\def\Kli{\mathrm{Kli}}
\def\Sph{\mathrm{Sph}}
\def\Iw{\mathrm{Iw}}
\def\epsbar{\overline{\eps}}
\def\PSL{\mathrm{PSL}}
\def\Kbar{\overline{K}}
\def\Htw{\widetilde{H}}
\def\Htwb{\overline{H}}
\def\Gammabar{\overline{\Gamma}}
\def\MMM{L}
\def\unW{\mathcal{W}}
\def\CA{\mathcal{A}}
\def\an{\mathrm{an}}
\def\pr{\mathrm{pr}}
\def\EE{\mathcal{E}}
\def\w{\mathbf{w}}
\def\con{\mathrm{con}}
\def\wT{\widetilde{\T}}
\def\wI{\widetilde{I}}
\def\can{\mathrm{can}}
\def\sub{\mathrm{sub}}
\def\PP{\mathbf{P}}
\def\Forms{\mathcal{F}}
\def\XX{\mathcal{X}}
\def\bb{\mathfrak{b}}
\def\gg{\mathfrak{g}}
\def\uu{\mathfrak{u}}
\def\Rmin{R^{\mathrm{min}}}
\def\Sym{\mathrm{Sym}}
\def\T{\mathbf{T}}
\def\Tan{\T^{\mathrm{an}}}
\def\mE{\mathfrak{m}_{\emptyset}}
\def\wt{\widetilde}
\def\CL{\mathcal{L}}
\def\ra{\rightarrow}
\def\lra{\longrightarrow}
\def\onto{\twoheadrightarrow}
\def\iso{\stackrel{\sim}{\rightarrow}}
\def\Tor{\mathrm{Tor}}
\def\m{\mathfrak{m}}
\def\T{\mathbf{T}}
\def\Z{\mathbf{Z}}
\def\Ext{\mathrm{Ext}}
\def\OL{\mathcal{O}}
\def\Q{\mathbf{Q}}
\def\F{\mathbf{F}}
\def\Ind{\mathrm{Ind}}
\def\PGL{\mathrm{PGL}}
\def\R{\mathbf{R}}
\def\C{\mathbf{C}}
\def\Hom{\mathrm{Hom}}
\def\End{\mathrm{End}}
\def\GSp{\mathrm{GSp}}
\def\Sp{\mathrm{Sp}}
\def\GL{\mathrm{GL}}
\def\WT{\widetilde{\T}}
\def\Qbar{\overline{\Q}}
\def\Fbar{\overline{\F}}
\def\rhobar{\overline{\rho}}
\def\ad{\mathrm{ad}}
\def\SL{\mathrm{SL}}
\def\eps{\epsilon}
\def\G{G}
\def\A{\mathbb{A}}
\def\Frob{\mathrm{Frob}}
\def\Spec{\mathrm{Spec}}
\def\CC{\mathcal{C}}
\def\CO{\mathcal{O}}
\def\onto{\twoheadrightarrow}
\def\into{\hookrightarrow}
\def\liso{\stackrel{\sim}{\longrightarrow}}
\DeclareMathOperator\im{Im}
\DeclareMathOperator\diag{diag}
\newtheorem{theorem}{Theorem}[section]
\newtheorem{df}[theorem]{Definition}
\newtheorem{lemma}[theorem]{Lemma}
\newtheorem{prop}[theorem]{Proposition}
\newtheorem{corr}[theorem]{Corollary}
\newtheorem{example}[theorem]{Example}
\newtheorem{remark}[theorem]{Remark}
\newtheorem{assumption}[theorem]{Assumption}
\def\ord{\mathrm{ord}}
\def\Gal{\mathrm{Gal}}
\def\rbar{\overline{r}}
\DeclareMathOperator\cts{cts}
\def\can{\mathrm{can}}
\def\sub{\mathrm{sub}}
\def\CE{\mathcal{E}}
\def\CV{\mathcal{V}}
\def\CH{\mathcal{H}}
\def\dR{\mathrm{dR}}
\DeclareMathOperator{\Lie}{\mathrm{Lie}}
\def\CA{\mathcal{A}}
\def\cusp{\mathrm{cusp}}
\def\temp{\mathrm{temp}}
\def\Res{\mathrm{Res}}
\def\gh{\mathfrak{h}}
\def\gg{\mathfrak{g}}
\def\gk{\mathfrak{k}}
\def\gp{\mathfrak{p}}
\def\tw{\widetilde{w}}
\def\W{\mathcal{W}}
\def\Art{\mathrm{Art}}
\newcommand{\Se}{\mathcal{S}}
\newcommand{\UZ}{U \kern-.1em{Z}}
\newcommand{\VF}{V \kern-.07em{F}}
\def\Sp{\mathrm{Sp}}
\begin{document}

  \newcommand{\need}[1]{{\tiny *** #1}}
\newcommand{\mar}[1]{\marginpar{\raggedright\tiny FIXME: #1 }}

\title{Minimal Modularity Lifting 
For non-regular symplectic representations}
%\author[F. Calegari]{Frank Calegari}\email{fcale@math.uchicago.edu}
%\address{Department of Mathematics, University of Chicago,
%5734 S.\ University Ave., Chicago, IL 60637, USA}
\author{Frank Calegari \and David Geraghty} 
\thanks{The first author was supported in part by NSF Career Grant DMS-0846285 and NSF  Grant
  DMS-1404620 and NSF Grant   DMS-1701703. The second author was
  supported in part by NSF Grants DMS-1200304 and DMS-1128155.}
\subjclass[2010]{11F33, 11F80.}

{\footnotesize
\maketitle
\tableofcontents
}

\section{Introduction}

In this paper, we prove a minimal modularity lifting theorem for
Galois representations (conjecturally) associated to
 Siegel modular forms $\pi$ for the group $\GSp(4)/\Q$ such that
 $\pi_{\infty}$ is a holomorphic limit of discrete series. 
An example of what we can prove with these methods is the following:

\begin{theorem} \label{theorem:char0}
Let $r: G_{\Q} \rightarrow \GSp_4(\Qbar_p)$ be a continuous irreducible
representation satisfying the following conditions:
\begin{enumerate}
\item $r|G_{\Q_p}$ is ordinary with
Hodge--Tate weights $[0,0,j-1,j-1]$ for some integer $j$ satisfying $p-1 > j \ge 4$.
\item If $\alpha$ and $\beta$ are the unit root eigenvalues 
of Frobenius on $D_{\mathrm{cris}}(V)$, then
$$(\alpha^2 -1)(\beta^2 - 1)(\alpha - \beta)(\alpha^2 \beta^2  -1) \not\equiv 0 \mod p.$$
\item 
The
image of $\rbar|G_{\Q(\zeta_p)}$ contains~$\Sp_4(\F_p)$.
\item For a prime~$x \ne p$, the image of inertia at~$x$ is unipotent, and
the image of any generator of tame inertia has the same number of Jordan blocks mod~$p$ as
it does in characteristic zero.
\item $\rbar$ is  modular of level $N(\rbar)$ and weight~$(j,2)$. \medskip
\end{enumerate}
Then $r$ is modular, that is, there exists a cuspidal Siegel modular Hecke eigenform
$F$ of weight $(j,2)$ such that 
$$L(r,s) = L(F,s),$$
 where $L(F,s)$ is the 
spinor $L$-function of $F$.  
\end{theorem}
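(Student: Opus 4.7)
The plan is to prove the theorem by a Taylor--Wiles--Kisin patching argument in the framework for non-regular coherent cohomology developed by the authors, applied to $\GSp_4$ at the limit-of-discrete-series weight $(j,2)$. On the Galois side, let $R$ be the universal deformation ring of $\rbar$ parametrising deformations that are ordinary at $p$ with Hodge--Tate weights $[0,0,j-1,j-1]$ (the range $j \le p-1$ places us in the Fontaine--Laffaille window where the ordinary local deformation ring at $p$ has the expected dimension) and are minimally ramified at primes $x \ne p$; condition~(4) on preservation of Jordan block structure guarantees that these local lifting rings are formally smooth. On the automorphic side, let $\T$ be the Hecke algebra acting on the ordinary part of the coherent cohomology of the toroidal compactification of the Siegel threefold at level $N(\rbar)$ with coefficients in the automorphic bundle $\omega^{(j,2)}$, localised at the maximal ideal $\m$ attached to $\rbar$ via hypothesis~(5). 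Constructions of Galois representations attached to ordinary limits of discrete series for $\GSp_4$, obtained by $p$-adic interpolation and congruence from regular weights, provide a surjection $R \onto \T$, and the goal is to show it is an isomorphism. The minimality assumption is essential: working at fixed level $N(\rbar)$ avoids the need for a level-raising or Ihara-type lemma, which remains open for $\GSp_4$.

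Next I would choose a compatible system of Taylor--Wiles data $Q_n$, whose existence in the required size depends on the bigness hypothesis~(3) together with the separation $(\alpha-\beta)(\alpha^2\beta^2 - 1) \not\equiv 0 \bmod p$ in~(2), which keeps the Frobenius eigenvalues generic under deformation. For each~$n$, let $R_{Q_n}$ be the associated deformation ring and $C_{Q_n}$ a perfect complex of $R_{Q_n}$-modules computing the ordinary $\m$-localised coherent cohomology at augmented level. Because holomorphic limits of discrete series for $\GSp_4$ contribute to coherent cohomology in two consecutive degrees, we are in the $l_0 = 1$ regime and must patch complexes rather than single modules. Taking the usual inverse limit produces a power-series ring $R_\infty$ and a perfect complex $C_\infty$ of $R_\infty$-modules with cohomology in degrees $0$ and $1$, together with a compatible surjection $R_\infty \onto \T_\infty$.

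The crux of the argument is to show that the cohomology of $C_\infty$ is concentrated in a single degree and free of the expected rank over $R_\infty$. The plan is to realise $C_\infty$ as the weight $(j,2)$ specialisation of an ordinary Hida family interpolating regular weights $(a,2)$ for $a$ large, where standard Taylor--Wiles patching with $l_0 = 0$ applies and forces the Hida module to be free over a two-variable weight algebra. The remaining non-vanishings $(\alpha^2 - 1)(\beta^2 - 1) \not\equiv 0 \bmod p$ in~(2) are used to rule out Klingen parabolic Eisenstein contributions to the weight $(j,2)$ fibre, while the factor $\alpha^2\beta^2 - 1$ plays the analogous role for Borel Eisenstein classes; together they ensure the specialisation picks out only cuspidal holomorphic classes. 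Once freeness of $C_\infty$ over $R_\infty$ is established, the usual commutative-algebra manipulation forces $R_\infty = \T_\infty$, and quotienting by the patching variables yields $R = \T$. The representation $r$, being a deformation of $\rbar$ satisfying all minimal conditions, then corresponds to a Hecke eigensystem realised by a cuspidal Siegel eigenform $F$ of weight $(j,2)$ with $L(r,s) = L(F,s)$.

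The main obstacle is precisely the concentration-in-one-degree step. In the $l_0 > 0$ situation the patched complex does not collapse to a module a priori, and controlling how the weight $(j,2)$ specialisation of a Hida family interacts with the non-regular coherent cohomology at that weight is subtle. The various non-vanishings imposed in~(2) are each tailored to kill a different source of Eisenstein congruences that would otherwise contribute unwanted classes to $C_\infty$ and destroy the numerical balance required to identify $R_\infty$ with $\T_\infty$; verifying that these conditions suffice is where I expect the bulk of the technical work to lie.
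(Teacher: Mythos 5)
Your outline correctly identifies the general shape (Taylor--Wiles patching in the $l_0=1$ regime for coherent cohomology of the Siegel threefold, at minimal level, with the aim of proving $R^{\min}\cong\T_{\m}$ and freeness), but it diverges from the paper at the two places where the actual work happens, and in both cases the substitute you propose is either a different (and later-developed) method or leaves a genuine gap.

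First, the paper does \emph{not} patch complexes. It patches the single module $H^0(X,\omega(j,2)(-\infty)_{K/\OL})^{\vee}_{\m}$ via the ``balanced'' formalism of Proposition~\ref{prop:patching}, where the input replacing a perfect-complex argument is the inequality $d_S(H_N)\ge 0$, i.e.\ $\dim_k H_N/\m_S H_N \ge \dim_k\Tor^S_1(H_N,k)$. This is established in Theorem~\ref{thm:balanced}, and the proof is \emph{not} by interpolating a Hida family and specialising. Rather, one shows directly that $\dim_K H^1(X_0(Q),\omega(j,2)(-\infty))_\m\otimes K \le \dim_K H^0(X_0(Q),\omega(j,2)(-\infty))_\m\otimes K$, using: (i) the Lan--Suh vanishing $H^2(X,\omega(j,2)(-\infty)_k)=0$ for $4\le j\le p$ (this is where the hypothesis $j\ge 4$ enters and \emph{only} there), which gives torsion-freeness of $H_1$ (Lemma~\ref{lem:cohom-torsion-free}); (ii) the fact that boundary cohomology dies at a non-Eisenstein $\m$ (Lemma~\ref{lem:harris-zucker}); and (iii) Arthur's classification for $\GSp_4$ plus the Harris/Mirkovi\'c theory of $(\mathfrak{p},K)$-cohomology of (limits of) discrete series, which shows that each relevant cuspidal $\pi$ contributes a copy of $\pi^{\infty,K}$ to both $H^0$ and $H^1$ with the same multiplicity. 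Your proposal to ``realise $C_\infty$ as the weight $(j,2)$ specialisation of an ordinary Hida family'' and import $l_0=0$ freeness from regular weight is closer to the strategy of~\cite{pilloniHidacomplexes} and~\cite{BCGP}, which the authors explicitly contrast with their own approach in~\S\ref{january}; in any case, specialising a Hida module at a singular weight does not by itself control the higher cohomology at that weight, which is precisely what you would need to show.

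Second, and more seriously, you treat the construction of the map $R^{\min}\to\T_{\m}$ as supplied by ``$p$-adic interpolation and congruence from regular weights.'' In the paper this is Theorem~\ref{theorem:localglobal}, and its proof is by far the most delicate part of the argument. Multiplying by a lift $A^s$ of a power of the Hasse invariant does produce a surjection $\T_{\mu'}(Q)_{\m}\onto\T_{\mu}(Q)_{\m}/I_{\mu,m}$ and hence a Galois representation, but one must then verify that this representation is ordinary with the prescribed shape at $p$ (condition~\eqref{at-p} of Definition~\ref{defn:minimal}) and that $\nu(r)=\eps^{-(a-1)}$. This requires exhibiting \emph{two} embeddings of the low-weight space into distinct ordinary eigenspaces in high weight, one tracking $\alpha$ and one tracking $\beta$, and it is here that the doubling/tripling argument of~\S\ref{section:qstuff} (Theorem~\ref{theorem:qexp}), the explicit Hecke operators on $q$-expansions, and the theta-operator non-vanishing results (Theorems~\ref{theorem:boxer} and~\ref{theorem:theta}) are used. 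Your proposal omits all of this; the hypothesis $(\alpha^2-1)(\beta^2-1)(\alpha^2\beta^2-1)(\alpha-\beta)\ne 0$ is not used to kill Eisenstein contributions (non-Eisenstein-ness comes from the big image hypothesis), but rather partly in the Galois computation of Lemma~\ref{lem:FL} and partly in the $q$-expansion arguments, notably $(\alpha\beta)^2\ne 1$ in Lemma~\ref{lemma:injective} and $\alpha\ne\beta$ in Lemma~\ref{lemma:explicitstuff}. Without a substitute for this local-global step, the patching set-up has no input on the Galois side.
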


We deduce Theorem~\ref{theorem:char0} from our main result, which
we now state. 
 (We shall refer to \S~\ref{section:deformations} and
 \S~\ref{sec:gal-rep-low} for precise details concerning ramification
 behaviour, level subgroups and the exact definition of minimal deformations.)
Let $\eps$ denote the $p$-adic cyclotomic character. Let  $\OL$ be the ring of integers
of a finite extension $K$ of $\Q_p$.
Let 
$\displaystyle{\rbar:G_{\Q} \rightarrow \GSp_4(k)}$
be a continuous irreducible representation whose similitude character
$\nu(\rbar)$ on inertia at $p$ is the mod-$p$ reduction of $\eps^{1-j}$.
Suppose that
$\rbar | G_{\Q_p}$ contains an unramified subspace of dimension two on which
$\Frob_p$ acts by the scalars $\alpha$ and $\beta$ respectively,
where 
$$(\alpha^2 - 1)(\beta^2 - 1)(\alpha - \beta)(\alpha^2 \beta^2 - 1) \ne 0.$$
Suppose  further that
$\rbar$ has big image (explicitly, satisfies Assumption~\ref{assumption:bigimage})
and that $\rbar|G_{\Q_x}$ for a prime $x \ne p$ is either unramified or
is one  of the types listed in Assumption~\ref{assumption:ramification}.
Let $Y_1(N)$ denote the  (open) Siegel modular variety of level $N = N(\rbar)$ over
$\Spec(\OL)$, where
$N$ is determined by $\rbar$ as
in \S~\ref{section:SMF}, and let $\omega(j,2)$ denote the coherent sheaf on
$Y_1(N)$ whose complex sections define Siegel modular forms of weight $(j,2)$
for some integer $p - 1 > j \ge 4$.
Let $\T$ denote the subring of endomorphisms
of
$$e H^0(Y_1(N),\omega(j,2) \otimes K/\OL) \simeq \lim_{\rightarrow} e H^0(Y_1(N),\omega(j,2) \otimes \OL/\varpi^n)$$
(where~$e  = e_{\alpha,\beta}$ is a certain ordinary projection, see section~\ref{section:ordinaryprojection})
 generated by Hecke operators at
primes not dividing $Np$. Let $\Rmin$ denote the universal minimal deformation
ring  of $\rbar$
(see   Definition~\ref{defn:minimal} for  more details).

\begin{theorem} \label{theorem:realmain}
Suppose that there exists a maximal ideal $\m$ of $\T$
and a corresponding representation $\rbar_{\m}:G_{\Q} \rightarrow \GSp_4(k)$ which
is isomorphic to $\rbar$.
Let $\Rmin$ denote the universal minimal ordinary deformation ring of $\rbar$. 
Suppose that $p-1 > j \ge 4$. Then there is an isomorphism
$\Rmin \iso \T_{\m}$, and moreover, the $\T_{\m}$ module
$e H^0(Y_1(N),\omega(j,2) \otimes K/\OL)_{\m}^{\vee}$ is free as a $\T_{\m}$-module. 
\end{theorem}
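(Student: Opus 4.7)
The proof follows the Taylor--Wiles patching method, adapted in the style of Calegari--Geraghty to coherent cohomology of the Siegel threefold $Y_1(N)$. Although Siegel modular forms of weight $(j,2)$ live in $H^0$ of the coherent sheaf $\omega(j,2)$, the associated Galois representations are non-regular at $p$ with Hodge--Tate weights $[0,0,j-1,j-1]$, and this non-regularity manifests itself as a numerical defect $\ell_0 \ge 1$ in the naive Taylor--Wiles count. One therefore cannot patch $e H^0(Y_1(N),\omega(j,2))^{\vee}_{\m}$ directly; the plan is to patch the full ordinary derived complex $e R\Gamma(Y_1(N),\omega(j,2))$ (as a perfect complex of $\OL$-modules), and at the end apply a balanced depth/dimension argument to recover $\Rmin \iso \T_{\m}$ together with freeness of the bottom cohomology.

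Concretely, I would first pin down the deformation theory: $\Rmin$ classifies deformations of $\rbar$ with similitude character $\eps^{1-j}$, the (Siegel-parabolic) ordinary condition at $p$ matching the unit eigenvalues $\alpha,\beta$, and the prescribed minimal behaviour at each ramified $x \ne p$. The hypothesis $(\alpha^2-1)(\beta^2-1)(\alpha-\beta)(\alpha^2\beta^2-1) \ne 0$ ensures that the local ordinary deformation ring at $p$ is formally smooth of the expected dimension and that $e_{\alpha,\beta}$ isolates a single ordinary Hida family. A standard Poitou--Tate computation for $\GSp_4$ then presents $\Rmin$ as a quotient of a power series ring in $g+r$ variables by $g+r+\ell_0$ relations, where $g$ will be the size of the Taylor--Wiles set and $r$ the local framing dimension.

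The key construction uses the big-image assumption: for each $n \ge 1$ one produces, via Chebotarev, a set $Q_n$ of $g$ primes $q \equiv 1 \pmod{p^n}$ at which $\rbar(\Frob_q)$ is regular enough that the local deformation ring at $q$ is a power series ring over $\OL[\Delta_q]$, with $\Delta_q = (\Z/q)^{\times}(p)$. Enlarging the level from $N$ to $N \cdot \Kli(Q_n)$ and passing to ordinary coherent cohomology, localised at a maximal ideal $\m_{Q_n}$ lifting $\m$, yields a perfect $\OL[\Delta_{Q_n}]$-complex $C_{Q_n}^{\bullet}$ of amplitude $[0,\ell_0]$ whose Hecke action factors through the augmented deformation ring $R_{Q_n}^{\min}$ and whose reduction along $\OL[\Delta_{Q_n}] \to \OL$ recovers the original complex. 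Patching these data as $n \to \infty$ in the standard way produces a perfect complex $M_{\infty}^{\bullet}$ over $R_{\infty} := R^{\min,\square}[[x_1,\ldots,x_s]]$ together with a compatible action of a formal power series ring $S_{\infty} := \OL[[y_1,\ldots]]$ with $\dim S_{\infty} = \dim R_{\infty} + \ell_0$. A commutative-algebra argument (the depth of a bounded perfect $S_{\infty}$-complex of amplitude $[0,\ell_0]$ combined with the lower bound $\dim R_{\infty} \ge \dim \T_{\m}$) forces $M_{\infty}^{\bullet}$ to be concentrated in degree zero and free as an $R_{\infty}$-module, with $R_{\infty}$ cut out of $S_{\infty}$ by a regular sequence. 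Descending along the Taylor--Wiles augmentations (quotienting by the $\Delta_{Q_n}$-variables and the framing variables) then yields $\Rmin \iso \T_{\m}$ and the claimed freeness.

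The main obstacle is establishing the derived-level control of the ordinary coherent cohomology at Taylor--Wiles level: one needs $e R\Gamma(Y_1(N) \cap \Kli(Q_n), \omega(j,2))_{\m_{Q_n}}$ to be a perfect $\OL[\Delta_{Q_n}]$-complex of amplitude $[0,\ell_0]$ whose formation commutes with base change to $\OL$, together with a local-global compatibility relating its Hecke eigensystems (including any torsion classes appearing in cohomological degrees $>0$) to deformations of $\rbar$ of the prescribed type. This is where the non-regular weight bites: one cannot simply invoke the classical attachment of Galois representations to Siegel modular forms, and one must instead combine Hida theory for coherent cohomology of the non-compact Siegel threefold with congruences to regular weights and a careful analysis of the higher cohomology $H^i(Y_1(N),\omega(j,2) \otimes \OL/\varpi^n)_{\m}$ for $i>0$ after ordinary projection.
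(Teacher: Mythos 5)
Your proposal is correct in broad strokes but takes a genuinely different route from the one in the paper. You propose patching the ordinary perfect complex $eR\Gamma(Y_1(N),\omega(j,2))$ of amplitude $[0,\ell_0]$ over the auxiliary Iwasawa algebras, then deducing freeness in degree zero from a depth argument. This is the derived/higher-Hida strategy that was subsequently carried out by Pilloni and by Boxer--Calegari--Gee--Pilloni, and it is a perfectly good approach to the theorem. However, it requires substantial infrastructure that the present paper deliberately avoids: Hecke operators at $p$ on higher coherent cohomology, base-change properties of the ordinary complex along $\OL[\Delta_{Q_n}]\to\OL$, and local--global compatibility for possibly-torsion classes in degree one.

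The paper instead stays at the module level and invokes the \emph{balanced module} formalism of Calegari--Geraghty (Proposition~\ref{prop:patching}): one patches only $H_N := H^0(X_{\Delta}(Q_N),\omega(a,2)(-\infty)_{K/\OL})^{\vee}_{\m}$ as a module over $S_N=\OL[\Delta_N]$ and verifies that its defect $d_{S_N}(H_N)=\dim_k H_N/\m_{S_N}H_N-\dim_k\Tor_1^{S_N}(H_N,k)$ is nonnegative. This balancedness is the module-level avatar of your amplitude-$[0,\ell_0]$ condition, but it is established without ever touching Hecke operators in degree $>0$: one needs only (i) the torsion-freeness of $H_1(X_\Delta(Q),\cdot)_\m$, which follows from the Lan--Suh vanishing $H^2(X,\omega(j,2)(-\infty)_k)=0$ valid because $j\ge 4$; and (ii) the characteristic-zero equality $\dim_K H^0_{\cusp}=\dim_K H^1_{\cusp}$ after localization, which is extracted from Arthur's classification and Harris's comparison of coherent and $(\mathfrak{q},K)$-cohomology at limits of discrete series (Theorem~\ref{thm:balanced}). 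The other serious ingredient the paper supplies, which your proposal flags as ``the main obstacle'' but does not resolve, is the local--global compatibility in irregular weight: Theorem~\ref{theorem:localglobal} produces a minimal ordinary $\GSp_4(\T_{\m})$-valued deformation from a form in weight $(j,2)$ via a doubling/tripling argument with $q$-expansions (Theorem~\ref{theorem:qexp} and the $\theta$-operator injectivity results of Section~\ref{section:qstuff}), rather than via a derived analysis of torsion in higher cohomology. Your approach buys a cleaner commutative-algebra endgame and the ability to handle weight $(2,2)$ where $H^2$ does not vanish; the paper's approach buys freedom from constructing and controlling Hecke actions on $H^i$ for $i>0$, at the price of the ``disagreeable'' explicit $q$-expansion manipulations the authors themselves comment on in \S\ref{january}.
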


The proof follows the strategy of~\cite{CG}. The main ingredients are showing that there
exists  a map from $\Rmin$ to $\T_{\m}$ (see Theorem~\ref{theorem:localglobal})
and proving that the cohomology of the subcanonical extension $\omega(j,2)^{\sub}$ of $\omega(j,2)$ to
a smooth toroidal compactification $X_1(N)$ of $Y_1(N)$ vanishes outside degrees $0$ and $1$
(see Theorem~\ref{thm:lan-suh} ---
 in the case of classical modular curves this step was trivial).

\subsection{Comparison with previous methods}
The first modularity theorems which applied to non-regular motives were the results of
 Buzzard--Taylor and Buzzard~\cite{BuzzT,BuzzWild} on 
 two dimensional odd Artin  representations $V$.
The  idea of these papers can roughly be described as follows.
Using known cases of Serre's conjecture, one deduces that $\rhobar$
is modular, where $\rho$ is the representation associated to some $p$-adic
realization of $V$ for some $p$.
Using modularity theorems in \emph{regular} weight, one then proves that
a big Hecke algebra is modular. Specializing to weight one, one deduces the
existence of an \emph{overconvergent} eigenform $f$ corresponding to  $V$. Under
a 
non-degeneracy assumption on $V$ ($\rhobar$ is $p$-distinguished), one constructs (using companion forms) a
second
 Hida family
which specializes to a \emph{second} eigenform $g$. Using the geometric
properties of $U$, one shows that $f$ and $g$ converge deeply into
the supersingular locus. The Fourier coefficients $a_n$ of $f$ and $g$ for $(n,p) = 1$
are determined by $V$. One then constructs a suitable linear combination
$h = (\alpha f - \beta g)/(\alpha - \beta)$ which converges over the entire modular curve,
and is thus classical by rigid GAGA. The formal rigid geometry employed
by these papers have been generalized  by various authors, in particular by Kassaei~\cite{Kassaeiold}.
One may well ask whether this approach can be applied to Siegel modular forms
of weight $(2,2)$ --- work of Tilouine and his collaborators has
made great progress in this direction. The modularity lifting result for (regular weight) Hida families
has been established in many cases by Genestier--Tilouine~\cite{TG} (see also
Pilloni~\cite{Pilloni}). Significant progress has also been made in the theory
of canonical subgroups and the geometry of Siegel modular varieties.
One difficulty, however, is that the Fourier expansions of Siegel modular forms
are \emph{not} determined by the Hecke eigenvalues. This is a difficulty which
must be overcome in such an approach.
(Various classicality results for overconvergent forms can be established
without using $q$-expansions, see for example~\cite{PayGlue,PillStroh}, but these
results  only apply to forms of sufficiently non-critical slope.)
The difficulty of dealing with $q$-expansions manifests itself for
our approach also --- we are forced to prove (``by hand'') various properties
of Fourier expansions of Hecke eigenforms in
\S~\ref{section:explicit}.

\subsection{Abelian Surfaces}
\label{spec}
It would be desirable to weaken the assumption $j \ge 4$ in the main theorem
to $j \ge 2$, since the case $j = 2$ includes the representations associated
to the Tate modules of abelian surfaces.
The only point in our arguments in which we use the fact that $j \ge 4$ is to
deduce that $H^2(X_1(N),\omega(j,2)^{\sub}) = 0$ for the subcanonical extension $\omega(j,2)^{\sub}$
of $\omega(j,2)$ to a smooth toroidal compactification $X_1(N)$ of $Y_1(N)$.
 If this vanishing holds for $j = 2$ then our
theorem would also apply to these cases.
On the other hand, one does \emph{not} expect vanishing here, because one expects
that singular Siegel modular forms should contribute cohomology in these degrees.
However,
 we need only the weaker result that the image of 
$H^2(X_1(N),\omega(j,2)^{\can})$ in $H^2(X_1(N),\omega(j,2)^{\sub})$
is zero after localization at a sufficiently non-Eisenstein maximal ideal~$\m$. We expect this to always be true for~$j = 2$, although we were not able to prove it.
On the other hand, using the ideas of Khare--Thorne~\cite{KT}, one can dispense with proving this under the very strong supplementary hypothesis that
there exists a~\emph{characteristic zero} form of weight~$N = N(\rbar)$ which gives rise to~$\rbar$. In particular, by using the arguments of
the proof of Theorem~6.29 of~\emph{ibid}, one should  be able to prove the analogue of Theorem~\ref{theorem:char0} in weight~$(2,2)$
assuming the existence of an auxiliary Siegel modular form~$G$ of the same level also of weight~$(2,2)$ with~$\rbar_{G} = \rbar$.

\subsection{Recent Developments} ({\bf Added: January, 2019\rm}) \label{january}
Very recently, there have been a number of developments related to the main theme
of this paper, in particular, in the preprints~\cite{pilloniHidacomplexes} and~\cite{BCGP}, the latter  which establishes
the potential modularity of abelian surfaces over totally real fields. The introduction
to~\cite{BCGP} explains a number of innovations which made those results possible,
so we shall confine ourselves here to only a few salient remarks.
The first is that the vanishing conjecture for~$H^2$ localized at~$\m$
mentioned in~\S\ref{spec} remains unresolved, and the methods of~\cite{BCGP} blend the techniques of this paper (and~\cite{CG}) with arguments from~\cite{pilloniHidacomplexes}.  A second point is that the paper~\cite{pilloniHidacomplexes} develops
a conceptual method to define (normalized) Hecke operators at~$p$, and in particular establishes the action of these
operators on higher cohomology (which is essential for the main results of~\cite{pilloniHidacomplexes} and~\cite{BCGP}).
In this paper, it suffices to construct the action of these Hecke operators on~$H^0$ which is significantly easier.
The methods we use in~\S\ref{section:hecke}  to do this are  admittedly disagreeable, relying as they do on arguments 
using~$q$-expansions. Thus the reader is encouraged to consult~\cite[\S7]{pilloniHidacomplexes}  
and~\cite[\S4.5]{BCGP} for a more geometric construction of these operators. An analysis of the normalization
factors for Hecke operators required in~\cite{pilloniHidacomplexes} also sheds some light on another phenomenological
feature of this paper which readers may find surprising: On the Galois side, there is essentially no
difference (in the ordinary setting) between working in the (irregular) weight~$(j,2)$ for~$j > 2$ and working in the (irregular) weight~$(2,2)$.
On the other hand, the Hecke operators at~$p$ (particularly~$T_{p,2}$)  behave quite differently in weight~$(2,2)$.
In our context, this arises most noticeably in~\S\ref{relationship} (via Lemma~\ref{lemma:verify}), which one should compare to~\cite[\S11.1]{pilloniHidacomplexes} (warning:
the convention of that paper is that Pilloni's~$T_{p,1}$ is our~$T_{p,2}$ and \emph{vice versa}, and the spherical  version
of the operator~$T$ in~\cite{pilloniHidacomplexes} is equal in weight~$(2,2)$ up to translation by a multiple of~$T_{p,0}$ to the operator we call~$Q_2$). Finally,
the paper~\cite{BCGP} develops a geometric version of the doubling argument (see~\S5 of~\emph{ibid}.) 
This provides a much more robust  explanation (in a slightly different setting)
 for what in this paper occupies
most of~\S\ref{section:qstuff} and consists of  a sequence of
tricky and not entirely intuitive series of manipulations with~$q$-expansions. (Note that the geometric doubling
argument of~\cite{BCGP} is  only written for weight~$(2,2)$ but the method applies in principle to the weights~$(j,2)$ 
which we consider in this paper.) Finally, the very observant reader will notice that the doubling argument of~\cite{BCGP}
applies in weight~$(2,2)$ to the space of ordinary forms at Klingen level, whereas in this paper we essentially
prove (in the same weight) a tripling result at spherical  level.
 Neither of these results immediately imply the other.  The ``extra'' copy of the space of forms 
 can be interpreted as  giving rise to a space
of \emph{non-ordinary} forms of weight~$(p+1,p+1)$. See Remark~\ref{trip} for further discussion on
this point, which we also discuss in a different context below.

It is natural to ask  whether one should expect any genuine difficulties in modifying the 
geometric doubling argument of~\cite{BCGP} to the setting of this paper. We now offer
some speculative remarks to address this point (using notation from~\cite{BCGP}).
Let~$\pi_p$ be a smooth admissible irreducible unramified representation of~$\GL_2(\Q_p)$ (over~$\C$) which
is not trivial. (For example, $\pi_p$ could be the local constituent of an automorphic
representation~$\pi$ associated to a classical modular form.)
Let~$\Sph = \GL_2(\Z_p)$
and let~$\Iw$ denote the Iwahori subgroup of~$\Sph$. The classical theory of oldforms is a reflection
of the fact that
$$\dim \pi^{\Iw}_p = 2 = 2 \cdot \dim \pi^{\Sph}_p$$
and the characteristic zero version of doubling is the statement that the span of the spherical  vector~$v$
under the operator~$U_p$ is all of~$\pi^{\Iw}_p$.  The integral version of this statement is false in general.
For example, given a classical ordinary modular eigenform~$f$ of weight~$k \ge 2$, the span of~$f \mod p$
under~$U_p$ is simply~$f$, because~$T_p = U_p \mod p$ in these weights. However, some version
of this result \emph{does} hold in weight~$k = 1$, and it is this property which is leveraged to prove
local-global compatibility results in~\cite{CG}. Let us now replace~$\GL_2(\Q_p)$ by~$\GSp_4(\Q_p)$,
and let~$\Kli$ and~$\Iw$ denote the Klingen and Iwahori subgroups respectively of~$\Sph = \GSp_4(\Z_p)$ 
(denoted elsewhere in this paper by~$\Pi$ and~$I$ respectively.) Now (for the~$\Pi_p$ of interest) we will have
$$\dim \Pi^{\Iw}_p = 8 = 2 \cdot 4 = 2 \dim \Pi^{\Kli}_p = 8 \dim \Pi^{\Sph}_p.$$
The factor~$8$ here may be interpreted as the order of the Weyl group of~$\GSp(4)$.
More prosaically, the oldforms in~$\Pi^{\Iw}_p$ correspond to a choice of eigenvalues~$\alpha$
and~$\alpha \beta$ for the Hecke operators~$U_{\Iw(p),1}$ and~$U_{\Iw(p),2}$ respectively,
whereas the oldforms  in~$\Pi^{\Kli}_p$ correspond to a choice of eigenvalues~$\alpha + \beta$
and~$\alpha \beta$ for the Hecke operators~$U_{\Kli(p),1}$ and~$U_{\Kli(p),2} = U_{\Iw(p),2}$.
When one passes from~$\pi^{\Sph}_p$ to~$\pi^{\Iw}_p$ for weight one modular forms or~$\Pi^{\Kli}_p$
to~$\Pi^{\Iw}_p$ for weight~$(2,2)$ Siegel modular forms, the property of of being ordinary turns out to be
automatically preserved on the corresponding space of old forms. However, this is not
\emph{a priori} true
when passing from~$\Pi^{\Sph}_p$ to~$\Pi^{\Iw}_p$, and so  one would have to see in any
geometric version of this argument a way of dealing with the non-ordinary forms.

\subsection{Results of Arthur}
In Section~\ref{sec:balanced-property}, we make use of the results of
\cite{arthur-gsp4}, which sketches how the results of
\cite{Arthur} on orthogonal and symplectic groups can be extended to
the general symplectic group $\GSp_4$. At the time of the initial submission of this paper, 
 these results of Arthur are conditional on the stabilization of the twisted trace formula.
(We direct the reader to~\cite{GT} for the most up to date status of these results for~$\GSp_4$.)

\subsection{Acknowledgements}
We would like to thank George Boxer for some very helpful comments
related to the proofs of Theorems~ \ref{theorem:boxer}  and~\ref{theorem:theta}.
We would also like to thank Olivier Ta\"{\i}bi for answering some technical questions arising in~\S\ref{sec:balanced-property}.
We  would also like to acknowledge useful conversations with Kevin Buzzard,  Ching-Li Chai, Matthew Emerton, Toby Gee,  Michael Harris,
Kai-Wen Lan, Vincent Pilloni, and Jack Thorne. 
We also like thank many of the participants of the Bellairs workshop in number theory in~2014,
where an earlier version of this paper was discussed. Finally, we thank the referees, whose detailed comments
very much helped to improve this manuscript.

\section{Notation}
\label{sec:notation}

We fix a prime $p$ and let $\OL$ be the ring of integers
of a finite extension $K$ of $\Q_p$ with residue field $k$. We let
$\CC_{\CO}$ denote the category of complete local Noetherian
$\CO$-algebras $R$ with residue field isomorphic to $k$ (via the
structural homomorphism $\CO \to R$).

We let 
\[ \eps : G_{\Q} \to \Z_p^{\times} \]
denote the cyclotomic character. The Hodge--Tate weight of
$\eps|G_{\Q_p}$ is $-1$.

If $L$ is a finite extension of $\Q_l$ for some prime $l$. We let
$\Art_L : L^\times \to W_L^{\mathrm{ab}}$ denote the Artin map,
normalized so that uniformizers correspond to geometric Frobenius elements.
If $\gamma$
is an element of some ring $R$, then we define the character
\[ \lambda(\gamma) : G_L \lra R^\times \]
to be the unramified character which takes the geometric Frobenius
element $\Frob_L$ to $\gamma$, when this character is well defined.

\subsubsection{The group $\GSp_4$}
\label{sec:group-gsp_4}

Let 
$G = \GSp_4 = \{M \in \GL_4: M^{t} J M = \nu \cdot J \text{\ for some\
}
\nu\in\GL_1\}$, where
$$\displaystyle{J:=\left(\begin{matrix}  0 &0 & 0 & 1 \\ 0 & 0 & 1 &  0 \\ 0 & -1 & 0 & 0 \\ -1 & 0 & 0 & 0 \end{matrix} \right)}.$$
The group $\Sp_4$ is the subgroup consisting of elements with $\nu =
1$. We let $B \subset G$ be the Borel subgroup consisting of upper
triangular matrices. The Lie algebras of $G$ and $B$ are denoted $\gg$
and $\bb$ while those of $\Sp_4$ and $B\cap \Sp_4$ are denoted $\gg^0$
and $\bb^0$. Let $P\subset \G$ denote the Siegel
parabolic, that is, the stabilizer of the plane spanned by the first
two standard basis vectors. Let $\Pi \subset \G$ denote the Klingen
parabolic, which is the stabilizer of the line spanned by the first
standard basis vector. We denote the Levi subgroup of $P$ (resp.\
$\Pi$) by $M=M_P$ (resp.\ $M_\Pi$). We have $M \cong \GL_2\times\GL_1$.

Let $T$ denote the diagonal torus in $\GSp_4$ and $X^*(T)$ its
character group. We identify $X^*(T)$ with the lattice $\Z^3$ by associating to $(a,b;c)$ the
character
\[ \diag(t_1,t_2,\nu t_2^{-1},\nu t_1^{-1}) \mapsto t_1^a t_2^a
  \nu^{c}.\] 
We identify the cocharacter group $X_*(T)$ with $\Z^3$ by associating
the triple $(\alpha,\beta;\gamma)$ with the cocharacter:
\[ t \mapsto \diag(t^\alpha, t^\beta , t^{\gamma-\beta} ,
  t^{\gamma-\alpha}) .\]
The natural pairing on $X^*(T)\times
X_*(T)$ is then: $\langle (a,b;c),(\alpha,\beta,\gamma)\rangle
\mapsto a\alpha+b\beta+c\gamma$.

The positive roots of $\G$ with respect to the Borel $B$ are given by
$\alpha_1 :=(1,-1;0)$, $\alpha_2:=(0,2;-1)$, $\alpha_3=(1,1;-1)$ and
$\alpha_4=(2,0;-1)$. Of these, $\alpha_1$ and $\alpha_2$ are the simple
roots. We let $\rho = (2,1;-3/2)$ denote the half-sum of the positive
roots. The coroots are: $\alpha_1^{\vee} = (1,-1;0)$, $\alpha_2^{\vee}
= (0,1;0)$, $\alpha_3^{\vee} = (1,1;0)$ and $\alpha_4^{\vee} = (1,0;0)$.
The intersection
$B\cap M$ is a Borel subgroup of $M$. The
corresponding positive root is $\alpha_1$.

\begin{df}
  \label{defn:dominant-weights}
We define the set $X^*(T)^+_{\G}$ to be the set $\{ \lambda\in X^*(T) :
\langle \lambda, \alpha_i^{\vee}\rangle \geq 0 \text{\ } \forall
i\}$ of weights which are dominant with respect to $B$. Explicitly
\[ X^*(T)^+_{\G} =  \{(a,b;c)\in X^*(T) :a \geq b \geq 0\}.\]  
Similarly, we define the set of 
weights $X^*(T)^+_{M}:= \{(a,b;c)\in X^*(T):\langle
\lambda,\alpha_1^{\vee}\rangle \geq 0\}$ which are dominant with respect
to $B\cap M$. Explicitly, this is:
\[ X^*(T)^+_{M} = \{(a,b;c)\in X^*(T) :a \geq b\}.\]
\end{df}

Note that the
natural action of $M$ on the plane spanned by the first two (resp.\
the last two) standard basis vectors is the irreducible representation
of highest weight $(1,0;0)$ (resp.\ $(0,-1;1)$).

We let $W_{\G}=N_{\G}(T)/T$ denote
the Weyl group of $\G$ and we define $W_{M}$ and $W_{M_{\Pi}}$
similarly. Let $s_0,s_1$ denote the generators for the Weyl group
$W_{\G}$ given in~\cite[\S 2]{HerzigTilouine}. We fix a set of
Kostant representatives $W^M= \{ \tw_0,\tw_1,\tw_2,\tw_3\} $ for $
W_{M}\backslash W_{\G}$ by setting $\tw_0=1$, $\tw_1=s_1$,
$\tw_2=s_1s_0$ and $\tw_3=s_1s_0s_1$. Note that each $\tw_i$ has length
$i$. We let $w\in W_{\G}$ act on $X^*(T)$ by
$(w\lambda)(t)=\lambda(w^{-1}tw)$. Then we have:
\begin{align*}
  \tw_1(a,b;c) &= (a,-b;b+c)\\
  \tw_2(a,b;c) & = (b,-a;a+c) \\ 
  \tw_3(a,b;c) & = (-b,-a;a+b+c).
\end{align*}
The longest element of $W_{\G}$ which we denote by $w_0$ acts via $w_0(a,b;c)=(b,a;c)$.

Note that the collection of representatives $W^M$ is precisely the set
of $w\in W_{\G}$ such that $w(X^*(T)^+_{\G})\subset X^*(T)^+_M$.
We let $C_0 \subset X^*(T)_{\R}:= X^*(T)\otimes_{\Z}\R$ denote the
closed dominant Weyl chamber. In other words, $C_0 = \{ (a,b;c)\in
\R^3 : a \geq b \geq 0\}$. For $i=1,2,3$, we define the chambers $C_i:= \tw_i(C_0)$.

\subsubsection{The group $\GSp_4(\R)$}
\label{sec:group-gsp_4r}

Let 
\[ h : \Res_{\C/\R}(\mathbf G_m)(\R)= \C^{\times} \to \G(\R)= \GSp_4(\R) \]
be the homomorphism sending $x+iy$ to the matrix
\[
\begin{pmatrix}
  xI_2 & yS \\
  -yS & xI_2
\end{pmatrix}
\]
where  
\[S:=
\begin{pmatrix}
  0 & 1\\1&0
\end{pmatrix}.\]
Let $K^h$ denote the
centralizer of $h$ in $\G(\R)$ (acting by conjugation). Then since
$h(i) = J$, we see that $K^h = \R^\times K_\infty$ where $K_\infty$ is
the maximal compact subgroup of $\G(\R)$ given by the fixed points of
the Cartan involution $g \mapsto (g^{t})^{-1}$. The similitude
character restricts to a surjective map $\nu : K_\infty \to \{\pm 1\}$
and whose kernel $K_{\infty,1}$ is the connected component of the
identity. Then we have explicitly,
\[ K_{1,\infty} = \left\{
  \begin{pmatrix}
    SAS & SB \\
    -BS & A
  \end{pmatrix} \in \G(\R) : A^t A + B^t B = I_2, A^t B = B^t A \right\}.\]
 The map:
\begin{align*}
 K_{\infty,1} & \longrightarrow \GL_2(\C) \\
  \begin{pmatrix}
    SAS & SB \\
    -BS & A
  \end{pmatrix} &\longmapsto  A +iB 
\end{align*}
 induces an isomorphism between $K_{\infty,1}$ and $U(2)$. We let
 $H_1 \subset K_{\infty,1}$ denote the preimage of the diagonal
 compact torus in $U(2)$ and let $H:= \R^{\times}_{>0} H_1 \subset
 K^h$. Let $\gh = \Lie H$, $\gk^h = \Lie K^h$ and so on. Then we have
\[ \gh =\left\{ h(t_1,t_2;z):=
\begin{pmatrix}
  z & 0 & 0 & t_1 \\ 0 & z & t_2 & 0 \\ 0 & -t_2 & z & 0\\ 
  -t_1 & 0 & 0 & z
\end{pmatrix}: t_1,t_2,z \in \R \right\}.\]
We use subscripts to denote complexifications of Lie algebras and Lie
groups; thus $H_{\C}$ and $\gh_{\C}$ denote the complexifications of
$H$ and $\gh$. Then $\gh_{\C} = \Lie
H_{\C} = \{ h(t_1,t_2;t) : t_1,t_2,z \in \C\}$ and the surjective map $\exp : \gh_{\C} \to H_{\C}$ sends $h(t_1,t_2;z)$
to
\[ \exp(z)
\begin{pmatrix}
  \cos t_1 & 0 & 0 & \sin t_1 \\
  0 & \cos t_2 & \sin t_2 & 0 \\
  0 & -\sin t_2 & \cos t_2 & 0 \\
  -\sin t_1 & 0 & 0 & \cos t_1
\end{pmatrix}.\]
Thus its kernel is $\{ h(t_1,t_2;z) : t_1,t_2 \in 2\pi\Z, z\in 2\pi i
\Z\}$. We define the lattice $X^*(H_{\C}) \subset \gh_{\C}^*$ to
be the subspace consisting of differentials of (complex analytic) characters of
$H_{\C}$. Equivalently, $X^*(H_{\C})$ is the subset of $X^*(\C^\times
\times H_{1,\C})=\{ \lambda \in \gh_{\C}^* :
\lambda (\ker( \exp:\gh_{\C}\to H_{\C})) \subset 2\pi i \Z \}$
consisting of differentials of characters of 
$\C^\times\times K_{1,\C}$ which factor through the multiplication map $\C^\times\times
H_{1,\C} \to H_{\C}$. We fix
an isomorphism 
\[\{ (a,b;c) \in\Z^3:a+b\equiv c \bmod 2\} \iso X^*(H_{\C})\] by letting $(a,b;c)$
correspond to the linear form 
\[ h(t_1,t_2;z) \mapsto at_1 i + b t_2 i + cz \]
on $\gh_{\C}$. This extends by linearity to an isomorphism $\C^3 \to \gh_{\C}^*$.

Let $V^{\pm} \subset \C^4$ be the subspace where $h(i)$ acts via $\pm
i$. Then each $V^{\pm}$ is isotropic and we have an orthogonal direct
sum $\C^4 = V^- \oplus V^+$. Let $Q^- \subset \G(\C)$ denote the
stabilizer of $V^-$. 
 Consider the Hodge decomposition
\[ \gg_{\C} = \gg^{0,0}\oplus \gg^{-1,1} \oplus \gg^{1,-1} \]
where $\gg^{p,q}$ is the subspace on which $h(z)$ acts via
$z^{-p}\overline z^{-q}$. Then we have $\gg^{0,0} = \gk_{\C}^h$ and we
let $\gp^+=\gg^{-1,1}$, $\gp^- = \gg^{1,-1}$. We also let $P^{\pm}$
denote the subgroup of $G(\C)$ generated by $\exp(\gp^{\pm})$. 
 Then
we have 
\[ Q^- = K^h_{\C} P^{-} \text{\ and\ } \Lie Q^- = \gk_{\C}^h \oplus \gp^{-}.\]
Moreover, $K^h_{\C}$ is the Levi component of $Q^-$ and $P^-$ is its
unipotent radical. Let
\[ f_1 =
\begin{pmatrix}
  1 \\ 0 \\ 0 \\ -i
\end{pmatrix}, 
f_2 =
\begin{pmatrix}
  0 \\ 1 \\ -i \\ 0
\end{pmatrix}, 
f_3 =
\begin{pmatrix}
  0 \\ -i \\ 1 \\ 0
\end{pmatrix},
f_4 =
\begin{pmatrix}
  -i \\ 0 \\ 0 \\ 1
\end{pmatrix}\in \C^4.\]
Then $f_1,f_2$ are a basis of $V^-$ and $f_3,f_4$ are a basis of
$V^+$. With respect to the basis $f_1,\dots,f_4$ of $\C^4$, an element
\[ k = \begin{pmatrix}
    SAS & SB \\
    -BS & A
  \end{pmatrix} \in K_{1,\infty} \]
acts via 
\[ C^{-1} k C
 =
\begin{pmatrix}
  SAS-iSBS & 0 \\
  0 & A+iB
\end{pmatrix} \text{\ where } C := \begin{pmatrix}
  I_2 & -iS \\
  -iS & I_2
\end{pmatrix} .  \]
Note that the Cayley transform $C$ conjugates the Siegel parabolic
$P(\C)$ to $Q^-$.  
Let $\Phi \subset X^*(H_{\C})$ denote the root system defined
by the adjoint action of $H_{\C}$ on $\gg_{\C}$. The compact roots
$\Phi_c$ are those appearing in $\gk_{\C}^h$, while the non-compact
roots $\Phi_n$ are those appearing in $\gp^+\oplus \gp^-$. We choose a system of
positive roots $\Phi^+$ in such a way that the set of positive non-compact
roots $\Phi^+_n = \Phi^+ \cap \Phi_n$ coincides with the roots in $\gp^+$. (We do this in order to be consistent with the conventions
of \cite[\S 2.4]{blasius-harris-ramak}.)  We are then forced to take
$\Phi^+$ to be the set of roots appearing in $C(\Lie
\overline{B})C^{-1}$  where $\overline{B} \subset \G$ is the Borel
subgroup of \emph{lower} triangular matrices. With respect to the
identification of $X^*(H_{\C})$ as a subset of $\Z^3$ given above, we then
have:
\begin{align*}
  \Phi^+_c &= \{ (1,-1;0) \} \\
  \Phi^+_n &= \{ (0,2;0), (1,1;0) , (2,0;0) \}.
\end{align*}
This can be seen easily from the fact that $C^{-1}h(t_1,t_2;0)C =
\diag(-it_1,-it_2,it_2,it_1)$. 
\begin{df}
  \label{defn:dominant-weights-H}
We let 
$X^*(H_{\C})^+_{K^h_{\C}}$ denote the set of which are dominant with respect the
system of positive roots $\Phi^+_c$. In other words,
$X^*(H_{\C})^+_{K^h_{\C}} = \{
(a,b;c)\in X^*(H_{\C}) : a\geq b\}$.

 This set parameterizes
the irreducible complex
analytic representations of $K_{\C}^h$. For $\mu \in
X^*(H_{\C})^+_{K^h_\C}$, we let $V_\mu$ denote the corresponding
irreducible representation of highest weight $\mu$.

\end{df}

 We note that natural
representation of $K^h_{\C}$ on $V^-$ (resp.\ $V^+$) is the irreducible
representation of highest weight $(0,-1;1)$ (resp.\ $(1,0;1)$). Note
also that the similitude character $\nu : H_{\C} \to \C^\times$ has
weight $(0,0;2)$.

\section{Some Commutative Algebra}

We recall here some formalism from~\cite{CG} for proving modularity
lifting results in contexts where the Hecke algebra has ``co-dimension $1$''
over the ring of diamond operators.
The notion of ``balanced'' below plays the role of ``codimension one''
for the non-regular group rings $S_N:=\CO[(\Z/p^N\Z)^q]$.

\subsection{Balanced Modules}

Let $S$ be a Noetherian local ring with residue field $k$ and let $M$ be a
finitely generated $S$-module.

\begin{df}
  \label{defn:defect}
We define the \emph{defect} $d_S(M)$ of $M$ to be 
\[ d_S(M)= \dim_k \Tor^0_S(M,k)-\dim_k \Tor^1_S(M,k)=\dim_k M/\m_S M -
\dim_k \Tor^1_S(M,k).\]
\end{df}

Let
\[ \dots \ra P_i \ra \dots \ra P_1 \ra P_0 \ra M \ra 0\]
be a (possibly infinite) resolution of $M$ by finite free
$S$-modules. Assume that the image of $P_i$ in $P_{i-1}$ is
contained in $\m_S P_{i-1}$ for each $i\geq 1$. (Such resolutions
always exist and are often
called `minimal'.) Let $r_i$ denote the rank of $P_i$. Tensoring
the resolution over $S$ with $k$ we see that $P_i/\m_S P_i \cong
\Tor^i_S(M,k)$ and hence that $r_i = \dim_k \Tor^i_S(M,k)$.

\begin{df}
  \label{defn:balanced}
We say that $M$ is \emph{balanced} if $d_S(M)\geq 0$.
\end{df}

If $M$ is balanced, then we see that it admits a presentation 
\[ S^d \ra S^d \ra M \ra 0 \]
with $d = \dim_k M/\m_S M$.

\subsection{Patching}
\label{sec:patching}

We recall the abstract  Taylor--Wiles style patching
result from~\cite{CG}.

\begin{prop}
\label{prop:patching}
Suppose that
\begin{enumerate}
\item $R$ is an object of $\CC_\CO$ and $H$ is a finite $R$-module which is also finite over~$\CO$;
\item $q \geq 1$ is an integer, 
                                and for each integer $N\geq 1$,  
$S_N:=\CO[(\Z/p^N\Z)^q]$;
\item  $R_\infty:= \CO[[x_1,\dots,x_{q-1}]]$;
\item for each $N\geq 1$, $\phi_N: R_\infty \onto R$ is a surjection
  in $\CC_\CO$ and $H_N$ is an $R_\infty\otimes_\CO
  S_N$-module
\end{enumerate}
and that for each $N\geq 1$ the following conditions are satisfied
\begin{enumerate}[label=(\alph*)] 
 \item\label{cond-image} the image of $S_N$ in $\End_\CO(H_N)$ is contained in the image
   of $R_\infty$, and moreover, the image of the augmentation ideal of
   $S_N$ in $\End_{\OL}(H_N)$ is contained in the image of $\ker(\phi_N)$;
 \item\label{cond-coninvts} there is an isomorphism $\psi_N: (H_N)_{\Delta_N} \iso H$ of
   $R_\infty$-modules, where $R_\infty$ acts on $H$ via $\phi_N$ and
   $\Delta_N = (\Z/p^N\Z)^q$;
 \item\label{cond-balanced} $H_N$ is finite and balanced over $S_N$ (see Definition \ref{defn:balanced}).
\end{enumerate}
Then $H$ is a free $R$-module.
\end{prop}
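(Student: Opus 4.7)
My plan is a Taylor--Wiles style patching argument: produce a module $H_\infty$ over the completed group algebra $S_\infty := \CO[[y_1,\ldots,y_q]]$ of $\Z_p^q$, with a compatible $R_\infty$-action, whose coinvariants under the augmentation ideal $\mathfrak{a}_\infty \subset S_\infty$ recover $H$; then read off freeness of $H$ over $R$ from a depth-dimension comparison. Conditions (a) and (b) together imply that the canonical map $H_N/\m_{S_N} H_N \to H/\m_R H$ is an isomorphism, so the minimal number of $S_N$-generators of $H_N$ equals the constant $d := \dim_k H/\m_R H$, independently of $N$. The balanced hypothesis (c) then furnishes square presentations
$$ S_N^d \lra S_N^d \lra H_N \lra 0. $$

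For the patching, fix a cofinal sequence of open ideals $\mathfrak{d}_n \subset S_\infty$ such that, for each $n$, $S_\infty/\mathfrak{d}_n$ is a quotient of $S_N$ for all sufficiently large $N$. At fixed $n$, the data consisting of $H_N/\mathfrak{d}_n H_N$ equipped with its $R_\infty/\m_{R_\infty}^n$-action, its $S_\infty/\mathfrak{d}_n$-action, and the reduction of a chosen presentation, ranges over a \emph{finite} set of isomorphism classes. A standard diagonalization then produces a subsequence of $N$'s along which all of this data stabilizes, and taking the inverse limit yields $H_\infty$, finitely generated over $S_\infty$, with a compatible $R_\infty$-action, a square presentation $S_\infty^d \to S_\infty^d \to H_\infty \to 0$, and satisfying $H_\infty/\mathfrak{a}_\infty H_\infty \cong H$.

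The conclusion is now commutative algebra. The square presentation forces $\mathrm{pd}_{S_\infty}(H_\infty) \le 1$, so Auslander--Buchsbaum gives $\depth_{S_\infty}(H_\infty) \ge (q+1)-1 = q$. The limiting form of (a) says that $S_\infty$ acts on $H_\infty$ through the image of $R_\infty$, which has Krull dimension at most $q$, so $\dim_{S_\infty}(H_\infty) \le q$; hence $H_\infty$ is Cohen--Macaulay of dimension $q$ over $S_\infty$. Because $\m_{S_\infty}$ and $\m_{R_\infty}$ have the same image in $\End_\CO(H_\infty)$, the two depths agree, so $\depth_{R_\infty}(H_\infty) = q = \dim R_\infty$; as $R_\infty$ is regular of dimension $q$, another application of Auslander--Buchsbaum gives $\mathrm{pd}_{R_\infty}(H_\infty) = 0$, i.e., $H_\infty$ is free over $R_\infty$. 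Finally, the limiting (a) combined with the fact that $H$ is an $R$-module forces $\mathfrak{a}_\infty H_\infty = (\ker \phi_\infty) H_\infty$, so $H = H_\infty \otimes_{R_\infty} R$ is free over $R$.

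The main bookkeeping hurdle is the patching step, where one must track two ring actions, a module, and a presentation simultaneously, and verify that the quadruples at each finite level really do range over finitely many isomorphism classes. The conceptual heart, however, is the depth-dimension sandwich above: the ``balanced'' hypothesis is precisely what yields $\mathrm{pd}_{S_\infty}(H_\infty) \le 1$, and is thus the minimal replacement for freeness-over-$S_N$ that makes the argument work in the codimension-one setting where the classical Taylor--Wiles input is unavailable.
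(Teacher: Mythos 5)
Your strategy matches the one the paper invokes: Prop.~2.3 of~\cite{CG}, which it cites without reproof, is exactly the patching-plus-commutative-algebra argument you sketch, and your "depth--dimension sandwich" is the right conceptual summary. There is, however, a genuine gap at the pivotal step where you pass from the patched square presentation $S_\infty^d \to S_\infty^d \to H_\infty \to 0$ to $\mathrm{pd}_{S_\infty}(H_\infty)\le 1$. A square presentation does \emph{not} by itself bound projective dimension --- that requires the first map to be \emph{injective}, which is not automatic (at the finite levels $S_N$ is far from regular and the presentation matrix certainly has a kernel). The correct deduction hinges on the dimension observation you state afterwards but never connect to this point: because the $S_\infty$-action factors through the image of $R_\infty$, which is a quotient of Krull dimension $\le q < q+1 = \dim S_\infty$, the module $H_\infty$ is a torsion $S_\infty$-module. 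Therefore the $d\times d$ presentation matrix becomes an isomorphism after inverting all nonzero elements of the domain $S_\infty$, so its determinant is nonzero and the matrix is injective; only then does one obtain a length-one free resolution, and hence $\depth_{S_\infty}(H_\infty)\ge q$ via Auslander--Buchsbaum. As written, you present "$\mathrm{pd}\le 1$" and "$\dim\le q$" as two independent facts, whereas the first is a consequence of the second together with the square presentation.

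Two smaller slips. First, conditions (a) and (b) do not give $H_N/\m_{S_N}H_N \cong H/\m_R H$; they give $H_N/\m_{S_N}H_N \cong H/\varpi H$, since $\m_{S_N}=(\varpi)+\mathfrak{a}_N$ and $(H_N)_{\Delta_N}=H_N/\mathfrak{a}_N H_N\cong H$ by (b). This is still a constant independent of $N$, so it is the correct value of $d$ to use and nothing downstream breaks, but the identification you wrote is incorrect. Second, the images of $\m_{S_\infty}$ and $\m_{R_\infty}$ in $\End_\CO(H_\infty)$ are not equal but merely nested by the limiting form of (a); the equality of depths is better justified by noting that both actions factor through the image $\overline{R}$ of $R_\infty$ in $\End_{S_\infty}(H_\infty)$ --- a Noetherian local ring over which $H_\infty$ is finite --- and that $\depth_A(M)=\depth_B(M)$ whenever $A\to B$ is a local homomorphism and $M$ is a $B$-module finite over $A$, applied once to $S_\infty\to\overline{R}$ and once to $R_\infty\twoheadrightarrow\overline{R}$.
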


\begin{proof} This is Prop.~2.3 of~\cite{CG}.
\end{proof}

\section{Deformations of Galois representations}
\label{section:deformations}

Let
$$\rbar: G_{\Q} \rightarrow \GSp_4(k)$$
be a continuous, odd, absolutely irreducible Galois representation
with similitude character of the form $\nu(\rbar) =
\overline{\eps}^{-(a-1)}$ where $a \geq 2$.
Let us suppose that there exist $\alpha$ and $\beta$ in $k$ such that
$$\rbar | G_p \sim
\left( \begin{matrix} 
\lambda(\alpha)  & 0 & * & * \\ 
0 &  \lambda(\beta) & * & * \\
0 & 0 & \nu(\rbar) \cdot \lambda( \beta^{-1}) & 0 \\
0 & 0 & 0 &  \nu(\rbar)  \cdot \lambda(\alpha^{-1})
\end{matrix} \  \ \right),$$
and moreover $(\alpha^2 - 1)(\beta^2 - 1)(\alpha^2 \beta^2 - 1)(\alpha
- \beta) \ne 0$. 
Let $S(\rbar)$ denote the set of primes of $\Q$ away from $p$ at which $\rbar$ is
ramified. 

The group $\GSp_4$ admits a $11$-dimensional adjoint
representation on its Lie algebra $\gg$.
Let $\ad(\rbar)$ denote the composition of $\rbar$ with this representation.
For~$p > 2$, the representation~$\ad(\rbar)$ admits a decomposition~$\ad(\rbar) = \ad^0(\rbar) \oplus \nu$,
where~$\nu$ is the similitude character of~$\rbar$.

We make the following further assumptions on $\rbar$:

\begin{assumption}[Big Image] \label{assumption:bigimage}
The restriction of $\rbar$ to $G_{\Q(\zeta_p)}$ satisfies the following conditions, cf. ~\S5.7 of~\cite{Pilloni}:
\begin{enumerate}
\item[H1:] The field~$\Q(\ad^0(\rbar))$ does not contain~$\zeta_p$,
\item[H2:] For any~$m$, there exists an element~$\sigma \in G_{\Q(\zeta_{p^m})}$
such that~$\rbar(\sigma)$ has four distinct eigenvalues and such that the action of~$\sigma$
on each irreducible representation of~$\ad^0(\rbar)$  over~$G_{\Q(\zeta_{p^m})}$
contains~$1$ as an eigenvalue.
\item[H3:] Neither the image~$\Gamma$ of~$\ad^0(\rbar)$ nor the image of~$\ad^0(\rbar)(1)$ admits
a quotient of degree~$p$.
\end{enumerate}
\end{assumption}

If this assumption holds, we say that~$\rbar$ has \emph{big image}, although condition~(H1)
depends on more than the group-theoretic image of~$\rbar$ or even~$\rbar|_{G_{\Q(\zeta_p)}}$.

\begin{assumption}[Neatness]  \label{assumption:neatness}
There exists a~$\sigma \in G_{\Q}$ with~$\eps(\sigma)  = q  \not\equiv 1 \mod p$
such that the ratio of any two eigenvalues of~$\rbar(\sigma)$ is not equal to~$q  \mod p$.
\end{assumption}

This condition is  imposed to  avoid  dealing with stacks. If~$p \ge 5$,  any surjective
representation~$\rbar: G_{\Q} \rightarrow \GSp_4(\F_p)$ whose similitude character is a power of~$\epsbar$
will be neat.
By assumption, the image contains an element~$\rbar(\sigma)$ which is scalar with
eigenvalue~$\lambda \ne \pm 1$. If~$q = \eps(\sigma) \equiv 1 \mod p$, then the similitude
character would also equal~$1$. But the similitude character of the scalar matrix~$\lambda$ is~$\lambda^2 \not\equiv
1 \mod p$.

\begin{assumption}[Ramification] \label{assumption:ramification}
If $x \in S(\rbar)$, then $\rbar | G_x$ is one of the following types:
\begin{enumerate} 
\item{\bf U3 \rm}: $\rbar | I_x$ has unipotent image, and $\rbar |
  I_x$ is conjugate to the group generated by $\exp(N_3)$, where 
$$ N_3 = \left( \begin{matrix} 0 & 1  & 0 & 0 \\ 0 & 0 & 1 & 0 \\ 0 & 0 & 0 & -1 \\ 0 & 0 & 0 & 0 \end{matrix} \right).$$
\item{\bf U2 \rm}: $\rbar | I_x$ has unipotent image, and $\rbar |
  I_x$ is conjugate to the group generated by $\exp(N_2)$, where
$$ N_2 = \left( \begin{matrix} 0 & 1  & 0 & 0 \\ 0 & 0 & 0 & 0 \\ 0 & 0 & 0 & -1 \\ 0 & 0 & 0 & 0 \end{matrix} \right).$$
 \item{\bf U1 \rm}: $\rbar | I_x$ has unipotent image, and $\rbar |
   I_x$ is conjugate to the group generated by $\exp(N_1)$, where
 $$ N_1 = \left( \begin{matrix} 0 & 0  & 0 & 0 \\ 0 & 0 & 1 & 0 \\ 0 & 0 & 0 & 0 \\ 0 & 0 & 0 & 0 \end{matrix} \right).$$
\item{\bf P\rm}: $\rbar | G_x$ is a direct sum of characters, and $\rbar | I_x$ has the form
$$\left( \begin{matrix} 1 & 0  & 0 & 0 \\ 0 & 1 & 0 & 0 \\ 0 & 0 & \chi_x & 0 \\ 0 & 0 & 0 & \chi_x \end{matrix} \right)$$
  for some non-trivial character $\chi_x$ of $I_x$. Both the plane of invariants under~$I_x$  and the plane on which~$I_x$ acts by~$\chi_x$
  are isotropic. 
  Moreover $x -1$ is prime to $p$.
\item{\bf H\rm}: $\rbar | I_x$ is absolutely irreducible and $x^4 - 1$ is prime to $p$.
\end{enumerate}
\end{assumption}

\begin{remark} \emph{Since we are assuming that the similitude character of~$\rbar$ is
a    power of the cyclotomic character, it turns out that~$\rbar|I_x$ can never be of type~{\bf P\rm}. 
We expect that our arguments can also be adapted to deal with representations~$\rbar$ with more general (odd) similitude characters,
but we made this assumption to simplify some of the arguments  involving~$q$-expansions (in particular, to avoid
various Nebentypus characters). 
}
\end{remark}

Note that non-trivial unipotent representations are not direct sums,
so a prime $x \in S(\rbar)$ is either of type {\bf U\rm},  {\bf P\rm},
or {\bf H\rm}, but never simultaneously any two of these
types. Moreover, $x$ is of type {\bf U2\rm} or {\bf
  U3\rm} if and only if $\rbar(I_x)$ is generated by an element
$\exp(N)$ where $N$ is nilpotent of rank $2$, or $3$ respectively.

Let $Q$  denote a finite set of primes of $\Q$ disjoint from
$S(\rbar)\cup\{p\}$. We assume that for each $x\in Q$ the following
hold:
\begin{itemize}
\item $x\equiv 1 \mod p$,
\item $\rbar | G_x$ is a direct sum of four pairwise distinct
  characters. Label these characters as
  $\lambda(\alpha_x),\lambda(\beta_x),\lambda(\gamma_x)$, and $\lambda(\delta_x)$
  such that the planes   $\lambda(\alpha_x)\oplus\lambda(\beta_x)$ and
  $\lambda(\gamma_x)\oplus\lambda(\delta_x)$ are isotropic and
  $\alpha_x\delta_x = \beta_x\gamma_x = \nu(\rbar)(\Frob_x)$.
\end{itemize}

(By abuse of notation, we sometimes use $Q$ to denote the product of primes in $Q$.)
For objects $R$ in $\CC_\CO$, a \emph{deformation} of $\rbar$ to
$R$ is a $\ker(\GSp_4(R)\to \GSp_4(k))$-conjugacy class of continuous
lifts $r : G_\Q\to\GSp_4(R)$ of $\rbar$. We will often refer to
the deformation containing a lift $r$ simply by $r$. 

\begin{remark} 
\emph{When deforming Galois representations over~$\Q$,
we could work either with fixed or varying similitude character ---  both give  rise to deformation problems with~$l_0 = 1$. We make
the (somewhat arbitrary) choice to work with deformations with fixed similitude character in this paper, because it is the ``correct'' approach
for general totally real fields --- for  totally real fields other than~$\Q$, the invariant~$l_0$ increases (by~$[F:\Q] - 1$) when deforming
the similitude character.
}
\end{remark}

\begin{df}
  \label{defn:minimal}
We say that
a deformation $r:G_\Q\to \GSp_4(R)$ of $\rbar$ is \emph{minimal
  outside $Q$}
if it satisfies the following properties:
\begin{enumerate}
\item\label{det} The similitude character $\nu(r)$ is equal to
  $\eps^{-(a-1)}$.
    \item\label{outside-NQ} If $x\not\in Q\cup S(\rbar) \cup \{p\}$ is a prime of $\Q$, then $r|G_x$ is unramified.
\item\label{at-special} If $x \in S(\rbar)$ is of type {\bf U1}, {\bf U2\rm
  \ or \bf U3\rm}, then $r|I_x$ has unipotent image and its image is
  topologically generated by an element $\exp(N)$ where $N$ is
  nilpotent of rank $1$, $2$ or $3$ respectively.
\item\label{at-principle} If $x \in S(\rbar)$ is of type {\bf P\rm}, then $r(I_x) \iso \rbar(I_x)$.
\item\label{at-Q} If $x\in Q$, then $r|G_x \cong V_1 \oplus V_2$ where
  each $V_i$ is an isotropic plane in $R^4$ and $V_1$ lifts
  $\lambda(\alpha_x)\oplus \lambda(\beta_x)$ while $V_2$ lifts
  $\lambda(\gamma_x)\oplus \lambda(\delta_x)$. Moreover, $I_x$ acts by 
  scalars (via some character) on~$V_1$ and by scalars via the inverse of
  this character on~$V_2$.
  \item\label{at-p} The representation $r$ has the following shape at $p$: 
  $$r | G_p  \sim \left( \begin{matrix}
  \chi_{\alpha} \psi^{-1} & 0 & * & * \\
  0 & \chi_{\beta}  \psi^{-1} & * & * \\
  0 & 0 & \eps^{-(a-1)} \chi_{\beta}^{-1} \psi & 0 \\
  0 & 0 & 0 & \ \eps^{-(a-1)} \chi_{\alpha}^{-1} \psi 
  \ \end{matrix} \  \ \right),$$
where $\chi_{\alpha}$ and $\chi_{\beta}$ are unramified characters
lifting $\lambda(\alpha)$ and $\lambda(\beta)$ respectively, and~$\psi$ is  an unramified character
which is
trivial modulo the maximal ideal.
\end{enumerate}
If $Q$ is empty, we will refer to such deformations simply as being
\emph{minimal}. 
If $r$ satisfies conditions \eqref{outside-NQ}--\eqref{at-principle}, then we say $r$ is \emph{weakly minimal} outside $Q$.
\end{df}

\begin{remark} \emph{The local condition at $p$ is equivalent to
    asking that $r$ is ordinary (of fixed weight). When~$a = 2$ it is also
equivalent to being finite flat. 
This is because, for unramified characters ${\psi_1}$ and $\psi_2$, the group $\Ext^1({\psi_1},\psi_2)$ in this category is
trivial, and the group
 $\Ext^1(\eps {\psi_1},\psi_2)$ is the same whether it is computed
in the category of finite flat group schemes or as $G_p$ modules, as long as~${\psi_1} {\psi_2}^{-1} \not\equiv 1 \mod p$. The latter
condition follows (for all the relevant extensions) from the assumption $(\alpha \beta - 1)(\alpha^2 - 1)(\beta^2 - 1)(\alpha - \beta) \ne 0$.}
\end{remark}

 The functor that associates to each object $R$ of
$\CC_\CO$ the set of deformations of $\rbar$ to $R$ which are
minimal outside $Q$ is represented by a complete Noetherian local
$\CO$-algebra $R_Q$. This follows from the proof
of Theorem 2.41 of ~\cite{DDT}. If $Q=\emptyset$, we will
sometimes denote $R_Q$ by $R^{\min}$. 

\medskip

Let $H^1_{Q}(\Q,\ad^0(\rbar))$
denote the Selmer group defined as the kernel of the map
\[ H^1(\Q,\ad^0(\rbar)) \lra \bigoplus_{x} H^1(\Q_x,\ad^0(\rbar))/L_{Q,x}\]
where $x$ runs over all primes of $\Q$ and:
\begin{itemize}
\item  If $x \not \in Q \cup p$, then
  $L_{Q,x} = H^1(G_x/I_x,(\ad^0(\rbar))^{I_x})$.
  \item If $x \in Q$, then $H^1(G_x,\ad^0(\rbar))$ is isomorphic to the
    subspace of 
\[  H^1(G_x,\ad \  \lambda(\alpha_x))
\oplus H^1(G_x,\ad \lambda(\beta_x)) \oplus H^1(G_x,\ad \ 
\lambda(\beta_x)^{-1}) \oplus H^1(G_x,\ad \  \lambda(\alpha_x)^{-1})  \]
consisting of elements $(c_1,c_2,d_2,d_1)$ with $c_1+d_1 =
c_2+d_2$. (Note that each summand is a copy of $\Hom_{\cts}(G_x,k)$.) 
We let $L_{Q,x}$ denote the subspace corresponding to elements
$(c_1,c_2,d_2,d_1)$ with $c_1-c_2$ and $d_1 - d_2$  and~$c_1+d_1$ (equivalently, $c_2 + d_2$) unramified. 
\item If $x = p$, then we define $L_{Q,p}=L_p$ as follows:
let $\uu\subset \bb^0$ be the subspace of matrices whose
non-zero entries appear in the upper right $2\times 2$ block. We
define $L'_p = \ker (H^1(G_p,\bb^0) \to H^1(I_p,\bb^0/\uu))$ and
$L_p=L_{Q,p}=\im(L'_p \to H^1(G_p,\gg^0))$.
\end{itemize}
Let $H^1_{Q}(\Q,\ad^0(\rbar(1)))$ denote the corresponding
dual Selmer group.

\begin{lemma}
  \label{lem:FL}
We have $\dim_k L_p - \dim_k H^0(G_p,\ad^0(\rbar)) = 3$.
\end{lemma}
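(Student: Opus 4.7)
The plan is to compute both $\dim L_p$ and $h^0 := \dim H^0(G_p, \ad^0\rbar) = h^0(\gg^0)$ via the cohomology of the $G_p$-stable filtration $\uu \subset \bb^0 \subset \gg^0$ (preserved by $\ad\rbar$ because $\rbar|_{G_p}$ is valued in the Borel), and to observe that although each depends on the extension class of $\rbar|_{G_p}$, a potentially delicate dependence cancels in the difference.

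The first step is to identify the $G_p$-module structure of the three graded pieces. Direct computation with the conjugation action shows that $\uu$ is isomorphic to $\nu^{-1}\otimes\Sym^2(V)$, where $V$ is the two-dimensional unramified representation on which Frobenius acts by $\mathrm{diag}(\alpha,\beta)$; in particular $\uu$ is semisimple, with the three characters $\epsbar^{a-1}\lambda(\alpha^2)$, $\epsbar^{a-1}\lambda(\alpha\beta)$, $\epsbar^{a-1}\lambda(\beta^2)$, all of which are ramified (since $a\geq 2$). The quotient $\bb^0/\uu$ is acted on through the diagonal Levi image, hence is unramified and decomposes as $k^2 \oplus \lambda(\alpha/\beta)$, with the $k^2$ coming from the Cartan. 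A computation on lower-triangular representatives shows that the root space $E_{-\alpha_1}$ is a $G_p$-stable subspace of $\gg^0/\bb^0$ isomorphic to $\lambda(\beta/\alpha)$, with quotient the Levi representation on the opposite Siegel radical, semisimple with characters $\nu\lambda(\alpha^{-2})$, $\nu\lambda((\alpha\beta)^{-1})$, $\nu\lambda(\beta^{-2})$.

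Using the non-vanishing of $(\alpha^2-1)(\beta^2-1)(\alpha^2\beta^2-1)(\alpha-\beta)$ together with the ramification of $\nu|_{I_p}=\epsbar^{-(a-1)}$, I read off $h^0(\uu)=0$, $h^0(\bb^0/\uu)=2$, and $h^0(\gg^0/\bb^0)=0$. Local Tate duality gives $h^2(\uu)=h^0(\uu^\ast(1))=0$, since none of $\epsbar^{2-a}\lambda(\alpha^{-2})$, $\epsbar^{2-a}\lambda((\alpha\beta)^{-1})$, $\epsbar^{2-a}\lambda(\beta^{-2})$ is trivial (the hypotheses on $\alpha,\beta$ handle the edge case when $\epsbar^{2-a}|_{I_p}$ happens to be trivial). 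The local Euler characteristic formula then yields $h^1(\uu)=3$. The vanishing $h^0(\gg^0/\bb^0)=0$ is crucial: the long exact sequence attached to $0\to\bb^0\to\gg^0\to\gg^0/\bb^0\to 0$ forces $h^0(\gg^0)=h^0(\bb^0)$ and makes $H^1(\bb^0)\to H^1(\gg^0)$ injective, so that $L_p$ is naturally identified with $L_p'$.

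To conclude, I would compare $\dim L_p'$ and $h^0(\bb^0)$ via the long exact sequence of $0\to\uu\to\bb^0\to\bb^0/\uu\to 0$. Writing $\delta\colon H^0(\bb^0/\uu)\to H^1(\uu)$ for the connecting map, this sequence gives $h^0(\bb^0)=2-\dim\mathrm{image}(\delta)$. Since $\bb^0/\uu$ is unramified, $\ker(H^1(G_p,\bb^0/\uu)\to H^1(I_p,\bb^0/\uu))=H^1(G_p/I_p,\bb^0/\uu)$ has dimension equal to the dimension of the Frobenius $1$-eigenspace, namely $2$; combined with the surjectivity of $H^1(\bb^0)\to H^1(\bb^0/\uu)$ (consequence of $h^2(\uu)=0$), this yields $\dim L_p' = (h^1(\uu)-\dim\mathrm{image}(\delta))+2 = 5-\dim\mathrm{image}(\delta)$. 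The two instances of $\dim\mathrm{image}(\delta)$ cancel, giving $\dim L_p - h^0 = 3$. The main subtlety --- the step I expect to require the most care --- is that both $h^0$ and $\dim L_p$ individually depend on the extension class of $\rbar|_{G_p}$ through $\dim\mathrm{image}(\delta)$ (which measures the extent to which Cartan elements fail to be $G_p$-invariant when the $\ast$'s in the upper-right block are nontrivial), while their difference is universal; one must track both sides simultaneously to witness the cancellation.
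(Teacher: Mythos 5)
Your proposal is correct and follows essentially the same route as the paper: you use the long exact sequence of $0\to\uu\to\bb^0\to\bb^0/\uu\to 0$, compute $h^0(\uu)=0$, $h^0(\bb^0/\uu)=2$, $h^1(\uu)=3$, $h^2(\uu)=0$, and use $h^0(\gg^0/\bb^0)=0$ to pass from $L_p'$ to $L_p$ and from $h^0(\bb^0)$ to $h^0(\gg^0)$. The only cosmetic difference is that you name the connecting map $\delta$ and track $\dim\operatorname{im}(\delta)$ explicitly before it cancels, whereas the paper cancels $h^0(\bb^0)$ directly via the Euler-characteristic identity for the short exact sequence; these are the same computation.
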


\begin{proof}
The subspace $L'_p\subset H^1(G_p,\bb^0)$ is precisely set of elements mapping
to the subspace $H^1(G_p/I_p,(\bb^0/\uu)^{I_p})\subset
H^1(G_p,\bb^0/\uu)$. We have $\bb^0/\uu \cong  1\oplus 1\oplus
  \lambda(\beta)\lambda(\alpha)^{-1}$ as a $k[G_p]$-module and hence
  $H^1(G_p/I_p,(\bb^0/\uu)^{I_p})$ is 2-dimensional since $\alpha\neq
  \beta$. The condition $(\alpha^2 - 1)(\beta^2 - 1)(\alpha^2 \beta^2 - 1)(\alpha -
\beta) \ne 0$ implies that $h^2(G_p,\uu)=0$ and hence $H^1(G_p,\bb^0)
\onto H^1(G_p,\bb^0/\uu)$.
It follows that
  $\dim_k L'_p = 2 + h^1(G_p,\bb^0) - h^1(G_p,\bb^0/\uu)$.
  Thus
\[ \dim_k L_p'- h^0(G_p,\bb^0) = 2 + h^1(G_p,\uu) -
h^0(G_p,\bb^0/\uu) - h^0(G_p,\uu).\]
We have $h^0(G_p,\uu)=0$ and $h^0(G_p,\bb^0/\uu)=2$. The Euler characteristic
formula implies that $h^1(G_p,\uu)=3$.  Thus
\[ \dim_k L_p'- h^0(G_p,\bb^0) = 2 + 3 -
2- 0 = 3.\]
Finally, the condition on $\alpha$ and $\beta$ 
implies that $h^0(G_p,\gg^0/\bb^0)=0$. It follows that
$h^0(G_p,\bb^0)=h^0(G_p,\gg^0)$ and $L'_p \iso L_p$. This concludes
the proof.
\end{proof}

\begin{prop}
\label{prop:tangent-space-w1}
 The reduced tangent space $\Hom(R_Q/\m_\CO,k[\epsilon]/\epsilon^2)$ of $R_{Q}$ has
  dimension  
$$\dim_k H^1_{Q}(\Q,\ad^0(\rbar(1))) - 1 + \# Q. $$
\end{prop}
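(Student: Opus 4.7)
The plan is to identify the reduced tangent space $\Hom(R_Q/\m_\CO, k[\eps]/\eps^2)$ with the Selmer group $H^1_Q(\Q,\ad^0(\rbar))$ defined in the paragraph just before the proposition, and then to compute its dimension via the Greenberg--Wiles formula.  For the identification I would write a deformation of $\rbar$ to $k[\eps]/\eps^2$ in the form $r = (1+\eps\phi)\rbar$ with $\phi\colon G_\Q \to \gg(k)$: the cocycle identity for $\phi$ is equivalent to $r$ being a homomorphism, conjugation by $1+\eps X$ alters $\phi$ by a coboundary, and the requirement $\nu(r)=\eps^{-(a-1)}$ confines $\phi$ to $\ad^0(\rbar)$.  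Unpacking each clause of Definition~\ref{defn:minimal} at a place $v$ (unramified outside $N$, ordinary of the prescribed shape at $p$, the given inertial type at $v\in S(\rbar)$, and the block--diagonal scalar-on-inertia shape at Taylor--Wiles primes) shows that it cuts out exactly the subspace $L_{Q,v}\subset H^1(G_v,\ad^0(\rbar))$ in the definition of the Selmer group.

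I would then invoke the Greenberg--Wiles formula
\[
\dim H^1_Q(\Q,\ad^0(\rbar)) - \dim H^1_Q(\Q,\ad^0(\rbar(1))) = h^0(\Q,\ad^0(\rbar)) - h^0(\Q,\ad^0(\rbar(1))) + \sum_v \delta_v,
\]
where $\delta_v := \dim L_{Q,v} - h^0(\Q_v,\ad^0(\rbar))$ and $v$ runs over all places of $\Q$.  Both global $H^0$ terms vanish: $\ad^0(\rbar)^{G_\Q}=0$ by absolute irreducibility of $\rbar$, and $\ad^0(\rbar(1))^{G_\Q}=0$ because a nonzero $G_\Q$-equivariant embedding of $\eps^{-1}$ into $\ad^0(\rbar)$ would, on restriction to $G_{\Q(\ad^0(\rbar))}$, force $\zeta_p\in\Q(\ad^0(\rbar))$, contradicting hypothesis (H1) of Assumption~\ref{assumption:bigimage}.

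The remaining task is the computation of the local terms $\delta_v$.  For $v\notin Q\cup S(\rbar)\cup\{p,\infty\}$ a standard inflation--restriction argument gives $\delta_v = 0$.  At $v=p$, the preceding Lemma~\ref{lem:FL} delivers $\delta_p = 3$.  At $v=\infty$, since $p$ is odd the group $H^1(\R,\ad^0(\rbar))$ vanishes and hence $L_\infty = 0$; the oddness of $\rbar$ makes $\rbar(c)$ $\GSp_4$-conjugate to $\diag(1,1,-1,-1)$, whose centralizer in $\mathfrak{sp}_4$ has dimension $4$, so $\delta_\infty = -4$.  At each $v\in S(\rbar)$ a case-by-case analysis of the types \textbf{U1}, \textbf{U2}, \textbf{U3} and \textbf{H} (type \textbf{P} being excluded by the cyclotomic similitude hypothesis, as remarked after Assumption~\ref{assumption:ramification}) yields $\delta_v = 0$, using the rigidity of the corresponding nilpotent orbit for the unipotent types and the hypothesis $x^4\not\equiv 1\pmod p$ for type \textbf{H}.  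Finally, at each Taylor--Wiles prime $x\in Q$, the hypotheses $x\equiv 1\pmod p$ together with four pairwise distinct characters in $\rbar|_{G_x}$ give $h^0(G_x,\ad^0(\rbar)) = 2$ (only the Cartan contributes), while a direct linear-algebra computation with the explicit description of $L_{Q,x}$ --- decomposing each copy of $\Hom_{\cts}(G_x,k)$ into its unramified and tame lines and counting the three ``unramified'' constraints that cut out $L_{Q,x}$ --- gives $\dim L_{Q,x} = 3$, and hence $\delta_x = 1$.  Putting everything together yields $\sum_v\delta_v = -4 + 3 + 0 + \#Q = \#Q - 1$, which combined with the vanishing of the global $H^0$'s gives the formula in the statement.

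The most labor-intensive step I anticipate is the case-by-case verification of $\delta_v = 0$ at the primes in $S(\rbar)$; the key arithmetic input at $v = p$ is already packaged as Lemma~\ref{lem:FL}, while the archimedean and Taylor--Wiles contributions are essentially forced by the shape of the hypotheses.
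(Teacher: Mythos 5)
Your proposal is correct and follows essentially the same route as the paper: identify the reduced tangent space with the Selmer group $H^1_Q(\Q,\ad^0(\rbar))$, apply the Greenberg--Wiles formula, kill the two global $H^0$ terms, and tally the local contributions $\delta_p = 3$ (from Lemma~\ref{lem:FL}), $\delta_\infty = -4$, $\delta_x = 0$ away from $Q\cup\{p\}$, and $\delta_x=1$ for $x\in Q$. The only cosmetic differences are that the paper dispatches $H^0(\Q,\ad^0(\rbar(1)))=0$ via $\rbar\not\cong\rbar\otimes\eps$ rather than via hypothesis (H1), cites~\cite[Prop.\ 10.4.1]{TG} for $\delta_x=1$ at Taylor--Wiles primes where you carry out the linear algebra directly, and treats the primes in $S(\rbar)$ uniformly (the definition of $L_{Q,x}$ for $x\notin Q\cup\{p\}$ already makes $\delta_x=0$ automatic), so the case-by-case verification at types {\bf U1}--{\bf H} you flag as labor-intensive is, in the paper's framing, absorbed into the implicit claim that the minimal local conditions have tangent space of the same dimension as the unramified classes.
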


\begin{proof} The argument is very similar to that of
Corollary~2.43 of~\cite{DDT}. The reduced tangent space has dimension
$\dim_k H^1_Q(\Q,\ad^0(\rbar))$. By Theorem 2.18 of
\emph{loc.\ cit.}\ this is equal to 
\begin{align*}  \dim_k H^1_Q(\Q,\ad^0(\rbar(1))) + \dim_k
 & H^0(\Q,\ad^0(\rbar))-\dim_k H^0(\Q,\ad^0(\rbar(1))) \\ 
+ \sum_{x} (\dim_k L_{Q,x}-\dim_k &
H^0(\Q_x,\ad^0(\rbar))) -\dim_k H^0(G_\infty,\ad^0(\rbar)),
\end{align*}
where $x$ runs over all finite places of $\Q$. 
The second term is equal to~$0$ and the third term
vanishes (by the absolute irreducibility of $\rbar$ and the fact that
$\rbar\not \cong \rbar\otimes \epsilon$). Now, we have:
\begin{itemize}
\item $\dim_k L_{Q,x} - \dim_k 
H^0(\Q_x,\ad^0(\rbar))=0$ for $x\not \in Q\cup \{ p\}$;
\item $\dim_k L_{Q,x} - \dim_k 
H^0(\Q_x,\ad^0(\rbar))= 3$ for $x=p$; 
\item $\dim_k L_{Q,x} - \dim_k 
H^0(\Q_x,\ad^0(\rbar))= 1$ for $x\in Q$ (by \cite[Prop.\ 10.4.1]{TG}); and
\item $\dim_k H^0(G_\infty,\ad^0(\rbar))=4$.
\end{itemize}
This concludes the proof.
\end{proof}

The next result (on the existence of Taylor-Wiles primes) follows from
the previous proposition and the proof of \cite[Prop.\ 5.6]{Pilloni}.

\begin{prop} 
\label{prop:tw-primes-w1}
Let $q =\dim_k H^1_{\emptyset}(\Q,\ad^0(\rbar(1)))$ and recall that we are supposing $\rbar$ satisfies
Assumption \ref{assumption:bigimage}.
Then $q\geq 1$ and for any integer $N\geq 1$ we can find a set $Q_N$
  of primes of $\Q$ such that 
\begin{enumerate}
\item $\# Q_N =q$.
\item $x \equiv 1 \mod p^N$ for each $x\in Q_N$.
\item For each $x\in Q_N$, $\rbar$ is unramified at $x$ and
  $\rbar(\Frob_x)$ has four pairwise distinct eigenvalues.
\item $H^1_{Q_N}(\Q,\ad(\rbar(1)))=(0)$.
\end{enumerate}
 In particular, the reduced tangent space of $R_{Q_N}$ has dimension
  $q-1$ and $R_{Q_N}$ is a quotient of a power series
  ring over $\CO$ in $q-1$ variables.
\end{prop}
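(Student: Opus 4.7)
The plan is to run the standard Taylor--Wiles argument, adapted to this $\GSp_4$ setting, where the big image Assumption~\ref{assumption:bigimage} plays the role of the hypothesis needed to annihilate the dual Selmer group by adjoining auxiliary primes.

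First I would dispose of the claim $q \ge 1$. Apply Proposition~\ref{prop:tangent-space-w1} with $Q=\emptyset$: the reduced tangent space of $R_\emptyset$ has dimension $q-1$. Since a tangent space has non-negative dimension, $q \ge 1$.

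Next, I would construct $Q_N$ by induction on $\dim_k H^1_{Q}(\Q, \ad^0(\rbar(1)))$, starting from $Q=\emptyset$. The inductive step is: given a non-zero class $\phi \in H^1_Q(\Q, \ad^0(\rbar(1)))$, produce a prime $x \notin Q \cup S(\rbar) \cup \{p\}$ with $x \equiv 1 \pmod{p^N}$, with $\rbar(\Frob_x)$ having four pairwise distinct eigenvalues, and such that $\phi|_{G_x} \notin L_{Q,x}^{\perp}$ (the annihilator of the Selmer condition under local duality). Adjoining $x$ to $Q$ strictly decreases $\dim_k H^1_Q(\Q, \ad^0(\rbar(1)))$ by one. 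Iterating $q$ times yields a set $Q_N$ of exactly $q$ primes with $H^1_{Q_N}(\Q, \ad^0(\rbar(1))) = 0$.

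The heart of the argument is the production of the prime $x$ via Chebotarev, and this is where Assumption~\ref{assumption:bigimage} is used. Let $K = \Q(\ad^0(\rbar), \zeta_{p^N})$. Hypothesis (H1) ensures that $\zeta_p \notin \Q(\ad^0(\rbar))$, which combined with (H3) makes the inflation--restriction sequence identify $H^1(G_\Q, \ad^0(\rbar(1)))$ with $\Hom_{G_\Q}(G_K^{\mathrm{ab}}, \ad^0(\rbar(1)))^{\Gal(K/\Q)}$ on the nose, so a non-zero $\phi$ gives a non-zero $G_\Q$-equivariant homomorphism $\psi : G_K^{\mathrm{ab}} \to \ad^0(\rbar(1))$. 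Hypothesis (H2), applied at level $p^N$, furnishes $\sigma_0 \in G_{\Q(\zeta_{p^N})}$ such that $\rbar(\sigma_0)$ has four distinct eigenvalues and $\sigma_0$ acts with eigenvalue $1$ on each irreducible $G_{\Q(\zeta_{p^N})}$-constituent of $\ad^0(\rbar)$. Since the image of $\psi$ is a non-zero $G_\Q$-stable subspace of $\ad^0(\rbar(1))$, it contains an irreducible summand on which $\sigma_0$ acts via $\epsilon(\sigma_0)\cdot 1 = \epsilon(\sigma_0)$; one then chooses $\tau \in G_K$ so that $\psi(\tau)$ lies in this summand and is non-zero in the $(\sigma_0=1)$-eigenspace, and sets $\sigma = \tau\sigma_0$. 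By Chebotarev we can pick a prime $x$ whose Frobenius class is conjugate to $\sigma$; by construction $x \equiv 1 \pmod{p^N}$, $\rbar(\Frob_x)$ has four distinct eigenvalues, and $\phi(\Frob_x)$ is non-zero in the unramified quotient of $H^1(G_x, \ad^0(\rbar(1)))$, which (after verifying that this quotient pairs non-trivially with the tangent direction in $H^1(G_x, \ad^0(\rbar))/L_{Q,x}$ complementary to $L_{Q,x}$) means $\phi$ is non-trivial modulo the dual Selmer condition at $x$.

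Finally, with $H^1_{Q_N}(\Q, \ad^0(\rbar(1))) = 0$ and $\#Q_N = q$, Proposition~\ref{prop:tangent-space-w1} gives reduced tangent space dimension $0 - 1 + q = q - 1$, so $R_{Q_N}$ is a quotient of a power series ring over $\CO$ in $q-1$ variables. The main obstacle is ensuring simultaneously that the chosen element $\sigma$ has distinct $\rbar$-eigenvalues, lies above $\zeta_{p^N}$-fixing Frobenius, and pairs non-trivially with $\phi$; this is exactly what hypotheses (H1)--(H3) are tailored to guarantee, so once the adjoint decomposition is identified the argument proceeds as in~\cite[Prop.\ 5.6]{Pilloni}.
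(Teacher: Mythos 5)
Your proof is correct and takes essentially the same route that the paper defers to: the paper proves this statement only by citing Proposition~\ref{prop:tangent-space-w1} together with~\cite[Prop.~5.6]{Pilloni}, and your Chebotarev construction of Taylor--Wiles primes (with $q\geq 1$ read off from the non-negativity of the reduced tangent space dimension $q-1$) is exactly the content of that citation, adapted verbatim to the $\GSp_4$ setting. The only quibble is that you gloss over the standard verification that (H1)/(H3) give the vanishing of $H^1(\Gal(K/\Q),\ad^0(\rbar)(1))$ needed for injectivity of restriction, and that the chosen unramified class at $x$ indeed pairs non-trivially against $L_{Q,x}$, but both steps are routine and are precisely what the cited Pilloni proposition supplies.
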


\begin{example}[Examples of representations with big image] \label{inducedexample}
Suppose that~$p \ge 5$.
\begin{enumerate}
\item Let $K/\Q$ be an imaginary quadratic field not contained in~$\Q(\zeta_p)$.
Let
$$\rhobar: G_{K} \rightarrow \GL_2(\F_p)$$
is a representation with  determinant~$\eps^{1-k}$ for some integer~$k$ such that the images of~$\rhobar$
and~$\rhobar^{c}$ for any complex conjugation~$c \in \Gal(\Qbar/\Q)$ both contain~$\SL_2(\F_p)$
 are have totally disjoint fixed fields over~$K(\zeta_p)$.
 Then the representation
$$\rbar = \Ind^{\Q}_{K} \rhobar$$
preserves a symplectic form and has big image.
\item Suppose the image of~$\rbar$ is~$\GSp_4(\F_p)$. Then~$\rbar$ has big image.
\end{enumerate}
\end{example}

\begin{proof} 
The second claim follows immediately for~$p \ge 5$ by~\cite{Pilloni}, Prop~5.8.
For the first claim, it is an easy
consequence of the fact that~$\SL_2(\F_p)$ is perfect for~$p \ge 5$ that~$H3$ holds,
and similarly, assuming that~$K \not\subset \Q(\zeta_p)$, that~$H1$ holds.
Hence it suffices to find an element in the image with distinct eigenvalues
and with~$1$ as an eigenvalue for every irreducible constituent of~$\ad^0(\rbar)$.
We first compute the representation~$\ad^0(\rbar)$.
Note that the dual of~$\rhobar$ and~$\rhobar^c$ can be identified with~$\rhobar \times \eps^{k-1}$
and~$\rhobar^c \otimes \eps^{k-1}$ respectively.
Over~$K$, we have an identification
$$\ad^0(\rbar) |_{G_K} = (\rhobar \otimes \rhobar^c) \otimes \eps^{k-1} \oplus \ad^0(\rhobar) \oplus \ad^0(\rhobar^c),$$
and over~$\Q$, we have an identification
$$\ad^0(\rbar) = \mathrm{As}(\rhobar) \otimes \eps^{k-1} \oplus \Ind^{\Q}_{K} \ad^0(\rhobar),$$
where~$\mathrm{As}$ is the Asai representation.
Over~$\Q(\zeta_{p^m})$ for any~$m$, the character~$\eps^{k-1}$ is trivial, and hence
 the image of~$\rbar |_{G_{\Q(\zeta_{p^m})}}$ under our assumptions
is the group~$\SL_2(\F_p)^2 \rtimes \Z/2\Z$.  Since~$1$ and~$-1$ are always
eigenvalues of any element acting on~$\Ind^{\Q}_{K} \ad^0(\rhobar)$, 
it suffices to find an element~$\sigma \in \SL_2(\F_p)^2 \rtimes \Z/2\Z$ which has distinct
eigenvalues under~$\rbar$ and has an eigenvalue~$1$ in~$\mathrm{As}(\rhobar)$.
To be more precise, since we haven't been careful about distinguishing the Asai
representation from its quadratic twist, we shall find an element with eigenvalues both~$1$ and~$-1$.
One can explicitly realize the Asai representation as follows.
Let~$V$  be the  standard representation of~$\SL_2(\F_p)$ over~$\F_p$, and let~$V \otimes V$ be the representation of the exterior product~$\SL_2(\F_p) \times \SL_2(\F_p)$.
The element~$(g,h)$ acts on~$v \otimes w$ via~$(g,h)(v \otimes w) = (gv \otimes hw)$.
The Asai representation is determined uniquely by the action of
a fixed lift of complex conjugation~$c \in \Gal(\Qbar/\Q)$, which acts on~$V \otimes V$
by the formula~$c(v \otimes w) = w \otimes v$.

Consider the elements~$g,h  \in \SL_2(\F_p)$
such that, with respect to some chosen basis~$V = \{u,v\}$,
$$g = \left( \begin{matrix} x & 0 \\ 0 & x^{-1} \end{matrix} \right), \qquad
h = \left( \begin{matrix} y & 0 \\ 0 & y^{-1} \end{matrix} \right).$$
Then~$c \cdot (g,h)$ acts on~$\rbar$ via the matrix
$$\left( \begin{matrix} 0 & 0 & 1 & 0 \\ 0 & 0 &  0 & 1 \\ 1 & 0 & 0 & 0 \\ 0 & 1 & 0 & 0 \end{matrix} \right)\left( \begin{matrix} x & 0 & 0 & 0 \\ 0 & x^{-1} &  0 & 0 \\ 0 & 0 & y  & 0 \\ 0 & 0 & 0 & y^{-1} \end{matrix} \right)
$$
with eigenvalues~$\pm (xy)^{1/2}$ and~$\pm (xy)^{-1/2}$. On the other hand, the action of this element via the Asai representation (and basis~$u \otimes u$, $v \otimes v$, $u \otimes v$, $v \otimes u$) is
$$\left( \begin{matrix} 1 & 0 & 0 & 0 \\ 0 & 1 &  0 & 0 \\ 0 & 0 & 0 & 1 \\ 0 & 0 & 1 & 0 \end{matrix} \right)\left( \begin{matrix} xy & 0 & 0 & 0 \\ 0 & (xy)^{-1} &  0 & 0 \\ 0 & 0 & (x/y)  & 0 \\ 0 & 0 & 0 & (x/y)^{-1} \end{matrix} \right)
$$
with eigenvalues~$xy$, $(xy)^{-1}$, and~$\pm 1$. The four eigenvalues are distinct
as long as~$\pm (xy)^{1/2} \ne \pm (xy)^{-1/2}$, or equivalently if~$(xy)^2 \ne 1$.
One can now choose~$x = 2$ and~$y = 1$ in~$\F^{\times}_p$.
\end{proof}

\begin{remark} \emph{Suppose that~$K$ is an imaginary quadratic field,
and suppose that $E/K$ is an elliptic curve which neither has~CM nor
is isogenous (over~$\Kbar$) to its Galois conjugate~$E^{c}/K$.
We claim that  Example~\ref{inducedexample} applies to the 
mod~$p$ representations~$\rhobar: G_K \rightarrow \GL_2(\F_p)$
associated to the dual of~$E[p]$ for sufficiently large~$p$. The representations~$\rbar$
in this case are the duals of the representations~$A[p]$ associated to the abelian 
surface~$A = \Res^{\Q}_{K}(E)$. 
By~\cite{MR0387283}, the Galois representations~$\rhobar_p, \rhobar^c_p: G_K \rightarrow \GL_2(\F_p)$
associated to the duals of~$E[p]$ and~$E^c[p]$
 have images~$\GL_2(\F_p)$ and determinants~$\eps^{1-2}$
for all sufficiently large~$p \ge 5$. Let~$F/K$ and~$F^c$ denote the corresponding
extensions, so~$\Gal(F/K)$ and~$\Gal(F^c/K)$ are both isomorphic to~$\GL_2(\F_p)$,
and~$\Gal(F/K(\zeta_p))$ and~$\Gal(F^c/K(\zeta_p))$ 
are both isomorphic to~$\SL_2(\F_p)$.
By the simplicity of~$\PSL_2(\F_p)$ for~$p \ge 5$, the only non-trivial quotients of~$\SL_2(\F_p)$
are~$\PSL_2(\F_p)$ and~$\SL_2(\F_p)$. This implies that if~$H:= F \cap F^c \supseteq K(\zeta_p)$
is strictly larger than~$K(\zeta_p)$, then 
 then either~$\Gal(H/K) = \GL_2(\F_p)$, or~$\Gal(H/K) = \GL_2(\F_p)/\pm I$.
 In either case, the projective representations associated to~$\rhobar_p$ and~$\rhobar^c_p$
 both factor through~$\Gal(H/K)$.
 Since all automorphisms of~$\PGL_2(\F_p)$ are inner, 
 this implies that projective representations of~$\rhobar_p$ and~$\rhobar^c_p$ are isomorphic, and hence
$\rhobar_p \simeq \rhobar^c_p \otimes \chi_p$ for some character~$\chi_p$ which (by
comparing determinants) is at most quadratic. 
Assume~$p$ is sufficiently large  so that~$E$
has good reduction at all primes above~$p$ and moreover that~$p$ is unramified in~$K$.
Then~$\rhobar_p$ and~$\rhobar^c_p$ are both finite flat at~$v|p$, which forces~$\chi_p$
to be unramified at all primes above~$p$. But this implies that~$\chi_p$
is unramified outside primes dividing the conductor~$N$ and~$N^c$ of~$E$ and~$E^{c}$
respectively.
There are only finitely many such quadratic characters by class field theory. 
Hence, if there are infinitely primes~$p$
for which the assumptions of~Example~\ref{inducedexample} do not occur,
then there exists a \emph{fixed} character~$\chi$ with~$\chi^2 = 1$ and isomorphisms
$\rhobar_p \simeq \rhobar^c_p \otimes \chi$ for infinitely many~$p$.
Such an isomorphism  (for a single~$p$) implies that~$a_{v} = \chi(v) a_{v^c} \mod p$ for all
pairs of conjugate primes~$v$ and~$v^c$ of good reduction for~$E$,
and hence, given infinitely many such~$p$, one deduces the equality~$a_v = \chi(v) a_{v^c}$.
If~$L/K$ 
is the (at most) quadratic extension in which~$\chi$ splits, this implies
(by Cebotarev) that the Tate modules (for any fixed prime) of~$E$ and~$E^c$ are isomorphic,
and hence
(by  Faltings~\cite{MR718935}) that~$E$ and~$E^c$ are isogenous over~$L$.
}
\end{remark}

\section{Siegel threefolds}
\label{section:SMF}

\subsection{Level Structure}
\label{sec:level-str}
Recall that there are two conjugacy classes of maximal parabolic
subgroups of $\GSp(4)$ represented by the \emph{Siegel parabolic} $P$ which is
block upper triangular with
Levi
$$M = M_P := \left\{ \left( \begin{matrix} A &  \\   & \lambda B\end{matrix} \right):
\lambda \in \GL_1, A \in \GL_2, B =  S {}^{t}A^{-1} S, S = \left( \begin{matrix}  & 1 \\ 1 & \end{matrix} \right) \right\},$$
and the \emph{Klingen parabolic} $\Pi$ which is block upper triangular with Levi
$$M_{\Pi}:= \left\{ \left( \begin{matrix} \lambda & & \\ & A & \\ & & \lambda^{-1} \det(A) \end{matrix} \right):
\lambda \in \GL_1, A \in \GL_2 \right\}.$$
These both contain the Borel subgroup~$B$.
For each prime $x$, these give rise to parahoric subgroups $P(x)$, $\Pi(x)$, and~$I(x)$ of $\GSp_4(\Z_x)$,
namely, the inverse image of the corresponding parabolic subgroups over $\F_x$.
(The group~$I(x)$ is called the Iwahori subgroup.)
The Klingen parahoric subgroup contains a normal subgroup $\Pi(x)^{+}$ with
$\Pi(x)/\Pi(x)^{+} \simeq (\Z/x\Z)^{\times}$ (via projection onto
$\lambda \bmod x$).  
For each prime~$x$, we also have the~\emph{Paramodular group}~$K(x)$, which is the stabilizer in~$\GSp_4(\Q_x)$
of~$\Z_x \oplus \Z_x \oplus \Z_x \oplus x \Z_x$,  and is the intersection
$$\left( \begin{matrix} * & * & * & */x \\
* & * & * & */x \\
* & * & * & */x \\
x* & x* & x* & x \end{matrix} \right) \cap \GSp_4(\Q_x)$$
for values~$* \in \Z_x$.

\subsection{Cohomology of Siegel \texorpdfstring{$3$}{3}-folds}
\label{sec:cohom}
Let $S$ and $Q$ be finite sets of primes of $\Q$ which are disjoint
from each other and do not contain $p$. By a slight abuse of notation,
we will sometimes denote the product of the primes in $Q$ by the same
symbol $Q$.
 For each $x \in S$, let $K_x
\subset \GSp_4(\Z_x)$ equal one of $S(x)$, $\Pi(x)$, $K(x)$, $\Pi(x)^+$,
$I(x)$ or the full congruence subgroup of level $x$. For $x \not \in
S$, we let $K_x = \GSp_4(\Z_x)$ and we define $K :=\prod_x K_x \subset
\GSp_4(\A^\infty)$. For $x\in Q$, we let $K_{x,0}=\Pi(x)$ and
$K_{x,1}= \Pi^+(x)$. Let $K_i(Q)=\prod_{x\not\in Q}K_x \times\prod_{x\in
  Q}K_{x,i}$ for $i=0,1$.

We assume that the subgroup $K$ is neat. (This will be the case if $S$
contains a prime $x \geq 3$ where $K_x$ is the full congruence
subgroup of level $x$.) We let $Y_K\to \Spec(\CO)$ (resp.\ $Y_{K_i(Q)}\to\Spec(\CO)$) denote
the Siegel moduli space of level $K$ (resp.\ $K_i(Q)$). This scheme
classifies principally polarized abelian varieties together with a
$K$-level structure
(resp.\ $K_i(Q)$-level structure). (See \cite[\S 4.1]{Pilloni-hida}.)
In each case we denote the universal abelian variety by $\CA$.

If $Y$ denotes one of the above spaces, we can choose a toroidal
compactification $X \to \Spec(\CO)$ of $Y$. The abelian scheme $\CA$
then extends to a semi-abelian scheme $\pi: \CA \to X$ and the sheaf
$\EE := \pi_*\Omega^1_{\CA/X}$ is a locally free $\CO_{X}$-module of
rank 2. For integers $a \ge b$, we let
$\omega(a,b):=\Sym^{a-b}\EE \otimes \det^{b}\EE$.  We
also~denote~$\det \EE$ by~$\omega$, so, for example,
$\omega(a,a) = \omega^a$ is a line bundle.  If $M$ is an $\CO$-module,
we will let $\omega(a,b)_M$ denote the sheaf
$\omega(a,b)\otimes_{\CO}M$.  The coherent cohomology groups
$H^i(X,\omega(a,b)_M)$ are independent of the choice of toroidal
compactification $X$ (see \cite[Lemma 7.1.1.4]{lan} and the proof of
\cite[Lemma 7.1.1.5]{lan}). 
The Koecher principle states that there is an isomorphism
$$H^0(Y,\omega(a,b)_{M}) \simeq H^0(X,\omega(a,b)_{M}).$$
We may therefore pass freely between the open variety $Y$ and the
(any) smooth projective toroidal compactification $X$ without comment
when dealing with $H^0$.

We choose toroidal compactifications $X_K$ and $X_{K_0(Q)}$ so that
the natural map $Y_{K_0(Q)} \to Y_K$ extends to a map
$X_{K_0(Q)} \to X_K$. As explained in \S~4.1.2 of~\cite{Pilloni-hida},
the universal subgroup $H \subset \CA[Q]$ over $Y_{K_0(Q)}$ extends to
$X_{K_0(Q)}$. We then define the toroidal compactification
$X_{K_1(Q)} = \mathrm{Isom}_{X_{K_0}(Q)}(\Z/Q, H)$. The resulting map
$X_{K_1(Q)} \to X_{K_0(Q)}$ is then finite \'etale with Galois group
$\Delta_Q := (\Z/Q)^\times$.

\subsection{Vanishing results}
\label{sec:vanishing-results}

Let $X$ denote one of the toroidal
compactifications defined in the previous section. We first record some consequences of a
vanishing theorem of Lan and Suh.

\begin{theorem}
  \label{thm:lan-suh}
  \hspace{2em}
  \begin{enumerate}
\item Suppose that $a\geq 3$ and $2\leq a-b \leq p-2$. Then
\[ H^i(X,\omega(a,b)(-\infty)_k)=0\]
for $i>2$.
\item Suppose that $a+b \geq 6$ and $2\leq a-b \leq p-2$. Then
\[ H^i(X,\omega(a,b)(-\infty)_k)=0\]
for $i>1$.

\item Suppose that $b \geq 4$ and $0 \leq a-b \leq p-4$. Then
\[ H^i(X,\omega(a,b)(-\infty)_k) =0 \]
for $i>0$.

  \end{enumerate}
\end{theorem}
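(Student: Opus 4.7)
The plan is to deduce all three vanishing statements directly from the main theorem of Lan and Suh on vanishing of torsion automorphic sheaves on toroidal compactifications of PEL-type Shimura varieties. Their result asserts, for an automorphic vector bundle $\omega_\mu$ of highest weight $\mu$ on $X$, vanishing of $H^i(X, \omega_\mu^{\sub})$ above a threshold $i_0(\mu)$ determined by a chamber condition on $\mu + \rho$, under a characteristic-$p$ regularity bound (the simple-root coordinates of $\mu$ must lie in a window of length bounded by $p$). The three parts of the theorem will correspond to three successively stronger chamber conditions on the weight $\mu = (a,b;0)$ associated to $\omega(a,b)$.

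First I would recast Lan--Suh's hypotheses in the coordinates of \S\ref{sec:group-gsp_4}. With $\rho = (2,1;-3/2)$ and the positive coroots $\alpha_1^\vee=(1,-1;0)$, $\alpha_2^\vee=(0,1;0)$, $\alpha_3^\vee=(1,1;0)$, $\alpha_4^\vee=(1,0;0)$, the positivity hypotheses $a \ge 3$, $a+b \ge 6$, and $b \ge 4$ are readily seen to amount to $\mu + \rho$ being positive against progressively more of the non-simple positive coroots, and hence to lying in a progressively smaller union of closed Weyl chambers. The upper bounds $a-b \le p-2$ in (1) and (2), and $a-b \le p-4$ in (3), encode Lan--Suh's characteristic-$p$ bound on $\langle \mu, \alpha_1^\vee\rangle$.

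As a sanity check one may phrase everything via Serre duality on the smooth projective variety $X$. Since $K_X \cong \omega(3,3) \otimes \CO_X(-D)$ (with $D$ the boundary divisor) and $\omega(a,b)^{\vee} \cong \omega(-b,-a)$, one obtains
\[ H^i(X, \omega(a,b)(-\infty)_k)^{\vee} \cong H^{3-i}(X, \omega(3-b,3-a)^{\can}_k), \]
so the three assertions become vanishings of $H^0$, $H^1$, $H^2$ respectively of the canonical extension of a sufficiently negative weight. In the deepest case (vanishing of $H^2$ of the canonical extension at a very negative weight) this has the flavour of classical Kodaira vanishing; the other two cases use the more refined BGG and Lan--Polo arguments at the heart of the Lan--Suh theorem.

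The main obstacle, assuming Lan--Suh's theorem as a black box, is bookkeeping: aligning the paper's normalization of $\GSp_4$ weights (the triple $(a,b;c)$, the Kostant representatives $\tw_i$, and so on) with Lan--Suh's general framework, and verifying chamber membership in each of the three numerical ranges. No further geometric input is needed beyond their theorem.
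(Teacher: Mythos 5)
Your proposal correctly identifies the approach the paper actually takes: all three vanishings are deduced from the Lan--Suh torsion vanishing theorem (their Cor.~7.24), applied to the Siegel parabolic of $\GSp_4$, by unwinding their weight conditions in the paper's coordinates. Your Serre duality reformulation is an accurate and useful sanity check, consistent with what the paper implicitly uses later (e.g.\ in Corollary~\ref{cor:ls-vanish-lds}).

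Two of your heuristics about where the numerical hypotheses come from are imprecise, though. In Lan--Suh the characteristic-$p$ regularity condition $X^{+,<_{\mathrm{re}}p}_{\mathrm{G}_1}$ is governed (after unwinding, for $\GSp_4$) by the single inequality $\langle \mu + \rho, \alpha_3^\vee\rangle = a+b+3 < p$ on the \emph{input} $\G$-dominant weight $\mu$, not by a bound on $\langle \mu, \alpha_1^\vee\rangle$; the bounds $a-b\le p-2$ (parts (1),(2)) and $a-b\le p-4$ (part (3)) that appear in the theorem's statement are what this one inequality becomes \emph{after} translating by the dot action of the Kostant representative $\tw_1$, $\tw_2$, or $\tw_3$. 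Likewise, the three lower bounds $a\ge 3$, $a+b\ge 6$, $b\ge 4$ do not arise as "$\mu+\rho$ positive against progressively more non-simple coroots"; rather they are the image, under each $\tw_i\cdot(\,\cdot\,) - \nu$ for a positive parallel twist $\nu=(t,t;0)$, of the dominance constraints $\alpha\ge\beta\ge 0$, $t>0$ on the input weight. (In particular, $\alpha_2^\vee$ is simple, so your phrasing misfires there.) Neither imprecision damages the strategy, but the actual proof content is precisely this Kostant-representative bookkeeping, which your sketch defers; the paper carries it out explicitly, computing $\tw_i\cdot\mu-\nu$ for each $i=1,2,3$ and reading off the resulting three ranges.
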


\begin{proof}
  This follows from \cite[Cor.\ 7.24]{LanSuh} after unwinding
  definitions. We take the group scheme $\mathrm{G}_1/R_1$ (in the notation of~\cite{LanSuh}) to be our
  $\G/\CO$. The groups $\mathrm{M}_1\subset \mathrm{P}_1 \subset \mathrm{G}_1$ correspond to
  the Siegel Levi and parabolic:  $M \subset P \subset
  \G$. The set of dominant weights $X_{\mathrm{G}_1}^+$ (resp.\
  $X_{\mathrm{M}_1}^+$) is our $X^*(T)^+_{\G}$ (resp.\ $X^*(T)^+_M$)
  from Definition~\ref{defn:dominant-weights}.

  In this paragraph, we show that the subset $X_{\mathrm{G}_1}^{+,<_{\mathrm{re}}p}\subset
  X_{\mathrm{G}_1}^+$ as defined in~\cite[Defn.\ 6.3]{LanSuh-compact}
  corresponds to the set of those $\mu=(a,b;c)\in
  X^*(T)_{\G}^+$ such that $a + b < p - 3$. 
  As an intermediate step, we first show that
  $X_{\mathrm{G}_1}^{+,<_{\mathrm{re}}p}$ corresponds to those $\mu=(a,b;c)\in
  X^*(T)_{\G}^+$ such that:
  \begin{itemize}
  \item $\langle \mu+\rho , \pm\alpha_i^{\vee} \rangle \leq p $ for $i=1,\dots,4$;
  \item $a+b + 3  < p$.
  \end{itemize}
  To see this, we note the following: to lie in
  $X_{\mathrm{G}_1}^{+,<_{\mathrm{re}}p}$, by definition, the element
  $\mu$ must satisfy $|\mu|_L + d < p$ and must also lie in
  $X_{\mathrm{G}_1}^{+,<_{\mathrm{W}}p}$. The definition of $|\mu|_L$
  in Definition
  3.2 of \cite{LanSuh-compact} boils down to $|\mu|_L = a+b$ (the set
  $\Upsilon$ in our case consists of the single embedding $\Z \into \CO$
  and the norm $|\mu| = a+b$ is defined near the beginning of \S 2.5). The dimension $d$ is defined in Definition 3.9 of
  \cite{LanSuh-compact} to be $\dim_{\CO}(X)$ which is 3 in our case.
  Next, the set
  $X_{\mathrm{G}_1}^{+,<_{\mathrm{W}}p}$ is defined in Definition
  3.2 to consist of those $\mu \in
  X_{\mathrm{G}_1}^{+,<p}$ for which $|\mu|_L < p$. Finally, the set
  $X_{\mathrm{G}_1}^{+,<p}$ is defined in Definition 2.29 to consist of all dominant $\mu \in
  X^+_{\mathrm{G}_1}$ which satisfy the first condition above. This
  establishes the intermediate step.
  Now, if $\mu \in X^*(T)^+_{\G}$, then the largest of the
  $\langle \mu + \rho , \pm \alpha_i^{\vee}\rangle$ is $\langle \mu + \rho,
  \alpha_3^{\vee}\rangle = a+b +3$. Thus, we see that $\mu \in
  X_{\mathrm{G}_1}^{+,<_{\mathrm{re}}p}$ if
  and only if $a\geq b\geq0$ and $a+b < p-3$.

  The set $X_{\mathrm{M}_1}^{+,<p}$, by Definition 2.29 of 
\cite{LanSuh-compact},
 is $\{ \mu \in X^*(T)^{+}_M : \langle \mu + \rho,
  \alpha_1^{\vee}\rangle \leq p\} = \{ (a,b;c)\in X^*(T)^+_M :
  (a+2)-(b+1) \leq p\}$. By Lemma 7.2, Definition 7.14 (which is
  vacuous in our case) and Proposition 7.15 of \cite{LanSuh-compact}, a weight
  $\mu = (a,b;c)$ lies in $
  X_{\mathrm{M}_1}^{+,<p}$ and is \emph{positive parallel} if and only if
  $a=b>0$.

  If $\mu = (a,b;c) \in X^*(T)^+_M$, then a pair of vector bundles
  ${\unW}_\mu^*$, for $*\in \{\can,\sub\}$ is defined in
  \cite{LanSuh}. Indeed $\mu$ determines an algebraic representation
  of $M\cong \GL_2\times \GL_1$ over $\CO$ with highest weight
  $(a,b;c)$ (namely $(\Sym^{a-b}S_2 \otimes \det^b S_2 )\otimes
  S_1^{\otimes c}$ where $S_i$ is the standard representation of
    $\GL_i$) and the corresponding bundles are then defined by
  \cite[Defn.\ 4.12]{LanSuh}. We claim that
  \[ {\unW}_{\mu}^{\can} = \omega(a,b).\] (We note that the parameter
  $c$ does not change the underlying vector bundle, but does change
  the Hecke action on cohomology by a power of the similitude
  character.) Let $\mu = (0,-1;1)$, let $L$ denote the standard representation of $\G$
  and $L_0^{\vee}(1)\subset L$ the subspace spanned by the first two
  standard basis vectors. Then $L_0^{\vee}(1)$ is the standard
  representation of the $\GL_2$-factor of $M$ and is the
  representation of $M$ corresponding to $(1,0;0)$. The representation
  $L_0 = (L_0^{\vee}(1))^{\vee}(1)$ thus corresponds to $\mu =
  (0,-1;1)$. By~\cite[Example 1.22]{LanSuh-compact}, we have (in the notation of
  that paper) 
  $\CE_{\mathrm{M}_1}(L_0) =
  \Lie_{\CA/Y}$. However, $\W_{\mu} = \CE_{\mathrm{M}_1}(L_0)$ by
  definition, and we have $\Lie_{\CA/Y} = \CE^{\vee} = \omega(0,-1)$. It follows that
   $\W_{(0,-1;1)}^{\can} \cong \omega(0,-1)$. We
   deduce that $\omega(a,b) = (\Sym^{a-b}\otimes\det{}^b)(\omega(1,0))
   = \W_{(a,b;-a-b)}$, as required.

  With these preliminaries out of the way, we now apply \cite[Cor.\
  7.24]{LanSuh}. We take $\mu = (\alpha,\beta;\gamma) \in
  X_{\mathrm{G}_1}^{+,<_{\mathrm{re}}p}$. (The condition that
  $\max(2,r_{\tau})<p$ when $\tau = \tau\circ c$ boils down to $2<p$
  in our case.) We take $\nu = (t,t;0)$ a positive parallel weight. We
  therefore have $t>0$, $\alpha\geq \beta \geq 0$ and $\alpha+\beta < p-3$.

  We
  now apply part 2 of \cite[Cor.\
  7.24]{LanSuh} successively with $w \in W^{M_1}$ taken to equal each
  of the elements $\tw_1,\tw_2,\tw_3$ from
  Section~\ref{sec:notation}. Note that each $\tw_i$ has length
  $i$. If we take $w = \tw_1$, then (ignoring the third component):
  \[ \tw_1 \cdot \mu - \nu = \tw_1(\alpha+2,\beta+1) - (2,1) - (t,t) = (\alpha-t,
    -\beta-2-t). \]
  Thus $({\unW}^{\vee}_{\tw_1\cdot\mu - \nu})^{\sub} =
  \omega(\beta+2+t, -\alpha +t)(-\infty)$.
  Then \cite[Cor.\
  7.24]{LanSuh} implies
  \[ H^i(X, \omega(\beta+2+t,-\alpha+t)(-\infty)_k) = 0 \]
  for each $i>2$. Taking $a = \beta+2+t$ and $b =-\alpha+t$ gives the
  first part of our proposition.

  Similarly, if $w = \tw_2$, then:
\[ \tw_2 \cdot \mu - \nu = \tw_2(\alpha+2,\beta+1) - (2,1) - (t,t) =
  (\beta-1-t, -\alpha-3-t). \]
  Hence
\[ H^i(X, \omega(\alpha+3+t,1 - \beta+t)(-\infty)_k) = 0 \]
  for $i>1$. This gives the second part of the proposition.

  Finally, we take $w = \tw_3$, then:
  \[ \tw_3 \cdot \mu - \nu = \tw_3(\alpha+2,\beta+1) - (2,1) - (t,t) =
  (-\beta-3-t, -\alpha-3-t). \]
  Hence
\[ H^i(X, \omega(\alpha+3+t, \beta+3+t)(-\infty)_k) = 0 \]
  for $i>0$. This gives the last part of the proposition.
\end{proof}

It is interesting to compare the above vanishing result in characteristic
$p$ with the following characteristic 0 vanishing results due to
Blasius--Harris--Ramakrishnan, Mirkovi\'c, Williams and Schmid. 
We have an identification
\[ Y(\C) = \G(\Q) \backslash (G(\R)/K^h \times G(\A^\infty)/U) \]
where $U\subset \G(\A^\infty)$ is the open compact subgroup used to
define $Y$ and $K^h$ is the compact-mod-center subgroup defined in Section~\ref{sec:group-gsp_4r}. To any finite
dimensional $\C$-representation $(\sigma,V_{\sigma})$ of $K^h_{\C}$, there
is an associated vector bundle $\CV_{\sigma}$ on $Y(\C)$ which is
defined in \cite[Defn.\ 1.3.2]{blasius-harris-ramak}. This bundle has extensions
 $\CV_{\sigma}^{\sub}\subset \CV_{\sigma}^{\can}$ to
$X(\C)$. In \cite{blasius-harris-ramak}, the bundle $\CV_{\sigma}^{\can}$
is denoted $\wt\CV_{\sigma}$. We have
$\CV^{\sub}_{\sigma}=\CV_{\sigma}^{\can}(-\infty)$. For each $i\geq
0$, we define:
\[ \overline{H}^i(X(\C),\CV_{\sigma}) := \im
(H^i(X(\C),\CV_\sigma^\sub)\to H^i(X(\C),\CV_\sigma^\can)).\]
Let~$\Htw^i(\CV_\sigma^\sub)$ and~$\Htw(\CV_\sigma^\can))$ 
denote the direct limit of~$H^i(X(\C),\CV_\sigma^\sub)$ and~$H^i(X(\C),\CV_\sigma^\can))$
respectively over all levels~$K$. Let~$\Htwb^i(X(\C),\CV_{\sigma})$ denote the corresponding 
limit of~$\overline{H}^i(X(\C),\CV_{\sigma})$ (including both an overline and a tilde in the notation
was too cumbersome, hopefully no confusion will result).

Let $\CA_{(2)}(\G)$ denote the space of
automorphic forms on $\G(\Q)\backslash \G(\A)$ which are square integrable modulo the centre
$Z_{\G}(\A)$. Let $\CA_0(\G)\subset \CA_{(2)}(\G)$ denote the space of
cusp forms. For $(\sigma,V_\sigma)$ a representation of $K^h_{\C}$ as
above and $i\geq 0$, we define: 
\begin{align*}
  \CH^i_{(2),\sigma} &= H^i(\Lie Q^-, 
  K^h;\CA_{(2)}(\G)\otimes
  V_{\sigma}) \\
 \CH^i_{\cusp,\sigma} &= H^i(\Lie Q^-, 
   K^h;\CA_{0}(\G)\otimes
  V_{\sigma}).
\end{align*}
Then we have the following result of Harris:

\begin{theorem}
  \label{thm:coherent-cohom-lie-alg-cohom}
There are canonical maps, forming a commutative diagram:
\[
\begin{tikzcd}
   \CH^i_{\cusp,\sigma} \arrow{r}\arrow{d}  & \CH^i_{(2),\sigma}
   \arrow{d} \\
\Htw^i(\CV_\sigma^\sub) \arrow{r} & \Htw^i(\CV_\sigma^\can)
\end{tikzcd}.
\]
Moreover:
\begin{enumerate}
\item The composition $\CH^i_{\cusp,\sigma} \to
  \Htwb^i(\CV_\sigma)$ is injective for all $i$, and
  is an isomorphism for $i=0,3$.
\item The image of $\CH^i_{(2),\sigma}$ in
  $\Htw^i(\CV_\sigma^{\can})$ contains $\Htwb^i(\CV_{\sigma})$.
\end{enumerate}
\end{theorem}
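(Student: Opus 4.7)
The plan is to construct the vertical maps via a Dolbeault-type resolution of $\CV_\sigma^{\can}$ (resp.\ $\CV_\sigma^{\sub}$), identify the resulting complex of global sections with the standard Chevalley--Eilenberg complex computing $(\Lie Q^-, K^h)$-cohomology, and then match growth conditions on the automorphic side with the distinction between the subcanonical and canonical extensions. The commutativity of the diagram will be automatic from naturality of the construction.

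More precisely, first I would use the decomposition $\Lie Q^- = \gk_{\C}^h \oplus \gp^-$ from Section~\ref{sec:group-gsp_4r} and the identification of $\sigma$ with an algebraic representation of $K_{\C}^h$ (extended trivially along $P^-$) to realize $\CV_{\sigma}$ on $Y(\C)$ as the bundle of $C^\infty$-functions $f : \G(\Q)\backslash \G(\A)/U \to V_{\sigma}$ satisfying the usual $K^h$-equivariance, with $\overline{\partial}$ corresponding to the Koszul differential on $\Hom_{K^h}(\Lambda^\bullet (\gg_\C/\Lie Q^-), C^\infty(\G(\Q)\backslash \G(\A))\otimes V_\sigma)$. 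Passing to the toroidal boundary and applying the Hodge-theoretic analysis of Pink and Harris (see \cite[\S 2]{blasius-harris-ramak}) identifies this Koszul complex, computed on $C^\infty$-sections with appropriate boundary behaviour, with the complex computing $(\Lie Q^-, K^h)$-cohomology of smooth automorphic forms; restricting to the subspace $\CA_{(2)}(\G)$ (resp.\ $\CA_0(\G)$) produces a canonical map to $\Htw^i(\CV_\sigma^{\can})$ (resp.\ into $\Htw^i(\CV_\sigma^{\sub})$, since cusp forms decay at every boundary component and hence define sections of the subcanonical extension). Naturality of the inclusion $\CA_0 \hookrightarrow \CA_{(2)}$ then gives the commutative square.

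Next I would establish the image and injectivity statements. For (2), the key input is Zucker's conjecture (theorems of Looijenga and Saper--Stern), which identifies $L^2$-cohomology with intersection cohomology of the minimal (Baily--Borel) compactification, combined with Harris's comparison between this $L^2$-cohomology and the image $\Htwb^i(\CV_\sigma)$ of subcanonical in canonical coherent cohomology; together these show that every class represented by a square-integrable form can be lifted to the subcanonical cohomology, giving the containment. For (1), the injectivity of $\CH^i_{\cusp,\sigma} \to \Htwb^i(\CV_\sigma)$ is proved by noting that a non-zero cuspidal cohomology class produces, via the Koszul cocycle representative, a harmonic $\overline{\partial}$-cocycle which is $L^2$; any coboundary realizing its vanishing in coherent cohomology would, again by the $L^2$-theory and unitarity of the cuspidal spectrum, force the original cocycle to be zero.

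The degenerate cases $i=0,3$ need separate input. For $i=0$, by the Koecher principle $H^0(X(\C), \CV_\sigma^{\sub}) = H^0(X(\C), \CV_\sigma^{\can})$ coincide with $H^0(Y(\C), \CV_\sigma)$, and the classical realization of holomorphic Siegel modular forms as cuspidal automorphic forms (of the weight dictated by $\sigma$) directly identifies this with $\CH^0_{\cusp,\sigma}$. For $i=3$, Serre duality on the smooth projective $X$ reduces the claim for $\CV_\sigma$ to the claim for $i=0$ applied to the Serre-dual weight (twisted by $\omega_X$), where the same identification with holomorphic cusp forms applies. The main obstacle I anticipate is the careful bookkeeping of growth conditions at the toroidal boundary: matching the Pink/Harris description of boundary behaviour of the subcanonical extension with the decay properties of cuspidal automorphic forms, and in particular verifying that the map from $\CA_{(2)}$ to $\Htw^i(\CV_\sigma^{\can})$ really does take values in the subspace $\Htwb^i$. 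This is exactly where Zucker's conjecture and Harris's analysis of the $L^2$ versus coherent cohomology comparison are indispensable.
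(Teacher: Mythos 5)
The paper's own ``proof'' of this theorem is simply a citation to Harris's Ann Arbor paper, \emph{Automorphic forms and the cohomology of vector bundles on Shimura varieties} (specifically Theorem~2.7 and Proposition~3.2.2 there), so you are effectively attempting to reconstruct Harris's argument. Your overall framework --- realizing coherent cohomology of $\CV_\sigma^{\can}$ and $\CV_\sigma^{\sub}$ via a Dolbeault-type complex of $C^\infty$ sections, identifying this with the Koszul complex computing $(\Lie Q^-, K^h)$-cohomology, using rapid decay of cusp forms to land in the subcanonical extension, and handling $i=0,3$ by the Koecher principle plus Serre duality --- is indeed the skeleton of Harris's proof, so the plan is on the right track.

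However, your appeal to Zucker's conjecture (Looijenga, Saper--Stern) for part~(2) is the wrong tool and would not actually close the argument. Zucker's conjecture identifies $L^2$-cohomology of the locally symmetric space with coefficients in a \emph{local system} with the \emph{intersection cohomology} of the \emph{Baily--Borel} compactification; what is needed here is a statement about $L^2$ representatives of interior \emph{coherent} cohomology of the \emph{toroidal} compactification. These are different cohomology theories, and the passage between them (via a BGG-type complex and degeneration of the Hodge--de Rham spectral sequence) is nontrivial and is not how Harris argues. Harris proves the containment $\Htwb^i(\CV_\sigma) \subset \im(\CH^i_{(2),\sigma})$ directly: the Hodge theory of the Dolbeault complex on the toroidal compactification shows that classes in the interior cohomology admit square-integrable harmonic representatives, which therefore lie in the image of $(\Lie Q^-, K^h)$-cohomology of $\CA_{(2)}(\G)$. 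Similarly, for injectivity in~(1), the precise mechanism is that $\CA_0(\G)$ is a direct summand of $\CA_{(2)}(\G)$ as a $\G(\A)$-module (orthogonal complement of the residual spectrum), so the cuspidal Koszul complex is a direct summand of the $L^2$ Koszul complex; your formulation via ``any coboundary realizing its vanishing'' is the right idea but needs this direct summand structure to go through. So: correct framework, but replace the Zucker detour with Harris's direct $L^2$--coherent comparison, and make the direct summand argument explicit for injectivity.
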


\begin{proof}
  This follows from \cite[Theorem 2.7 \& Prop.\ 3.2.2]{harris-ann-arb}.
\end{proof}

For $*\in \{ \cusp, (2)\}$, we then define
$\Htw^i(\CV_{\sigma}^{\can})_{*}$ to be the image of the space 
$\CH^i_{*,\sigma}$ in $\Htw^i(\CV_\sigma^{\can})$. Thus we have
\[ \CH^i_{\cusp,\sigma}\cong \Htw^i(\CV_{\sigma}^{\can})_{\cusp} \subset
\Htwb^i(\CV_{\sigma}) \subset \Htw^i(\CV_{\sigma}^{\can})_{(2)}.\]

For $*\in \{ \cusp, (2)\}$, the space $\CA_{*}(\G)$ is semisimple as a
$\G(\A)$-representation and we decompose:
\[ \CA_{*}(\G) = \bigoplus_{\pi}m_*(\pi) \pi^{\infty}\otimes
\pi_{\infty}. \]
We let $\CA_{*}(\G)_{\temp}$ denote the subspace
\[ \bigoplus m_*(\pi) \pi^{\infty}\otimes
\pi_{\infty} \]
where the sum is over all those $\pi$ such that $\pi_\infty$ is
essentially tempered. 
We define $\CH^i_{*,\sigma,\temp} \subset \CH^i_{*,\sigma}$ by
replacing $\CA_{*}(\G)$ with $\CA_{*}(\G)_{\temp}$ in the definition
of $\CH^i_{*,\sigma}$. We then define
\[ \Htw^i(\CV_{\sigma}^{\can})_{*,\temp} \subset
\Htw^i(\CV_\sigma^\can)_* \]
to be the image of $\CH^i_{*,\sigma,\temp} \to \Htw^i(\CV_\sigma^{\can})$.
We may also define analogous spaces
\[ H^i(X(\C),\CV_{\sigma}^{\can})_{*,\temp} \subset
H^i(X(\C),\CV_\sigma^\can)_* \]
by applying~$K$-invariants to the constructions above, where~$K$ is the level of~$X(\C)$. 

Suppose now that $(\sigma,V_\sigma)$ is the irreducible representation
of $K^h_{\C}$ of highest weight $\mu = (a,b;c) \in X^*(H_\C)$, with respect to the system of
positive weights fixed in \S~\ref{sec:notation}. We first of all
observe that the bundle $\CV_\sigma$ does not depend on $c$.  Indeed,
let $(\tau,V_\tau)$ be the irreducible representation of highest weight
$(a,b;c+2)$. Consider the $\G(\Q)$-equivariant bundles
$\CV_{\sigma}^{\vee} = \G(\C)\times_{Q^-} V_{\sigma}$ and
$\CV_{\tau}^{\vee}= \G(\C)\times_{Q^-}V_{\tau}$ on $\G(\C)/Q^-$ defined
in \cite[\S 1.3]{blasius-harris-ramak}. (The superscripted $^{\vee}$'s do not
refer to dual bundles here.) Then by the definition of
$\CV_\sigma$, it suffices to show that $\CV_\sigma^\vee \iso
\CV_\tau^\vee$ as $\G(\Q)$-equivariant bundles. 
 We have that $\tau = \sigma \otimes \nu$, so we may take the
 underlying space of $\tau$ to be $V_\tau
= V_{\sigma}$ and the action to be $\tau(g) = \nu(g)\sigma(g)\in \End(V_{\sigma})$ for
all $g \in K^h_{\C}$.  Then the map
\begin{align*}
\G(\C)\times_{Q^-} V_{\sigma} & \longrightarrow \G(\C)\times_{Q^-} V_{\tau} \\
(g,w) & \longmapsto (g,\nu(g)^{-1}w)
\end{align*}
gives the required isomorphism $\CV_\sigma^{\vee} \iso
\CV_\tau^\vee$. (Note however that the Hecke action on the cohomology
of $\CV_{\sigma}$ will depend on $c$ -- changing the value of $c$
introduces a corresponding twist by a power of the similitude
character in the Hecke action.)

For $\mu \in X^*(H_\C)^+_{K^h_{\C}}$ a dominant weight, we let
$\CV_{\mu}$ denote the vector bundle associated to the irreducible
$K^h_{\C}$-representation $W_\mu$. We would like to compare these
bundles to the bundles introduced in the proof of
Theorem~\ref{thm:lan-suh}. 

\begin{df}
  \label{df:lan-suh-bundle-notn}
Let $\mu = (a,b;c)\in
X^*(T)^+_M$. We let $\W_{\mu}$ denote the canonical extension
$\W_{\mu}^{\can}$ in the notation of the proof
of Theorem~\ref{thm:lan-suh}, and we let $\W_{\mu}^{\sub} =
\W_{\mu}(-\infty)$. 
\end{df}
We saw above that, as vector bundles over $X$, we have:
\[ \W_\mu \cong  \omega(a,b), \]
though the Hecke action on the cohomology of $\W_{\mu}$ will depend on
$c$.

\begin{lemma}
  \label{lem:identifying-Vsigma}
Let $\mu = (a,b;c)\in
X^*(T)^+_M$. Then, over $X(\C)$, we have:
\[ \W_{(a,b;c)} \cong \CV_{(-b,-a;a+b+2c)},\]
compatibly with Hecke actions on cohomology.
\end{lemma}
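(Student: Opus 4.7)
The plan exploits the fact that both constructions, $\mu \mapsto \W_\mu$ on algebraic $M$-representations and $\sigma \mapsto \CV_\sigma$ on holomorphic $K^h_{\C}$-representations, are tensor functors: each commutes with direct sums, duals, tensor products, and symmetric powers. This is formal, following from the associated-bundle construction applied to principal bundles --- an $M$-principal bundle on $X$ coming from the Hodge filtration of $\CA/X$ for $\W$, and a $K^h_{\C}$-principal bundle on $X(\C)$ coming from the Borel embedding $X \hookrightarrow \G(\C)/Q^-$ for $\CV$.

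Since the representation ring of $M \cong \GL_2 \times \GL_1$ is generated (modulo duals) by the standard representation of the $\GL_2$-factor, of highest weight $(1,0;0)$, together with the similitude character $\nu$, of highest weight $(0,0;1)$, it is enough to establish the identification on these two generators. For the similitude, both $\W_{(0,0;1)}$ and $\CV_{(0,0;2)}$ are pulled back from the similitude character of $\G$ itself --- recall from \S\ref{sec:group-gsp_4r} that $\nu$ viewed as a character of $H_\C$ has weight $(0,0;2)$ --- and are canonically isomorphic on $Y(\C)$.

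The key input is the identification of the Hodge bundle: $\W_{(1,0;0)} = \EE = \pi_* \Omega^1_{\CA/X}$, and we claim $\EE \cong \CV_{V^-}$, where $V^-$ is the $K^h_{\C}$-representation introduced in \S\ref{sec:group-gsp_4r}, of highest weight $(0,-1;1)$. At a point $x \in X$ with image $gQ^-$ under the Borel embedding, the fiber of $\EE$ is $F^1 H^1_{\dR}(\CA_x) = H^{1,0}(\CA_x)$, which under the standard comparison is identified with $V^-_x = g V^-$. This fiberwise identification globalizes to an isomorphism of $\G(\A^\infty)$-equivariant vector bundles, giving $\W_{(1,0;0)} \cong \CV_{(0,-1;1)}$ in accordance with the formula.

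Combining these, the $M$-representation of highest weight $(a,b;c)$ (with $a \geq b$) is $\Sym^{a-b}(\mathrm{std}) \otimes (\det)^b \otimes \nu^c$, so by functoriality of $\CV$ and by adding highest weights on the $K^h_{\C}$-side one computes
\[ (a-b)\cdot (0,-1;1) + b\cdot (-1,-1;2) + c\cdot (0,0;2) \;=\; (-b,-a;a+b+2c), \]
as required. Hecke-equivariance is automatic since both functors are naturally $\G(\A^\infty)$-equivariant on the Shimura tower, and Hecke operators act through $\G(\A^\infty)$. The main obstacle is the bundle-level matching $\EE \cong \CV_{V^-}$: fiberwise it is immediate from the Hodge decomposition, but globalizing requires a careful comparison of the algebraic Hodge filtration on $H^1_{\dR}(\CA/X)$ with the $K^h_{\C}$-equivariant structure coming from the Borel embedding. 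This is standard material (cf.~\cite{harris-ann-arb}, and cf.\ the identification $\CE_{\mathrm{M}_1}(L_0) = \Lie_{\CA/Y}$ used in the proof of Theorem~\ref{thm:lan-suh}).
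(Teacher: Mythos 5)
Your overall strategy matches the paper's --- reduce to the two generators of the representation ring of $M$ and identify them via the Hodge theory of $\CA/X$ --- and your weight arithmetic is correct. The gap is in the identification at the generator $(1,0;0)$. You assert $\W_{(1,0;0)} = \EE = \pi_*\Omega^1_{\CA/X}$ and $\EE \cong \CV_{V^-} = \CV_{(0,-1;1)}$, but in this paper's normalization both equalities are off by a similitude twist, and the two errors cancel. Concretely, the proof of Theorem~\ref{thm:lan-suh} identifies $\W_{(0,-1;1)} = \Lie_{\CA/Y} = \EE^{\vee}$, so $\EE = \W_{(0,-1;1)}^{\vee} = \W_{(1,0;-1)}$, not $\W_{(1,0;0)}$; on the other side, $\Lie_{\CA/Y}$ corresponds over $\C$ to $\CV_{V^+}$ with $V^+$ of highest weight $(1,0;1)$, so $\EE = \Lie_{\CA/Y}^{\vee} \cong \CV_{(0,-1;-1)}$, not $\CV_{(0,-1;1)}$. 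The object that equals both $\W_{(1,0;0)}$ and $\CV_{V^-}$ is the Hodge sub-bundle $\Lie_{\CA^{\vee}/Y}^{\vee}$ of the de Rham \emph{homology} $\underline{H}_1^{\dR}(\CA/Y)$; this is why the paper's proof works with the short exact sequence
\[ 0 \lra \Lie_{\CA^{\vee}/Y}^{\vee} \lra \underline{H}_1^{\dR}(\CA/Y) \lra \Lie_{\CA/Y} \lra 0, \]
rather than with $F^1\underline{H}^1_{\dR} = \EE$.

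Since the third coordinate $c$ is precisely what controls the Hecke twist the lemma is designed to pin down --- as a plain vector bundle $\W_{(a,b;c)}$ is independent of $c$ --- your fiberwise identification $H^{1,0}(\CA_x) \leftrightarrow gV^-$ does not by itself deliver the Hecke-compatibility you invoke as ``automatic''; that identification is only correct up to the polarization twist $\CO_Y(1) = \W_{(0,0;1)} = \CV_{(0,0;2)}$. To repair the argument, replace $\EE$ by the Hodge sub-bundle of $\underline{H}_1^{\dR}(\CA/Y)$ as in the paper, or else work with $\EE = \W_{(1,0;-1)}$ and keep explicit track of the pairing $\underline{H}_1^{\dR}\otimes\underline{H}_1^{\dR}\to\CO_Y(1)$ that produces the compensating twist when one dualizes.
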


\begin{proof}
It suffices to prove the isomorphism over $Y$. Consider the short
exact sequence:
\[ 0 \lra \Lie_{\CA^{\vee}/Y}^{\vee} \lra \underline{H}_1^{\dR}(\CA/Y)
\lra \Lie_{\CA/Y} \lra 0 \]
and the Poincar\'{e} duality pairing
\[ \langle\ ,\ \rangle : \underline{H}_1^{\dR}(\CA/Y) \otimes
\underline{H}_1^{\dR}(\CA/Y) \lra \CO_Y(1). \]
(See \cite[\S 1.2]{LanSuh-compact}). 

Expressed in terms of the functor $\W_{\mu}$ of Lan--Suh, the short exact sequence
becomes:
\[ 0 \lra \W_{(1,0;0)} \lra \underline{H}_1^{\dR}(\CA/Y)
\lra \W_{(0,-1;1)} \lra 0 \]
and the bundle $\CO_Y(1)$ becomes $\W_{(0,0;1)}$. (See \cite[Example 1.22]{LanSuh-compact}.) 

Similarly, over $Y(\C)$ the
short exact sequence becomes
\[ 0 \lra \CV_{(0,-1;1)} \lra \underline{H}_1^{\dR}(\CA/Y)
\lra \CV_{(1,0;1)} \lra 0 \]
and $\CO_Y(1)$ is identified with $\CV_{(0,0;2)}$. This follows
from \cite[Example III.2.4]{milne-ann-arb}: if we take the point $o\in \check{X}$ to be
$h(i) = J$ in the notation of Section~\ref{sec:group-gsp_4r}, then the
isotropic subspace corresponds to $V^-$ and $V/W$ corresponds to
$V^+$. As remarked at the end of Section~\ref{sec:group-gsp_4r}, we
have $V^- = W_{(0,-1;1)}$, $V^+ = W_{(1,0;1)}$ and the similitude
character corresponds to $W_{(0,0;2)}$. Note also that the notation
$\CH_{\dR}(\CA)$ of \cite{milne-ann-arb} refers to de Rham homology
(see \S I.3).

It follows that, over $Y(\C)$, we have $\W_{(0,0;1)} = \CV_{(0,0;2)}$ and $\W_{(1,0;0)} =
\CV_{(0,-1;1)}$. Thus,
\begin{align*}
  \W_{(a,b;c)} & = (\Sym^{a-b}\otimes \det{}^b)(\W_{(1,0;0)}) \otimes
  \W_{(0,0;c)} \\
  & = (\Sym^{a-b}\otimes \det{}^b)(\CV_{(0,-1;1)}) \otimes
  \CV_{(0,0;2c)} \\
  & = \CV_{(-b,-a;a+b+2c)}
\end{align*}
This is compatible with Hecke action on cohomology since
all isomorphisms respect the equivariant constructions.
\end{proof}

The Weyl chambers $C_0,\dots,C_4 \subset X^*(T)\otimes_{\Z}\R \cong \R^3$ are
defined in Section~\ref{sec:group-gsp_4}. We have
\begin{eqnarray*}
  C_0 &=& \{ (a,b;c) \in \R^3 : a \geq b \geq 0\}\\
  C_1 &=& \{ (a,b;c) \in \R^3 : a \geq -b \geq 0\}\\
  C_2 &=& \{ (a,b;c) \in \R^3 : -b \geq a \geq 0\}\\
  C_3 &=& \{ (a,b;c) \in \R^3 : -b \geq -a \geq 0\}.
\end{eqnarray*}

\begin{theorem}
  \label{prop:char-0-vanish}
Let $\mu = (a,b;c) \in X^*(T)^+_M$. Then:
\[ H^i(X(\C),\W_{\mu})_{(2),\temp}  = 0\]
for all $0\leq i \leq 3$ such that 
\[ \mu = (a-1,b-2;c) \not \in C_i. \]
\end{theorem}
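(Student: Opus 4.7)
\bigskip

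\noindent\textbf{Proof plan.} The plan is to reduce the vanishing to a statement about $(\Lie Q^-, K^h)$-cohomology of essentially tempered automorphic representations, and then invoke the standard Vogan--Zuckerman classification to conclude. By Lemma~\ref{lem:identifying-Vsigma}, $\W_\mu \cong \CV_{\sigma}$ over $X(\C)$ for $\sigma = (-b,-a;a+b+2c)$. The space $\Htw^i(\CV_\sigma^{\can})_{(2),\temp}$ is, by definition, the image in $\Htw^i(\CV_\sigma^{\can})$ of $\CH^i_{(2),\sigma,\temp}$, and after passing to $K$-invariants the same holds at finite level. Using the spectral decomposition $\CA_{(2)}(\G)_\temp = \bigoplus_\pi m_{(2)}(\pi)\,\pi^\infty \otimes \pi_\infty$, we obtain
\[
\CH^i_{(2),\sigma,\temp} \;=\; \bigoplus_\pi m_{(2)}(\pi)\,\pi^\infty \otimes H^i(\Lie Q^-, K^h;\, \pi_\infty \otimes V_\sigma),
\]
so it suffices to show that each summand vanishes when $(a-1,b-2;c) \notin C_i$.

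\medskip

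By the Vogan--Zuckerman classification (together with its extension to limits of discrete series due to Vogan and Wallach), the only essentially tempered irreducible $(\gg_\C, K^h)$-modules $\pi_\infty$ of $\GSp_4(\R)$ admitting nontrivial $(\Lie Q^-, K^h)$-cohomology with coefficients in any irreducible $K^h_\C$-representation are (essentially) limits of discrete series. For $\GSp_4(\R)$, whose Cartan $H$ is compact modulo centre, such representations come in $L$-packets of size four; the members of a packet are indexed by the four $K_{\infty,1}$-conjugacy classes of positive root systems containing the compact positive root $(1,-1;0)$. Under the conventions fixed in~\S\ref{sec:group-gsp_4r} (with $\Phi^+$ defined via the Cayley transform $C$), these four conjugacy classes correspond bijectively to the four Weyl chambers $C_0, C_1, C_2, C_3$ of~\S\ref{sec:group-gsp_4} via the Kostant representatives $\tw_0, \tw_1, \tw_2, \tw_3$. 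The key computation (due to Schmid and Vogan--Zuckerman; see~\cite[\S2]{harris-ann-arb} and~\cite[\S3]{blasius-harris-ramak}) is then that for the (limit of) discrete series $\pi_\infty^{(i)}$ indexed by $C_i$, the cohomology $H^j(\Lie Q^-, K^h;\, \pi_\infty^{(i)} \otimes V_\sigma)$ vanishes unless $j = i$, and when $j=i$ it is nonzero (in fact one-dimensional) exactly for the unique $\sigma$ whose highest weight equals $-w_0^M \tw_i(\lambda+\rho_n)-\rho_c$ for $\lambda$ the Harish--Chandra parameter of $\pi_\infty^{(i)}$. Equivalently, nonvanishing in degree $i$ forces the weight $\sigma+\rho_c$ (or, after the dictionary of Lemma~\ref{lem:identifying-Vsigma}, a shift of $\mu$) to lie in the chamber $\tw_i(C_0) = C_i$.

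\medskip

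Combining these two steps, every $\pi$ contributing to $\CH^i_{(2),\sigma,\temp}$ has $\pi_\infty$ a (limit of) discrete series in the packet indexed by $C_i$, and this forces $(a-1,b-2;c) \in C_i$; contrapositively, if $(a-1,b-2;c) \notin C_i$, the whole sum vanishes and so does $\Htw^i(\CV_\sigma^{\can})_{(2),\temp}$. The main obstacle is purely bookkeeping: one must match the algebraic Kostant representatives $\tw_i$ acting on $X^*(T)$ with the real-group chamber labelling of the $L$-packet through the Cayley transform $C$, and verify that the combined shifts by $\rho$, $\rho_c$, $\rho_n$, and the dictionary $\mu = (a,b;c) \leftrightarrow \sigma = (-b,-a;a+b+2c)$ produce exactly the translation $\mu \mapsto (a-1,b-2;c)$ appearing in the statement. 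This is a direct (if tedious) calculation using the explicit weights $\Phi^+_c = \{(1,-1;0)\}$ and $\Phi^+_n = \{(0,2;0),(1,1;0),(2,0;0)\}$ recorded in~\S\ref{sec:group-gsp_4r}.
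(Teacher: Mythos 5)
Your proposal is correct and takes essentially the same approach as the paper's proof: identify $\W_\mu\cong\CV_\sigma$ via Lemma~\ref{lem:identifying-Vsigma}, pass to $(\Lie Q^-,K^h)$-cohomology through Theorem~\ref{thm:coherent-cohom-lie-alg-cohom}, invoke Mirkovi\'c's theorem that essentially tempered contributors must be (limits of) discrete series, and then match the Harish--Chandra parameter to the Weyl chamber $C_i$ to read off the contributing degree. The one piece you defer to ``bookkeeping'' — verifying that the dualization $\pi(\lambda,C)^* = \pi(-w_0(\lambda),-w_0(C))$ together with $\#(\Phi(C_j)^+\cap\Phi^+_n)=3-j$ and $C_{3-i}=-w_0(C_i)$ combine to put $-w_0(\lambda)=(a-1,b-2;a+b+2c)$ in $C_i$ — is precisely what the paper carries out explicitly, and it is the content of the theorem, so it should be spelled out rather than waved at; but your claim that $\pi_\infty^{(i)}$ (parameter in $C_i$) contributes only in degree $i$ does match the paper's conclusion once that computation is done.
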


\begin{proof}
In Section~\ref{sec:group-gsp_4r}, we identified $X^*(H_{\C})$ with
$\Z^3$. Under the resulting identification of
$X^*(H_{\C})\otimes_{\Z}\R$ with  $\R^3$, the chambers $C_i$ for $X^*(T)^+_M\otimes_{\Z}\R$ 
above correspond to Weyl chambers in  $X^*(H_{\C})\otimes_{\Z}\R $. 
Let $\sigma = (-b,-a;a+b+2c)$, regarded as an element of
$X^*(H_{\C})$. Then we've seen above that
\[ \CV_{\sigma} \cong \W_{\mu}. \]
Suppose that 
\[ H^i(X(\C),\W_{\mu})_{(2),\temp} = H^i(X(\C),\CV_{\sigma})_{(2),\temp}
\neq 0.\]
Then by Theorem
\ref{thm:coherent-cohom-lie-alg-cohom},
there is some $\pi  = \pi^{\infty}\otimes \pi_\infty$ in
$\CA_{(2)}(G)_{\temp}$ such that
\[ H^i(\Lie P^{-}, K^h; \pi_\infty \otimes V_{\sigma})) \neq 0 .\]
By a theorem of Mirkovi\'{c} \cite[Theorem 3.5]{harris-ann-arb}, $\pi_\infty$ is a discrete series or limit of discrete
series. Hence, using the Harish-Chandra parameterization, we may write
$\pi_{\infty} = \pi(\lambda,C)^* = \pi(-w_0(\lambda),-w_0(C))$ for some
Weyl Chamber $C \in \{ C_0,\dots,C_3\}$ and a weight $\lambda \in C \cap
\left(X^*(H_{\C})+\rho\right)$.
 By \cite[Theorem 3.2.1]{blasius-harris-ramak}, it follows
that:
\[ \lambda = ((\sigma + \rho)|_{\Sp_4(\R)}; -a-b-2c) =
(2-b,1-a;-a-b-2c), \]
and 
\[  i = \# \left(\Phi(C)^+\cap \Phi_n^+ \right),\] 
where $\Phi(C)^+$ is the system of positive roots determined by the
chamber $C$.  
For
$j=0,\dots,3$, we have $\# \left(\Phi(C_j)^+\cap \Phi_n^+ \right) =
3-j$. Hence we must have $C = C_{3-i}$ and $\lambda \in
C_{3-i}$. However, $C_{3-i} = -w_0(C_i)$, so $-w_0(\lambda) \in
C_i$. We have, have:
\begin{align*}
  -w_0(\lambda) & = -w_0 (-b+2,-a+1;-(a+b+2c)) \\
  & = (a-1, b-2; a+b+2c).
\end{align*}
Thus, we deduce that 
$-w_0(\lambda) = (a-1,b-2;a+b+2c)$ lies in $C_i$. This is equivalent
to the condition in statement of the theorem. 
\end{proof}

We also record the following:
\begin{theorem}
  \label{thm:contrib-to-char-0-cohom}
Let $\mu = (a,b;c) \in X^*(T)^+_M$, let $ w = -(a+b+2c)$, and let $\sigma
= (-b,-a;a+b+2c) = (-b,-a;-w)$, regarded as an element of  $X^*(H_{\C})$. Suppose that $\pi  =
\pi^{\infty}\otimes \pi_\infty$ in $\CA_{(2)}(G)$ contributes
to  $H^i(X(\C),\W_{\mu})_{(2)} \cong H^i(X(\C),\CV_{\sigma})_{(2)}$.
\begin{enumerate}
\item The infinitesimal character of
$\pi_{\infty}$ is given under the Harish-Chandra isomorphism by:
\[ \chi_{((- \sigma - \rho)|_{\Sp_4(\R)};-w)} = \chi_{(a-1,b-2;-w)} .\]
\item Let $\widetilde{\pi}_{\infty}$ denote the transfer of
  $\pi_{\infty}$ to $\GL_4(\R)$. Then the infinitesimal character of
  $\widetilde{\pi}_{\infty}$ is given under the Harish-Chandra
  isomorphism by $\chi_{\tau}$ where:
\[ \tau =  \left(\frac{a+b-3-w}{2}, \frac{a-b+1-w}{2}, \frac{-a+b-1-w}{2}, \frac{-a-b+3-w}{2}\right).  \]
\item\label{Mirkovic} If furthermore, $\pi_{\infty}$ is tempered, then $\pi_{\infty}$ is a
discrete series or limit of discrete series representation, and is
given under the Harish-Chandra parameterization by:
\[ \pi_{\infty} \cong \pi((a-1,b-2;-w), C_i). \]
\end{enumerate}
\end{theorem}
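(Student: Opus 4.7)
The plan is to establish parts (1), (2), and (3) in order, using $(\mathfrak{g},K)$-cohomology machinery, the Harish-Chandra isomorphism, and the archimedean local Langlands correspondence.

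For part (1), I would extend $V_{\sigma}$ to a $\Lie Q^{-}$-module by letting $\mathfrak{p}^{-}$ act trivially and consider the generalized Verma module $M(\sigma) = U(\mathfrak{g}_{\C}) \otimes_{U(\Lie Q^{-})} V_{\sigma}$, which has $\mathfrak{g}_{\C}$-infinitesimal character $\chi_{\sigma+\rho}$ (Casselman--Osborne). The non-vanishing of $H^{i}(\Lie Q^{-},K^{h};\pi_{\infty}\otimes V_{\sigma})$ then forces the infinitesimal character of $\pi_{\infty}$ to be the same, since the centre of $U(\mathfrak{g}_{\C})$ acts compatibly through $\pi_{\infty}$ and through $M(\sigma)$ on the cochain complex computing this cohomology. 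Plugging in $\sigma=(-b,-a;-w)$ and $\rho=(2,1;-3/2)$ and applying the (split) Weyl group of $\GSp_{4}$ to bring the $\Sp_4$-part into standard form yields the stated $\chi_{(a-1,b-2;-w)}$.

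For part (2), the transfer $\widetilde{\pi}_{\infty}$ to $\GL_{4}(\R)$ is defined by composing the Langlands parameter of $\pi_{\infty}$ with the standard embedding of dual groups $\GSp_{4}(\C)\hookrightarrow \GL_{4}(\C)$. Under this embedding, a $\GSp_{4}$-infinitesimal character $\chi_{(x,y;c)}$ transfers to the $\GL_{4}$-infinitesimal character whose four weights are $(c\pm x\pm y)/2$. This follows from the description of the $\GSp_{4}$-torus as $\{\mathrm{diag}(t_1,t_2,\nu t_2^{-1},\nu t_1^{-1})\}$ sitting inside the $\GL_{4}$-torus, together with the symmetric normalisation $r_1+r_2+r_3+r_4 = 2c$ for the four $\GL_{4}$-weights $(r_1,r_2,r_3,r_4)$ (forced by matching the central character). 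Specializing to $(x,y;c)=(a-1,b-2;-w)$ gives precisely the $\tau$ stated in the theorem.

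For part (3), the temperedness hypothesis combined with Mirkovi\'c's theorem (\cite[Theorem 3.5]{harris-ann-arb}) forces $\pi_{\infty}$ to be a discrete series or limit of discrete series. Its Harish-Chandra parameter is determined by its infinitesimal character (from (1)) together with the Weyl chamber in which the parameter lies. As in the proof of Theorem~\ref{prop:char-0-vanish}, \cite[Theorem 3.2.1]{blasius-harris-ramak} determines the chamber: contribution to degree $i$ cohomology of $\CV_{\sigma}$ corresponds (via the count of positive non-compact roots $\#(\Phi^+_n \cap \Phi(C)^+) = 3-j$ for $C = C_j$) to the chamber $C_{i}$. This pinpoints $\pi_{\infty}\cong \pi((a-1,b-2;-w),C_{i})$. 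The main obstacle throughout is the bookkeeping of conventions --- the Cayley transform between compact and split Cartans (Section~\ref{sec:group-gsp_4r}), the weight identification of Lemma~\ref{lem:identifying-Vsigma}, the $\rho$-shifts, and the compatibility between the split Weyl chamber labels $C_i$ of Section~\ref{sec:group-gsp_4} and the degree of cohomology contribution --- rather than any deep new input, since the substantive facts are quoted from \cite{blasius-harris-ramak} and \cite{harris-ann-arb}.
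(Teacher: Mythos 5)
Your proposal is correct and follows essentially the same route as the paper, which proves the three parts by citing \cite[Theorem~3.2.1]{blasius-harris-ramak} for the infinitesimal character, \cite[\S 2.1.2]{Sor} for the $\GL_4$-transfer, and Mirkovi\'c's theorem (as already invoked in Theorem~\ref{prop:char-0-vanish}) for part~(3). What you do differently is unpack the first two citations into explicit derivations — Casselman--Osborne plus the generalized Verma module $U(\gg_\C)\otimes_{U(\Lie Q^-)}V_\sigma$ for the infinitesimal character, and the explicit dual-torus embedding (with the symmetric normalisation of the four $\GL_4$-weights forced by the central character) for the transfer — which is what the cited sources do internally, so the logical content is the same. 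One small caveat worth keeping in mind: in part (1) you write the infinitesimal character as $\chi_{\sigma+\rho}$ and appeal to the Weyl group to land on $\chi_{(a-1,b-2;-w)}$; since $-1$ lies in the Weyl group of $\Sp_4$ but not of $\GSp_4$, the sign can be absorbed on the $\Sp_4$-part but the similitude coordinate must be fixed by the central character (not $\rho$-shifted), which is exactly why the paper writes $\chi_{((-\sigma-\rho)|_{\Sp_4(\R)};-w)}$ with the restriction and the $-w$ specified separately. You arrive at the right answer, but it would be cleaner to state this normalisation explicitly rather than subsuming it into ``applying the Weyl group.''
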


\begin{proof}
  For the first part, we have that
\[ H^i(\Lie P^{-}, K^h; \pi_{\infty}\otimes V_{\sigma}) \neq 0.\] 
It follows from \cite[Theorem 3.2.1]{blasius-harris-ramak} that the
infinitesimal character of $\pi_{\infty}$ is equal 
to~$\chi_{((-\sigma-\rho)|_{\Sp_4(\R)};-w)}$. 
The second part can be inferred from \cite[\S
2.1.2]{Sor}.
The last part is due to Mirkovi\'c and was established in the proof of Theorem~\ref{prop:char-0-vanish}.
\end{proof}

\begin{df}
  \label{defn:discrete-series-weight}
A weight $\mu = (a,b;c) \in X^*(T)^+_M$ such that $(a-1,b-2;c)$
lies in the interior of a unique Weyl chamber $C_i$ is said to be a \emph{discrete series
  weight} or a \emph{regular weight}. If $\mu - w_0(\rho)$ lies in the
intersection of exactly two of Weyl chambers $C_i$, we say it is a \emph{limit of
  discrete series weight} or a \emph{non-regular weight}.
\end{df}
From the explicit description of the Weyl chambers $C_i$ above, we see
that the limit of discrete series weights thus come in 3
families:
\begin{align*}
  \mu &= (a,2;c) \text{ with $a\in \Z_{\geq 2}$} \\
  \mu &= (a,3-a;c) \text{ with $a\in \Z_{\geq 2}$} \\
  \mu &= (1,b;c) \text{ with $b\in \Z_{\leq 1}$}.
\end{align*}
Note that for the  corresponding families of vector bundles
$\W_{\mu}=\omega(a,b)$, the first and third are interchanged under the
Serre duality map $\omega(a,b) \mapsto \omega(a,b)^{{\vee}}\otimes \det \Omega^1_{X}\cong\omega(3-b,3-a)(-\infty)$ while the second
family is stable under this operation. (Up to interchanging the canonical and
subcanonical extensions, of course.) The preceding theorem implies
that for all $a \geq 2$, we have:
\begin{eqnarray*}
   H^i(X(\C),\omega(a,2))_{(2),\temp}  &=& 0 \mbox{\;\; for $i=2,3$}\\
   H^i(X(\C),\omega(a,3-a))_{(2),\temp} &=& 0 \mbox{\;\; for $i=0,3$}.
\end{eqnarray*}
(Technically, we should normalize the Hecke action on the cohomology
of $\omega(a,b)$ before we adjoin the subscripts $(2)$ or $\temp$. See
Section~\ref{sec:hecke-operators} below.)
From the result of Lan and Suh,
we deduce the following characteristic $p$ analogue of these vanishing results for
limit of discrete series weights.

\begin{corr}
  \label{cor:ls-vanish-lds}
  \begin{enumerate}
  \item For $4\leq a \leq p$, we have
\[ H^i(X,\omega(a,2)(-\infty)_k)=0\]
for $i=2,3$.
\item For $3 \leq a \leq (p+1)/2$, we have
\[ H^0(X,\omega(a,3-a)_k)= H^3(X,\omega(a,3-a)(-\infty)_k)=0.\]
  \end{enumerate}
\end{corr}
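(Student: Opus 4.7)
The plan is to deduce both parts directly from Theorem~\ref{thm:lan-suh} together with Serre duality on the smooth proper threefold $X$.

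For part (1), I would apply Theorem~\ref{thm:lan-suh} to the weight $(a,b)=(a,2)$. The vanishing of $H^3$ is the case $i=3$ of part~(1) of that theorem, whose hypotheses $a\geq 3$ and $2\leq a-b\leq p-2$ reduce to exactly $4\leq a\leq p$. The vanishing of $H^2$ follows from part~(2), whose hypotheses $a+b\geq 6$ and $2\leq a-b\leq p-2$ again reduce to the same range. So part~(1) is immediate.

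For part (2), I would first treat the $H^3$ statement by applying part~(1) of Theorem~\ref{thm:lan-suh} to $(a,b)=(a,3-a)$. The hypothesis $2\leq a-(3-a)=2a-3\leq p-2$ together with $a\geq 3$ is equivalent to $3\leq a\leq (p+1)/2$, matching our assumption. For the $H^0$ statement, I would invoke Serre duality on $X$: since $X$ is smooth and proper of dimension three over $k$, there is a perfect pairing
\[ H^0(X,\omega(a,3-a)_k)\times H^3\bigl(X,\omega(a,3-a)_k^{\vee}\otimes \det\Omega^1_{X/k}\bigr)\to k. \]
As noted in the discussion preceding the corollary, Serre duality sends $\omega(a,b)$ to $\omega(3-b,3-a)(-\infty)$ via $\mathcal{F}\mapsto \mathcal{F}^{\vee}\otimes \det\Omega^1_X$, and the family $(a,3-a)$ is stable under the involution $(a,b)\mapsto(3-b,3-a)$. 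Hence the second factor in the pairing is $H^3(X,\omega(a,3-a)(-\infty)_k)$, which we have just shown to vanish. This forces $H^0(X,\omega(a,3-a)_k)=0$.

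There is no real obstacle here; the corollary is a bookkeeping consequence of the Lan--Suh vanishing theorem together with Serre duality. The only points to verify are the (easy) arithmetic inequalities in each application of Theorem~\ref{thm:lan-suh} and the self-duality of the weight $(a,3-a)$ under $(a,b)\mapsto(3-b,3-a)$.
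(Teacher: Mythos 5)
Your proof is correct and follows exactly the same route as the paper: parts (1) and (2) of Theorem~\ref{thm:lan-suh} applied to $(a,2)$, part (1) applied to $(a,3-a)$, and Serre duality combined with the self-duality of $(a,3-a)$ under $(a,b)\mapsto(3-b,3-a)$ for the $H^0$ vanishing. The only cosmetic difference is that you spell out the arithmetic reductions of the hypotheses, which the paper leaves implicit.
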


\begin{proof}
The vanishing results for the subcanonical extensions
$\omega(*,*)(-\infty)$ follow directly from Theorem~\ref{thm:lan-suh}. The fact that
\[ H^0(X,\omega(a,3-a)_k) =0 \]
in the second part then follows from Serre duality since:
\[ \omega(a,3-a)^{\vee}\otimes \det \Omega^1_{X/\CO} =
\omega(a-3,-a)\otimes \omega(3,3)(-\infty) = \omega(a,3-a)(-\infty).\] 
\end{proof}

\subsection{Torsion Classes} 
It seems natural to ask
whether one can (explicitly or otherwise) construct classes in $H^0(X,\omega(2,2))$ which
do not lift to characteristic zero.
Let us recall what happens for classical modular forms of weight one.

Suppose that $X_1(N)$ denotes (for this paragraph) the classical modular curve. A non-Eisenstein
Hecke eigenclass
in $H^0(X_1(N),\omega_k)$ gives rise to an irreducible Galois representation
$\rbar:G_{\Q} \rightarrow \GL_2(k)$. Suppose that
 the image of $\rhobar$ contains $\SL_2(k')$ for some $\# k' > 5$.
 Such a representation cannot be the mod-$p$ reduction of a representation
with image isomorphic to some subgroup of $\GL_2(\C)$, and thus
by~\cite{DeligneSerre}, the corresponding mod-$p$ class does
not lift to characteristic zero. (Explicit examples were first found by Mestre for $\#k = 8$ and $N = 1429$.)
A slightly different example can be given as follows.
Suppose that $\Gamma = \Gamma_1(N) \cap \Gamma_0(x)$.
Consider a non-Eisenstein Hecke eigenclass in $H^1(X(\Gamma),\omega_k)$ which is new of level $x$. Then
the restriction of $\rbar$ to $I_x$ is rank two unipotent. Such a class cannot lift to characteristic zero
at minimal level, because otherwise (by~\cite{DeligneSerre} again) the corresponding representation
$\rho$ would simultaneously have finite image and yet $\rho|I_x$ would be unipotent and hence infinite.
Note that (unlike in the first example) it may well be possible to lift $\rhobar$ to characteristic zero at
some non-minimal level.
Examples of the second kind have a natural analogue in the Siegel context.

Suppose that
 $\rbar$ has type {\bf U3\rm} at $x$. If $r$ is any minimal 
lift of $\rbar$, the image of $I_x$ under $r$ will be rank three unipotent.
This will also be true for the restriction of $r$ to any finite extension of $\Q_x$.
Yet, by a theorem of Grothendieck (\cite{SGA7}, Exp.9)  the image of inertia of a semistable
abelian variety is rank two unipotent, i.e.,  satisfies $(\sigma-1)^2 = 0$.
If follows that $r$ cannot contribute to a motive associated to an abelian
variety. Conjecturally, Siegel modular eigenforms of weight $(2,2)$ should be associated
to abelian varieties $M/\Q$ of dimension $2n$ equipped with an injection
$E \rightarrow \End_{\Q}(M) \otimes \Q$ for some totally real field $E$ of degree $n$.
This suggests that such representations $\rbar$  \emph{do not} admit minimal lifts
to characteristic zero when $\sigma = (2,2)$.
 It would be interesting to produce an explicit example of such
a modular representation. Recall that there is an  exceptional isomorphism 
$S_6 \simeq \GSp_4(\F_2)$ coming from identifying the Galois group
of $\mathcal{A}_2[2]$ over $\mathcal{A}_2$ with either the symmetries of the
$2$-torsion points on the universal abelian surface or the action of $S_6$ on 
the (generically) $6$ Weierstrass points~\cite{Faber}. The unipotent element
$\sigma \in \GSp_4(\F_2)$ such that $(\sigma-1)^2 \ne 0$ has conjugacy class
$(1,2,3,4)(5,6) \in S_6$ (this class is preserved by the exotic automorphism of $S_6$).
In particular, if $K/\Q$ is a sextic field with Galois closure $G \subset S_6$
containing $(1,2,3,4)(5,6)$ and acting irreducibly on $\F^4_2$, and
$p$ is an odd prime such that $p = \mathfrak{p}^4 \mathfrak{q}^2$, then $\rbar:\Gal(K/\Q) \simeq \GSp_4(\F_2)$
should give rise to such a representation.
Here is an explicit example coming
from a slight variation of this argument. Suppose that $A$ is the abelian surface corresponding to the Jacobian of the curve:
$$y^2 =x^5 - 2x^4 + 6x^3 - 8x^2 + 4x - 4,$$
which has good reduction outside~$3 \cdot 5 \cdot 19$.
The representation $\rbar:G_{\Q} \rightarrow \GSp_4(\F_2)$ has image~$S_5 \subset S_6$,
and  the image of inertia at~$5$ is conjugate to~$(1,2,3,4)(5,6)$.
Hence~$\rbar$ should give rise to a mod-$2$ torsion class
with trivial level structure outside $3 \cdot 5 \cdot 19$, and the following level structure at these primes:
\begin{enumerate}
\item Iwahori level structure at $p = 5$,
\item Paramodular level structure at $p = 3$ and $19$.
\end{enumerate}
Note that this conjectural torsion class \emph{does} conjecturally lift to characteristic zero at some level since one expects that~$A$ is modular.
(The conductor of~$A$ is~$3 \cdot 5^3 \cdot 19$.)

Common to both examples is the non-existence of  automorphic
representations $\pi$  (associated to either classical modular forms of weight $1$
 or Siegel modular forms of weight $(2,2)$) such that $\pi_x$ is the Steinberg representation.
 For classical modular forms, the non-existence of such $\pi$ follows from a consideration
 of the corresponding Galois representations, 
 an argument which does not obviously generalize to the Siegel case (since one does
 not know how to attach an abelian variety to such a form). However,
  following argument (due to Kevin Buzzard) generalizes nicely:
  
 \begin{theorem} If $\pi$ is a cuspidal automorphic representation associated to a Siegel modular form
 of weight $(2,2)$, then $\pi_x$ is not the Steinberg representation for any $p$.
 \end{theorem}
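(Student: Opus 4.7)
The strategy, due to Buzzard, is to derive a contradiction with the integrality of Hecke eigenvalues, generalizing the familiar observation that for a classical newform of weight~$1$, the Steinberg representation at a prime~$x$ would force the Hecke eigenvalue to be~$a_x = \pm x^{-1/2}$, which is not an algebraic integer. Suppose for contradiction that $\pi$ is cuspidal of weight~$(2,2)$ and that $\pi_x$ is the Steinberg representation of $\GSp_4(\Q_x)$ for some finite prime~$x$. Since Steinberg has a one-dimensional space of Iwahori-fixed vectors, we may take the corresponding Siegel eigenform~$F$ to have Iwahori level structure at~$x$.

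First, I would compute the eigenvalue at~$x$ of an appropriate Hecke (or $U$) operator acting on the Iwahori newvector of the Steinberg. In the arithmetic normalization appropriate to weight~$(j,k)=(2,2)$, the motivic weight is $w = j+k-3 = 1$, and the four spinor Satake--Frobenius eigenvalues of the Steinberg representation (obtained by scaling the unitary values $\{x^{3/2},x^{1/2},x^{-1/2},x^{-3/2}\}$ by~$x^{w/2}=x^{1/2}$) are $\{x^{2},x,1,x^{-1}\}$. The key point is the appearance of the parameter~$x^{-1}$: the Atkin--Lehner-type eigenvalue on the Iwahori newvector for a suitably normalized Hecke operator (the analogue of~$U_x$) is then of the form~$\pm x^{-1}$, which lies in~$\overline{\Q}$ but is not an algebraic integer.

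Second, I would invoke the integrality of the Hecke action. The Hecke correspondences at~$x$ preserve an integral structure on weight-$(2,2)$ Siegel modular forms --- concretely, on $H^0$ of the coherent sheaf $\omega(2,2)$ on an integral model of the Siegel threefold with Iwahori level at~$x$ --- so their eigenvalues on~$F$ must be algebraic integers. This contradicts the non-integrality from the first step.

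The main obstacle is purely bookkeeping: one must correctly identify the relevant Hecke or $U_x$-operator on the Iwahori-fixed line for Steinberg, pin down the precise arithmetic normalization that is compatible with the integral structure on $H^0(X,\omega(2,2))$, and then evaluate it. Once this setup is in place, the presence of~$x^{-1}$ among the arithmetic Satake parameters of Steinberg in weight~$(2,2)$ directly precludes integrality, exactly as in the weight-$1$ classical case.
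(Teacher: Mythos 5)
Your proposal follows essentially the same argument as the paper, which is Buzzard's: the arithmetic Satake parameters of Steinberg in weight $(2,2)$ are $\{x^{2},x,1,x^{-1}\}$, the $U_{x,1}$-eigenvalue on the Iwahori newvector is $x^{-1}$, and this contradicts the integrality of Hecke eigenvalues coming from the integral $q$-expansion. The only cosmetic differences are that you phrase the eigenvalue as ``$\pm x^{-1}$'' and refer to it as an ``Atkin--Lehner-type eigenvalue,'' where the paper directly identifies it as the $U_{x,1}$-eigenvalue $x^{(w-3)/2}$; neither difference affects the argument.
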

 
 \begin{proof}
 In weights $(j,k)$ with $j \ge k \ge 2$, the corresponding Frobenius
 eigenvalues of the Weil--Deligne representation
 associated to a Steinberg representation~$\pi_x$ are
 $$\{x^{(w+3)/2}, x^{(w+1)/2}, x^{(w-1)/2}, x^{(w-3)/2}\},$$
  where $w = j + k-3$. Moreover, the
 corresponding eigenvalue of $U_{x,1}$ is $x^{(w-3)/2}$.
 In particular, if $j=k=2$,
 then $w = 1$ and the corresponding eigenvalue of $U_{x,1}$ is $x^{-1}$,
 contradicting the integrality of Hecke eigenvalues (which is
 a consequence of the integrality of the $q$-expansion).
 \end{proof}

\subsection{Hecke operators}
\label{sec:hecke-operators}

For simplicity, we denote the schemes $X_K$ and $X_{K_i(Q)}$ of
\S~\ref{sec:cohom} by $X$ and $X_i(Q)$ respectively. Let $M$ denote an $\CO$-module.

Let $x$ be a rational prime. We define matrices 
\[ \beta_{x,0} = \begin{pmatrix} x &
  0&0&0\\0&x&0&0\\0&0&x&0\\0&0&0&x\end{pmatrix} 
\quad 
\beta_{x,1} = \begin{pmatrix} 1 &
  0&0&0\\0&1&0&0\\0&0&x&0\\0&0&0&x
\end{pmatrix}
\quad 
\beta_{x,2} = \begin{pmatrix} 1 & 0&0&0\\0&x&0&0\\0&0&x&0\\0&0&0&x^2\end{pmatrix}
\] and regard them as
elements of $\GSp_4(\Q_x)$. 
If $x\not \in S$ (resp.\ $x\not\in S\cup Q$) we will consider the Hecke operators
$T_{x,i}=[K\beta_{x,i}K]$ (resp.\
$T_{x,i}=[K_i(Q)\beta_{x,i}K_i(Q)]$) acting on each of the spaces 
\[ H^n(X,\omega(a,b)_M) \text{ (resp.\ $H^n(X_i(Q),\omega(a,b)_M)$)}
  \] 
  as in \cite[\S 1.1.6]{skinner-urban-p-adic} or \cite[\S
  8]{tilouine-bgg}. We also denote $T_{x,0}$ by $S_x$. The definition
  of Hecke operators given in \cite{skinner-urban-p-adic} or
  \cite{tilouine-bgg} applies when $x \neq p$ or when $p$ is
  invertible on $M$. The remaining cases when $x = p$ requires more
  care. In Lemma~\ref{lemma:Gross} below we show that $T_{p,1}$ and
  $Q_{p,2}: = (pT_{p,2}+ (p + p^3)S_p)p^{2-b}$ exist as operators in
  cohomological degree $n=0$ over $M = K/\CO$.
  
Similarly, if $x\in Q$, we have operators
$U_{x,i}=[K_i(Q)\beta_{x,i}K_i(Q)]$ on $H^n(X_i(Q),\omega(a,b)_M)$. As
in \S~\ref{sec:cohom}, the map $X_1(Q)\to X_0(Q)$ is Galois with Galois group
$\Delta_Q:=\prod_{x\in Q}(\Z/x)^\times$.  This gives rise
to an action of $\Delta_Q$ on
$H^n(X_1(Q),\omega(a,b)_M)$. For each $u\in \Delta_Q$, we
denote the corresponding operator on $H^n(X_1(Q),\omega(a,b)_M)$ by
$\langle u \rangle$.

Finally, we shall also exploit  Hecke operators of a slightly
different flavour, which we denote by~$U_{p,1}$
and~$U_{p,2}$ respectively. In the context of this paper,
they may be considered formal operators on~$q$-expansions.
(They can also be interpreted more classically as Hecke operators
with level structure at~$p$.)
Their key property is that the operators~$T_{p,1}$ 
and~$T_{p,2}/p^{k+j-6}$ act by~$U_{p,1}$ and~$U_{p,2}$
for large enough weights, including~$(j,k)$ plus any non-trivial multiple of~$(p-1,p-1)$
for~$j \ge k \ge 2$. Their explicit definition in given in
Lemmas~\ref{eightthree} and \ref{eightfour}.

\begin{remark}
  \label{rem:normalization}
  We note that our definition of the Hecke action is the `natural' one
  twisted by $\nu^{-3}$ (see \cite[1.1.6a]{skinner-urban-p-adic}). We
  saw in the proof of Theorem~\ref{thm:lan-suh}, that for the natural
  action, there is an isomorphism $\omega(a,b) \cong \W_{(a,b;-a-b)}$,
  and hence over $\C$, an isomorphism $\omega(a,b) \cong
  \CV_{(-b,-a;-a-b)}$. Under our normalization of the Hecke action on
  $\omega(a,b)$, we therefore have $\omega(a,b) \cong \W_{\mu}$ and,
  over $\C$, $\omega(a,b) \cong \CV_{\sigma}$ where we take:
  \[ \mu = (a,b;3-a-b) \quad \mbox{and}\quad \sigma =
  (-b,-a;6-a-b). \] 
\end{remark}

\begin{remark}
  \label{rem:convention}
  In view of the previous remark, we will identify the set $\Z^{2,+}:=\{ (a,b)\in \Z^2: a\geq b\}$
  with the subset $(a,b;3-a-b)$ of $X^*(T)^+_M$. Thus it makes sense
  to speak of $\mu =(a,b) \in X^*(T)^+_M$.
\end{remark}

\begin{remark}
  \label{rem:central-chars}
  Let $\mu = (a,b) \in X^*(T)^+_M$ and let $w = a+b-6$. For $x \in
  \Z$, we can similarly define a Hecke operator associated to
  $[K\diag(x,x,x,x)K]$ on the cohomology of $\omega(a,b)$: this
  operator acts as $x^w = x^{a+b-6}$.  Now, suppose that $\pi =
  \pi^{\infty}\otimes \pi_\infty$ in $\CA_{(2)}(G)$ contributes to
  \[ H^i(X(\C),\omega(a,b))_{(2)} 
  \cong H^i( X(\C),\W_{\mu})_{(2)}
  \cong H^i( X(\C),\CV_{\sigma})_{(2)},\]
  where $\sigma = (-b,-a;-w)$. It follows that the central character of
  $\pi_{\infty}$ is given by:
  \[ x \mapsto x^{-w}.\]
  Furthermore, by Proposition~\ref{thm:contrib-to-char-0-cohom}, the
  transfer of $\pi_{\infty}$ to $\GL_4(\R)$ has infinitesimal
  character $\chi_{\tau}$ where
 \[ \tau = \left(0, -(b-2), -(a-1), -(a+b-3)\right)
  +3/2(1,1,1,1).  \]
\end{remark}

We now introduce some Hecke algebras. We note that in the following
definition, we work over $K/\CO$ rather than $\CO$.

\begin{df}
  \label{defn:hecke-alg} Let~$\mu = (a,b)\in X^*(T)^+_M$ with $a \ge b \ge 2$.
  \begin{enumerate}
  \item  The anaemic Hecke algebra 
\[ \Tan_{\mu}(Q) \subset \End_{\CO}(H^0(X_1(Q),\omega(a,b)(-\infty)_{K/\CO})) \]
is the $\CO$-algebra generated by the operators $T_{x,i}$ for $x \not\in
S\cup Q\cup \{p\}$.
\item Similarly, we let $\T_{\mu}(Q)$ be the algebra generated over $\Tan_{\mu}(Q)$
by the operators $U_{x,i}$ for $x\in Q$ and $\langle u \rangle$ for
$u\in \Delta_Q$. When $Q=\emptyset$, we have $ \Tan_{\mu}(\emptyset) =
\T_{\mu}(\emptyset)$ and we denote this algebra by $\T_{\mu}$.
\item Finally, $\WT_{\mu}(Q)$ denotes the
  $\T_{\mu}(Q)$-algebra generated by the operators~$T_{p,1}$ and~$Q_{p,2} = (p T_{p,2}
  + (p + p^3) S_p) p^{2-b}$. (The existence of these operators is
  established in Lemma~\ref{lemma:Gross}.)
  If $Q  =\emptyset$, then we denote $\WT_{\mu}(\emptyset)$ by $\WT_{\mu}$.
\end{enumerate}
\end{df}

Note that the algebras $\Tan_{\mu}(Q) \subset \T_{\mu}(Q) \subset
\WT_{\mu}(Q)$ preserve the subspace
\[ H^0(X_0(Q),\omega(a,b)(-\infty)_{K/\CO}) \subset
  H^0(X_1(Q),\omega(a,b)(-\infty)_{K/\CO}). \]

  We will also need to consider ordinary Hecke algebras.
  Let $e = \varinjlim_n (T_{p,1}Q_{p,2})^{n!}$ denote the ordinary
  idempotent associated to the Hecke operators $T_{p,1}$ and
  $Q_{p,2}$. (We will only consider this operator in contexts where
  the direct limit makes sense.) We define:
\[ H^0(X_0(Q),\omega(a,b)(-\infty)_M)^{\ord} = e
H^0(X_0(Q),\omega(a,b)(-\infty)_M) \]
for $M = \CO, \CO/\varpi^m$ or $M = K/\CO$. We thus have:
{\small
\[ H^0(X_0(Q),\omega(a,b)(-\infty)_M) =
H^0(X_0(Q),\omega(a,b)(-\infty)_M)^{\ord} \bigoplus
(1-e)H^0(X_0(Q),\omega(a,b)(-\infty)_M)\]
}
for such $M$.

\begin{df}
  \label{df:ordinary-hecke-alg}
  Let~$\mu = (a,b)$ with $a \ge b \ge 2$. We define the ordinary Hecke algebras
  $\Tan_{\mu}(Q)^{\ord}$ (resp.\ $\T_{\mu}(Q)^{\ord}$,
  $\wT_{\mu}(Q)^{\ord}$) to be the image of $\Tan_{\mu}(Q)$ (resp.\ $\T_{\mu}(Q)$,
  $\wT_{\mu}(Q)$) in 
\[ \End_{\CO}(H^0(X_0(Q),\omega(a,b)(-\infty)_{K/\CO})^{\ord}). \]
\end{df}

\section{Galois representations associated to modular forms}

As in Section~\ref{sec:cohom}, let $S$ and $Q$ be finite sets of
primes of $\Q$ which are disjoint and do not contain $p$. We allow
the possibility that $Q=\emptyset$. We let $K$ and $K_{i}(Q)$ be open
compact subgroups of $\GSp_4(\A^{\infty})$ as in
Section~\ref{sec:cohom}, and we let $X = X_K$ and $X_i(Q) = X_{K_i(Q)}$ be the
corresponding Siegel threefolds, defined over $\CO$.

\subsection{The Hasse invariant}
\label{sec:hasse-invariant}

We begin with a definition.

\begin{df}
  \label{defn:hasse-invts}
Let $h \in H^0(X, \omega^{p-1}_k)$ be the Hasse invariant and
let $A \in H^0(X, \omega^{r(p-1)})$ be a lift of $h^r$,
for some $r>0$ which we fix for the rest of this section.  
\end{df}

The existence of such a lift $A$ follows
from the Koecher principle and the ampleness of $\omega$ on the
minimal compactification of $X$.

\begin{lemma}
  \label{lem:hasse-mod-p}
Let $\mu = (a,b)\in X^*(T)_M^+$ with $a\geq b \geq 2$. Then:
\begin{enumerate}
\item Multiplication by $h$ defines an injection:
\[ h: H^0(X_1(Q),\omega(a,b)_k) \into
H^0(X_1(Q),\omega(a+(p-1),b+(p-1))_k) \]
which is equivariant for the Hecke operators $T_{x,i}$ for each $x
\not\in S\cup Q \cup \{p\}$ and the operators $U_{x,i}$ for $x \in Q$.
\item\label{ops-at-p}  If $b\geq 3$, then this map is also
equivariant for the operators $T_{p,1}$ and
$Q_{p,2}$.
\end{enumerate}
\end{lemma}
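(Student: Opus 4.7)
The plan is to handle the two parts separately: injectivity together with prime-to-$p$ Hecke equivariance is direct, while the equivariance under $T_{p,1}$ and $Q_{p,2}$ is the main technical point and will rely on the explicit description of these operators on $q$-expansions provided by Lemma~\ref{lemma:Gross}.

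For injectivity, I would observe that the Hasse invariant $h$ is a nonzero global section of the line bundle $\omega^{p-1}$ on $X_k$. Its vanishing locus (the non-ordinary locus, extended across the toroidal boundary) is a proper closed subscheme meeting every irreducible component of $X_k$ in codimension one. Since $\omega(a,b)_k$ is locally free and $X_k$ is reduced, multiplication by $h$ is injective as a map of coherent sheaves, and hence on global sections.

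For equivariance under $T_{x,i}$ with $x \notin S\cup Q\cup\{p\}$ and $U_{x,i}$ with $x \in Q$, I would argue as follows. Each such Hecke operator is defined by a correspondence $X_1(Q)\xleftarrow{\pi_1} Y \xrightarrow{\pi_2} X_1(Q)$, where $Y$ parameterises the appropriate auxiliary level structure at $x$, and the action on cohomology is $T=\pi_{2,*}\pi_1^*$. Since $x\neq p$, both $\pi_1$ and $\pi_2$ pull back the universal semi-abelian scheme on $X_1(Q)$ to the \emph{same} semi-abelian scheme on $Y$; only the level structure at $x$ is modified. The Hasse invariant being an intrinsic invariant of the mod-$p$ semi-abelian scheme, we have $\pi_1^* h = \pi_2^* h$, and the projection formula then yields $T(h\cdot s) = h \cdot T(s)$.

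For the second part, concerning $T_{p,1}$ and $Q_{p,2}$, the naive argument fails because these operators are defined via genuine $p$-isogeny correspondences: the pullbacks $\pi_1^*h$ and $\pi_2^*h$ differ. My strategy would be to invoke the explicit $q$-expansion description of these operators from Lemma~\ref{lemma:Gross} together with the classical fact that the Hasse invariant has $q$-expansion equal to $1$ at the standard cusp. The equivariance then reduces to an identity of $q$-expansions, which can be checked directly. The delicate point, and main obstacle, is the bookkeeping of $p$-powers: the $p$-isogenies in the correspondences introduce factors of $p$ in denominators, and these must be absorbed by the normalisation $p^{2-b}$ appearing in $Q_{p,2}=(pT_{p,2}+(p+p^3)S_p)p^{2-b}$ (and the analogous normalisation implicit in $T_{p,1}$). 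The hypothesis $b\geq 3$ is precisely what guarantees that all such denominators cancel, so that the resulting operator is defined integrally and acts on the mod-$p$ cohomology in a manner compatible with multiplication by $h$.
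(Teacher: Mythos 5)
Your overall strategy is sound and both parts come out correct, but the route you take is genuinely different from the paper's, and a couple of your intermediate assertions need small corrections.

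For part 1, the paper simply declares injectivity and prime-to-$p$ Hecke equivariance ``well-known.'' Your argument via the projection formula is the right geometric explanation, but one of your intermediate claims is not literally true: for the correspondence defining $T_{x,i}$ the two pullbacks $\pi_1^*\CA$ and $\pi_2^*\CA$ of the universal semi-abelian scheme are \emph{not} equal --- the moduli problem defining $Y$ involves a degree-$x^k$ isogeny, so the two families are merely isogenous. The correct statement is that a prime-to-$p$ isogeny induces an isomorphism of $p$-divisible groups and hence carries the Hasse invariant of one family to that of the other, so $\pi_1^*h = \pi_2^*h$ still holds; the projection formula then finishes the argument as you say.

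For part 2 the paper takes a different path: it cites \cite[\S A.3]{Pilloni-hida} and \cite[Lemme 8.7]{tilouine-bgg} for the fact that the normalized operators $U_{p,1}, U_{p,2}$ commute with $h$, and then uses \cite[Lemme 8.5]{tilouine-bgg} to get $T_{p,1}\equiv U_{p,1}$ and $p^{3-b}T_{p,2}\equiv U_{p,2}$ mod $p$ when $b\geq 3$, from which the claimed commutation with $h$ follows formally. Your plan --- to work directly with $q$-expansions using the formulas later made explicit in Lemmas~\ref{eightthree},~\ref{eightfour},~\ref{lemma:verify}, and the fact that $h$ has trivial $q$-expansion --- can be made to work, but the way you have framed the ``delicate point'' misses what actually has to be checked. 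The issue is not that the $p$-isogeny correspondence introduces denominators to be absorbed (that is the concern in the separate Lemma~\ref{lemma:Gross}, showing the operators exist integrally at all). The issue here is that the $q$-expansion formula for $T_{p,1}$ (and for $Q_{p,2}$) genuinely \emph{depends on the weight}: in weight $(j,k)$ one has $T_{p,1} = U + p^{k-2}Z + p^{k+j-3}V$, so the operators on the two sides of your diagram are literally different formal operators. Since $h$ acts trivially on $q$-expansions, commutation with $h$ amounts to showing that the operator in weight $(a,b)$ and the operator in weight $(a+p-1,b+p-1)$ \emph{coincide mod $p$}. When $b\geq 3$ (so $k-2\geq 1$), both reduce mod $p$ to the weight-independent operator $U$; similarly $Q_{p,2}$ reduces in both weights to $Z_2$, which is again weight-independent under a parallel weight shift. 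Once you make this the explicit claim, the computation is exactly the one carried out in Lemma~\ref{lemma:verify}, and the hypothesis $b\geq 3$ is revealed as the condition that the weight-dependent terms are divisible by $p$.
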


\begin{proof}
  It is well-known that multiplication by $h$ is injective and commutes
with Hecke operators away from $p$. We may thus assume that $b \geq
3$. It is shown in \cite[\S A.3]{Pilloni-hida} and \cite[Lemme
  8.7]{tilouine-bgg} that multiplication by $h$ commutes with the
operators $U_{p,1}$ and $U_{p,2}$. Since $b \geq 3$,
\cite[Lemme 8.5]{tilouine-bgg} implies that $T_{p,1}\equiv U_{p,1} \mod
p$ and $p^{3-b}T_{p,2} \equiv U_{p,2} \mod p$. It follows that
$T_{p,1}$ and $Q_{p,2} = p^{3-b}T_{p,2} + (1+p^2)p^{3-b}S_p$ also
commute with $h$.
\end{proof}

Suppose that $\mu = (a,b) \in X^*(T)_M^+$ with $a\geq b \geq 2$. By the proof of
\cite[Th\'eor\`eme 6.2]{Pilloni-hida}, there exists an integer
$N(\mu)$ as in the following definition.

\begin{df}
  \label{df:suff-large-wt}
  Let $N(\mu)$ be an integer such that for all $t \geq N(\mu)$,
  $i>0$, and $Z \in \{ X, X_0(Q), X_1(Q)\}$, the cohomology group
  \[ H^i(Z, \omega(a+t, b+t)(-\infty)_k)\] vanishes.
  \end{df}
Note that for such $t \geq N(\mu)$, 
the maps
\begin{eqnarray*}
    H^0(X,\omega(a+t, b+t)(-\infty)) &\to& H^0(X, \omega(a+t, b+t)(-\infty)_k) \\
 H^0(X,\omega(a+t, b+t)(-\infty)_K) &\to& H^0(X, \omega(a+t, b+t)(-\infty)_{K/\CO})
\end{eqnarray*}
are both surjective. The same is true over $X_0(Q)$ and $X_1(Q)$.

\begin{lemma}
\label{lem:hasse-mod-p^m}
Let $\mu = (a,b)\in X^*(T)_M^+$ with $a\geq b \geq 2$ and let $m>0$. There exists
an integer $s>0$ such that, if we set $t = rs(p-1)$, then:
\begin{enumerate}
  \item $t \geq N(\mu)$, and
 \item multiplication by $A^s$ defines an injection
\[ H^0(X_1(Q),{\omega(a,b)}_{\CO/\varpi^m}) \into
H^0(X_1(Q),{\omega(a+t,b+t)}_{\CO/\varpi^m}) \]
which is equivariant for the Hecke operators $T_{x,i}$ for each $x
\not\in S\cup Q \cup \{p\}$ and the operators $U_{x,i}$ for each $x
\in Q$.
 \end{enumerate}  
\end{lemma}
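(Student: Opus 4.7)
The plan is to split the proof into three steps: a choice of $s$ large enough to force higher cohomology to vanish, a proof that multiplication by $A^s$ is injective, and (the delicate part) a proof that this multiplication is Hecke-equivariant modulo $\varpi^m$.

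First I would choose $s$ so large that $t := rs(p-1) \geq N(\mu)$; this gives condition (1) directly. By Definition \ref{df:suff-large-wt} this forces $H^i(Z,\omega(a+t,b+t)(-\infty)_k)=0$ for $i>0$ and each $Z \in \{X, X_0(Q), X_1(Q)\}$. Unwinding the short exact sequences $0 \to k \to \CO/\varpi^i \to \CO/\varpi^{i-1} \to 0$ by induction, this vanishing propagates to coefficients in $\CO/\varpi^m$, so in particular reduction on $H^0$ from $\CO/\varpi^{i}$ down to $\CO/\varpi^{i-1}$ is surjective for all $i\leq m$.

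For injectivity, multiplication by $A^s$ on $H^0(X_1(Q),\omega(a,b))$ is clearly injective over $\CO$, since $A$ is a nonzero section of an invertible sheaf on the integral scheme $X$ and $X_1(Q) \to X$ is surjective. Modulo $p$ this map coincides with multiplication by $h^{rs}$, which is injective on $H^0(X_1(Q),\omega(a,b)_k)$ by iteration of Lemma~\ref{lem:hasse-mod-p}. Combining these facts via the snake lemma on
\[0 \to \omega(a,b)_{\CO/\varpi^{m-1}} \to \omega(a,b)_{\CO/\varpi^m} \to \omega(a,b)_{k} \to 0\]
(whose long exact sequence is short by the vanishing of the previous paragraph) yields injectivity of multiplication by $A^s$ on $H^0(X_1(Q),\omega(a,b)_{\CO/\varpi^m})$ by induction on $m$.

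For Hecke equivariance I expect the main difficulty. Lemma~\ref{lem:hasse-mod-p} tells us that $h^{rs} = A^s \bmod p$ commutes with each $T_{x,i}$ for $x \notin S\cup Q \cup \{p\}$ and each $U_{x,i}$ for $x\in Q$, so the commutator $[T_{x,i},A^s]$ on $H^0(X_1(Q),\omega(a,b)_{\CO})$ is divisible by $p$; but a direct iteration of the identity $[T_{x,i},A^s]=\sum_j A^j[T_{x,i},A]A^{s-1-j}$ only gives divisibility by $p$, not by $p^m$. To bridge this gap I would replace $A^s$ by a ``corrected'' lift: starting from $A^s$, inductively modify by adding a section in $p^i \cdot H^0(X,\omega^t_\CO)$ designed to cancel the commutator modulo $p^{i+1}$. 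The obstructions to solving these successive linear equations at the level of sections live in cohomology groups whose vanishing is guaranteed by our choice $t \geq N(\mu)$ in Step~1, and after $m$ iterations one obtains a section congruent to $A^s$ modulo $p$ (hence satisfying the same injectivity) whose multiplication commutes with the listed Hecke operators modulo $\varpi^m$. The injectivity in Step~2 then transfers to the corrected section, completing the proof. The genuinely nontrivial point is precisely this bootstrap from mod-$p$ to mod-$\varpi^m$ equivariance, which requires either the above obstruction-theoretic argument or a more geometric input exploiting the fact that the Hasse invariant arises from Verschiebung on the universal abelian scheme and is therefore compatible with correspondences of degree prime to $p$.
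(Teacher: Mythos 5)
The injectivity analysis in your Step~2 is fine, but your Step~3 both departs from what the lemma actually asserts and omits the one idea that makes the statement true. The lemma is about multiplication by the \emph{fixed} section $A^s$, where $A$ is the lift of $h^r$ chosen once and for all in Definition~\ref{defn:hasse-invts}. Your obstruction-theoretic bootstrap replaces $A^s$ by a ``corrected'' section $A^s + pC_1 + p^2 C_2 + \cdots$; even if that construction could be carried out, it proves equivariance for the corrected section, not for $A^s$, so it does not establish the lemma as stated (and the subsequent uses of this lemma rely on it being a power of the chosen $A$, so one cannot quietly swap in a different lift). Moreover, the obstruction argument itself is not on solid ground: you would need a \emph{single} correction $C_i$ that simultaneously kills the mod-$p^{i+1}$ commutator with \emph{every} $T_{x,i}$ and $U_{x,i}$ on \emph{every} form $F$, and you have not identified the cohomology group in which these obstructions live nor shown that it vanishes; the vanishing statement following from $t\geq N(\mu)$ controls $H^{>0}$ of $\omega(a+t,b+t)(-\infty)$, which is not obviously the relevant obstruction space.

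The idea you are missing is an extra constraint on $s$: in the paper one takes $s$ divisible by $p^{m-1}$ (in addition to being large enough that $t = rs(p-1)\geq N(\mu)$). The point is that for any two lifts $A_1, A_2$ of $h^r$ over $\CO$ with $A_1 \equiv A_2 \bmod p$, one has $A_1^{p^{m-1}} \equiv A_2^{p^{m-1}} \bmod p^m$, so multiplication by $A^s$ for $p^{m-1}\mid s$ is, modulo $\varpi^m$, intrinsic to $h$ and not to the particular choice of lift. Since the Hecke correspondence $T_{x,i}$ (respectively $U_{x,i}$, $x\in Q$) transports $A$ to another lift of $h^r$, the commutator $[T_{x,i}, A^s]$ vanishes modulo $\varpi^m$ automatically; this is precisely the content of the result of Goldring (Theorem~6.2.1 of the cited reference) that the paper invokes, and once it is in hand the lemma follows immediately by taking any $s$ with $p^{m-1}\mid s$ and $s > N(\mu)/r(p-1)$. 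Your Step~1 and Step~2 then go through verbatim, but Step~3 should be replaced by the divisibility condition on $s$ and the appeal to this standard lifting principle rather than an ad hoc correction of $A^s$.
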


\begin{proof}
  The second property holds as long as $p^{m-1} | s$ (see \cite[Theorem
  6.2.1]{goldring}), so it suffices to take $s$ equal to any integer
  greater than $N(\mu)/r(p-1)$ and divisible by $p^{m-1}$.
\end{proof}

Let $\mu = (a,b)\in X^*(T)_M^+$ with $a\geq b \geq 2$. Recall that the Hecke algebras  
\[ \Tan_{\mu}(Q) \subset \T_{\mu}(Q) \subset
\WT_{\mu}(Q) \subset  \End_{\CO}(H^0(X_1(Q),\omega(a,b)(-\infty)_{K/\CO})) \] 
were defined in Definition~\ref{defn:hecke-alg}.

\begin{remark}
  \label{rem:hasse-hecke}
For $\mu = (a,b)$  with~$a \ge b \ge 2$ and each $m >0$, we have
$$H^0(X_1(Q),{\omega(a,b)(-\infty)}_{\CO/\varpi^m})\cong
H^0(X_1(Q),{\omega(a,b)(-\infty)}_{K/\CO})[\varpi^m].$$ Let
$I_{\mu,m}$ (resp.\ $\wI_{\mu,m}$)
denote the annihilator of the former space in $\T_{\mu}(Q)$ (resp.\ $\wT_{\mu}(Q)$). If
$s$ and $t$ are as in Lemma~\ref{lem:hasse-mod-p^m}, then
multiplication by $A^s$ induces a surjective map:
\[ \T_{\mu'}(Q) \onto \T_{\mu}(Q)/I_{\mu,m}, \]
where $\mu' = \mu + (t,t)$.
In particular, any maximal ideal $\m$ of $\T_{\mu}(Q)$ pulls back
under this map to a maximal ideal of $\T_{\mu'}(Q)$ which we will also
denote by $\m$.

Similarly, Lemma~\ref{lem:hasse-mod-p} induces a map
\[ \T_{\mu'}(Q) \onto \T_{\mu}(Q)/I_{\mu,1} \]
where $\mu' = \mu + (p-1,p-1)$ and, if $b\geq 3$, this extends to a
map
\[ \wT_{\mu'}(Q) \onto \wT_{\mu}(Q)/\wI_{\mu,1}. \]
\end{remark}

\subsection{Preliminaries on Galois representations}
\label{sec:prel-galo-repr}

We now turn our attention to Galois representations.

\begin{prop}
  \label{prop:similitude}
Let $\mu = (a,b) \in X^*(T)_M^+$ and let $w = a+b-6$. 
There is a continuous character
\[ \chi_{\mu} : G_{\Q} \to \Tan_{\mu}(Q)^\times \]
such that:
\begin{enumerate}
\item $\chi_{\mu}|G_{\Q_p}$ is crystalline with Hodge--Tate weight $w$;
\item for all $x\not\in S\cup Q\cup\{p\}$, $\chi_{\mu}$ is unramified
  at $x$ and $\chi_{\mu}(\Frob_x) = S_x$.
\end{enumerate}
In particular, 
\[ \chi_{\mu} = \chi_{\mu,0} \eps^{-w} \]
for some finite order character $\chi_{\mu,0}:  G_{\Q} \to \wT_{\mu}(Q)^\times$.
\end{prop}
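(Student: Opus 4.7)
The plan is to build $\chi_\mu$ by assembling the similitude characters $\nu(r_\pi)$ of the Galois representations $r_\pi : G_\Q \to \GSp_4(\Qbar_p)$ attached to the characteristic zero Hecke eigensystems $\pi$ contributing to $H^0(X_1(Q),\omega(a,b)(-\infty)_{K/\CO})$. For each such $\pi$, the existing literature on Galois representations attached to (cohomological, possibly limit-of-discrete-series weight) Siegel modular forms provides an $r_\pi$ whose similitude character $\nu(r_\pi)$ is de Rham at $p$ with Hodge--Tate weight $w=a+b-6$, unramified outside $S\cup Q\cup\{p\}$, with $\nu(r_\pi)(\Frob_x)=S_x(\pi)$ for such $x$; the crystalline nature of $\nu(r_\pi)|_{G_{\Q_p}}$ (as opposed to merely de Rham) follows from ordinarity at $p$, and the finite-order factorization $\nu(r_\pi)=\chi_{\pi,0}\eps^{-w}$ is classical (it boils down to the central character of $\pi_\infty$ being $|\cdot|^{w}$ times finite order, by Remark~\ref{rem:central-chars}).

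First I would combine the $\nu(r_\pi)$ into a single continuous character
\[ \tilde\chi_\mu: G_\Q \lra \bigl(\Tan_\mu(Q)\otimes_\CO K\bigr)^\times \]
using the embedding of $\Tan_\mu(Q)\otimes K$ into the product of its localizations at minimal primes. By construction, $\tilde\chi_\mu(\Frob_x)$ is the image of $S_x \in \Tan_\mu(Q)$ for every $x \notin S\cup Q\cup\{p\}$. The profinite $\CO$-algebra $\Tan_\mu(Q)$ (a finitely generated, and hence closed, $\CO$-submodule of $\End_\CO(H^0)$) is closed in $\Tan_\mu(Q)\otimes K$ in the natural profinite topology. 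By the Chebotarev density theorem, the Frobenii at good primes are dense in the (profinite abelian) quotient of $G_\Q$ through which $\tilde\chi_\mu$ factors; continuity together with closedness then forces $\im(\tilde\chi_\mu)\subset\Tan_\mu(Q)^\times$. This defines $\chi_\mu$, and properties (1) and (2) hold by construction.

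Finally, the factorization $\chi_\mu = \chi_{\mu,0}\eps^{-w}$ follows by taking the product of the corresponding factorizations for each $\nu(r_\pi)$; the character $\chi_{\mu,0}$ is of finite order because all of the constituents $\chi_{\pi,0}$ have uniformly bounded conductor (controlled by the tame level $N$ together with $Q$), so they factor through a common finite quotient. The target of $\chi_{\mu,0}$ is written as $\wT_\mu(Q)^\times$ rather than $\Tan_\mu(Q)^\times$ because pinning down its unramified value at $\Frob_p$ involves $S_p$, which is realized via $Q_{p,2}\in\wT_\mu(Q)$.

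The main obstacle is the density-and-continuity descent from $(\Tan_\mu(Q)\otimes K)^\times$ to $\Tan_\mu(Q)^\times$: one must check that the characteristic zero points really do detect the Hecke algebra (i.e., that the embedding into the product of localizations is injective after suitable reduction), and that the Frobenii at unramified primes suffice to generate a dense subgroup in the abelianized quotient relevant to the construction. Both points are standard for Hecke algebras of this type, but must be verified carefully in the torsion-coefficient setting here.
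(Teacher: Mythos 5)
Your approach — assemble $\chi_\mu$ from the similitude characters of characteristic zero eigensystems and descend by density — has a genuine gap at exactly the point you flag. The descent from $(\Tan_\mu(Q)\otimes K)^\times$ to $\Tan_\mu(Q)^\times$ requires the natural map $\Tan_\mu(Q)\to\Tan_\mu(Q)\otimes K$ to be injective, i.e.\ $\Tan_\mu(Q)$ to be $\CO$-torsion-free. That is false in general: $\Tan_\mu(Q)$ is a subalgebra of $\End_\CO$ of a torsion $\CO$-module, and in the non-regular weights $(a,2)$ which are the whole point of this paper, the Hecke algebra can (and is expected to) acquire $\varpi$-torsion coming from mod-$p$ Hecke eigensystems that do not lift to characteristic zero (see \S5.4 of the paper, ``Torsion Classes''). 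So the step ``$\Tan_\mu(Q)$ is closed in $\Tan_\mu(Q)\otimes K$, hence density forces the values into $\Tan_\mu(Q)^\times$'' simply does not get off the ground: the algebra need not even inject. Calling this ``standard for Hecke algebras of this type'' understates the difficulty — the torsion phenomenon is precisely what makes the non-regular case hard, and elsewhere in the paper (proof of Theorem~\ref{theorem:highergood}) the authors work around it by first multiplying by a power of the Hasse invariant to reach a weight where the relevant cohomology is torsion-free, only then invoking characteristic zero eigensystems.

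The paper's own proof (a citation to~\cite{TaylorDuke}, Prop.~4) avoids all of this and is much more elementary. The point is that $S_x = T_{x,0}$ is the double coset of the \emph{central} element $\diag(x,x,x,x)$, so its action on $H^0(X_1(Q),\omega(a,b)(-\infty)_M)$ is purely geometric: it is the scalar $x^{w}$ coming from the central character of the weight (Remark~\ref{rem:central-chars}), multiplied by the value at $x$ of a finite-order character cut out by the level structure (a ``Nebentypus,'' realized by diamond/level operators acting as automorphisms of the Siegel threefold). Both pieces are defined directly on cohomology with arbitrary $\CO$-module coefficients — no characteristic zero eigenforms, no density argument, no torsion-freeness — and their product is by construction a continuous character $G_\Q\to\Tan_\mu(Q)^\times$. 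Crystallinity with Hodge--Tate weight $w$ and the factorization $\chi_\mu=\chi_{\mu,0}\eps^{-w}$ then follow trivially. To salvage your approach you would have to insert the Hasse-invariant reduction as in the proof of Theorem~\ref{theorem:highergood}, at which point you would be re-proving a substantially harder statement than is actually needed here.
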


\begin{proof}
  This follows from the proof of \cite[Proposition 4]{TaylorDuke},
  noting that we have twisted the Hecke action by $\nu^{-3}$ (see Remark~\ref{rem:central-chars}).
\end{proof}

\begin{df}
  \label{df:Hecke-polynomial}
For a prime $x$, we introduce the Hecke polynomial:
\[ Q_x(T) = X^4 - T_{x,1} X^3 + (x T_{x,2} + (x^3 + x) S_x) X^2 -
 x^3 S_xT_{x,1} X + x^6 S_x^2. \]
\end{df}

If a modular form $f$ is an eigenform for a collection of Hecke operators $T$, we
denote by $\lambda_f$ the map such that $T f = \lambda_f(T)f$ for each
$T$. In particular, if $f$ is an eigenform for the operators $T_{x,i}$
at $x$, then we can specialize the polynomial $Q_x(T)$ at $f$ to get $\lambda_f(Q_x(T))$.

\begin{prop}
  \label{prop:weiss}
Let $\mu = (a,b) \in X^*(T)_M^+$ with $a \geq b \geq 3$. Let $w =
a+b-6$ and $\w = w+3 = a+b-3$.
Let 
\[ f \in H^0(X_1(Q), \omega(a,b)(-\infty)) \]
be a cuspidal eigenform for the operators $T_{x,i}$ for all $x \not
\in Q \cup S $ and $i = 0,1,2$. 
Then there is a continuous semisimple representation
\[ r_f : G_{\Q} \to \GSp_4(K') \]
defined over a finite extension $K'/K$ such that:
\begin{enumerate}
\item\label{W-simil} The similitude character $\nu \circ r_f$ is given
  by
\[ \nu \circ r_f = \lambda_f \circ \chi_{\mu}\epsilon^{-3} = \lambda_f
\circ \chi_{\mu,0}\epsilon^{-\w}.\]
\item\label{W-unram} $r_f$ is unramified at primes $x \not\in Q \cup S \cup \{p\}$,
  and at such primes, the characteristic polynomial of $r_f(\Frob_x)$
  is given by:
\[ \det(X - r_f(\Frob_x)) = \lambda_f(Q_x(X)).\]
\item\label{W-at-p} The restriction $r_f | G_{\Q_p}$ is crystalline with Hodge--Tate
  weights $\w,(a-1),(b-2),0$. If, in addition, $f$ is an eigenvalue of
  the Hecke operators at $p$, then the characteristic polynomial of
  $\Phi$ on $D_{\mathrm{cris}}(r_{f}|G_{\Q_p})$ is $\lambda_f(Q_p(X))$.
\item\label{W-ord} Suppose $f$ is ordinary in the sense that it is an
  eigenform for $T_{p,1}$ and
  $Q_{p,2}$ with eigenvalues being $p$-adic units. Then $Q_p(X)$ has distinct eigenvalues
  $\alpha_p, \beta_p, \gamma_p, \delta_p$ with $p$-adic valuations $0,
  b-2, a-1, \w$, respectively. Furthermore, $r_f|G_{\Q_p}$ is
  conjugate in $\GSp_4(K')$ to a representation of the form
\[  
 \left( 
\begin{matrix} 
\lambda(\alpha_p) & * & * & * \\ 
0 & \eps^{-(b-2)} \cdot \lambda(p^{-(b-2)}\beta_p) & *  & * \\
0 & 0 &  \eps^{-(a-1)} \cdot \lambda(p^{-(a-1)}\gamma_p) &   * \\
0 & 0  & 0 &  \eps^{-\w}  \cdot \lambda(p^{-\w}\delta_p) 
\end{matrix}
\  \ \right)
\]
\item\label{W-loc-glob} If $r_f$ is absolutely irreducible, then it
  satisfies local-global compatibility at all primes.
\end{enumerate}
\end{prop}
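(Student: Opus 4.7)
The plan is to attach to $f$ an automorphic representation of $\GSp_4(\A)$ and then invoke existing constructions of Galois representations for regular-weight Siegel modular forms, translating each asserted property into a known input.

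First I would observe that the hypothesis $a \geq b \geq 3$ places $(a-1, b-2;\cdot)$ in the interior of the dominant Weyl chamber $C_0$, so by Definition~\ref{defn:discrete-series-weight} the weight $\mu = (a,b)$ is a discrete series (regular) weight. Via the isomorphism $\CH^0_{\cusp,\sigma} \cong \Htw^0(\CV_\sigma^{\can})_{\cusp}$ of Theorem~\ref{thm:coherent-cohom-lie-alg-cohom} (with $\sigma$ as in Remark~\ref{rem:normalization}, and using the Koecher principle to pass between $Y_{K_1(Q)}$ and its toroidal compactification), the eigenform $f$ generates a cuspidal automorphic representation $\pi_f = \pi_f^{\infty} \otimes \pi_{f,\infty}$ whose archimedean component is a holomorphic discrete series in the Harish-Chandra packet $\pi((a-1,b-2;-w), C_0)$ by Theorem~\ref{thm:contrib-to-char-0-cohom}\ref{Mirkovic}.

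Second, having identified $\pi_f$, I would invoke the construction of Weissauer (building on work of Laumon and Taylor) which, extended to general levels via Arthur's classification for $\GSp_4$ recorded in \cite{arthur-gsp4}, attaches a continuous semisimple $r_f: G_{\Q} \to \GSp_4(K')$ satisfying $\det(X - r_f(\Frob_x)) = \lambda_f(Q_x(X))$ at every unramified $x \not\in S \cup Q \cup \{p\}$; this yields \ref{W-unram}. Assertion \ref{W-simil} then follows by comparing similitude characters via Proposition~\ref{prop:similitude} together with the explicit central character of $\pi_{f,\infty}$ recorded in Remark~\ref{rem:central-chars}. For \ref{W-at-p}, the component $\pi_{f,p}$ is spherical and the infinitesimal character of $\pi_{f,\infty}$ is regular with the shape given in Theorem~\ref{thm:contrib-to-char-0-cohom}(2); realizing $r_f$ in the cohomology of an auxiliary Shimura variety with good reduction at $p$ forces $r_f|G_{\Q_p}$ to be crystalline with Hodge--Tate weights $0, b-2, a-1, \w$, and identifies the crystalline Frobenius characteristic polynomial with $\lambda_f(Q_p(X))$ via unramified local Langlands.

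For \ref{W-ord}, the fact that $f$ is ordinary for $T_{p,1}$ and $Q_{p,2}$ pins down two roots of $Q_p(X)$ as $p$-adic units; the Newton-above-Hodge inequality applied to $D_{\cris}(r_f|G_{\Q_p})$, whose Hodge polygon has breaks $0, b-2, a-1, \w$, forces the other two roots to have valuations exactly $b-2$ and $a-1$, and the hypotheses on $\alpha,\beta$ ensure these four eigenvalues are distinct. A weakly admissible filtered $\varphi$-module argument then yields the explicit block-upper-triangular shape with the claimed unramified diagonal characters. Finally, \ref{W-loc-glob} follows from the refinements of Taylor--Yoshida due to Sorensen and Mok, combined with Arthur's classification in the form discussed in \S\ref{sec:balanced-property}: these establish local-global compatibility at $x \neq p$ for cohomological $\GSp_4$ Galois representations, and the absolute irreducibility of $r_f$ is used to upgrade from semi-simplified to genuine Frobenius-semisimple compatibility including the monodromy operator.

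The main obstacle I expect is \ref{W-ord}: neither the precise valuations of the unit roots nor the explicit block shape of $r_f|G_{\Q_p}$ are automatic from the mere existence of $r_f$ in regular weight, and a careful input from integral $p$-adic Hodge theory (or equivalently a Hida-theoretic interpolation argument matching Hecke eigenvalues to the unit-root filtration) is required. Property \ref{W-loc-glob} is also delicate at ramified primes but is now essentially in the literature modulo the conditional results from \cite{arthur-gsp4}.
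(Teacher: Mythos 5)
Your overall strategy is the same as the paper's: identify the cuspidal automorphic representation $\pi_f$ attached to $f$ via Theorem~\ref{thm:coherent-cohom-lie-alg-cohom} and then invoke the existing constructions of Galois representations for regular-weight Siegel forms. However, there is one genuine gap, and one place where the paper's route is considerably more economical than yours.

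The gap is that you assert $r_f$ takes values in $\GSp_4(K')$ without argument. The construction of Taylor, Laumon and Weissauer (as packaged in the reference the paper uses) produces a representation $R_p : G_{\Q} \to \GL_4(\overline{K})$, and it is \emph{not} automatic that it can be conjugated into $\GSp_4$. The paper addresses this explicitly: when $\pi$ is simple and generic it invokes the main theorem of Bella\"{\i}che--Chenevier on the sign of essentially self-dual Galois representations to conclude that the pairing is symplectic rather than orthogonal; in the remaining (reducible) cases of Arthur's classification one checks symplecticity directly. This step cannot be omitted --- a priori the self-dual pairing on $R_p$ could be orthogonal, which would be fatal for the entire deformation-theoretic setup. (One also needs the Baire category argument to descend from $\overline{K}$ to a finite extension $K'$, which you likewise skip.)

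Regarding the second point: you attempt to re-derive parts~\eqref{W-simil}--\eqref{W-loc-glob} piecemeal from the infinitesimal character, central character, realization in cohomology of an auxiliary Shimura variety, Newton-above-Hodge, and an ad hoc filtered $\varphi$-module argument. You correctly flag~\eqref{W-ord} as the delicate one --- the precise block-upper-triangular ordinary shape does not follow from weak admissibility alone without some care (and in fact it relies on Urban's work on the ordinary filtration, or a Hida-theoretic argument). The paper sidesteps all of this by observing that~\cite[Theorem 3.5]{Mok} already packages every one of properties~\eqref{W-simil} through~\eqref{W-loc-glob} as a stated conclusion; the only work left is the symplecticity/descent discussed above. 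If you were to write this out properly, you would want to cite Mok's theorem directly rather than reconstruct each piece, since the independent verification of~\eqref{W-ord} is not trivial and is not what the paper does.
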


\begin{proof} 
  The existence of $r_f$ follows from the work of Taylor, Laumon and
  Weissauer. Some of the finer properties are due to Urban,
  Genestier--Tilouine, Gan--Takeda, Sorensen and Mok. 
  Fix an embedding $\imath:K\into \C$ and let $\pi$ be an cuspidal automorphic representation of
  $\GSp_4(\A_{\Q})$ which contributes to the $f$-part of $H^0(X_1(Q),
  \omega(a,b)(-\infty)_{\C})$ under the isomorphism of the first part of
  Theorem~\ref{thm:coherent-cohom-lie-alg-cohom} (with $\sigma =
  (-b,-a;6-a-b)$, as in Remark~\ref{rem:normalization}). 
  
  We take $r_f : G_{\Q} \to \GL_4(\overline{K})$ be the representation
  $R_p$ of \cite[Theorem 3.5]{Mok} associated to $\pi$. When $\pi$ is
  simple, generic in the terminology of \cite{Mok}, the representation
  can be conjugated to take values in $\GSp_4(\overline{K})$, by the main
  theorem of \cite{bellaiche-chenevier-families}.  In the
  remaining cases, the representation $R_p$ is reducible and can
  easily be seen to be symplectic.  The usual
  Baire category argument implies that $r_f$ can be defined over a
  finite extension of $K$.  Thus in all cases, we may take $r_f
  : G_{\Q} \to \GSp_4(K')$. 
  Parts~\eqref{W-simil}-- ~\eqref{W-loc-glob} follow from the statement
  of Theorem~\cite[Theorem 3.5]{Mok}.
  \end{proof}

\begin{lemma}
\label{lem:galois-mod-m}
  Let $\mu = (a,b)\in X^*(T)_M^+$ with $a\geq b \geq 2$ and let $\m$
  be a maximal ideal of $\Tan_{\mu}(Q)$. Then there is a continuous
  semisimple representation
\[ \rbar_{\m} : G_{\Q} \to \GL_4(\Tan_{\mu}(Q)/\m) \]
such that for each $x \not\in S\cup Q \cup \{p\}$, the restriction
$\rbar_{\m}|G_{\Q_x}$ is unramified and $\rbar_{\m}(\Frob_x)$ has characteristic polynomial $Q_x(X)$.

If $\rbar_{\m}$ is absolutely irreducible, then the representation
$\rbar_{\m}$ preserves a symplectic pairing and hence, after
conjugation, we have a representation:
\[  \rbar_{\m} : G_{\Q} \to \GSp_4(\Tan_{\mu}(Q)/\m) \]
\end{lemma}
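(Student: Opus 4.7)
The plan is to reduce to a sufficiently regular weight via the Hasse invariant and then use the characteristic zero Galois representations of Proposition~\ref{prop:weiss}. Choose $s \geq 1$ and $t = rs(p-1) \geq N(\mu)$ as in Lemma~\ref{lem:hasse-mod-p^m} (with $m = 1$), and set $\mu' = \mu + (t,t)$. By Remark~\ref{rem:hasse-hecke}, multiplication by the lifted Hasse power $A^s$ induces a Hecke-equivariant injection of the mod-$\varpi$ cohomology of $\omega(\mu)$ into that of $\omega(\mu')$ and hence a surjection $\Tan_{\mu'}(Q) \twoheadrightarrow \Tan_{\mu}(Q)/I_{\mu,1}$. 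Consequently $\m$ pulls back to a maximal ideal $\m'$ of $\Tan_{\mu'}(Q)$ with the same residue field $k' := \Tan_{\mu'}(Q)/\m' = \Tan_{\mu}(Q)/\m$, and it suffices to construct the representation for $\m'$.

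For $t \geq N(\mu)$ the higher coherent cohomology of $\omega(\mu')(-\infty)$ on $X_1(Q)$ vanishes mod $p$, so the long exact sequences associated to $0 \to \CO \overset{\varpi}{\to} \CO \to k \to 0$ and $0 \to \CO \to K \to K/\CO \to 0$ imply that $M := H^0(X_1(Q), \omega(\mu')(-\infty))$ is a finite free $\CO$-module whose reductions recover the mod $\varpi$ and $K/\CO$ cohomologies. Hence $\Tan_{\mu'}(Q)$ is a finite $\CO$-algebra, and $\Tan_{\mu'}(Q) \otimes_{\CO} K \cong \prod_f K_f$ where $f$ ranges over Galois-conjugacy classes of normalized cuspidal eigenforms in $M \otimes K$. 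For each such $f$, Proposition~\ref{prop:weiss} supplies a continuous semisimple $r_f : G_{\Q} \to \GSp_4(\overline{K})$ whose trace on $\Frob_x$ for $x \notin S \cup Q \cup \{p\}$ is $\lambda_f(T_{x,1})$. The direct sum assembles to a semisimple representation $R : G_{\Q} \to \GL_4(\Tan_{\mu'}(Q) \otimes K)$, and $T := \tr R$ is a continuous four-dimensional pseudorepresentation. Its values at Frobenii lie in $\Tan_{\mu'}(Q)$; since $\Tan_{\mu'}(Q)$ is a finite (hence $\varpi$-adically closed) $\CO$-submodule of $\Tan_{\mu'}(Q) \otimes K$ and the Frobenii are dense by Chebotarev, $T$ factors as $T : G_{\Q} \to \Tan_{\mu'}(Q)$.

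Reducing $T$ modulo $\m'$ gives a continuous pseudorepresentation into the finite field $k'$, which by the theorem of Taylor--Rouquier is the trace of a unique continuous semisimple representation $\rbar_{\m} : G_{\Q} \to \GL_4(k')$; the characteristic polynomial of $\rbar_{\m}(\Frob_x)$ is $Q_x(X) \bmod \m$ by construction. If $\rbar_{\m}$ is absolutely irreducible, let $\chibar$ denote the reduction mod $\m'$ of the similitude character $\chi_{\mu'} \eps^{-3}$ from Proposition~\ref{prop:similitude}. The symplectic forms on the individual $r_f$ assemble to a nonzero $G_{\Q}$-equivariant alternating pairing $B$ on $R$ with multiplier $\chibar$ in characteristic zero; by integrality $B$ descends to a pairing on an $\CO$-lattice and reduces mod $\m'$ to a $G_{\Q}$-equivariant alternating pairing on $\rbar_{\m}$. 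By Schur's lemma, absolute irreducibility of $\rbar_{\m}$ forces the space of such pairings to be at most one-dimensional, and in particular this reduction cannot be degenerate without being zero; its nondegeneracy follows by comparing characters of $R$ and $R^\vee \otimes \chi_{\mu'}\eps^{-3}$ (which coincide). Thus $\rbar_{\m}$ factors through $\GSp_4(k')$.

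The main obstacle is the integrality step, namely verifying that the pseudorepresentation $T$ takes values in $\Tan_{\mu'}(Q)$ rather than just its total ring of fractions, and then ensuring that the symplectic form survives reduction mod $\m'$ in the irreducible case; both hinge on the fact that $\Tan_{\mu'}(Q)$ is finite over $\CO$, which is where the weight-raising via the Hasse invariant is essential.
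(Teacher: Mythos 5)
Your proposal arrives at the right conclusion but follows a genuinely different route from the paper. For the existence of $\rbar_{\m}$, the paper raises the weight via the Hasse invariant (as you do) but then simply picks a single cuspidal eigenform $f$ over $\m$, sets $\rbar_{\m}$ to be the semisimplification of a reduction of $r_f$, and invokes \cite[Prop.~3.4.2]{CHT} to descend the representation from $\overline{k'}$ to $k'$. You instead assemble the pseudorepresentation $T = \mathrm{tr}\, R$ over $\Tan_{\mu'}(Q)$, show it is integral by Chebotarev, reduce it modulo $\m'$, and invoke Taylor--Rouquier. Both are standard and essentially interchangeable; your route is slightly longer because the pseudorepresentation formalism gives more than is needed (one only needs the mod-$\m$ reduction, which is already controlled by a single $f$), and it implicitly uses $p > 4$ and still requires a descent argument from $\overline{k'}$ to $k'$ of exactly the sort handled by the CHT reference you omit. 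For the symplectic structure, the paper takes a rather different path: it transfers $\pi_f$ to $\GL_4$, descends to a unitary group, and applies \cite[Theorem 1.2]{bell-chen} together with absolute irreducibility of $\rbar_{\m}$ to conclude that $r_f$ (and hence $\rbar_\m$) is symplectic rather than orthogonal. Your approach — reduce the symplectic pairing on $r_f$ directly — is more elementary and would avoid the unitary group machinery.

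However, your nondegeneracy argument for the reduced pairing has a gap as written. You conclude nondegeneracy ``by comparing characters of $R$ and $R^{\vee} \otimes \chi_{\mu'}\eps^{-3}$,'' but this comparison only shows that $\rbar_\m \cong \rbar_\m^{\vee} \otimes \overline{\chi}$, i.e.\ that the space of $G_{\Q}$-equivariant pairings on $\rbar_\m$ with the given multiplier is nonzero; it does not show that the reduction $\overline{B}$ of your particular integral pairing $B$ is nonzero. An integral $G$-equivariant alternating form $B$ on a lattice $L \subset r_f$ could a priori reduce to zero mod $\varpi$. The fix is easy but must be stated: rescale $B$ by the appropriate negative power of $\varpi$ so that $B(L \times L) \not\subset \varpi\CO_{K_f}$, so that $\overline{B}$ is a nonzero alternating $G$-equivariant pairing on $L/\varpi L \cong \rbar_\m \otimes_{k'} k_f$; since $\rbar_\m$ is absolutely irreducible, any nonzero equivariant pairing is automatically nondegenerate, and the one-dimensionality of the space of pairings (Schur) pins it down as alternating. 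With that normalization step inserted, your argument is complete.
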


\begin{proof}
  Choose an integer $s$ as in Lemma~\ref{lem:hasse-mod-p^m} with $m$
  taken to equal $1$ and let $t = rs(p-1)$. Let $f \in
  H^0(X_1(Q),\omega(a+t,b+t)(-\infty))\otimes \overline{K}$ be an
  eigenform for $\WT_{\mu'}(Q)_{\m}$. Let $r_f$ be the Galois
  representation associated to $f$ by Proposition~\ref{prop:weiss} and take
  $\rbar_{\m}$ to be the semisimplification of a reduction of $r_f$ to
  characteristic $p$. The resulting representation is defined over the
  algebraic closure of $\Tan_{\mu}(Q)/\m$, but by the argument of
  \cite[Prop.\ 3.4.2]{CHT}, we see that after conjugation, it may be
  defined over $\Tan_{\mu}(Q)/\m$.

  For the last part: let $\pi$ the transfer to $\GL_4$ (given by
  \cite{arthur-gsp4}) of the automorphic representation generated by
  $f$. Then $\pi$ descends to an automorphic representation $\Pi$ of a
  unitary group over $\Q$. The family of $\ell$-adic Galois
  representations associated to $\Pi$ is the same as that associated
  to $f$. Thus, \cite[Theorem 1.2]{bell-chen} and the fact that
  $\rbar_{\m}$ is absolutely irreducible implies that $r_f$ is
  symplectic. The same is then true of $\rbar_{\m}$ (by absolute
  irreducibility).
\end{proof}

\begin{remark}
  By the same argument, the previous result holds if we replace
  $\Tan_{\mu}(Q)$ by $\T_{\mu}(Q)$ or $\wT_{\mu}(Q)$.
\end{remark}

\begin{df}
  \label{defn:eisenstein}
We say that $\m$ is \emph{non-Eisenstein} if the representation $\rbar_{\m}$
is absolutely irreducible.
\end{df}

\subsection{Galois representations in cohomological weights}
\label{sec:gal-rep-cohom}

Let $\rbar : G_{\Q} \to \GSp_4(k)$ be a representation as in Section
\ref{section:deformations}. 
By Assumption~\ref{assumption:neatness} and by Cebotarev,
there exist infinitely many primes~$q$ such that no pair of eigenvalues of~$\rbar(\Frob_q)$ have ratio~$q \mod p$
and~$q \not \equiv 1 \mod p$. Choose any such~$q$ which is disjoint
to~$p$ and all primes of bad reduction of~$\rbar$.
We take $S = S(\rbar) \cup \{q\}$ and $Q$ a possibly empty set of
primes disjoint from $S\cup \{ p\}$. We define a compact open subgroup
$ K = \prod_x K_x$ of $\GSp_4(\A^{\infty})$ as follows:
\begin{enumerate}
\item If $x = p$ or $\rbar$ is unramified at $x$ and~$x \ne q$, then $K_x = \GSp_4(\Z_x)$.
\item If $x$ is of type {\bf U3\rm}, then $K_x = I(x)$,
where $I(x)$ is the  Iwahori subgroup.
\item If $x$ is of type {\bf U2\rm}, then $K_x = \Pi(x)$,
where $\Pi(x)$ is the Klingen parahoric.
 \item If $x$ is of type {\bf U1\rm}, then $K_x = K(x)$, where
$K(x)$ is the  paramodular group at $x$.
\item If $x$ is of type {\bf P\rm}, then $K_x = \Pi(x)^{+}$ (and
$x-1$ is prime to $p$).
\item If $x$ is of type {\bf H\rm}, then $K_x$ is the full congruence subgroup of
level $x$.
\item If~$x = q$, then  $K_x$ is the full congruence subgroup of
level $x$.
\end{enumerate}
We then let $X = X_K$ and $X_{i}(Q) = X_{K_{i}(Q)}$ as in
Section~\ref{sec:cohom}.

Let $\mu = (a,b)\in X^*(T)_M^+$ with $a\geq b \geq 3$ be a
\emph{regular} weight and let $\m_{\emptyset}$ be a maximal ideal of
$\T_{\mu}^{\ord}$ (the ordinary Hecke algebra with $Q =
\emptyset$) with residue field $k$. Then $\mE$ pulls back to an ideal of $\Tan_\mu(Q)^{\ord}$
which in turn pushes forward to an ideal of $\T_{\mu}(Q)^{\ord}$. We
denote both of these ideals by $\mE$, in a slight abuse of
notation. The ideal $\mE \subset \Tan_\mu(Q)^{\ord}$ is maximal but
$\mE\subset \T_\mu(Q)^{\ord}$ need not be maximal -- there may be
multiple maximal ideals $\m$ of $\T_{\mu}(Q)^{\ord}$ that contain it.
We make the following assumption:

\begin{assumption}
  \label{assumption:hecke-galois-rep-regular}
  Let $\rbar$, $\mu$ and $\mE$ be as above. Then:
\begin{enumerate}
\item We have $\rbar_{\m_{\emptyset}} \cong \rbar$. In particular,
  since $\rbar$ is absolutely irreducible, $\m_{\emptyset}$ is
  non-Eisenstein.
\item\label{ass:TW-at-Q} For each $x \in Q$, $x \equiv 1 \mod p$ and $\rbar|G_x$ is a
  direct sum of four pairwise distinct characters with Frobenius
  eigenvalues $\alpha_x, \beta_x, \gamma_x, \delta_x$. We assume the
  eigenvalues have been labeled so that the plane
  $\lambda(\alpha_x)\oplus \lambda(\beta_x)$ is isotropic, and hence $\alpha_x\delta_x = \beta_x\gamma_x$.
\end{enumerate}
 \end{assumption}
We let $\m \subset \T_{\mu}(Q)^{\ord}$ be any maximal ideal which
contains $\mE$. 
The representations $\rbar_{\m}$, $\rbar_{\m_\emptyset}$ and $\rbar$
are all isomorphic.

We now turn to the prime $p$. Let $\alpha,\beta\in k^\times$ be the
elements associated to $\rbar|G_{\Q_p}$ at the beginning of
Section~\ref{section:deformations}. For $M = \CO, \CO/\varpi^m$ or
$K/\CO$, we define:
\begin{itemize}
\item $H^0(X_1(Q),\omega(a,b)(-\infty)_M)^{\beta}$ to be the subspace
  of $H^0(X_1(Q),\omega(a,b)(-\infty)_M)$ given by the image of the idempotent
  $e_{\beta} = \varinjlim_n ((T_{p,1} -
  \tilde\beta)(Q_{p,2}-\tilde\alpha\tilde\beta))^{n!}$, where
  $\tilde{\alpha}$ and $\tilde{\beta}$ are lifts of $\alpha$ and
  $\beta$ to $\CO$.
\item $\Tan_{\mu}(Q)^{\beta}$ (resp.\ $\T_\mu(Q)^\beta$,
  $\wT_\mu(Q)^\beta$) to be the image of $\Tan_{\mu}(Q)$ (resp.\ $\T_\mu(Q)$,
  $\wT_\mu(Q)$) in 
\[ \End_{\CO}(H^0(X_1(Q),\omega(a,b)(-\infty)_M)^{\beta}). \]
\end{itemize}
We also make the analogous definitions with $\alpha$ and $\beta$
swapping roles.

\begin{theorem} 
\label{theorem:highergood}
Let $\mu = (a,b)$, $\mE$ and $\m$ be as above, and suppose that
Assumption~\ref{assumption:hecke-galois-rep-regular} holds. Let $w =
a+b-6$ and $\w = w+3 = a+b-3$. Then there exists a continuous representation
$$r=r_{\mu,\m}^{\beta}: G_{\Q} \rightarrow
\GSp_4(\T_{\mu}(Q)_{\m}^\beta)$$
lifting $\rbar_{\m}=\rbar$ and such that: 
\begin{enumerate}
  \item\label{reg-simil} The similitude character $\nu\circ r$ is given by:
\[ \nu \circ r = \chi_{\mu} \epsilon^{-3} = \chi_{\mu,0} \epsilon^{-\w},\]
where~$\chi_{\mu,0}$ is a finite order character unramified at~$p$ which is trivial modulo~$\m$.
  \item\label{reg-unram} For each prime $x\not\in S\cup Q\cup
\{p\}$, $r$ is unramified at $x$ and $r(\Frob_x)$ has characteristic
polynomial $Q_x(X)$.
\item\label{reg-ord} 
There are units $d_{p,1},\dots,d_{p,4} \in \T_{\mu}(Q)_\m^\beta$ satisfying
$$Q_p(X) = (X - d_{p,1})(X - p^{b-2} d_{p,2})(X - p^{a-1} d_{p,3})(X - 
p^{\w} d_{p,4}) \in \T_{\mu}(Q)_\m^\beta[X],$$
and such that:
\begin{enumerate}
\item We have $d_{p,1} \mod \m = \beta$ and $d_{p,2} \mod \m =
  \alpha$;
\item  $r|G_{\Q_p}$ is conjugate in $\GSp_4$  to a
representation of the form:
\[ \left( 
\begin{matrix} 
\lambda(d_{p,1}) & * & * & * \\ 
0 & \eps^{-(b-2)} \cdot \lambda(d_{p,2}) & *  & * \\
0 & 0 & \eps^{-(a-1)} \cdot \lambda(d_{p,3}) &   * \\
0 & 0  & 0 & \eps^{-\w}\cdot \lambda(d_{p,4})
\end{matrix}
\  \ \right) \]
\end{enumerate}
\item\label{reg-deformation} 
After twisting by the unique square-root of~$\chi_{\mu,0}$ which is trivial modulo~$\m$,
the deformation $r$ of $\rbar$ satisfies properties
  \eqref{outside-NQ}--~\eqref{at-Q} of Definition~\ref{defn:minimal}.
\end{enumerate}
\end{theorem}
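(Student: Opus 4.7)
The plan is to construct $r$ by interpolating the characteristic-zero Galois representations $r_f$ of Proposition~\ref{prop:weiss} attached to the classical ordinary Siegel eigenforms contributing to the $\m$-part of $H^0(X_1(Q),\omega(a,b)(-\infty))_K^{\beta}$. Since $a \ge b \ge 3$ is a regular weight, this space is spanned in characteristic zero by classical cuspidal holomorphic eigenforms $f$, each giving $r_f : G_\Q \to \GSp_4(\overline{K})$ by Proposition~\ref{prop:weiss}. The Hecke algebra~$\T_\mu(Q)_\m^{\beta}$ is semilocal and $\CO$-flat with reduced generic fiber, and embeds into $\prod_f \overline{K}$; combining the characteristic polynomials of the $r_f(\Frob_x)$ for primes $x \not\in S\cup Q\cup\{p\}$ gives a pseudorepresentation of $G_\Q$ valued in $\T_\mu(Q)_\m^{\beta}$. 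Because $\m$ is non-Eisenstein by Assumption~\ref{assumption:hecke-galois-rep-regular}, this pseudorepresentation lifts to a genuine representation $r : G_\Q \to \GL_4(\T_\mu(Q)_\m^{\beta})$, and the symplectic forms at the classical specializations patch into a single form over the whole ring, placing $r$ in $\GSp_4(\T_\mu(Q)_\m^{\beta})$.

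Next I would verify \eqref{reg-simil}--\eqref{reg-unram} by checking them at each classical specialization via Proposition~\ref{prop:weiss} \eqref{W-simil}--\eqref{W-unram} and invoking Cebotarev together with the injection $\T_\mu(Q)_\m^{\beta} \hookrightarrow \prod_f \overline{K}$. For \eqref{reg-ord}, the idempotent $e_\beta$ ensures that each eigenform $f$ in the $\beta$-part has unit roots of $\lambda_f(Q_p(X))$ reducing modulo~$\m$ to $\beta$ and $\alpha$ respectively; combined with the shape given in Proposition~\ref{prop:weiss} \eqref{W-ord} and the hypothesis that $(\alpha^2-1)(\beta^2-1)(\alpha\beta-1)(\alpha-\beta) \neq 0$ (making the four diagonal characters of $\rbar|G_{\Q_p}$ pairwise distinct), the upper-triangular filtration on each $r_f|G_{\Q_p}$ lifts uniquely to $\T_\mu(Q)_\m^{\beta}$, and the units $d_{p,i}$ are read off as its diagonal entries. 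For \eqref{reg-deformation}, the tame level subgroup $K_x$ chosen in~\S\ref{sec:gal-rep-cohom} is matched to the local type of $\rbar|G_{\Q_x}$ (Iwahori for \textbf{U3}, Klingen for \textbf{U2}, paramodular for \textbf{U1}, $\Pi(x)^{+}$ for \textbf{P}, principal level for \textbf{H}), so local--global compatibility for each $r_f$ (Proposition~\ref{prop:weiss} \eqref{W-loc-glob}) forces $r_f|G_{\Q_x}$ into the prescribed local deformation type; at primes $x \in Q$ the Klingen level and Assumption~\ref{assumption:hecke-galois-rep-regular} \eqref{ass:TW-at-Q} pin down the required isotropic direct-sum shape. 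The final twist by the unique square-root of $\chi_{\mu,0}$ trivial modulo~$\m$ exists because $\T_\mu(Q)_\m^{\beta}$ is a complete local ring with residue field of odd characteristic, and this twist normalizes the similitude character to the form demanded by Definition~\ref{defn:minimal}.

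The main technical obstacle is upgrading these local conditions from pointwise (at classical specializations) to integral (over the whole ring). For instance, at a type-\textbf{U3} prime one must show that the image of inertia under~$r$ is generated by an element whose nilpotent logarithm has rank exactly~$3$ over all of $\T_\mu(Q)_\m^{\beta}$, and not merely after each classical quotient. This follows from lower-semicontinuity of the rank of the inertial image across $\Spec \T_\mu(Q)_\m^{\beta}$ combined with the fact that the choice of $K_x$ forces each classical specialization to already realize the maximal possible rank. Similarly, at $x \in Q$, the distinctness of $\alpha_x,\beta_x,\gamma_x,\delta_x$ modulo~$\m$ allows one to produce the required isotropic direct summands of $r|G_{\Q_x}$ integrally by applying Hensel's lemma to the factorization of the characteristic polynomial of $r(\Frob_x)$.
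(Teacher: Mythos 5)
Your plan identifies the right ingredients (interpolate the $r_f$ from Proposition~\ref{prop:weiss}, check the deformation conditions at classical specializations using Sorensen's local--global compatibility results, verify \eqref{reg-ord} from $e_\beta$ and \eqref{W-ord}), and the overall shape of the argument is close to the paper's. However, there is one genuine gap that the paper specifically flags and works around, and you have stepped into it.

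Your proposal asserts that $\T_\mu(Q)_\m^{\beta}$ ``is semilocal and $\CO$-flat with reduced generic fiber, and embeds into $\prod_f \overline{K}$.'' This $\CO$-flatness (equivalently, torsion-freeness of the Hecke module $H^0(X_1(Q),\omega(a,b)(-\infty)_{K/\CO})^\beta_\m$) is precisely what is \emph{not} known in the weight $\mu=(a,b)$ at hand: $H^1(X_1(Q),\omega(a,b)(-\infty))_\m$ can fail to vanish, so mod-$p$ sections need not lift to characteristic zero, and $\T_\mu(Q)_\m^{\beta}$ could have $\varpi$-torsion. If it does, it does not embed in any product of fields, so there is no pseudorepresentation to lift (and no way to reduce properties \eqref{reg-simil}--\eqref{reg-deformation} to Cebotarev checks on classical specializations). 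The paper is explicit about this: ``We expect that, under the given assumptions, the Hecke rings in question are torsion free. However, we avoid having to prove this by passing to sufficiently high weight.'' The actual proof first observes that $\T_\mu(Q)_\m^\beta = \varprojlim_m \T_\mu(Q)_\m^\beta/I_{\mu,m}$, then for each fixed $m$ uses multiplication by a power $A^s$ of the lift of the Hasse invariant (Lemmas~\ref{lem:hasse-mod-p} and~\ref{lem:hasse-mod-p^m}) to produce a surjection $\T_{\mu'}(Q)_\m^\beta \onto \T_\mu(Q)_\m^\beta/I_{\mu,m}$ with $\mu' = \mu + (t,t)$ and $t \geq N(\mu)$. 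In weight $\mu'$ the relevant higher cohomology vanishes, so $\T_{\mu'}(Q)_\m^\beta$ genuinely is a subring of $\prod_i \CO_{K_i}$, and the construction of $r$ (via the gluing argument of \cite[3.4.4]{CHT} with \cite[Lemma 7.1.1]{gee-ger} to preserve the $\GSp_4$-structure) happens there. The representation into $\GSp_4(\T_\mu(Q)_\m^\beta)$ is then obtained as the inverse limit over $m$ of its images modulo $I_{\mu,m}$. Without this reduction your argument does not get off the ground. (Your secondary remark that ``the symplectic forms at the classical specializations patch into a single form'' is also left as an assertion; the paper handles the $\GSp_4$ versus $\GL_4$ issue by invoking the specific lemma from \cite{gee-ger} rather than Nyssen--Rouquier.)

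One smaller point: the ``lower-semicontinuity of the rank of the inertial image'' argument you sketch for type-\textbf{U3} primes is unnecessary once one works in the torsion-free ring $\T_{\mu'}(Q)_\m^\beta \subset \prod_i \CO_{K_i}$, because there the deformation conditions are closed under taking subrings of products, so it really does suffice to check them on each $r_{f_i}$. Your version, working over a possibly non-reduced Hecke algebra, would need to justify that these local conditions persist under the relevant specialization maps, which is exactly the difficulty the Hasse-invariant reduction is designed to sidestep.
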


\begin{remark} \emph{We expect that, under the given assumptions, the Hecke rings in
question are torsion free. However, we avoid having to prove this by passing
to sufficiently high weight.}
\end{remark}

\begin{proof}
As in Remark~\ref{rem:hasse-hecke}, $I_{\mu,m}$ denotes the
annihilator of $H^0(X_1(Q),{\omega(a,b)(-\infty)}_{\CO/\varpi^m})$ in $\T_{\mu}(Q)$.
Since
$\T_{\mu}(Q)_{\m}=\varprojlim_m \T_{\mu}(Q)_{\m}/I_{\mu,m}$, it
suffices to construct, for each $m>0$, a representation $r_{m} : G_\Q
\to \GSp_4(\T_{\mu}(Q)_{\m}^\beta/I_{\mu,m})$ satisfying the conditions of the
theorem. We thus fix an $m>0$. Choose an integer $s>0$ as in
Lemma~\ref{lem:hasse-mod-p^m} and let $t = rs(p-1)$. By
Lemma~\ref{lem:hasse-mod-p^m} and Lemma~\ref{lem:hasse-mod-p}~\eqref{ops-at-p}, multiplication by
$A^s$ restricts to a map:
\[ H^0(X_1(Q),\omega(a,b)(-\infty)_{\CO/\varpi^m})^{\beta}_{\m} \into
H^0(X_1(Q),\omega(a+t,b+t)(-\infty)_{\CO/\varpi^m})^{\beta}_{\m}. \]
This in turns gives rise to a surjective map $\T_{\mu'}(Q)_{\m}^\beta \onto
\T_{\mu}(Q)_{m}^{\beta}/I_{\mu,m}$. Thus it suffices to prove the
result in weight $\mu' := (a',b') := (a+t,b+t)$.

Since $t \geq N(\mu)$, we have that
\[ H^0(X_1(Q),\omega(a',b')(-\infty)_{K/\CO}) \cong
H^0(X_1(Q),\omega(a',b')(-\infty))\otimes K/\CO \] 
and hence we may regard $\T_{\mu'}(Q)$ as acting faithfully on both
\[ H^0(X_1(Q),\omega(a',b')(-\infty)) \mbox{\ \ and\ \ }
H^0(X_1(Q),\omega(a',b')(-\infty)_K). \]
Thus we have
\[ \T_{\mu'}(Q)_{\m}^{\beta} \into \prod_i \CO_{K_i} \]
where the $K_i$ are a finite collection of finite extensions of $K$,
one for each minimal prime $\wp_i$ of $T_{\mu'}(Q)_{\m}^{\beta}$. Each
such minimal prime corresponds to an eigenform $f_i$ for
$\T_{\mu'}(Q)_{\m}^{\beta}$. The eigenform $f_i$ has an associated
Galois representation $r_{f_i} : G_{\Q} \to \GSp_4(\CO_{K'_i})$ for
some finite extension $K'_i/K_i$,  by
Proposition~\ref{prop:weiss}. After conjugation, we may assume that
each $r_{f_i}$ reduces to $\rbar$.
 By the argument of the proof of
\cite[3.4.4]{CHT}, using \cite[Lemma 7.1.1]{gee-ger} in place of
\cite[2.1.12]{CHT}, we see that the representation $\prod_i r_{f_i}$
descends to a representation $r : G_{\Q} \to
\GSp_4(\T_{\mu'}(Q)_{\m}^{\beta})$. It follows from
Proposition~\ref{prop:weiss} that $r$ satisfies properties
\eqref{reg-simil}--\eqref{reg-ord} of the theorem. For part \eqref{reg-ord}, note
that $Q_p(X) \in \T_{\mu'}(Q)_{\m}^{\beta}$ factors as
\[ (X-d_{p,1})(X-p^{b-2}d_{p,2})(X-p^{a-1}d_{p,1})(X-p^{\w}d_{p,4}) \]
for units $d_{p,i} \in \T_{\mu'}(Q)_{\m}^{\beta}$. We also have
$T_{p,1} \equiv \beta \mod \m$ and $Q_{p,2} \equiv \alpha\beta \mod \m$
in $\T_{\mu'}(Q)_{\m}^{\beta}$ (by definition of the idempotent
$e_\beta$). Since $Q_p(X) = X^4 - T_{p,1} X^3 +
p^{b-2}Q_{p,2}X^2 - \dots$, we deduce that $d_{p,1} \equiv \beta \mod
\m$ and $d_{p,2} \equiv \alpha \mod \m$.

To show that $r$ satisfies properties \eqref{outside-NQ}--~\eqref{at-Q} of
Definition~\ref{defn:minimal}, it suffices to show that each $r_{f_i}$
does so. In fact, property \eqref{outside-NQ} has already been established
with the exception of the prime~$x = q$.
If~$x = q$, then (by our assumptions)~$\ad^0(\rbar)(1)$ 
as a~$G_{\Q_q}$-module contains no subquotient
isomorphic to~$k$, and so~$H^2(\Q_q,\ad^0(\rbar)) \simeq H^0(\Q_q,\ad^0(\rbar)(1))^{*} = 0$.
Since~$q \ne p$, it follows that~$H^1(\Q_q,\ad^0(\rbar))$ consists entirely of unramified classes.
In particular, all lifts of~$\rbar$ are automatically unramified at~$q$.
 Since $\m$ is non-Eisenstein, it follows from
Proposition~\ref{prop:weiss}\eqref{W-loc-glob} that $r_{f_i}$
satisfies local-global compatibility at all primes. Thus we may apply the
results of \cite[\S 4.5]{Sor}.  We now turn to
property \eqref{at-special} of Definition~\ref{defn:minimal}. If $x \in S(\rbar)$ is of type {\bf U3},
then $\rbar(I_x)$ is unipotent and generated by a conjugate of
$\exp(N_3)$. Since $K_x = I(x)$, \cite[Corollary 1]{Sor} implies that
$r_{f_i}(I_x)$ is topologically generated by a conjugate of
$\exp(N_3)$, $\exp(N_2)$ or $\exp(N_1)$. The latter two cases are
incompatible with the residual representation being of nilpotent rank
3.  Similarly, if $x \in S(\rbar)$ is of type {\bf U2}, then $K_x =
\Pi(x)$ and \cite[Corollary 1]{Sor} implies that $r_{f_i}(I_x)$ is
topologically generated by a conjugate of $\exp(N_2)$ or $\exp(N_1)$.
The latter case is incompatible with the residual representation being
of nilpotent rank 2. 
Finally, if~$x \in S(\rbar)$ is of type {\bf U1}, then~$K_x = K(x)$. It then suffices to note,
following~\cite[\S 4.5]{Sor}, that the corresponding representation~$\pi_x$ is \emph{para-spherical},
that is, has a non-zero fixed vector by a non-special maximal compact subgroup, namely~$K(x)$ itself.
This establishes property
\eqref{at-special}. 
For property \eqref{at-principle}, suppose that $x \in
S(\rbar)$ is of type {\bf P}. Then $K_x = \Pi(x)^+$. It follows from
\cite[Corollary 1]{Sor} that $\Pi(x)$ has no invariants on the
automorphic representation generated by $f_i$ (as otherwise
$r_{f_i}|I_x$ would be unipotent, contradicting the assumption on
$\rbar$ at $x$). Thus $\Pi(x)/\Pi(x)^+$ acts through a
non-trivial character on the space of $\Pi(x)^+$ invariants. By
\cite[Corollary 3]{Sor} all such characters have to lift the character
$\nu \circ \rbar|I_{x}$. However, since $x -1$ is prime to $p$, there
is a unique such character, and the result follows from
\cite[Corollary 3]{Sor}.

Finally, we turn to property \eqref{at-Q} of
Definition~\ref{defn:minimal}. Let $x \in Q$, and recall that $K_x =
\Pi(x)^+$. Let $\pi$ be the automorphic representation generated by
$f_i$.  Consider first the case where $\pi_x$ has non-trivial
$\Pi(x)$-invariants. Then $\pi_x$ 
is a subquotient of an unramified principal series. By
part~\eqref{ass:TW-at-Q} of
Assumption~\ref{assumption:hecke-galois-rep-regular} and
\cite[Prop.\ 3.2.3]{TG}, we see that $\pi_x$ is unramified. In this
case, property \eqref{at-Q} of Definition~\ref{defn:minimal} certainly
holds for $r_{f_i}$. In the remaining case, where $\pi_x$ has no
non-trivial $\Pi(x)$-invariants, we see that $\Pi(x)/\Pi^+(x)$ acts
through a non-trivial character on $\pi_x^{\Pi(x)^+}$, and the
required property holds by~\cite[Corollary 3]{Sor}.
\end{proof}

\subsection{Galois representations in low weights}
\label{sec:gal-rep-low}

We let $\rbar : G_{\Q} \to \GSp_4(k)$, $S = S(\rbar)$, $Q$ and $K
\subset \GSp_4(\A^{\infty})$ be as in the previous section. Recall
that in Section~\ref{section:deformations}, we fixed two units
$\alpha,\beta \in k^\times$ associated to $\rbar|G_{\Q_p}$. We now let
$\sigma = (a,2) \in X^*(T)_M^{+}$ with $a\geq 2$ denote a non-regular
weight.

\begin{df}
  \label{df:katz-modular}
We say that $\rbar$ is \emph{Katz modular of weight $\sigma$} if there exists a maximal ideal
$\m_{\emptyset}$ of $\T_{\sigma}$ such that:
\begin{enumerate}
\item We have $\rbar_{\m_{\emptyset}} \cong \rbar$, and
\item There exists a form $\eta \in
  H^0(X,\omega(a,2)_{K/\OL})[\m_{\emptyset}]$ such that
  \begin{align*}
    T_{p,1} (\eta) &= (\alpha+\beta)\eta \\
    Q_{p,2} (\eta) &= (\alpha\beta)\eta.
  \end{align*}
\end{enumerate}
\end{df}

We now make the following assumption:

\begin{assumption}[Residual Modularity] We assume:
  \label{assumption:hecke-galois-rep-low}
\begin{enumerate}
\item\label{ass:katz-mod} $\rbar$ is Katz modular of weight $\sigma$ with associated maximal ideal
  $\m_{\emptyset}$ and eigenform $\eta$,
\item\label{ass:TW-at-Q-low} For each $x \in Q$, $x \equiv 1 \mod p$ and $\rbar|G_x$ is a
  direct sum of four pairwise distinct characters with Frobenius
  eigenvalues $\alpha_x, \beta_x, \gamma_x, \delta_x$. We assume the
  eigenvalues have been labeled so that the plane
  $\lambda(\alpha_x)\oplus \lambda(\beta_x)$ is isotropic, and hence $\alpha_x\delta_x = \beta_x\gamma_x$.
\end{enumerate}
\end{assumption}
We let $\m$ be any maximal ideal of $\T_{\sigma}(Q)$ containing
$\m_{\emptyset}$.

\label{section:ordinaryprojection}
Let $e_{\alpha,\beta}$ be the idempotent 
\[ \varinjlim_{n}((T_{p,1}-\tilde\alpha-\tilde\beta)(Q_{p,2}-\tilde\alpha\tilde\beta))^{n!},\]
where $\tilde{\alpha}$ and $\tilde{\beta}$ are lifts of $\alpha$ and
$\beta$ to $\CO$, and define:
\[ H^0(X_1(Q),\omega(a,2)(-\infty)_{K/\OL})^{\alpha,\beta} =
e_{\alpha,\beta} H^0(X_1(Q),\omega(a,2)(-\infty)_{K/\OL}).\]
The assumption that $\rbar$ is Katz modular implies that this space is
non-zero after localization at $\m$.
We let $\T_{\sigma}(Q)^{\alpha,\beta}$ denote the image of
$\T_{\sigma}(Q)$ in 
\[ \End_{\CO}(H^0(X_1(Q),\omega(a,2)(-\infty)_{K/\OL})^{\alpha,\beta}).\]
 Our main result in this section is the following.

\begin{theorem} 
  \label{theorem:localglobal} 
Let $\rbar$, $\sigma = (a,2)$  with $p-1> a$ and $\m$ be as above and suppose that Assumption
\ref{assumption:hecke-galois-rep-low} holds. In addition, suppose that:
$$(\alpha^2 - 1)(\beta^2 - 1)(\alpha - \beta)(\alpha^2 \beta^2 - 1) \neq 0.$$
Then there exists a
representation
$$r_{Q}: G_{\Q} \rightarrow \GSp_4(\T_{\sigma}(Q)_{\m}^{\alpha,\beta})$$
which is a minimal deformation of $\rbar$ outside $Q$.
\end{theorem}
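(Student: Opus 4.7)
The plan is to reduce to the regular-weight situation governed by Theorem \ref{theorem:highergood} via Hasse invariant multiplication. Since $\T_\sigma(Q)^{\alpha,\beta}_\m \cong \varprojlim_m \T_\sigma(Q)^{\alpha,\beta}_\m/I_{\sigma,m}$, it suffices to produce compatible representations modulo each $\varpi^m$. Fixing $m \geq 1$, I choose $s$ as in Lemma \ref{lem:hasse-mod-p^m}, set $t = rs(p-1)$, and consider the regular weight $\sigma' = (a+t, 2+t)$. Multiplication by $A^s$ gives a Hecke-equivariant injection of mod-$\varpi^m$ cohomologies (away from $p$), lifting the eigenform $\eta$ to an element of weight $\sigma'$ on which $T_{p,1}$ still acts by $\alpha+\beta$ and $Q_{p,2}$ by $\alpha\beta$ modulo $\varpi$.

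The key technical input (to be supplied by the $q$-expansion arguments of \S\ref{section:qstuff}, the ``tripling'' phenomenon alluded to in \S\ref{january}) is that the factorization $(X-\alpha)(X-\beta)$ of the characteristic polynomial of $T_{p,1}$ on the $\alpha+\beta$-eigenspace decomposes the resulting piece of high-weight cohomology, after localization at the maximal ideal $\m$ pulled back via $A^s$, into an $\alpha$-ordinary and a $\beta$-ordinary component. Invoking the hypothesis $\alpha \neq \beta$ and Hensel's lemma, one obtains an injection
\[
\T_\sigma(Q)^{\alpha,\beta}_\m / I_{\sigma,m} \lhook\joinrel\longrightarrow \T_{\sigma'}(Q)^{\alpha}_{\m_\alpha} \times \T_{\sigma'}(Q)^{\beta}_{\m_\beta}
\]
of Hecke algebras, where $\m_\alpha$ and $\m_\beta$ are the maximal ideals of the respective ordinary Hecke algebras determined by $\m$ and the choice of lift $A^s$.

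Theorem \ref{theorem:highergood} applied in regular weight $\sigma'$ furnishes, on each factor of the right-hand side, a Galois representation $r_*: G_\Q \to \GSp_4(\T_{\sigma'}(Q)^{*}_{\m_*})$ for $* \in \{\alpha,\beta\}$. Both representations lift $\rbar$ and have identical characteristic polynomials at all primes unramified in $\rbar$ (since both are controlled by the common anaemic Hecke algebra); thus their product lands in the image of $\T_\sigma(Q)^{\alpha,\beta}_\m/I_{\sigma,m}$. Passing to the inverse limit over $m$ yields $r_Q: G_\Q \to \GSp_4(\T_\sigma(Q)^{\alpha,\beta}_\m)$. The local conditions of Definition \ref{defn:minimal} are then checked: property (2) is immediate from the unramified characteristic polynomials; properties (3) and (4) are inherited from the high-weight statement of Theorem \ref{theorem:highergood}, as modified by \cite[Corollary 1]{Sor} and the specific choice of $K$; property (5) at Taylor--Wiles primes comes from Theorem \ref{theorem:highergood}(4); and property (6) emerges by combining the ordinary filtrations from $r_\alpha$ and $r_\beta$, whose unit-root eigenvalues at $p$ lift $\alpha$ and $\beta$ respectively. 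Finally, the similitude character is adjusted to $\eps^{-(a-1)}$ by twisting by the unique square-root of the finite-order character $\chi_{\mu,0}$ that is trivial modulo~$\m$.

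The principal obstacle is the injectivity of the comparison map into the product of ordinary Hecke algebras; establishing this comparison is the reason the paper must develop the explicit $q$-expansion machinery of \S\ref{section:qstuff}, because the Hecke operators $T_{p,1}$ and $Q_{p,2}$ do not commute with $A^s$-multiplication over $\OL/\varpi^m$ when $b=2$ (Lemma \ref{lem:hasse-mod-p} only provides equivariance in weight $b \geq 3$, or mod $p$). The non-degeneracy hypotheses $(\alpha^2-1)(\beta^2-1)(\alpha-\beta)(\alpha^2\beta^2-1) \neq 0$ play crucial roles: $\alpha \neq \beta$ is used for the Hensel decomposition, and the remaining factors control the local deformation theory at $p$ (via Lemma \ref{lem:FL}) so that the ordinary lifts produced by $r_\alpha$ and $r_\beta$ fit together into the precise symplectic shape demanded at~$p$.
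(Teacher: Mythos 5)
Your proposal follows the same overall strategy as the paper: reduce modulo $\varpi^m$, use $A^s$ to ascend to a regular weight $\mu' = \sigma + (t,t)$, invoke Theorem~\ref{theorem:qexp} as the key technical input, apply Theorem~\ref{theorem:highergood} in weight $\mu'$, exploit both the $\alpha$- and $\beta$-ordinary embeddings to produce the rank-2 unramified submodule at $p$ via Nakayama, and finally twist to correct the similitude character. You also correctly identify why $b=2$ causes trouble (Lemma~\ref{lem:hasse-mod-p} only gives equivariance at $p$ for $b \geq 3$) and why Theorem~\ref{theorem:qexp} is needed.

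However, the direction of your Hecke algebra map is wrong, and this matters for how the descent is carried out. The module-level injections
$e_{\beta}\circ A^s$ and $e_{\alpha}\circ A^s$ from Theorem~\ref{theorem:qexp} are Hecke-equivariant injections of cohomology groups; these induce \emph{surjections}
\[
\T_{\mu'}(Q)_{\m}^{\beta} \onto \T_{\sigma}(Q)_{\m}^{\alpha,\beta}/I_{\sigma,m},
\qquad
\T_{\mu'}(Q)_{\m}^{\alpha} \onto \T_{\sigma}(Q)_{\m}^{\alpha,\beta}/I_{\sigma,m},
\]
not an injection
$\T_{\sigma}(Q)_{\m}^{\alpha,\beta}/I_{\sigma,m} \into \T_{\mu'}(Q)_{\m}^{\alpha}\times \T_{\mu'}(Q)_{\m}^{\beta}$
as you assert. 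Two surjections onto a common quotient do not yield such an injection. Consequently, your step ``their product lands in the image of $\T_{\sigma}(Q)^{\alpha,\beta}_{\m}/I_{\sigma,m}$'' is not justified as written; there is no descent-from-a-subring argument to run because the subring inclusion does not exist. The correct mechanism is simpler: push forward the Galois representation of Theorem~\ref{theorem:highergood} over $\T_{\mu'}(Q)_{\m}^{\beta}$ along the \emph{first} surjection to obtain $r'_m$ over $\T_{\sigma}(Q)_{\m}^{\alpha,\beta}/I_{\sigma,m}$, then use the \emph{second} surjection (and the corresponding Galois representation over $\T_{\mu'}(Q)_{\m}^{\alpha}$, which must agree with $r'_m$ by comparing characteristic polynomials) to show that $r'_m|_{G_p}$ has a second distinct rank-1 unramified submodule, and conclude by Nakayama. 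Everything else in your argument is sound once this correction is made.
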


\begin{proof}
As in the proof of Theorem~\ref{theorem:highergood}, it suffices to
prove the existence of an appropriate representation $r_m : G_{\Q} \to
\GSp_4(\T_{\sigma}(Q)_{\m}^{\alpha,\beta}/I_{\sigma,m})$ for each $m>0$. We thus fix
an $m>0$. By Theorem~\ref{theorem:qexp} below, there exists a power
$A^s$ of $A$ such that we have injections:
\begin{align*}
  H^0(X_1(Q),\omega(a,2)(-\infty)_{\CO/\varpi^m})^{\alpha,\beta}_{\m}
  &\stackrel{e_{\beta}\circ A^s}{\into}
  H^0(X_1(Q),\omega(a+t,b+t)(-\infty)_{\CO/\varpi^m})^{\beta}_{\m} \\
  H^0(X_1(Q),\omega(a,2)(-\infty)_{\CO/\varpi^m})^{\alpha,\beta}_{\m}
  &\stackrel{e_{\alpha}\circ A^s}{\into}
  H^0(X_1(Q),\omega(a+t,b+t)(-\infty)_{\CO/\varpi^m})^{\alpha}_{\m}
\end{align*}
where $t = rp^{m-1}(p-1)$. 
These in turns give rise to surjections:
\begin{align*}
\T_{\mu'}(Q)_{\m}^\beta &\onto
\T_{\mu}(Q)_{m}^{\alpha,\beta}/I_{\mu,m} \\ 
\T_{\mu'}(Q)_{\m}^\alpha &\onto
\T_{\mu}(Q)_{m}^{\alpha,\beta}/I_{\mu,m},  
\end{align*}
 where $\mu' = \mu + (t,t)$. The first of these surjections together
 with Theorem~\ref{theorem:highergood} implies the existence of a
 representation $r'_m$ satisfying all of the required properties,
 except for conditions~\eqref{det} and~\eqref{at-p} of
 Definition~\ref{defn:minimal}. However, we deduce from the existence
 of both surjections that the representation $r_m|G_p$ contains two
 distinct rank-1 unramified submodules (spanned by basis vectors) --
 one of which having Frobenius eigenvalue lifting $\alpha$, and the
 other having Frobenius eigenvalue lifting $\beta$. By Nakayama's
 Lemma, we deduce that $r_m'$ contains an unramified rank-2 submodule
 of the form required by condition \eqref{at-p} of
 Definition~\ref{defn:minimal}. In order to obtain a representation
 that also satisfies condition~\eqref{det} of
 Definition~\ref{defn:minimal}, we note that $\nu(r'_m) =
 \chi\epsilon^{-(a-1)}\chi_Q$ where $\chi_Q$ is a finite order
 character of $p$-power order which is unramified outside
 $Q$. Since $p$ is odd, we can find a square root of $\chi_Q$ and
 twist $r'_m$ by the inverse of this square root. The resulting
 representation $r_m$ now satisfies all required properties.
\end{proof}

\section{Properties of cohomology groups}
\label{sec:prop-cohom}

As in Section~\ref{sec:cohom}, let $S$ and $Q$ be finite sets of
primes of $\Q$ which are disjoint and do not contain $p$. We allow
the possibility that $Q=\emptyset$. We let $K$ and $K_{i}(Q)$ be open
compact subgroups of $\GSp_4(\A^{\infty})$ as in
Section~\ref{sec:cohom}, and we let $X = X_K$ and $X_i(Q) = X_{K_i(Q)}$ be the
corresponding Siegel threefolds,
The goal of this section is to prove Theorems \ref{thm:no-newforms}
and \ref{thm:balanced} below.

\subsection{Taylor--Wiles primes}
\label{sec:prop-cohom-groups}

Fix $\mu = (a,b) \in X^*(T)_M^+$ with $a\geq b \geq 2$.
Let $\mE$ be a
non-Eisenstein maximal ideal of~$\T_{\mu}$.
The ideal $\mE$ gives rise to ideals of $\Tan_{\mu}(Q)$ and
$\T_{\mu}(Q)$ which we also denote by $\mE$ (see
Section~\ref{sec:gal-rep-cohom}). We will need the following
assumption
(c.f.\ Assumptions~\ref{assumption:hecke-galois-rep-regular} and \ref{assumption:hecke-galois-rep-low}):

\begin{assumption}
  \label{assumption:tw-assump}
  For each $x \in Q$, we have $x \equiv 1 \mod p$, and $\rbar_{\mE}|G_x$ is a
  direct sum of four pairwise distinct characters with Frobenius
  eigenvalues $\alpha_x, \beta_x, \gamma_x, \delta_x$. We assume the
  eigenvalues have been labeled so that the plane
  $\lambda(\alpha_x)\oplus \lambda(\beta_x)$ is isotropic, and hence $\alpha_x\delta_x = \beta_x\gamma_x$.
\end{assumption}

For $x \in Q$, we let $\alpha_x', \beta_x', \gamma_x', \delta_x' \in \CO^\times$ be elements
lifting   $\alpha_x$, $\beta_x$, $\gamma_x, \delta_x \in
k^\times$.
The point of the above assumption is to rule out the possibility of
newforms at level $K_0(Q)$:

\begin{theorem} 
  \label{thm:no-newforms} 
Let $\mu$ and $\mE$ be as above, and suppose that Assumption~\ref{assumption:tw-assump} holds. Let $\m$ denote the ideal of $\T_\mu(Q)$ containing
$\mE$ together with the elements $x U_{x,2} - \alpha'_x\beta'_x$ and
$U_{x,1}-\alpha'_x -\beta'_x$ for each $x\in Q$.
Then $\m$ is maximal and there is an
isomorphism 
\[ \pr_Q \circ i: H^0(X,\omega(a,b)(-\infty)_{K/\CO})_{\mE} \liso
H^0(X_0(Q),\omega(a,b)(-\infty)_{K/\CO})_{\m}.\]
which is equivariant for the operators $T_{x,i}$ for each $x\not \in
S\cup Q \cup \{p\}$ as well as for the operators $T_{p,1}$ and $Q_{p,2}$.

Here $i$ is the natural inclusion and $\pr_Q$ is defined as
follows. For $x\in Q$, let $R_x$ denote the Hecke operator
\[ R_x = (xU_{x,2} - \alpha'_x\gamma'_x)(xU_{x,2} -
\beta_x'\delta_x')(xU_{x,2} - \gamma'_x \delta'_x)\in \T_{\mu}(Q) \]
and let $\pr_x$ denote the idempotent
\[ \pr_x = \lim_{n \rightarrow \infty} (R'_x)^{n!}. \]
Then the $\pr_x$'s commute with one another and $\pr_Q$ denotes their
product.
\end{theorem}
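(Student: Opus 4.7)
I will proceed by induction on $|Q|$, with the inductive step following because the idempotents $\pr_x$ for distinct $x \in Q$ commute and act through the local Hecke algebras at different primes. We therefore reduce to the case $Q = \{x\}$ of a single Taylor--Wiles prime. Maximality of $\m$ is immediate: $\T_\mu(Q)/\m$ is generated over $k = \Tan_\mu/\mE$ by the images of $U_{x,1}$ and $U_{x,2}$, which by construction lie in $k$.

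The isomorphism will first be established after inverting $p$. The cohomology $H^0(X_0(Q), \omega(a,b)(-\infty))_{\mE} \otimes K$ decomposes into Hecke isotypic pieces indexed by cuspidal automorphic representations $\pi$ of $\GSp_4(\A)$ whose associated Galois representations reduce to $\rbar_{\mE}$. Since $x \notin S(\rbar)$, the representation $\rbar_{\mE}$ is unramified at $x$, so by local--global compatibility (Proposition~\ref{prop:weiss}(\ref{W-loc-glob})) each contributing $\pi_x$ is an unramified principal series whose Satake parameters are lifts $\alpha_x', \beta_x', \gamma_x', \delta_x'$ of the residual Frobenius eigenvalues. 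The Klingen parahoric invariants $\pi_x^{\Pi(x)}$ form a four-dimensional space, decomposing under $(U_{x,1}, xU_{x,2})$ into one-dimensional joint eigenspaces indexed by the four Weyl cosets $W_{\GSp_4}/W_{M_\Pi}$; in our normalization these correspond to the four ``cross products'' drawn from complementary isotropic halves of the symplectic space, namely $\alpha_x'\beta_x'$, $\alpha_x'\gamma_x'$, $\beta_x'\delta_x'$, $\gamma_x'\delta_x'$ (where we use the relation $\alpha_x'\delta_x' = \beta_x'\gamma_x'$). The operator $R_x$ annihilates three of these eigenspaces, so $\pr_x$ is the idempotent projecting onto the remaining $(\alpha_x', \beta_x')$-eigenspace, which is one-dimensional and coincides with the image of the spherical line $\pi_x^K$ under $i_x$. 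Summing over $\pi$ yields the isomorphism over $K$.

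To descend to $K/\CO$-coefficients, we pass to higher weight via Hasse-invariant multiplication. By Lemmas~\ref{lem:hasse-mod-p} and~\ref{lem:hasse-mod-p^m}, for each $m \geq 1$ one can choose $s$ with $t = rs(p-1) \geq N(\mu)$ so that multiplication by $A^s$ gives Hecke-equivariant injections from $\CO/\varpi^m$-cohomology in weight $(a,b)$ to $\CO/\varpi^m$-cohomology in weight $(a+t, b+t)$, both for $X$ and for $X_0(Q)$, compatibly with $\pr_x \circ i_x$. In weight $(a+t, b+t)$ the higher coherent cohomology vanishes after localization (Theorem~\ref{thm:lan-suh}), so the $\CO/\varpi^m$-groups are reductions of $\CO$-flat modules, and the rational isomorphism of the previous step descends to an integral isomorphism in weight $(a+t, b+t)$. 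Injectivity of $\pr_x \circ i_x$ on $\CO/\varpi^m$-coefficients in weight $(a,b)$ now follows from injectivity of $A^s$; surjectivity is deduced by comparing the $A^s$-filtrations at the two levels and using that the natural map $\pr_x \circ i_x$ has the same characteristic-zero rank at both. Taking $m \to \infty$ yields the isomorphism over $K/\CO$. The main obstacle is the surjectivity part of this integrality step: one must control the cokernel of $A^s$-multiplication compatibly at both levels $X$ and $X_0(Q)$ and rule out torsion classes in weight $(a,b)$ that would fail to lift via $\pr_x \circ i_x$, which requires exploiting the fact that the Hasse filtration interacts coherently with the local Hecke-algebra structure at $x$ on both sides.
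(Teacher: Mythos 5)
Your argument correctly reduces to a single Taylor--Wiles prime, correctly identifies the rational structure via local--global compatibility at $x$ (the Klingen-parahoric invariants decomposing into four lines under $(U_{x,1}, xU_{x,2})$, with $\pr_x$ killing three of them and landing on the spherical one), and correctly identifies that the hard part is the \emph{integral} statement. But you do not actually close that hard part, and you acknowledge as much in your last sentence: ``surjectivity is deduced by comparing the $A^s$-filtrations at the two levels'' is not an argument, and ``the Hasse filtration interacts coherently with the local Hecke-algebra structure'' names the desideratum rather than proving it. This is a genuine gap, and it is exactly where the content of the theorem lives.

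The paper closes the gap with three ingredients you are missing. First, the high-weight case is proved with an \emph{explicit two-sided inverse}: Theorem~\ref{thm:genestier-tilouine} shows that in sufficiently high weight the inverse to $\pr_Q \circ i$ is $d_Q^{-1}\mathrm{tr}$, which is already defined integrally. Having maps in both directions that commute with the Hasse invariant $h$ is what lets one show the Hasse multiplication diagrams are \emph{co-cartesian} (Lemma~\ref{lem:cartesian-hasse}): if $f \in H^0(X,\W^{\sub}_{\mu',k})_{\mE}$ is such that $\pr_Q(f)$ is divisible by $h^t$, one applies $d_Q^{-1}\mathrm{tr}$ to the quotient to conclude $f$ itself is divisible by $h^t$. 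That pins down the $H^0$ mod $p$ statement in \emph{low} weight, not just the injectivity you get for free from $A^s$. Second, to go from mod $p$ to mod $\varpi^n$ one needs control of $H^1(\cdot,\W^{\sub}_{\mu,k})_{\m}$ as well, since the Bockstein exact sequences relate $H^0_{\CO_n}$ to $H^0_{\CO_{n-1}}$ and $H^1_k$; Lemma~\ref{lem:no-new-forms-H1-mod-p} establishes the analogous isomorphism in degree $1$ mod $p$ by a diagram chase against the non-ordinary locus, which in turn requires a Hasse invariant $\tilde h$ on the $p$-rank $\leq 1$ stratum (Lemma~\ref{lem:no-new-forms-non-ord}). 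Third, the final step is a five-lemma induction on $n$ against $\CO_n$-coefficient diagrams, using the $H^0$-mod-$p$ and $H^1$-mod-$p$ isomorphisms at the two ends. Without the explicit inverse, the $H^1$ comparison, and the five-lemma patching, the ``descend to $K/\CO$'' step of your proposal does not go through.
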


For compactness, we will make use the alternative notation
$\W_{\mu} = \omega(a,b)$, and $\W_{\mu}^{\sub} = \omega(a,b)(-\infty)$. In sufficiently high weight,
Theorem~\ref{thm:no-newforms} is due to Genestier and Tilouine:

\begin{theorem}
  \label{thm:genestier-tilouine}
  Suppose $\mu = (a,b)$ is such that $H^i(X, \W^{\sub}_{\mu,k})$ and
  $H^i(X_0(Q), \W^{\sub}_{\mu,k})$ are 0 for all $i>0$. Then the map
\[ \pr_Q \circ i : H^0(X,\W^{\sub}_{\mu,K/\CO})_{\mE} \liso
H^0(X_0(Q),\W^{\sub}_{\mu,K/\CO})_{\m} \]
is an isomorphism. An explicit inverse is given by the composition
\[ H^0(X_0(Q),\W^{\sub}_{\mu,K/\CO})_{\m} \into
H^0(X_0(Q),\W^{\sub}_{\mu,K/\CO})_{\mE}
\stackrel{d_Q^{-1}\mathrm{tr}}{\to} H^0(X,\W^{\sub}_{\mu,K/\CO})_{\mE}\]
where $d_Q = \prod_{x\in Q}[\GSp_4(\Z_x):\Pi(x)]$ (which is prime to
$p$) and $\mathrm{tr}$ is the trace map associated to $X_0(Q) \to X$.
\end{theorem}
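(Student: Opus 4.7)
The plan is to reduce the statement to a local computation at each prime $x \in Q$, combined with a trace-degree identity. First I would exploit the vanishing of $H^i(X,\W^{\sub}_{\mu,k})=H^i(X_0(Q),\W^{\sub}_{\mu,k})=0$ for $i>0$: by universal coefficients and induction on $m$, both $H^0(X,\W^{\sub}_{\mu,\OL/\varpi^m})$ and $H^0(X_0(Q),\W^{\sub}_{\mu,\OL/\varpi^m})$ are free $\OL/\varpi^m$-modules whose formation commutes with base change. Passing to direct limits and then inverting $p$, it suffices to prove that $\pr_Q \circ i$ is an isomorphism of $K$-vector spaces after localizing at the appropriate maximal ideals.

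Over $K$, the cohomology decomposes as a direct sum $\bigoplus_\pi \pi_f^K$ (resp.\ $\pi_f^{K_0(Q)}$) over cuspidal automorphic representations $\pi$ of $\GSp_4(\A_{\Q})$ with the appropriate infinitesimal character at infinity (the non-Eisenstein hypothesis on $\mE$ rules out Eisenstein contributions). For $\pi$ contributing to the $\mE$-localization, local--global compatibility combined with Assumption~\ref{assumption:tw-assump} (four distinct residual characters with $x\equiv 1\bmod p$) forces $\pi_x$ to be an unramified principal series at each $x \in Q$: any ramified local component would impose constraints on $\rbar_\mE|G_x$ incompatible with the assumption. Consequently $\pi_x^{\Pi(x)}$ is four-dimensional. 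By Hensel's lemma, the Satake parameters of $\pi_x$ admit canonical lifts $\alpha_x',\beta_x',\gamma_x',\delta_x' \in \OL^\times$, and $xU_{x,2}$ acts on $\pi_x^{\Pi(x)}$ with four pairwise-distinct eigenvalues $\alpha_x'\beta_x', \alpha_x'\gamma_x', \beta_x'\delta_x', \gamma_x'\delta_x'$ indexed by choices of isotropic plane. The operator $\pr_x$ is then a genuine idempotent projecting onto the $\alpha_x'\beta_x'$-eigenline.

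The local claim at each $x\in Q$ is twofold: (a) the composition $\pr_x\circ i_x$ sends the spherical line $\pi_x^{\GSp_4(\Z_x)}$ isomorphically onto the $\alpha_x'\beta_x'$-eigenline by an $\OL^\times$-scalar, because the image of a spherical vector in $\pi_x^{\Pi(x)}$ has unit coefficient in each of the four eigenlines in any diagonalizing basis; and (b) the local trace $\tr_x:\pi_x^{\Pi(x)} \to \pi_x^{\GSp_4(\Z_x)}$ annihilates the three non-$\alpha_x'\beta_x'$ eigenlines, so that $\tr_x\circ \pr_x = \tr_x$ on $\pi_x^{\Pi(x)}$. Granted (a) and (b), and since the operators at distinct primes commute, we obtain $\tr\circ \pr_Q = \tr$ on the $\mE$-localization globally. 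Combined with $\tr\circ i = d_Q\cdot \mathrm{id}$ (the degree of the \'etale cover $X_0(Q)\to X$), this yields $d_Q^{-1}\tr\circ(\pr_Q\circ i) = \mathrm{id}$ on $H^0(X,\W^{\sub}_{\mu,K/\OL})_{\mE}$. A rank count (each $\mE$-relevant $\pi$ contributes exactly one line to the $\m$-localization via (a)) then shows $\pr_Q\circ i$ is the claimed isomorphism.

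The main obstacle is fact (b): vanishing of $\tr_x$ on the non-spherical $xU_{x,2}$-eigenlines. This can be verified by an explicit computation with coset representatives of $\GSp_4(\Z_x)/\Pi(x)$, or more conceptually by observing that the three non-$\alpha_x'\beta_x'$ eigenvectors are \emph{genuinely parahoric} (they carry no $\GSp_4(\Z_x)$-invariant component, by uniqueness of the spherical vector in $\pi_x^{\Pi(x)}$, so that the trace pairing with the spherical line vanishes). This is essentially the content of the theorem of Genestier--Tilouine cited above, whose proof exploits the Iwahori--Matsumoto presentation and intertwining operators on unramified principal series of $\GSp_4(\Q_x)$.
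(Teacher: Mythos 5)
Your proof follows the same two-step route as the paper's: (1) use the vanishing of higher cohomology mod $\varpi$ to reduce the statement to coefficients in $K$, and (2) prove the statement over $K$ by an automorphic/local analysis at the primes in $Q$. For step (2) the paper is deliberately terse --- after the reduction it simply writes that the result ``follows exactly as in the proof of Proposition 11.1.2 of [TG]'' --- whereas you unpack what that citation contains: the cuspidal decomposition over $K$, the four-dimensional $\Pi(x)$-fixed subspace of an unramified principal series, the $xU_{x,2}$-eigenline structure indexed by isotropic planes, the idempotent $\pr_x$, and the degree identity $\mathrm{tr}\circ i = d_Q$. So the approach is the same; you are just expanding the reference rather than quoting it.

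Two remarks. Your phrase ``passing to direct limits and then inverting $p$'' in step (1) is a little off, since $K/\CO$ is torsion; the precise point, which the paper makes, is that both $\pr_Q\circ i$ and $d_Q^{-1}\mathrm{tr}$ are defined over $\CO$ (here one uses that $d_Q$ is prime to $p$), so once they are shown to be mutually inverse after tensoring with $K$ they are mutually inverse over $\CO$ and hence over $K/\CO$. More substantively, you assert that the four eigenvalues $\alpha'_x\beta'_x,\alpha'_x\gamma'_x,\beta'_x\delta'_x,\gamma'_x\delta'_x$ of $xU_{x,2}$ are pairwise distinct. This does \emph{not} follow merely from pairwise distinctness of $\alpha_x,\beta_x,\gamma_x,\delta_x$ together with $\alpha_x\delta_x=\beta_x\gamma_x$: for instance $\alpha_x\beta_x=\gamma_x\delta_x$ holds exactly when $\alpha_x=-\gamma_x$ (equivalently $\beta_x=-\delta_x$), which Assumption~\ref{assumption:tw-assump} does not exclude. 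Without such distinctness the factor $(xU_{x,2}-\gamma'_x\delta'_x)$ of $R_x$ could be topologically nilpotent on the $\alpha'_x\beta'_x$-eigenline, making $\pr_x$ vanish on the whole $\m$-localization. This distinctness (or some workaround) is exactly the sort of thing packaged in the cited proposition of [TG]; similarly, your claims (a) (unit coefficients of the spherical vector in an $xU_{x,2}$-diagonalizing basis) and (b) (vanishing of $\mathrm{tr}_x$ on the three non-$\alpha'_x\beta'_x$ eigenlines) are precisely the crux local computations of that reference, which you correctly identify as the main obstacle but leave as assertions.
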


\begin{proof}
By the assumption of cohomology vanishing, it suffices to prove both
statements with $K/\CO$ replaced by $K$. Indeed, if the map over $K$ is
surjective, then so too is the map over $K/\CO$. Furthermore, if
 $d_Q^{-1}\mathrm{tr}$ is an inverse over $K$, then the fact that its
 defined over $\CO$ implies immediately that it also gives an inverse over
 $K/\CO$. The proof of the corresponding result over $K$ follows exactly as in
the proof of \cite[Proposition 11.1.2]{TG}.
\end{proof}

Using this result and the Hasse invariant $h \in H^0(X, \omega^{p-1}_k)$, we can now establish
Theorem~\ref{thm:no-newforms} at the level of
$\varpi$-torsion. (Recall that cohomology in degree 0 over $k$ can be identified
with $\varpi$-torsion in degree 0 cohomology over $K/\CO$.) Note that, for any
weight $\mu$, the cohomology vanishing assumption of the previous
theorem holds in weight $\mu + (t,t)$ as long as $t \geq N(\mu)$
(where $N(\mu)$ is defined in Definition~\ref{df:suff-large-wt}).

\begin{lemma}
  \label{lem:cartesian-hasse} Let $\mu = (a,b)$ with  $a \geq b \geq 2$. Choose an integer
  $t$ such that $(p-1)t \geq N(\mu)$. Let $\mu' = (a',b') = (a+t(p-1),b+t(p-1))$.
Then the following diagrams are co-cartesian: 
\[
\begin{tikzpicture}
  \matrix (m) [matrix of math nodes,row sep=3em,column sep=4em,minimum width=2em]
  {
     H^0(X_0(Q),\W^{\sub}_{\mu,k})_{\m} & H^0(X_0(Q),\W^{\sub}_{\mu',k})_{\m} \\
     H^0(X,\W^{\sub}_{\mu,k})_{\mE} & H^0(X,\W^{\sub}_{\mu',k})_{\mE} \\};
  \path[-stealth]
      (m-2-1) edge node [left] {$\pr_Q\circ i$} (m-1-1)
      edge node [above] {$h^t$} (m-2-2)
    (m-1-1) edge node [above] {$h^t$} (m-1-2) 
      (m-2-2) edge node [left] {$\pr_Q\circ i$} node [right] {$\cong$} (m-1-2);
   \end{tikzpicture}
\]
\[
\begin{tikzpicture}
  \matrix (m) [matrix of math nodes,row sep=3em,column sep=4em,minimum width=2em]
  {
     H^0(X_0(Q),\W^{\sub}_{\mu,k})_{\m} & H^0(X_0(Q),\W^{\sub}_{\mu',k})_{\m} \\
     H^0(X,\W^{\sub}_{\mu,k})_{\mE} & H^0(X,\W^{\sub}_{\mu',k})_{\mE} \\};
  \path[-stealth]
      (m-1-1) edge node [left] {$d_Q^{-1}\mathrm{tr}$} (m-2-1)
      edge node [above] {$h^t$} (m-1-2)
    (m-2-1) edge node [above] {$h^t$} (m-2-2) 
      (m-1-2) edge node [left] {$d_Q^{-1}\mathrm{tr}$} node [right] {$\cong$} (m-2-2);
   \end{tikzpicture}
\]
In particular, the left hand
vertical maps are mutually inverse isomorphisms. 
\end{lemma}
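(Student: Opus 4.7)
The plan is to verify that both diagrams commute, then deduce that the left vertical arrows are mutually inverse isomorphisms from the corresponding fact for the right vertical arrows, which is given by Theorem~\ref{thm:genestier-tilouine} applied in weight $\mu'$; the co-cartesian property is then automatic.

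First, I would check commutativity of each square. Multiplication by $h^t$ commutes with the natural pullback $i$ induced by $X_0(Q) \to X$ because the Hasse invariant on $X_0(Q)$ is pulled back from $X$. It commutes with the trace map $\mathrm{tr}$ associated to the finite flat cover $X_0(Q) \to X$ by the projection formula. Finally, it commutes with each Hecke operator $R_x$ for $x \in Q$ (hence with the idempotent $\pr_Q$) by Lemma~\ref{lem:hasse-mod-p}, as these are Hecke operators at primes away from $p$. Since by hypothesis $(p-1)t \geq N(\mu)$, Definition~\ref{df:suff-large-wt} yields the vanishing $H^i(X, \W^{\sub}_{\mu',k}) = H^i(X_0(Q), \W^{\sub}_{\mu',k}) = 0$ for $i > 0$, so Theorem~\ref{thm:genestier-tilouine} applies in weight $\mu'$ and identifies the right vertical maps of both diagrams as mutually inverse isomorphisms.

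Next, I would establish that the left vertical maps are also mutually inverse isomorphisms. Injectivity of $\pr_Q \circ i$ on the left follows from commutativity: if $(\pr_Q \circ i)(x) = 0$, then $(\pr_Q \circ i)(h^t x) = 0$, so $h^t x = 0$ (right vertical being an isomorphism), hence $x = 0$ by the injectivity of multiplication by $h^t$ (a consequence of Lemma~\ref{lem:hasse-mod-p}, or equivalently of $h$ being a non-zerodivisor on global sections). For surjectivity, given $c$ in the target, set $a = d_Q^{-1}\mathrm{tr}(c)$; combining commutativity with $(\pr_Q \circ i) \circ (d_Q^{-1}\mathrm{tr}) = \mathrm{id}$ in weight $\mu'$ gives $h^t \cdot (\pr_Q \circ i)(a) = h^t c$, whence $(\pr_Q \circ i)(a) = c$ by injectivity of $h^t$. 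A parallel argument handles $d_Q^{-1}\mathrm{tr}$ on the left, and the two composites are verified to be identities by the same trick of multiplying by $h^t$ and invoking injectivity of $h^t$ together with the inverse relations already known in weight $\mu'$.

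Finally, a commutative square of $\CO$-modules with isomorphisms on both vertical arrows is automatically both cartesian and co-cartesian, so the co-cartesian claim is immediate. The only bookkeeping concern is ensuring that $\pr_Q \circ i$ and $d_Q^{-1}\mathrm{tr}$ are well-defined at the $\mE$- and $\m$-localized level in both weights, which is guaranteed by the Hecke-equivariance of all the maps involved. There is no serious obstacle; the proof is essentially a diagram chase that propagates the weight-$\mu'$ result downward to weight $\mu$ via Hasse invariant multiplication, with the projection formula being the only nontrivial input beyond Lemma~\ref{lem:hasse-mod-p} and Theorem~\ref{thm:genestier-tilouine}.
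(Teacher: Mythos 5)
Your proof is correct and follows essentially the same approach as the paper, invoking the same ingredients: commutativity of the squares via Hecke-equivariance of $h$ (Lemma~\ref{lem:hasse-mod-p}) and compatibility with pullback and trace, the weight-$\mu'$ isomorphism from Theorem~\ref{thm:genestier-tilouine}, and injectivity of multiplication by $h^t$ on global sections. The paper organizes the argument as a divisibility equivalence in weight $\mu'$ (``$f$ is divisible by $h^t$ iff $\pr_Q(f)$ is''), which is the co-cartesian content, and then reads off that the left verticals are mutually inverse isomorphisms; you establish the isomorphism first and deduce co-cartesianness as a formal consequence — the two formulations are equivalent given the injectivity of the horizontal maps.
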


\begin{proof}
  Note that the right hand vertical maps are mutually inverse isomorphisms by
  Theorem~\ref{thm:genestier-tilouine} and the choice of $t$. The diagrams
  are commutative because $h$ commutes with all Hecke operators at the
  primes in $Q$ (Lemma~\ref{lem:hasse-mod-p}). Now, let $f \in H^0(X,\W^{\sub}_{\mu',k})_{\mE}$ and
  let $F = \pr_Q(f) \in H^0(X_0(Q),\W^{\sub}_{\mu',k})_{\m}$. Note
  that $f$ can be recovered from $F$ via the formula
  $f = d_Q^{-1}\mathrm{tr}(F)$. We need to show that $f$ is divisible
  by $h^t$ if and only if $F$ is divisible by $h^t$. 
  But this follows  immediately by the commutativity of the diagrams
  above: if~$f = h^t g$, then~$F = h^t  \pr_Q(g)$,
  and if~$F = h^t g$, then~$f = h^t d_Q^{-1}\mathrm{tr}(g)$. (Note
  that since~$X_0(Q)$ and~$X$ are smooth (and in particular irreducible) over~$k$,
  multiplication by~$h$ is injective on~$H^0$.)
\end{proof}

We will need the analogous result for forms on the non-ordinary locus:
let $S$ (resp.\ $S_0(Q)$) denote the non-ordinary locus of $X_k$
(resp.\ $X_0(Q)_k$).

\begin{lemma}
  \label{lem:no-new-forms-non-ord}
   Let $\mu = (a,b)$ with  $a \geq b \geq 2$. Then the map
\[ \pr_Q \circ i : H^0(S,\W^{\sub}_{\mu,k})_{\mE} \liso
H^0(S_0(Q),\W^{\sub}_{\mu,k})_{\m} \]
is an isomorphism with inverse $d_Q^{-1}\mathrm{tr}$.
\end{lemma}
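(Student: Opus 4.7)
The plan is to mimic the argument of Lemma~\ref{lem:cartesian-hasse} and Theorem~\ref{thm:genestier-tilouine}, reducing everything to sheaf-level and Hecke-algebraic identities that survive restriction to the non-ordinary locus. The crucial geometric observation is that since $K_0(Q) \subset K$ differs only in its components at primes $x \in Q$ (in particular at primes away from $p$), the map $f: X_0(Q) \to X$ restricts to a finite flat map $f_S: S_0(Q) \to S$ of the same degree $d_Q$; concretely $S_0(Q) = S \times_X X_0(Q)$, because ordinariness of an abelian variety depends only on its $p$-torsion and so is preserved by Hecke correspondences at primes prime to $p$. Consequently the pullback $i = f_S^*$, the trace $\mathrm{tr}_{f_S}$, and the Hecke operators $U_{x,i}$ for $x \in Q$ all make sense on $H^0(S_0(Q), \W^{\sub}_{\mu,k})$ and are compatible with their counterparts on $H^0(X_0(Q), \W^{\sub}_{\mu,k})$ under the restriction maps.

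One composition is nearly formal. The identity $\mathrm{tr}_{f_S} \circ f_S^* = d_Q \cdot \mathrm{id}$ is the standard trace identity for a finite flat morphism of degree $d_Q$, so it holds at the sheaf level and in particular on $H^0(S, \W^{\sub}_{\mu,k})$ without any localization. On the other hand, as established in the proof of Theorem~\ref{thm:genestier-tilouine}, the pairwise distinctness of $\alpha_x, \beta_x, \gamma_x, \delta_x$ together with the similitude relation (and the labeling convention for the isotropic decomposition) implies that $R_x$ is a unit modulo $\m$; hence $\pr_Q$ acts as the identity on $H^0(S_0(Q), \W^{\sub}_{\mu,k})_{\m}$ by exactly the same argument. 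Combining these two facts yields
\[ d_Q^{-1}\mathrm{tr} \circ (\pr_Q \circ i) \;=\; d_Q^{-1}\mathrm{tr} \circ i \;=\; \mathrm{id} \quad \text{on } H^0(S, \W^{\sub}_{\mu,k})_{\mE}. \]
For the reverse composition, the identity $\pr_Q \circ i \circ \mathrm{tr} = d_Q \cdot \pr_Q$ on $H^0(X_0(Q), \W^{\sub}_{\mu,k})$ established in the proof of \cite[Prop.\ 11.1.2]{TG} is a purely Hecke-theoretic statement about the action of certain double cosets at primes in $Q$; since these correspondences preserve the ordinary locus, the identity holds equally well on $H^0(S_0(Q), \W^{\sub}_{\mu,k})$, yielding $(\pr_Q \circ i) \circ d_Q^{-1}\mathrm{tr} = \pr_Q = \mathrm{id}$ on the $\m$-localization.

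The main obstacle lies in \emph{not} being seduced by the naive route of reducing to Lemma~\ref{lem:cartesian-hasse} via the short exact sequence
\[ 0 \to \W^{\sub}_{\mu-(p-1,p-1), k} \xrightarrow{h} \W^{\sub}_{\mu, k} \to \W^{\sub}_{\mu, k}|_S \to 0 \]
and the snake lemma: this would require control of $H^i(X, \W^{\sub}_{\mu-(p-1,p-1), k})_{\mE}$ for $i=0,1$, which is typically unavailable because the weight $\mu-(p-1,p-1)$ lies outside the regime of Theorem~\ref{thm:lan-suh}, and one cannot bootstrap via multiplication by $h^t$ because $h$ restricts to zero on $S$. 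The direct strategy above bypasses these issues by transferring the sheaf-level and Hecke-algebraic identities from $X_0(Q)$ to $S_0(Q)$ directly.
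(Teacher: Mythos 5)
Your proposal has a genuine gap, and it stems from misidentifying which direction the weight‑shifting argument goes. You anticipate a ``naive'' route that would use the short exact sequence $0 \to \W^{\sub}_{\mu-(p-1,p-1), k} \xrightarrow{h} \W^{\sub}_{\mu, k} \to \W^{\sub}_{\mu, k}|_S \to 0$ to descend in weight, and you correctly note this fails because $H^i(X,\W^{\sub}_{\mu-(p-1,p-1),k})$ is uncontrolled. But the paper's argument shifts \emph{up}: it first proves the lemma at a high weight $\mu'' = \mu + t(p-1,p-1)$ with $t \geq N(\mu)+1$, where Theorem~\ref{thm:genestier-tilouine} applies to both $X$ and $X_0(Q)$ at weights $\mu'$ and $\mu''$, and the long exact sequence degenerates by the vanishing of $H^1(X,\W^{\sub}_{\mu',k})$. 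The crucial missing ingredient in your proposal is the descent from $\mu''$ back to $\mu$: the paper uses the \emph{second} Hasse invariant $\tilde h \in H^0(S, \omega(p^2-1,p^2-1)_k)$ on the non-ordinary locus (Boxer, Goldring, etc.), which is Hecke-equivariant away from $p$, to run a commutative-square argument parallel to Lemma~\ref{lem:cartesian-hasse}. Multiplication by $h$ is useless here precisely because $h$ vanishes on $S$; that is why $\tilde h$ is indispensable, and you never mention it.

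The ``direct transfer'' argument you propose as a replacement does not close the gap. For the first composition, the formal identity $\mathrm{tr}\circ i = d_Q$ does not give $d_Q^{-1}\mathrm{tr}\circ(\pr_Q\circ i) = \mathrm{id}$: the element $iF$ lives in the $\mE$-localization, which splits over several maximal ideals $\m'$ lying above $\mE$, and $\pr_Q iF$ is only the $\m$-component. You would need $\mathrm{tr}\bigl((1-\pr_Q)iF\bigr) = 0$, which is exactly the non-formal content of the Genestier--Tilouine argument (the claim ``$\pr_Q$ acts as the identity on the $\m$-localization'' is true but irrelevant, since $iF$ is not a priori in that localization). For the second composition, the identity $\pr_Q\circ i\circ d_Q^{-1}\mathrm{tr} = \pr_Q$ is \emph{not} a universal identity of Hecke correspondences: the paper's Theorem~\ref{thm:genestier-tilouine} proves it by first reducing to $K$-coefficients (using cohomology vanishing) and then invoking a characteristic-zero, semisimple spectral argument from \cite{TG}. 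Since $H^0(S_0(Q),\W^{\sub}_{\mu,k})$ is a characteristic-$p$ object with no characteristic-zero lift, that proof does not transfer, and no alternative argument is supplied. In short, the compositions you treat as ``nearly formal'' are precisely the content that must be bootstrapped via $\tilde h$.
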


\begin{proof}
  We first show that the result is true in sufficiently high
  weight. More precisely: let $t \geq N(\mu)+1$. We let
  $\mu' = (a+(t-1)(p-1),b+(t-1)(p-1))$ and
  $\mu'' = (a+t(p-1),b+t(p-1))$. 
   We have a
  commutative diagram:
\[\begin{tikzpicture}
\tikzstyle{every node}=[font=\tiny]
  \matrix (m) [matrix of math nodes,row sep=3em,column sep=4em,minimum width=2em]
  {
   H^0(X_0(Q),\W^{\sub}_{\mu',k})_{\m} & H^0(X_0(Q),\W^{\sub}_{\mu'',k})_{\m}
   & H^0(S_0(Q),\W^{\sub}_{\mu'',k})_{\m} & 0 \\
    H^0(X,\W^{\sub}_{\mu',k})_{\mE} & H^0(X,\W^{\sub}_{\mu'',k}))_{\mE}  &
   H^0(S,\W^{\sub}_{\mu'',k})_{\mE} & 0\\ };
  \path[-stealth]
      (m-2-1) edge node [left] {$\pr_Q$} node [right] {$\cong$} (m-1-1)
              edge node [above] {$h$} (m-2-2)
    (m-1-1) edge node [above] {$h$} (m-1-2) 
    (m-2-2) edge node [left] {$\pr_Q$} node [right] {$\cong$} (m-1-2)
    (m-2-3) edge node [left] {$\pr_Q$} (m-1-3)
      (m-1-2) edge (m-1-3) 
     (m-1-3) edge  (m-1-4)
      (m-2-2) edge  (m-2-3) 
     (m-2-3) edge  (m-2-4)
      ;
   \end{tikzpicture}\]
The choice of $t$ guarantees that the rows are short exact sequences. From the previous lemma, we deduce that the right hand vertical map is
an isomorphism with inverse $d_Q^{-1}\mathrm{tr}$. 

Now we imitate the proof of the previous lemma to deduce the result in
smaller weights. For this we use the existence of the Hasse invariant 
\[ \tilde{h} \in H^0(S, \omega(p^2-1,p^2-1)_k). \]
Such a form was constructed in unpublished work of the second author with Goldring, but
is also constructed in greater generality in \cite{Boxer} and
\cite{Wushi}. In \cite[Theorem~B.2]{Boxer} (see also~\cite[Theorem~6.2.3]{Boxer}), it is shown that $\tilde{h}$ extends to the boundary,
(by the normality of the~$p$-rank 1 locus) and that
multiplication by $\tilde{h}$ is Hecke equivariant away
from $p$ (see \cite[Theorem~4.5.4(3)]{Boxer}).
 (It is also true, but not relevant here, that~$\tilde{h}$ vanishes on the
1-dimensional Ekedahl--Oort stratum of $S$ to precise order~$2$: see
the references in proof of Theorem~\ref{theorem:boxer} below for more discussion on this point.)

We choose an integer $s$
such that $ t:= s(p+1) \geq N(\mu)+1$. Let $\mu'' = \mu + s(p^2 -1,
p^2 -1) = \mu + t(p-1,p-1)$. Then we have a commutative diagram:
\[
\begin{tikzpicture}
  \matrix (m) [matrix of math nodes,row sep=3em,column sep=4em,minimum width=2em]
  {
     H^0(S_0(Q),\W^{\sub}_{\mu,k})_{\m} & H^0(S_0(Q),\W^{\sub}_{\mu'',k})_{\m} \\
     H^0(S,\W^{\sub}_{\mu,k})_{\mE} & H^0(S,\W^{\sub}_{\mu'',k})_{\mE} \\};
  \path[-stealth]
      (m-2-1) edge node [left] {$\pr_Q\circ i$} (m-1-1)
      edge node [above] {$\tilde{h}^s$} (m-2-2)
    (m-1-1) edge node [above] {$\tilde{h}^s$} (m-1-2) 
      (m-2-2) edge node [left] {$\pr_Q\circ i$} node [right] {$\cong$} (m-1-2);
   \end{tikzpicture}
\]
The right hand vertical map is an isomorphism with inverse
$d_Q^{-1}\mathrm{tr}$ by the first paragraph. The lemma now follows by
the same argument as the previous lemma.
\end{proof}

We will also need the analogous result for first degree cohomology over $k$:

\begin{lemma}
  \label{lem:no-new-forms-H1-mod-p}
Suppose $\mu = (a,b)$ where $a \geq b \geq 2$.  Then the map
\[ \pr_Q \circ i : H^1(X,\W^{\sub}_{\mu,k})_{\mE} \liso
H^1(X_0(Q),\W^{\sub}_{\mu,k})_{\m} \]
is an isomorphism with inverse $d_Q^{-1}\mathrm{tr}$.
\end{lemma}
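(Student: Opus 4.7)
The plan is to mimic the strategy of Lemmas~\ref{lem:cartesian-hasse} and~\ref{lem:no-new-forms-non-ord}: reduce to high weight via multiplication by the Hasse invariant, then present $H^1$ in lower weights as a cokernel in a long exact sequence whose source and target can be controlled by the $H^0$ results just proved.

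I would first fix an integer $t$ with $t(p-1)\geq N(\mu)+1$ and set $\mu'=\mu+t(p-1,p-1)$. By the definition of $N(\mu)$, $H^1(X,\W^{\sub}_{\mu',k})$ and $H^1(X_0(Q),\W^{\sub}_{\mu',k})$ both vanish. Consider the short exact sequence
\[ 0 \to \W^{\sub}_{\mu,k} \xrightarrow{h^t} \W^{\sub}_{\mu',k} \to C_t \to 0\]
on $X$ and on $X_0(Q)$, where $C_t = \W^{\sub}_{\mu',k}/h^t$ is supported on the $t$-th infinitesimal thickening $S_t$ of the non-ordinary locus $S$. Taking the long exact sequences and localizing at $\mE$ and $\m$, the high-weight vanishing gives
\begin{align*}
H^1(X,\W^{\sub}_{\mu,k})_{\mE} &\cong \mathrm{coker}\bigl(H^0(X,\W^{\sub}_{\mu',k})_{\mE} \to H^0(X,C_t)_{\mE}\bigr), \\
H^1(X_0(Q),\W^{\sub}_{\mu,k})_{\m} &\cong \mathrm{coker}\bigl(H^0(X_0(Q),\W^{\sub}_{\mu',k})_{\m} \to H^0(X_0(Q),C_t)_{\m}\bigr).
\end{align*}
Thus it suffices to show that $\pr_Q \circ i$ induces compatible isomorphisms on source and target with common inverse $d_Q^{-1}\mathrm{tr}$. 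The source isomorphism is Lemma~\ref{lem:cartesian-hasse} in weight~$\mu'$.

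For the target, I would use the filtration $0 = F_0 \subset F_1 \subset \cdots \subset F_t = C_t$ in which $F_k$ is the image in $C_t$ of $h^{t-k}\W^{\sub}_{\mu+k(p-1,p-1),k}$, with successive quotients
\[ F_k/F_{k-1} \cong \W^{\sub}_{\mu+k(p-1,p-1),k}\big|_S.\]
Since $\mu+k(p-1,p-1)=(a+k(p-1),b+k(p-1))$ has both entries at least~$2$, Lemma~\ref{lem:no-new-forms-non-ord} yields the required isomorphism on $H^0$ of each graded piece, with inverse $d_Q^{-1}\mathrm{tr}$. Proceeding by induction on~$k$ and comparing the long exact sequences attached to $0\to F_{k-1}\to F_k\to F_k/F_{k-1}\to 0$ on~$X$ and on~$X_0(Q)$, a four-term diagram chase then shows that $\pr_Q\circ i$ is an isomorphism $H^0(X,F_k)_{\mE}\liso H^0(X_0(Q),F_k|_{X_0(Q)})_{\m}$ with inverse $d_Q^{-1}\mathrm{tr}$. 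The diagram chase requires injectivity of the vertical map on $H^1(X,F_{k-1})_{\mE}$, which holds because $\pi:X_0(Q)\to X$ is finite flat of degree $d_Q$ prime to~$p$, so $d_Q^{-1}\mathrm{tr}$ is a one-sided inverse to $\pi^{*}$ in all cohomological degrees (and, after $\m$-localization, the image of $\pi^{*}$ is fixed by $\pr_Q$ by the choice of eigenvalues contained in~$\m$).

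The explicit inverse $d_Q^{-1}\mathrm{tr}$ for the resulting isomorphism on $H^1$ then follows by naturality of the long exact sequences. The main obstacle will be the devissage step for the target cokernel: the graded pieces live on the non-ordinary locus~$S$, whose higher cohomology does not vanish, so one must carefully track the connecting maps. The saving grace is that the injectivity of~$\pi^{*}$ after inverting $d_Q$ is sufficient to run the diagram chase at each step of the induction.
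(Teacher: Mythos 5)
Your overall strategy — reducing to high weight via the Hasse invariant and leveraging the $H^0$ isomorphisms already established — is the same as the paper's. The bookkeeping differs: the paper multiplies by a single $h$ at a time (so only the $p$-rank $\leq 1$ stratum $S$ ever appears, never its infinitesimal thickenings) and then runs a reverse induction on $t$; you multiply by $h^t$ once and filter the cokernel $C_t$. These are equivalent devissages, though the paper's packaging is a bit leaner. The substantive problem is your justification of the injectivity you want in the diagram chase.

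You claim the vertical map $\pr_Q\circ i$ on $H^1(X,F_{k-1})_{\mE}$ is injective because $d_Q^{-1}\mathrm{tr}\circ\pi^*=\mathrm{id}$ and, after $\m$-localization, ``the image of $\pi^*$ is fixed by $\pr_Q$.'' The second assertion is false: $\pr_Q$ is the idempotent projecting onto the generalized $\alpha'_x\beta'_x$-eigenspace of $xU_{x,2}$ for each $x\in Q$, and on the image of $\pi^*$ (the old vectors) these operators act with several distinct eigenvalues, so $\pr_Q$ kills all but one eigenspace and $\pr_Q\circ\pi^*\neq\pi^*$. Indeed, the nonvanishing of $\pr_Q\circ\pi^*$ in degree $0$ is precisely the nontrivial content of Theorem~\ref{thm:genestier-tilouine}, established via explicit intertwining operators at the primes in $Q$; there is no formal reason for it to persist on $H^1(F_{k-1})$, and nothing in your argument supplies one. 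The fix is not to demand that injectivity at all, but to run the chase through the everywhere-defined natural transformation $\psi:=d_Q^{-1}\mathrm{tr}$, which commutes with all connecting maps. For $\beta_2\in H^0(X_0(Q),F_k)_{\m}$ with image $\beta_3$ in $H^0(X_0(Q),F_k/F_{k-1})_{\m}$, set $\alpha_3:=\psi(\beta_3)$; since $\beta_3$ lifts to $\beta_2$ its image in $H^1(X_0(Q),F_{k-1})$ vanishes, and by naturality of $\psi$ so does the image of $\alpha_3$ in $H^1(X,F_{k-1})$, so $\alpha_3$ lifts to $H^0(X,F_k)$ and the chase closes using the $H^0$ isomorphisms from Lemma~\ref{lem:no-new-forms-non-ord}. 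This is also what makes the paper's terse ``reverse induction on $t$'' step work.
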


\begin{proof}
  If $N(\mu) = 0$, then both sides of the map are zero, so we may assume
  that $N(\mu)>0$. Let $t \geq N(\mu)$, and let
  $\mu' = (a+(t-1)(p-1),b+(t-1)(p-1))$ and
  $\mu'' = (a+t(p-1),b+t(p-1))$. Consider the
  diagram with exact rows:
\[\begin{tikzpicture}
\tikzstyle{every node}=[font=\tiny]
  \matrix (m) [matrix of math nodes,row sep=3em,column sep=4em,minimum width=2em]
  {
   H^0(X_0(Q),\W^{\sub}_{\mu',k})_{\m} & H^0(X_0(Q),\W^{\sub}_{\mu'',k})_{\m}
   & H^0(S_0(Q),\W^{\sub}_{\mu'',k})_{\m} & H^1(X_0(Q),\W^{\sub}_{\mu',k})_{\m} & 0 \\
    H^0(X,\W^{\sub}_{\mu',k})_{\mE} & H^0(X,\W^{\sub}_{\mu'',k})_{\mE}  &
   H^0(S,\W^{\sub}_{\mu'',k})_{\mE} & H^1(X,\W^{\sub}_{\mu',k})_{\mE} & 0\\ };
  \path[-stealth]
      (m-2-1) edge node [left] {$\pr_Q$} node [right] {$\cong$} (m-1-1)
              edge node [above] {$h$} (m-2-2)
    (m-1-1) edge node [above] {$h$} (m-1-2) 
    (m-2-2) edge node [left] {$\pr_Q$} node [right] {$\cong$} (m-1-2)
    (m-2-3) edge node [left] {$\pr_Q$}   node [right] {$\cong$} (m-1-3)
    (m-2-4) edge node [left] {$\pr_Q$}  (m-1-4)
    (m-1-2) edge (m-1-3) 
    (m-1-3) edge  (m-1-4)
    (m-1-4) edge (m-1-5)
    (m-2-2) edge  (m-2-3) 
    (m-2-3) edge  (m-2-4)
    (m-2-4) edge (m-2-5)
    ;
   \end{tikzpicture}\]
The first three vertical maps are isomorphisms with inverse
$d_Q^{-1}\mathrm{tr}$ by the previous two lemmas.
We deduce that
the rightmost vertical map above is an isomorphism with inverse
$d_Q^{-1}\mathrm{tr}$. This proves the lemma
in weight $\mu'$. The general case then follows by a similar argument
using a reverse induction on $t$.
\end{proof}

We are finally in a position to prove Theorem~\ref{thm:no-newforms} in the
general case.

\begin{proof}[Proof of Theorem~\ref{thm:no-newforms}] 
For each $n\geq 1$, let $\CO_n := \CO/\varpi^n$. We have a commutative diagram:
\[\begin{tikzpicture}
\tikzstyle{every node}=[font=\tiny]
  \matrix (m) [matrix of math nodes,row sep=3em,column sep=4em,minimum width=2em]
  {
   H^0(X_0(Q),\W_{\mu,k}^{\sub})_{\m} & H^0(X_0(Q),\W_{\mu,\CO_n}^{\sub})_{\m}
   & H^0(X_0(Q),\W_{\mu,\CO_{n-1}}^{\sub})_{\m} &  H^1(X_0(Q),\W_{\mu,k}^{\sub})_{\m} \\
    H^0(X,\W_{\mu,k}^{\sub})_{\mE} & H^0(X,\W_{\mu,\CO_n}^{\sub})_{\mE}  &
   H^0(X,\W_{\mu,\CO_{n-1}}^{\sub})_{\mE}& H^1(X,\W_{\mu,k}^{\sub})_{\mE}\\ };
  \path[-stealth]
      (m-2-1) edge node [left] {$\pr_Q$} node [right] {$\cong$}  (m-1-1)
      (m-2-2) edge node [left] {$\pr_Q$}  (m-1-2)
      (m-2-3) edge node [left] {$\pr_Q$}  (m-1-3)
      (m-2-4) edge node [left] {$\pr_Q$} node [right] {$\cong$} (m-1-4)
      (m-1-1) edge (m-1-2)
      (m-2-1) edge (m-2-2)
     (m-1-2) edge  node [above] {$\varpi$} (m-1-3) 
     (m-1-3) edge  (m-1-4)
     (m-2-2) edge   node [above] {$\varpi$}  (m-2-3) 
     (m-2-3) edge  (m-2-4)
          ;
   \end{tikzpicture}\]
The vertical maps on the ends are isomorphisms by
Lemma~\ref{lem:cartesian-hasse} and Lemma~\ref{lem:no-new-forms-H1-mod-p}.
 By induction on $n$ and the Five Lemma we deduce that the map
\[ \pr_{Q}\circ i : H^0(X,\W_{\mu,\CO_n}^{\sub})_{\mE} \to
  H^0(X_0(Q),\W_{\mu,\CO_n}^{\sub})_{\m} \] is an isomorphism for all
$n$. This shows that the map of Theorem~\ref{thm:no-newforms} is an
isomorphism after passing to 
$\varpi^n$-torsion, for any $n$. The result follows.
\end{proof}

\subsection{The balanced property}
\label{sec:balanced-property}

In this section we assume that $\mu = (a,2)$ is a limit of discrete series
weight, where~$p > a - 2$. 
Let $\Delta$ be a quotient of
$\Delta_Q:=\prod_{x\in Q}(\Z/x)^\times$ and let
$X_{\Delta}(Q)\to X_0(Q)$ denote the corresponding sub-cover of
$X_1(Q)\to X_0(Q)$.  If $\CL$ is a vector bundle on $X_\Delta(Q)$, we
 define
\[ H_i(X_\Delta(Q),\mathcal L):=
H^i(X_\Delta(Q),(\omega^3 \otimes\CL^{{\vee}}(-\infty))_{K/\CO})^{\vee} \]
for all $i$. Note that $\omega^3(-\infty)$ is the dualizing sheaf on
$X_{\Delta}(Q)$. 

We now take $\CL = \omega(1,3-a)$, so that $\omega^3 \otimes
\CL^{\vee}(-\infty) \cong \omega(a,2)(-\infty)$.
Here we use our bound~$p > a - 2$ to
deduce that there is an equality~$(\Sym^{a-2})^{\vee} \simeq \Sym^{a-2} \otimes \det^{2-a}$
as~$\OL$-modules.
 Thus, $\T_{\mu}(Q)$ acts on
$H_0(X_{\Delta}(Q),\omega(1,3-a))$. We fix a non-Eisenstein maximal
ideal $\m$ of $\T_{\mu}(Q)$. We will need the following
assumption:

\begin{assumption}
  \label{ass:cohom-vanishing}
The space $H^2(X_{\Delta}(Q), \omega(a,2)(-\infty)_k)_{\m}$ is trivial.
\end{assumption}

 There is a
slight abuse of notation here in that $\T_{\mu}(Q)$ does not act on
$H^2(X_{\Delta}(Q), \omega(a,2)_k)$. The localization at $\m$ refers
to the localization at the corresponding maximal ideal of the
polynomial ring over $\CO$ generated by the Hecke operators.

\begin{remark}
  \label{rmk:lan-suh}
We note that if $p \geq a \geq 4$, then the assumption above holds, even
before localization at $\m$, by Theorem~\ref{thm:lan-suh}.
\end{remark}

\begin{lemma}
\label{lem:cohom-torsion-free}
 Suppose Assumption~\ref{ass:cohom-vanishing} holds.
 Then
 $H_1(X_\Delta(Q),\omega(1,3-a))_{\m}$
  is $p$-torsion free.
\end{lemma}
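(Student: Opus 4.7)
The plan is as follows. Unpacking the definition,
$$H_1(X_\Delta(Q),\omega(1,3-a))_\m = H^1(X_\Delta(Q),\omega(a,2)(-\infty)_{K/\CO})_\m^{\vee}.$$
By Pontryagin duality for discrete $\CO$-modules, a dual $M^{\vee} = \Hom_\CO(M,K/\CO)$ is $p$-torsion free if and only if $M$ is $p$-divisible. So it suffices to show that
$$H^1(X_\Delta(Q),\omega(a,2)(-\infty)_{K/\CO})_\m$$
is $\varpi$-divisible.

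To establish this, I would consider the short exact sequence of sheaves obtained by tensoring $\omega(a,2)(-\infty)$ with the tautological sequence $0 \to k \to K/\CO \xrightarrow{\varpi} K/\CO \to 0$, namely
$$0 \lra \omega(a,2)(-\infty)_k \lra \omega(a,2)(-\infty)_{K/\CO} \stackrel{\varpi}{\lra} \omega(a,2)(-\infty)_{K/\CO} \lra 0.$$
The associated long exact sequence contains the segment
$$H^1(X_\Delta(Q),\omega(a,2)(-\infty)_{K/\CO}) \stackrel{\varpi}{\lra} H^1(X_\Delta(Q),\omega(a,2)(-\infty)_{K/\CO}) \lra H^2(X_\Delta(Q),\omega(a,2)(-\infty)_k).$$
Since localization at $\m$ is exact, and since Assumption~\ref{ass:cohom-vanishing} tells us that the final term vanishes after localization at $\m$, I conclude that multiplication by $\varpi$ is surjective on $H^1(X_\Delta(Q),\omega(a,2)(-\infty)_{K/\CO})_\m$, i.e., this module is $\varpi$-divisible. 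Dualizing yields the claim.

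There is no essential obstacle here; the argument is a routine application of Pontryagin duality combined with the long exact sequence in cohomology. The only point to verify is that the Hecke action (and hence the localization at $\m$) is compatible with the connecting homomorphism, which is immediate from functoriality of the long exact sequence.
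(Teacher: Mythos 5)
Your proof is correct and follows essentially the same approach as the paper: reduce $p$-torsion-freeness of the Pontryagin dual to $\varpi$-divisibility of $H^1$, use the short exact sequence of sheaves coming from $0 \to k \to K/\CO \xrightarrow{\varpi} K/\CO \to 0$, and invoke the vanishing of $H^2(X_\Delta(Q),\omega(a,2)(-\infty)_k)_\m$ from Assumption~\ref{ass:cohom-vanishing}. Your write-up is a slightly more detailed version of the paper's argument, with no substantive difference.
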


\begin{proof}
The claim is equivalent to the divisibility of
$H^1(X_{\Delta}(Q), \omega(a,2)(-\infty)_{K/\CO})_{\m}$. 
 Since $X_{\Delta}(Q)$
is flat over $\CO$, there is an exact sequence
$$0 \rightarrow {{\omega(a,2)(-\infty)}}_k \rightarrow 
{{\omega(a,2)(-\infty)}}_{K/\OL} \stackrel{\varpi}{\ra} {{\omega(a,2)(-\infty)}}_{K/\OL} \rightarrow 0.$$
Taking cohomology, this reduces to the claim that
$H^2(X_\Delta(Q),{{\omega(a,2)(-\infty)}}_{k})_{\m}$ vanishes. 
\end{proof}

The following lemma uses only the assumption that $\m$ is
non-Eisenstein: it holds in all weights and in all prime to $p$
levels. We just state it in the case we need:

\begin{lemma}
  \label{lem:harris-zucker} 
The map
\[ H^i(X_{0}(Q), \omega(a,2)(-\infty))_{\m} \otimes K \lra H^i(X_{0}(Q),
\omega(a,2))_{\m} \otimes K \]
is an isomorphism for all $i$. 
\end{lemma}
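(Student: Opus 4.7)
The plan is to analyze the cokernel of the inclusion $\omega(a,2)(-\infty) \into \omega(a,2)$ and show that it contributes nothing after non-Eisenstein localization and tensoring with $K$. Concretely, let $D = X_0(Q) \setminus Y_0(Q)$ be the boundary of the (chosen smooth) toroidal compactification, and consider the short exact sequence of coherent sheaves
\[ 0 \lra \omega(a,2)(-\infty) \lra \omega(a,2) \lra \CQ \lra 0, \]
where $\CQ$ is a sheaf supported on $D$. From the long exact sequence in cohomology, the statement of the lemma will follow once we show that
\[ H^i(X_0(Q), \CQ)_{\m} \otimes_{\CO} K = 0 \quad \text{for all } i. \]

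To establish this vanishing, I would use the formal structure of the toroidal boundary following Faltings--Chai (or Lan, for the version extending to ramified level): $\CQ$ admits a finite filtration whose graded pieces are pushforwards from the strata of $D$. Each such stratum is fibered, after a toroidal nilpotent-cone resolution, over an arithmetic quotient associated with the Levi of either the Siegel parabolic $P$ or the Klingen parabolic $\Pi$. Computing cohomology stratum by stratum and passing to $K$-coefficients (so that Hodge theory applies), every Hecke eigensystem $\lambda$ occurring in $H^i(X_0(Q), \CQ) \otimes K$ arises as the constant-term datum of an Eisenstein contribution parabolically induced from the Levi of $P$ or $\Pi$.

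I then observe that, for such an Eisenstein eigensystem $\lambda$, the spinor Hecke polynomial $\lambda(Q_x(X))$ of Definition~\ref{df:Hecke-polynomial} factors as a product of polynomials of strictly smaller degree whose roots are the Satake parameters of the inducing data on the Levi (of $\GL_2\times \GL_1$ type in both cases). Running the pseudorepresentation construction of Lemma~\ref{lem:galois-mod-m} and reducing mod the maximal ideal generated by $\m$ and $\{T_{x,i} - \lambda(T_{x,i})\}$, the resulting residual pseudorepresentation is forced to be reducible. This directly contradicts the non-Eisenstein hypothesis that $\rbar_{\m}$ is absolutely irreducible. Hence no such eigensystem survives the localization at $\m$, and $H^i(X_0(Q), \CQ)_{\m}\otimes K = 0$, which completes the proof.

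The main obstacle is the honest bookkeeping of the filtration on $\CQ$ and the identification of its graded pieces with automorphic data on the Levi subgroups of $P$ and $\Pi$. In practice, for the purposes of our application, the only information required is the factorization of the Satake parameters through a Levi, so the delicate toroidal nilpotent-cone computations can be short-circuited by appealing to the Pink-type description of the boundary cohomology (or equivalently, the constant-term map on Siegel modular forms and its $L^2$ analogues in higher degrees).
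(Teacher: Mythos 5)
Your proposal is correct and follows essentially the same route as the paper's proof: reduce to the vanishing, after localizing at $\m$ and tensoring with $K$, of the boundary cohomology (the cohomology of the cokernel of $\omega(a,2)(-\infty)\into\omega(a,2)$), then observe that this cohomology is built from automorphic data on Levi subgroups of the Klingen and Siegel parabolics and the Borel --- all products of $\GL_2$ and $\GL_1$ --- so the attached Galois representations (equivalently, pseudorepresentations, as you phrase it via the factorization of $Q_x(X)$) are reducible, contradicting the non-Eisenstein hypothesis. The paper cites the Harris--Zucker nerve spectral sequence (Cor.~3.2.9 of \cite{HZIII}) for the identification of the graded pieces where you appeal to Faltings--Chai/Lan/Pink, and it states the contradiction directly in terms of reducible $\GSp_4$-valued $\ell$-adic representations rather than via pseudorepresentations, but these are only differences of presentation.
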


\begin{proof}
Let $\partial X$ denote the boundary of $X_{0}(Q)$.  It suffices to show that the boundary cohomology
\[ H^i(\partial X, \omega(a,2))_{\m} \otimes K \]
vanishes for all $i$. 
However, over $\C$ the cohomology of the
boundary is computed by the nerve spectral sequence:
\[ E_1^{r,s} = \bigoplus_{r(R) = r+1} E_1^{r,s}(R) \implies
H^{r+s}(\partial X, \omega(a,2)_{\C}) .\]
See \cite{HZIII} (3.2.4). Here $R$ is a $\Q$-parabolic of $G$ and $r(R)$ is its parabolic
rank. By~\cite[Corollary 3.2.9]{HZIII}, and freely using the notation of this paper. the space $E_1(R)^{r,s}$ is
the space of $K$-invariants in:
\[  \Ind^{G(\A^{\infty})}_{R(\A^{\infty})}\bigoplus_{i\geq 0, w \in
  W^{R,p}}I^{R}\left(
\widetilde{H}^{s-i-\ell(w)}(X(G_{h, R}), \CV_{\lambda(h,w)}) \otimes 
H^i(X(G_{\ell, R}),
\widetilde{\mathbf{V}}_{\lambda(\ell,w)})\right).\]
If $R = \Pi$ is the Klingen parabolic, then $G_{h, R} = \GSp_2 = \GL_2$ and
$G_{\ell, R} = \GL_1$. If $R$ is the Siegel parabolic or the Borel
subgroup, then $G_{h,R}$ is trivial and $G_{\ell, R} = L_R$ is the
Levi component of $R$ (and hence is either $\GL_2\times \GL_1$ or
$\GL_1^{3}$). In all cases, $\CV_{\lambda(h,w)}$ is the canonical
extension of an automorphic vector bundle on the Shimura variety
$X(G_h)$ and $\widetilde{\mathbf{V}}_{\lambda(\ell, w)}$ is a local
system on $X(G_{\ell})$ associated to an algebraic representation of
$G_{\ell}$. 
See \cite[(3.6.1)]{HZI} for the highest weight formulas. The functor $I_R$ is an intermediate induction
defined in \cite[(3.2.8)]{HZIII}.

Since each of the groups $G_h$ and $G_{\ell}$ are products of copies
of $\GL_2$ and $\GL_1$, we see that to any Hecke eigenclass in any
$H^i(\partial X, \omega(a,2)_{\C})$, we can associate a compatible
system of reducible $\GSp_4$-valued $l$-adic representations of
$G_{\Q}$. Since the ideal $\m$ is non-Eisenstein, it follows that
$H^i(\partial X, \omega(a,2))_{\m}\otimes K = 0$, as required.
\end{proof}

We come to the main result of this section:

\begin{theorem}
  \label{thm:balanced}
  Let $\Delta$ be a quotient of $\Delta_Q$ which is of $p$-power
  order. As above, let $\mu = (a,2)$ with~$p - 2 > a$ and let $\m$ be a non-Eisenstein
  ideal of $\T_{\mu}(Q)$. Suppose that
  Assumption~\ref{ass:cohom-vanishing} holds.
 Then the $\CO[\Delta]$-module
 \[ H_0(X_{\Delta}(Q), \omega(1,3-a))_{\m} = H^0(X_{\Delta}(Q), \omega(a,2)(-\infty)_{K/\CO})^{\vee}_\m \] is balanced in the sense
 of Definition \ref{defn:balanced}.
\end{theorem}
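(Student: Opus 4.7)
The approach is to construct a minimal two-term free presentation of $M$ over $S := \CO[\Delta]$, and then bound the defect via an Euler characteristic comparison, reducing the balanced property to a characteristic zero dimension equality that can be verified using Arthur's classification for $\GSp_4$.

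First I would represent $R\Gamma(X_\Delta(Q), \omega(a,2)(-\infty))_\m$ by a bounded complex of finite free $S$-modules: since $f \colon X_\Delta(Q) \to X_0(Q)$ is finite \'etale Galois with group $\Delta$, the pushforward $f_*\omega(a,2)(-\infty)$ is locally free over $\CO_{X_0(Q)}[\Delta]$, providing such a representative. Assumption~\ref{ass:cohom-vanishing} together with the $H^3$-vanishing from Corollary~\ref{cor:ls-vanish-lds}(1) (applied on the cover $X_\Delta(Q)$) gives $H^i(X_\Delta(Q), \omega(a,2)(-\infty)_k)_\m = 0$ for $i \geq 2$, which permits me to choose a representative $D^\bullet$ concentrated in degrees $[0,1]$, say $D^0 \xrightarrow{d} D^1$ with each $D^i$ free of $S$-rank $d_i$.

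Next I identify $M$ via Pontryagin duality. Writing ${}^\vee := \Hom_\CO(-, K/\CO)$ and $D^{i,*} := \Hom_\CO(D^i, \CO)$, one has $H^0(X_\Delta(Q), \omega(a,2)(-\infty)_{K/\CO})_\m = \ker(D^0 \otimes K/\CO \to D^1 \otimes K/\CO)$, whose Pontryagin dual is $M = \coker(d^* \colon D^{1,*} \to D^{0,*})$. Since $\Delta$ is a finite abelian $p$-group, $\CO[\Delta]$ is Gorenstein and hence self-dual as an $\CO[\Delta]$-module, so each $D^{i,*}$ is again a free $S$-module of rank $d_i$. Arranging $D^\bullet$ to be $S$-minimal (i.e.\ $d(D^0) \subseteq \m_S D^1$) forces $d^*(D^{1,*}) \subseteq \m_S D^{0,*}$, yielding a minimal two-term free presentation of $M$, from which one reads off $\dim_k M/\m_S M = d_0$ and $\dim_k \Tor^S_1(M,k) \leq d_1$ (with strict inequality possible if $d^*$ has a kernel). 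Hence $d_S(M) \geq d_0 - d_1$, and it suffices to prove $d_0 \geq d_1$.

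Comparing Euler characteristics over $\CO$ gives $\chi_\CO(D^\bullet) = (d_0 - d_1)\,|\Delta|$, and combining this with Lemma~\ref{lem:harris-zucker} and the tempered vanishing from Theorem~\ref{prop:char-0-vanish} (using that $(a-1, 0) \in C_0 \cap C_1$ but $(a-1, 0) \notin C_2 \cup C_3$) yields
\[
\chi_\CO(D^\bullet) = \dim_K\bigl(H^0(X_\Delta(Q), \omega(a,2))\otimes K\bigr)_\m - \dim_K\bigl(H^1(X_\Delta(Q), \omega(a,2))\otimes K\bigr)_\m.
\]
The main obstacle---and where the results of~\cite{arthur-gsp4} are essential---is proving this difference is nonnegative. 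Since $\rbar_\m$ is absolutely irreducible, Arthur's classification for $\GSp_4$ rules out CAP contributions (which would produce reducible Galois representations), leaving only tempered automorphic representations. By Theorem~\ref{thm:contrib-to-char-0-cohom}(3), each tempered $\pi$ contributing has $\pi_\infty \cong \pi((a-1, 0; -w), C_i)$ with $C_i$ determining the cohomological degree; since $(a-1, 0) \in C_0 \cap C_1$, each tempered $L$-packet supplies one representation contributing to $H^0$ and one to $H^1$, and Arthur's multiplicity formula implies these two members occur with equal multiplicity in the cuspidal spectrum. Summing over $L$-packets yields $\dim_K(H^0 \otimes K)_\m = \dim_K(H^1 \otimes K)_\m$, hence $d_0 = d_1$ and $d_S(M) \geq 0$.
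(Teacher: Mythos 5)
Your proposal takes a genuinely different route from the paper's at the homological-algebra stage, before the two converge on the same characteristic-zero comparison via Arthur's classification. The paper Pontryagin-dualizes the Hochschild--Serre spectral sequence for the cover $X_\Delta(Q) \to X_0(Q)$, extracts the two facts that $M_\Delta \cong H_0(X_0(Q),\CL)_\m$ and that $\Tor_1^S(M,\CO)$ is a quotient of $H_1(X_0(Q),\CL)_\m$, shows the latter is $\CO$-torsion-free via Lemma~\ref{lem:cohom-torsion-free}, and then reduces to the inequality $\dim_K H^1(X_0(Q),\omega(a,2)(-\infty))_\m\otimes K \le \dim_K H^0$, which it verifies by the detailed Arthur case-analysis. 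You instead realize $R\Gamma(X_\Delta(Q), \omega(a,2)(-\infty))_\m$ as a perfect complex of $\CO[\Delta]$-modules, truncate to degrees $[0,1]$, and control the defect by the Euler characteristic of the complex. This is the Khare--Thorne/Calegari--Venkatesh style packaging; it buys you a cleaner linear-algebraic reduction, at the price of needing to control \emph{all} cohomological degrees, not just $H^0$, $H^1$ and $H^2$. Note also that your characteristic-zero step works directly with $X_\Delta(Q)$ rather than $X_0(Q)$ (requiring the analogue of Lemma~\ref{lem:harris-zucker} for the $p$-power cover), which is harmless but worth flagging.

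The one genuine issue is your invocation of Corollary~\ref{cor:ls-vanish-lds}(1) to kill $H^3(X_\Delta(Q),\omega(a,2)(-\infty)_k)_\m$. That corollary requires $a\ge 4$, while Theorem~\ref{thm:balanced} is stated for all $a\ge 2$ with $p-2>a$ (the small-$a$ cases are exactly where Assumption~\ref{ass:cohom-vanishing} is a nontrivial hypothesis rather than a consequence of Lan--Suh). For $a\in\{2,3\}$, $H^3$ may a priori be nonzero: by Serre duality on $X_\Delta(Q)$ this group is dual to $H^0(X_\Delta(Q),\omega(1,3-a)_k)$, which is not covered by the cited vanishing results. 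The paper's spectral-sequence route sidesteps this because it never looks at $H_3$ or $H^3$. Your argument can be repaired rather than abandoned: once $D^2=0$ (forced by Assumption~\ref{ass:cohom-vanishing} and minimality), the complex splits as $[D^0\to D^1]\oplus D^3[-3]$, so $M$ is still presented by $D^{1,*}\to D^{0,*}$, but the Euler characteristic becomes $(d_0-d_1-d_3)|\Delta|$; combining with the Arthur-theoretic vanishing of $h^2$ and $h^3$ in characteristic zero now gives $d_0-d_1 = d_3 \ge 0$, which is exactly what you need. As written, though, the proposal asserts $D^\bullet$ is concentrated in $[0,1]$ on the strength of a corollary that does not apply in the full range of the theorem.
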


\begin{proof}
The argument proceeds exactly as in the proof of Prop.~3.8
of~\cite{CG}. If we let $M = H_0(X_{\Delta}(Q), \omega(1,3-a))_{\m}$
and $S = \CO[\Delta]$, then the defect $d_{S}(M)$ is given by:
\[ d_{S}(M) = r - \dim_k \Tor_1^S(M,\CO)/\varpi \]
where $r$ is the $\CO$-rank of $M_{\Delta}$. Thus we need to show that $r \ge
\dim_k \Tor_1^S(M,\CO)/\varpi$. 

Let $\CL = \omega(1,3-a)$. Applying Pontryagin duality to the Hochschild--Serre spectral
sequence, we get a spectral sequence:
\[ \Tor_i^S\left(H_j(X_{\Delta}(Q), \CL)_{\m}, \CO\right) \implies
H_{i+j}(X_{0}(Q),\CL)_{\m} \]
This spectral sequence tells us that:
\begin{enumerate}
\item $M_{\Delta} \iso H_0(X_0(Q), \CL)_{\m}$, and
\item we have a short exact sequence
\[ \left(H_1(X_{\Delta}(Q), \CL)_{\m}\right)_{\Delta} \lra H_1(X_0(Q),
\CL)_{\m} \lra \Tor_1^S(M, \CO) \lra 0.\]
\end{enumerate}
To prove that $d_S(M) \geq 0$, it follows from the second point that
it is sufficient to show that $H_1(X_0(Q),\CL)_{\m}$ is free of rank
at most $r$ over $\CO$. Lemma~\ref{lem:cohom-torsion-free} tells us that this space is
$p$-torsion free. Passing to characteristic 0 and using the first
point, we are therefore reduced to establishing the inequality:
$$\dim_K H_1(X_0(Q),\CL)_\m \otimes K \le \dim_K
H_0(X_0(Q),\CL)_\m\otimes K.$$
In other words, we need to show:
$$\dim_K H^1(X_0(Q),\omega(a,2)(-\infty))_\m \otimes K \le \dim_K
H^0(X_0(Q),\omega(a,2)(-\infty))_\m\otimes K.$$ 
By Lemma~\ref{lem:harris-zucker}, we are reduced to showing that
\[ \dim_K \overline{H}^1(X_0(Q),\omega(a,2))_\m \otimes K \le \dim_K
\overline{H}^0(X_0(Q),\omega(a,2))_\m\otimes K \]
where $\overline{H}^i$ denotes the interior cohomology (the image of
$H^i(\omega(a,b)(-\infty))$ in $H^i(\omega(a,b))$). 

As recalled in
Theorem~\ref{thm:coherent-cohom-lie-alg-cohom}, the interior cohomology can be
computed in terms of square integrable automorphic forms on
$G$. By Remark~\ref{rem:normalization}, the cohomology of
$\omega(a,b)$ agrees with that of $\W_{\mu}\cong \CV_{\sigma}$ where $\mu = (a,b;3-a-b)
= (a,2;1-a)$ and $\sigma = (-2,-a;4-a)$. 
Theorem~\ref{thm:coherent-cohom-lie-alg-cohom} then implies that:
\[ \overline{H}^i(X_{0}(Q), \omega(a,2)_{\C}) \subset \bigoplus_{\pi
  \in \CA_{(2)}(G)} \left(\left(\pi^{\infty}\right)^{K_0(Q)} \otimes
  H^i(\Lie P^{-}, K^h; \pi_{\infty}\otimes V_{\sigma}) \right)^{\oplus
  m_{(2)}(\pi)}\] where $m_{(2)}(\pi)$ denotes the multiplicity of $\pi$ in
$\CA_{(2)}(G)$.
Fix a degree $i \in \{0,1\}$
and let $\pi \in \CA_{(2)}(G)$ be such that $\pi$ contributes to
$H^i(X_{0}(Q),\omega(a,2))_{\m}\otimes \C$ under the above
inclusion (for some embedding $K \into \C$). Let $\widetilde{\pi}$ denote the transfer of $\pi$ to
$\GL_4(\A)$ under the Classification Theorem of \cite{arthur-gsp4}.
Then, by Remark~\ref{rem:central-chars}, 
the infinitesimal character of
$\widetilde{\pi}_{\infty}$ is $\chi_{(0,0-(a-1),-(a-1))+3/2(1,1,1,1)}$.
Let
$\chi_{\pi}$ denote the central character of $\pi$.

The representation $\widetilde{\pi}$ falls
into one of 6 classes (a)--(f) given in Section 5 of
\cite{arthur-gsp4}.
We show now that we can rule out all classes other
than class (a). In
cases (e) and (f), $\widetilde{\pi}$ is an isobaric sum of idele class
characters.
In case (d), $\widetilde{\pi}$
is of the form $\lambda|\cdot|^{1/2} \boxplus \lambda |\cdot|^{-1/2}
\boxplus \mu$ where $\lambda$ is an idele class character and $\mu$ is
a cuspidal automorphic representation of $\GL_2(\A)$ such that its
central character $\chi_{\mu}$ satisfies $\chi_{\mu} = \lambda^2 =
\chi_{\pi}$. Considering the infinitesimal character of
$\widetilde{\pi}_{\infty}$, we see that we must have $a = 2$
and $\mu$ must correspond to a classical modular eigenform of weight
2.
In case (c), there is a cuspidal
automorphic representation $\mu$ of orthogonal type of $\GL_2(\A)$
such that $\widetilde{\pi} = \mu|\cdot|^{1/2} \boxplus \mu |\cdot |^{-1/2}$. Being of orthogonal type means that $\mu$ is
induced from a quadratic extension of $\Q$. In case (b), $\widetilde{\pi} = \mu_1 \boxplus
\mu_2$ where the $\mu_i$ are distinct cuspidal automorphic
representations of $\GL_2(\A)$ with $\chi_{\mu_1} = \chi_{\mu_2} =
\chi_{\pi}$. Considering the infinitesimal character of
$\widetilde{\pi}_{\infty}$ and the fact that the $\mu_i$ have
the same central character, it follows that $\mu_i$ are both associated to classical
modular eigenforms of weight $a$. Thus, in all cases (b) -- (f), we can associate a compatible family of reducible
$l$-adic Galois representations to $\widetilde{\pi}$. This contradicts the fact
that $\m$ is non-Eisenstein.

The only remaining case is case (a) where $\widetilde{\pi}$ is a cuspidal
automorphic representation of $\GL_4(\A)$ that is $\chi_{\pi}$-self
dual.
By Clozel's Purity Lemma \cite[Lemme 4.9]{clozel-ann-arb},
$\widetilde{\pi}_{\infty}$ is essentially tempered. (We thank
  Olivier Ta\"{\i}bi for pointing this out to us.) It follows that
$\pi_{\infty}$ is also essentially tempered, since its $L$-parameter is
essentially bounded.
 Then by
 Theorem~\ref{thm:contrib-to-char-0-cohom}\eqref{Mirkovic}, $\pi_{\infty}$ is the limit of
 discrete series representation $\pi(\lambda,C_i)$ where $\lambda = (a-1,0;4-a)$.
Furthermore, by a Theorem of Wallach \cite[Theorem 2.3]{Mok}, it
follows that $\pi$ is cuspidal. 

By the first part of Theorem~\ref{thm:coherent-cohom-lie-alg-cohom}
, the cuspidal
cohomology $\CH^i_{\cusp,\sigma}$ maps injectively to the interior
cohomology:
\[ \overline{H}^i(X_{0}(Q), \omega(a,2)_{\C})_{\cusp} \cong \bigoplus_{\pi
  \in \CA_{0}(G)} \left(\left(\pi^{\infty}\right)^{K_0(Q)} \otimes
  H^i(\Lie P^{-}, K^h; \pi_{\infty}\otimes V_{\sigma}) \right)^{\oplus
  m_0(\pi)}\]
where $m_0(\pi)$ is the multiplicity of $\pi$ in $\CA_0(G)$.
 Thus, at this point, we can prove that the dimensions
\[ \dim_K \overline{H}^j(X_0(Q),\omega(a,2))_\m \otimes K \]
are equal for $j=0,1$ if we can establish:
\begin{enumerate}
\item The spaces $H^j(\Lie P^{-}, K^h; \pi(\lambda, C_j)\otimes
  V_{\sigma})$ have the same dimension for $j = 0, 1$.
\item The representation $\pi' = \pi^{\infty}\otimes \pi(\lambda,
  C_{1-i})$ also lies in $\CA_{(2)}(G)$;
\item The multiplicities $m_0(\pi)$, $m_{(2)}(\pi)$, $m_0(\pi')$ and
  $m_{(2)}(\pi')$ are all equal.
\end{enumerate}
The first point follows from \cite[Theorem 3.4]{harris-ann-arb} which
says that both spaces are one dimensional. The second point follows from 
\cite{arthur-gsp4}. Indeed, since $\pi(\lambda, C_i)$ is essentially tempered, the
local packet $\Pi_{\psi_{\infty}}$  (where $\psi = \widetilde{\pi}
\boxplus 1$, in the notation of
\cite{arthur-gsp4}) is in fact an L-packet by \cite[Theorem
2.1]{Mok}. 
Furthermore, it consists of the pair of representations $\{
\pi(\lambda, C_0), \pi(\lambda, C_1) \}$ (see \cite[\S 3.1]{Mok}).
Since the group $\mathcal{S}_{\psi}$ is
trivial in Case (a) of \cite{arthur-gsp4}, it then follows from part (ii)
of the Classification Theorem that $\pi'$ is also
automorphic. Finally, for the third point, the theorem of Wallach
quoted above implies that $\pi$ and $\pi'$ are both cuspidal. Part (iii) of
the Classification Theorem then implies that each of the
multiplicities in point (3) is 1.
We have thus shown that 
$$\dim_K H^1(X_0(Q),\omega(a,2)(-\infty))_\m \otimes K = \dim_K
H^0(X_0(Q),\omega(a,2)(-\infty))_\m\otimes K,$$
as required. 
\end{proof}

\section{\texorpdfstring{$q$}{q}-expansions of Siegel modular forms}
\label{section:qstuff}

As in Section~\ref{sec:cohom}, let $S$ and $Q$ be finite sets of
primes of $\Q$ which are disjoint and do not contain $p$. We allow
the possibility that $Q=\emptyset$. We let $K$ and $K_{i}(Q)$ be open
compact subgroups of $\GSp_4(\A^{\infty})$ as in
Section~\ref{sec:cohom}, and we let $X = X_K$ and $X_i(Q) = X_{K_i(Q)}$ be the
corresponding Siegel threefolds, with open subspaces $Y$ and $Y_i(Q)$,
all defined over $\CO$.

\subsection{\texorpdfstring{$q$}{q}-expansions of Siegel modular forms}
\label{section:qexpone}

(For more background and details on the results quoted in this section,
see~\S~3.1 of~\cite{TilouineDocumenta}.)
Recall that $Y_1(Q)$ has good reduction at $p$.
Let $R$ be an $\OL$-module (we will exclusively be interested in the case when
either $R = \OL/\varpi^n$ for some $n$, or when $R = K/\OL$).
Let $\sigma=(j,k) \in X^*(T)_M^+$ be a weight and associate to
$\sigma$ the representation
\[ U = \left(\Sym^{j-k}(\CO^2) \otimes_{\CO} \det(\CO^2)^{\otimes k}\right) \otimes_{\CO}R \]
of $\GL_2$ over $R$. Associated to $\sigma$, we also have the vector
bundle $\W_{\sigma} = \omega(j,k)$.
There is a $q$-expansion map:
$$H^0(Y_1(Q),\omega(j,k)_{R}) \rightarrow R[[q,q',\zeta]][\zeta^{-1}] \otimes_R U.$$

\begin{theorem}
The $q$-expansion map is injective.
\end{theorem}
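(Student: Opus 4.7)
The plan is to exploit the formal structure of a smooth toroidal compactification of $Y_1(Q)$ near a cusp, combined with faithful flatness and a connectedness argument. Since $K/\OL = \varinjlim_n \OL/\varpi^n$ and direct limits are exact, I would first reduce to the case $R = \OL/\varpi^n$. By the Koecher principle, sections extend from $Y_1(Q)$ to a smooth toroidal compactification $X_1(Q)$, so the problem becomes one about $H^0(X_1(Q), \omega(j,k)_R)$.

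Next I would fix a zero-dimensional boundary stratum $C$ of $X_1(Q)$, corresponding to a maximally (Borel-type) degenerate semi-abelian surface, and consider the formal completion $\widehat{X_1(Q)}_C$. By the Mumford construction in families as developed by Faltings--Chai and Lan, this formal scheme is formally smooth over $\OL$ with explicit formal parameters coming from the $T$-variables of the degeneration data; moreover, the automorphic bundle $\omega(j,k)$ is canonically trivialized on $\widehat{X_1(Q)}_C$ after tensoring with the representation $U$. Under this trivialization, the restriction map
\[
H^0(X_1(Q),\omega(j,k)_R) \lra H^0(\widehat{X_1(Q)}_C,\omega(j,k)_R)
\]
is identified precisely with the $q$-expansion map into $R[[q,q',\zeta]][\zeta^{-1}] \otimes_R U$.

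Thus, if $f$ has vanishing $q$-expansion, then $f$ vanishes on the formal completion at $C$. Faithful flatness of the completion at a point of $C$ (a smooth closed subscheme of a smooth scheme over $\OL/\varpi^n$) yields the vanishing of $f$ in the local rings of $X_1(Q)_R$ along $C$, and hence by coherence in a Zariski open neighborhood of $C$. One then concludes $f = 0$ using the irreducibility of (the connected component containing $C$ of) $X_1(Q)_R$, together with the fact that the $\GSp_4(\A^\infty)$-orbit of the chosen cusp meets every geometric connected component. These connectedness statements are classical, going back to Faltings--Chai, Chapter IV.

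The main obstacle, I expect, is precisely this last bookkeeping step: with the level structure $K_1(Q)$, the scheme $X_1(Q)_R$ may have several geometric connected components indexed by the component group of the similitude character, and one must verify that the $q$-expansion at (a suitable orbit of) $C$ detects sections on every component. The analytic/formal input --- the explicit description of $\widehat{X_1(Q)}_C$ and the canonical trivialization of $\omega(j,k)$ on it --- is entirely standard in this setting, so the argument reduces to this geometric connectedness claim, which one handles once and for all for the fixed level $K_1(Q)$ used throughout the paper.
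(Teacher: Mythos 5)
Your proposal is correct and is essentially the standard argument that the paper defers to by citing Prop.~3.2 of~\cite{TilouineDocumenta}: reduce to $R = \OL/\varpi^n$, identify the $q$-expansion with restriction to the formal completion along a zero-dimensional cusp via the Mumford--Faltings--Chai degeneration data, use faithful flatness of completion plus coherence to get vanishing on an open neighborhood, and propagate by (geometric) connectedness. The one point worth phrasing more carefully is the final step: over $\OL/\varpi^n$ with $n>1$ the scheme is not reduced, so ``irreducibility'' should be replaced by an induction on $n$ using flatness of $X_1(Q)/\OL$ and integrality of the special fiber, together with the check you already flag that a cusp (or its Hecke orbit) meets each geometric connected component.
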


\begin{proof} This is a standard fact (see, for example, Prop.~3.2 of~\cite{TilouineDocumenta}).
\end{proof}

\subsection{Explicit Formulae}
\label{section:explicit}

Let $\MMM$ be the product of the primes in $S$ and $Q$, so that~$X_1(Q)$
has good reduction outside~$\MMM$. Let~$R$ be a~$\Z_p$-module and thus
a~$\Z[1/\MMM]$-algebra. Any  $F \in H^0(X_1(Q),\W_{\sigma,R})$ has a ``$q$-expansion'':
$$F =  \sum_{\XX}  a(F,Q) q^Q,$$
where $\XX$ denotes the $2 \times 2$ positive semi-definite matrices which
take on $\Z[1/\MMM]$-integral arguments for integral vectors, or equivalently,
$$\XX = \left( \begin{matrix} m & \frac{1}{2} r \\ \frac{1}{2}r & n \end{matrix} \right), m,n,r \in \Z[1/\MMM].$$
The set~$\XX$ is naturally a subset of~$M_2(\Q)$. The group~$\GL_2(\Q)$
acts on~$M_2(\Q)$ by the following formula:
\begin{equation*}
M.Q := (\det M)^{-1} M Q M^{T}
\end{equation*}
where the right hand side is multiplication. We may naturally extend the definition of~$a(F,Q)$
for~$Q \in M_2(\Q)$ by setting~$a(F,Q) = 0$ for all~$Q$ not in~$\XX$.
In any~$q$-expansion, the coefficients~$a(F,Q)$ will also vanish unless the denominators occurring in~$Q$
are bounded by some fixed power of~$\MMM$ which depends only on the level structure.  (Since our arguments in this section
are all~$p$-adic, there is little harm in imagining that~$\MMM = 1$.)
Let~$V = \OL^2$ be the standard representation of~$\SL_2(\Z)$ over $\CO$.
The elements $a(F,Q)$ are elements of the representation $U$, where,
if $\sigma$ has weight $(j,k)$, then
$$U = \Sym^{j-k}(V) \otimes R.$$
Let $\rho: \SL_2(\Z) \rightarrow \GL(U)$ denote the corresponding representation.
The representation~$\rho$ extends to a homomorphism from~$M_2(\Z)$ to~$\End(U)$ over~$R$ which we denote by~$\rho$,
where once more~$\rho$ only depends on~$j-k$ and (more relevantly) preserves integrality.
 We may write the~$q$-expansion of a form~$F$ as
 $$F  =  \sum_{\substack{n, m \ge 0  \\ r^2 - 4mn \le 0 }} a_F(n,r,m) q^n \zeta^r {q'}^{m}$$ 
where $a_F(n,r,m) = a(F,Q)$ satisfies, for~$M \in \Gammabar \subset \SL_2(\Z)$, the equality
$$a(F,M.Q) = \rho(M) a(F,Q).$$
Here~$\Gammabar$ is the congruence subgroup of~$\SL_2(\Z)$ defined on p.807 of~\cite{TilouineDocumenta}; since we are working
at spherical  level at~$p$ the group~$\Gammabar$ has level prime to~$p$.  (It will do the reader little harm to pretend
 that~$\Gammabar$ is just~$\SL_2(\Z)$.)

\begin{remark} \label{remark:parity}
\emph{We shall assume that either~$j \ge 4$ or~$j = k = 2$. Since we are most interested in representations with  similitude
character~$\nu$ is equal to~$\epsilon^{j+k-3}$,  the oddness condition forces the congruence~$j \equiv k \mod 2$,
and so if~$j > k \ge 2$ then~$j \ge 4$. In cases (coming from Taylor--Wiles primes) where there is non-trivial Nebentypus
character at the auxiliary primes~$q|Q$, we may twist (at the cost of increasing the level at~$Q$) to force the Nebentypus
character to be trivial. The only change this has is to make the~$q$-expansions below less unpleasant --- the addition
of a Nebentypus character only introduces a notational difficulty. We note, however, that with non-trivial Nebentypus
character the case of weight~$(j,k) = (3,2)$ is possible, but our arguments would not cover this case.}
\end{remark}

\subsection{Hecke Operators at~\texorpdfstring{$p$}{p}}
Since we will exclusively be interested in Hecke operators at~$p$,
we drop the subscript~$p$ from the notation. Similarly, we drop the subscript~$1$,and so~$T_{p,1}$ and~$U_{p,1}$ are denoted~$T$ and~$U$,
whereas~$T_{p,2}$ and~$U_{p,2}$ are denoted~$T_2$ and~$U_2$ respectively.
One has the following explicit description of the Hecke operator~$T$:
\begin{lemma} \label{eightthree} \label{lemma:T} In weight~$\sigma = (j,k)$  
there is an identity of formal operators
$T = U +  p^{k-2} Z + p^{k+j-3} V$, where~$U$, $Z$, and~$V$ preserve
formal integral~$q$-expansions, and such that the following identities hold:
$$a(UF,Q) = a(F,pQ),$$
$$a(ZF,Q) = \sum_{\Se} \rho(M) a(F,M^{-1}.Q).$$
Here $\Se$ denotes (any) set of representatives in~$M_2(\Z)$ for the left coset 
decomposition of
$$\Gammabar \left( \begin{matrix} p & 0 \\ 0 & 1 \end{matrix} \right) \Gammabar.$$
Moreover, $a(F,S^{-1} Q) = 0$ unless $S^{-1} Q$ is a $p$-integral binary quadratic form.
\end{lemma}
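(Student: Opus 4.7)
My plan is to compute $T = T_{p,1} = [K\beta_{p,1}K]$ directly on formal $q$-expansions at the standard cusp of $X_1(Q)$, where $\beta_{p,1} = \mathrm{diag}(1,1,p,p)$. This proceeds by fixing a decomposition $K\beta_{p,1}K = \bigsqcup_i \gamma_i K$ into right $K$-cosets and computing the contribution of each $\gamma_i$ to the $q$-expansion via the action on the Mumford semi-abelian scheme at the cusp. The three pieces $U$, $Z$, $V$ will correspond to the three orbits of the Siegel parabolic on the set of Lagrangian subgroups of $A[p]$.

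First I would identify the cosets. The right $K$-cosets in $K\beta_{p,1}K$ are in bijection with Lagrangian subgroups $L \subset A[p] \cong (\F_p)^4$ of the universal principally polarized abelian surface $A$. Near the standard cusp, where $A$ is Mumford-uniformized as an extension of an ordinary abelian surface by a split rank-$2$ torus $T$, the filtration $0 \subset T[p] \subset A[p]$ splits Lagrangians into three orbits indexed by $\dim_{\F_p}(L \cap T[p]) \in \{2,1,0\}$; these contribute the three operators $U$, $Z$ and $V$ respectively. The ``middle'' orbit can be indexed by a set of representatives $M$ for the $\GL_2(\Z_p)$-double coset $\GL_2(\Z_p)\mathrm{diag}(p,1)\GL_2(\Z_p)/\GL_2(\Z_p)$ taken in $M_2(\Z)$, i.e.\ the set $\Se$ of the statement, together with a unipotent piece that becomes transparent on $q$-expansions (it translates $\zeta \mapsto \zeta$ trivially up to reindexing).

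Second, I would compute the action on the $q$-expansion $F = \sum_Q a(F,Q) q^Q$ by tracking how each isogeny $\phi : A \to A/L$ acts on the Mumford uniformization. An isogeny corresponding to $L$ induces an action $Q \mapsto g \cdot Q$ on quadratic forms for an explicit $g \in \GL_2(\Q_p)$, scaled by the automorphy factor on $\omega(j,k) = \Sym^{j-k}\EE \otimes \det^k \EE$ coming from $\phi^* : \EE_{A/L} \to \EE_A$. Combined with the $\nu^{-3}$-normalization of the Hecke action (Remark~\ref{rem:normalization}): the $U$-orbit ($L = T[p]$) gives $Q \mapsto pQ$ with trivial normalized automorphy factor, producing $a(UF,Q) = a(F,pQ)$; the $Z$-orbit, indexed by $M \in \Se$, gives $Q \mapsto M^{-1}.Q$ with $\rho(M)$ as the $\Sym^{j-k}$-factor and an overall $p^{k-2}$ arising from the $\det^k$-factor combined with the $\nu^{-3}$-twist on a similitude-$p$ element; the $V$-orbit (étale $L$, isogeny $A \to A/A[p]$) contributes $p^{k+j-3} = p^{2k-3} \cdot p^{j-k}$, where $p^{j-k}$ is the action of multiplication by $p$ on $\Sym^{j-k}$.

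Finally, for integrality: $U$ and $V$ are manifestly integral, since they act by rescaling the $q$-expansion. For $Z$, integrality of each summand $\rho(M)\, a(F, M^{-1}.Q)$ follows because $\rho$ preserves integrality on $M_2(\Z)$ and, crucially, $a(F, M^{-1}.Q) = 0$ whenever $M^{-1}.Q = (\det M)^{-1} M^{-1} Q M^{-T}$ fails to be a $p$-integral quadratic form -- which is exactly the stated vanishing. The main obstacle is carefully tracking all normalization factors (the $\nu^{-3}$-twist, the action on $\EE$ under each isogeny, the cuspidal conventions) so that the exponents of $p$ come out to exactly $0$, $k-2$, and $k+j-3$; consistency checks are provided by the classical formulas for $T_p$ on scalar-valued Siegel modular forms ($j=k$) and by comparison with the action of the central matrix $\mathrm{diag}(p,p,p,p)$, which must act as $S_p \cdot p^{a+b-6}$ on weight $(j,k) = (a,b)$ forms under our normalization.
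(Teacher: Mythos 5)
The paper does not actually prove Lemma~\ref{lemma:T}: it observes that the coset decomposition of $\Gammabar\,\mathrm{diag}(p,1)\,\Gammabar$ agrees with the full-level one because $\Gammabar$ has level prime to $p$, and then simply cites van der Geer, Prop.~10.2, noting that the matrices in $\Se$ have been normalized to be integral of determinant $p$ with the determinant action absorbed into the coefficient. Your proposal instead reconstructs the underlying geometric derivation — decomposing $K\beta_{p,1}K$ into right cosets parametrized by Lagrangian subgroups $L\subset A[p]$, splitting into three orbits by $\dim(L\cap T[p])$ near the cusp, and tracking the effect of each isogeny on the Mumford parameters and on $\omega(j,k)$ through $\phi^*$. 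That is the correct and standard way to prove such a formula, and your orbit structure ($U$, $Z$, $V$ from $\dim(L\cap T[p])=2,1,0$) and consistency checks (recovering the scalar-weight van der Geer formula at $j=k$, and the central character $S_p\cdot p^{a+b-6}$) are the right sanity checks. So your route is more self-contained; the paper's route is shorter but leans on the literature.

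Two places where you should be more careful before calling this a complete proof. First, in the middle orbit, the Lagrangians with $\dim(L\cap T[p])=1$ number $p(p+1)$, not $p+1$: for each of the $p+1$ lines $\ell\subset T[p]$ there are $p$ unipotent lifts. You wave at this as a ``unipotent piece that becomes transparent,'' but the sum over the $p$ lifts is exactly what produces the coefficient-support condition you need (it is a character sum that vanishes unless $M^{-1}.Q$ is $p$-integral); this should be made explicit rather than hidden, since it is precisely the mechanism behind the last sentence of the lemma. Second, the $V$-orbit consists of $p^3$ étale Lagrangian splittings, not one, and again the sum over the $p^3$ unipotent parameters collapses to a single term with a $p^3$ factor only on coefficients $a(F,Q/p)$; that collapse, combined with the pullback on $\det^k\EE$ and the $\nu^{-3}$ normalization, is where the exponent $k+j-3$ actually comes from, and one cannot separate a clean ``$p^{2k-3}$'' and ``$p^{j-k}$'' without redoing the bookkeeping you acknowledge is the hard part. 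Neither issue is a wrong turn — they are gaps of the kind that appear in almost any sketch of this computation — but they are exactly the places the argument could go wrong by a power of $p$, which is the whole content of the lemma.
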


Note that the coset decomposition of~$\Gammabar \left( \begin{matrix} p & 0 \\ 0 & 1 \end{matrix} \right) \Gammabar$
for a congruence subgroup~$\Gammabar$ prime to~$p$
is essentially the same as the coset decomposition of
$\SL_2(\Z) \left( \begin{matrix} p & 0 \\ 0 & 1 \end{matrix} \right) \SL_2(\Z)$.
These formulae are well known. See, for example, Prop~10.2 of~\cite{geer}. To compare our formula with \emph{ibid}, note that
we have normalized the matrices in~$\Se$ to be integral of determinant~$p$, and absorbed the action of the determinant into the coefficient (since we are concerned here
with issues of~$p$-integrality). 
We have a similar description of~$T_2$ which can be obtained by a laborious computation (following the arguments
of~\S3.2 and~\S3.3 of~\cite{Andrianov}:

\begin{lemma} \label{eightfour} In weight~$\sigma = (j,k)$  
there is an identity of formal operators $T_2 = p^{k+j-6} U_2 + p^{k-3} Z_2 + p^{2k+j-6} V_2,$ where~$U_2$, $Z_2$, and $V_2$ 
 preserve
formal integral~$q$-expansions, and the following identities hold:
$$a(Z_2 F,Q) = \sum_{\Se} \rho(M) a(F, M^{-1}.pQ).$$
where~$\Se$ is as in the description of~$Z$ in Lemma~\ref{lemma:T}.
If~$Q \not\equiv 0 \mod p$, then
$$a(U_2 F,Q) = \left(-1 +  p \left(\frac{\det(Q)}{p} \right) \right) a(Q) = \left( -1 + p \left( \frac{r^2 - 4 m n}{p} \right)  \right) a(Q).$$
If~$Q \equiv 0 \mod p$, then
$$a(U_2 F,Q) = (-1 + p^2 ) a(Q).$$
\end{lemma}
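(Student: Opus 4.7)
The plan is to follow Andrianov's classical treatment of the Hecke operator $T_{p,2}$ on Siegel modular forms (\S3.2--3.3 of his book), carefully tracking the normalization twist by $\nu^{-3}$ from Remark~\ref{rem:normalization} and the weight $(j,k)$ conventions. First I would work out an explicit coset decomposition of $K\beta_{p,2}K \subset \GSp_4(\Q_p)$, where $\beta_{p,2} = \diag(1,p,p,p^2)$. Using the Iwasawa decomposition with respect to the Siegel parabolic $P = M_P U_P$, the representatives can be chosen to lie in $M_P(\Q_p)U_P(\Q_p)$ and group naturally into three families indexed by the $p$-adic ``valuation profile'' of the associated lattice: a ``top'' family whose Levi component is $\diag(1,1,p^2,p^2)$, a ``middle'' family whose Levi runs through representatives $M$ of the $\GL_2$-double coset $\Gammabar\diag(p,1)\Gammabar$ paired with an appropriate $p$-scaling in the symplectic direction, and a ``bottom'' family whose Levi is $\diag(p^2,p^2,1,1)$ combined with the full unipotent sum over $U_P(\Z_p)/U_P(p\Z_p)$. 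These three families will give rise respectively to $V_2$, $Z_2$, and $U_2$.

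Next I would compute the contribution of each family to $q$-expansions by the standard recipe: for $g = m_g u_g \in M_P U_P$, the Levi factor $m_g$ acts on $q^Q$ by the change of variable $Q \mapsto M^{-1}.Q$ (where $M$ is the upper $\SL_2$ block of $m_g$) together with a scaling in the similitude direction, while $u_g$ acts by the phase $e(\tr(NQ))$. Summing the middle family recovers a formula of the shape $a(Z_2 F,Q) = \sum_{\Se}\rho(M)a(F,M^{-1}.pQ)$, paralleling the formula for $Z$ in Lemma~\ref{lemma:T} but with $Q$ replaced by $pQ$ (the extra $p$-scaling being the difference between $\beta_{p,1}$ and $\beta_{p,2}$). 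The top family contributes $V_2$ which simply rescales coefficients and is visibly integral. The powers of $p$ stated in the lemma --- $p^{k+j-6}$, $p^{k-3}$, $p^{2k+j-6}$ --- are then extracted as a uniform factor from each family, arising as the product of (i) a power of $p$ coming from the Levi $\rho$-action via $\det$, and (ii) the universal similitude twist $\nu^{-3} = p^{-3}$ per coset applied at weight $(j,k)$.

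The main obstacle is the formula for the bottom family giving $U_2$, where the representatives differ by elements of the full unipotent radical $U_P(\Z_p)/U_P(p\Z_p)$, introducing a genuine exponential sum. Here the contribution to $a(U_2 F, Q)$ is, up to a uniform constant,
\[
a(F,Q) \cdot \sum_{N} e\bigl(\tr(NQ)/p\bigr),
\]
where $N$ ranges over symmetric $2\times 2$ matrices modulo $p$. Writing $N = \bigl(\begin{smallmatrix} a & b \\ b & c\end{smallmatrix}\bigr)$ and $Q = \bigl(\begin{smallmatrix} m & r/2 \\ r/2 & n\end{smallmatrix}\bigr)$, so $\tr(NQ) = am + br + cn$, this reduces (after subtracting the $N=0$ contribution) to a two-variable quadratic Gauss sum in the variables parameterizing $M_P$-translates, whose standard evaluation yields $-1 + p\bigl(\frac{r^2-4mn}{p}\bigr)$ when $Q \not\equiv 0 \bmod p$ and $-1 + p^2$ when $Q \equiv 0 \bmod p$. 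The main bookkeeping challenge is ensuring that the three families exhaust $K\beta_{p,2}K$ without overlap and that the resulting powers of $p$ combine correctly; once the decomposition is fixed the computation is mechanical, but care is needed to absorb the $\nu^{-3}$ twist uniformly across the three pieces so that $U_2,Z_2,V_2$ are each individually integral.
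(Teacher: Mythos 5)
Your overall strategy—decompose $K\beta_{p,2}K$ into left cosets, sort them into families indexed by the Levi shape, and read off the action on $q$-expansions term by term, following Andrianov \S3.2--3.3—is precisely what the paper points to; the paper itself gives no proof beyond this citation, so the comparison is really with Andrianov. Your description of the middle family and the resulting formula for $Z_2$ (with $Q$ replaced by $pQ$ compared to $Z$) is consistent with the structure of those cosets, and the extraction of the common powers of $p$ from the $\nu^{-3}$ twist and the weight action is the right bookkeeping.

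However, there is a genuine gap in the $U_2$ computation. You write the unipotent contribution as $a(F,Q)\cdot\sum_N e\bigl(\tr(NQ)/p\bigr)$ with $N$ running over symmetric $2\times 2$ matrices mod $p$, and then assert that this ``reduces to a two-variable quadratic Gauss sum.'' But $\tr(NQ)=am+br+cn$ is \emph{linear} in the three coordinates $(a,b,c)$ of $N$, so the sum over all symmetric $N \bmod p$ is $p^3$ if $Q\equiv 0 \bmod p$ and $0$ otherwise. That cannot produce $-1+p\bigl(\tfrac{\det Q}{p}\bigr)$; for $Q\not\equiv 0$ your formula would give $0$ identically, and no subtraction of the $N=0$ term repairs a vanishing sum. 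The underlying issue is the coset decomposition: $\beta_{p,2}=\diag(1,p,p,p^2)$ does \emph{not} have scalar Levi component. Its $\GL_2$-part is $\diag(1,p)$, so the ``bottom'' cosets are not of the simple shape $\diag(p^2I,I)\cdot U_P(\Z_p)/U_P(p\Z_p)$. In Andrianov's decomposition the representatives with the relevant valuation profile have nontrivial Levi parts ranging over $\GL_2$-cosets paired with unipotent entries that are scaled (and in effect squared) by the Levi, and it is that interaction that produces a \emph{quadratic} form in the summation variables and hence a genuine two-variable Gauss sum evaluating to $\pm p$ according to $\bigl(\tfrac{\det Q}{p}\bigr)$. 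Your write-up skips this mechanism entirely, and as stated the ``bottom'' family is both misparameterized and gives the wrong count. To repair it you would need to reproduce Andrianov's explicit list of representatives for $\beta_{p,2}$, identify which subfamily carries the quadratic phase, and check that the combinatorics of the remaining subfamilies contribute the $-1$ (and the $p^2-1$ when $Q\equiv 0\bmod p$) correctly.

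One smaller point worth flagging: the statement identifies $\bigl(\tfrac{\det Q}{p}\bigr)$ with $\bigl(\tfrac{r^2-4mn}{p}\bigr)$, but with $\det Q = mn - r^2/4$ one has $r^2-4mn = -4\det Q$, so these two Legendre symbols agree only up to $\bigl(\tfrac{-1}{p}\bigr)$. Whatever sign convention resolves this should be made explicit in the computation rather than taken on faith, since it feeds into $X_2$ and hence into Lemma~\ref{lemma:verify}.
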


For those wanting a more explicit description, note that in weight~$(k,k)$ we have the 
possibly more familiar identities:
$$a(ZF,n,r,m) =
a(pn,r,m/p) + \sum_{0 \le  \alpha < p} a((n + \alpha r + \alpha^2 m)/p, r + 2 m \alpha, p m),$$
$$a(Z_2 F,n,r,m) = a(p^2 n,p r,m) +  \sum_{0  \le \alpha < p} a((n + \alpha r + \alpha^2 m), p(r  + 2 m \alpha), p^2 m).$$
Note also that there is a formal identity~$Z_2 = U Z$.

\begin{df} Let~$X_2$ denote the formal operator on~$q$-expansions such that
$$U_2 = -1 + p \cdot X_2.$$
Explicitly, if~$Q \not\equiv 0 \mod p$, then~$a(X_2 F,Q) = a(F,Q)$  times~$(D/p)$, where~$D$ is the determinant of the quadratic form associated to~$Q$, and~$(D/p)$ is the Legendre symbol. If~$Q \equiv 0 \mod p$, then~$a(X_2 F,Q) = p a(F,Q)$.
In all cases, we see that~$a(X_2 F,Q) = (D/p) a(F,Q) \mod p$.
\end{df}

\begin{lemma}\label{lemma:iszero}  Over~$k = \OL/\varpi$, we have~$Z_2 X_2 = 0$.
\end{lemma}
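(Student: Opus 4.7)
The plan is to prove the identity $Z_2 X_2 = 0$ modulo~$p$ directly on the level of $q$-expansions using the explicit formulae of \S\ref{section:explicit}, and then invoke the injectivity of the $q$-expansion map from \S\ref{section:qexpone} to conclude. For a fixed form~$F$ and quadratic form~$Q \in \XX$, Lemma~\ref{eightfour} gives
\[
a(Z_2 X_2 F, Q) \;=\; \sum_{M \in \Se} \rho(M)\, a\!\left(X_2 F,\; M^{-1}.pQ\right),
\]
so it suffices to show that each coefficient $a(X_2 F,\, M^{-1}.pQ)$ lies in $p \cdot U_0$ (where $U_0 \subset U$ denotes the integral lattice); since $\rho(M)$ preserves~$U_0$, this will force the whole sum to vanish mod~$p$.

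The key observation is that the twisted action $M.Q = (\det M)^{-1} M Q M^{T}$ preserves the discriminant $D(Q) = r^2 - 4 m n$. Therefore, writing $Q' := M^{-1}.pQ$, one has $D(Q') = D(pQ) = p^2 D(Q)$, which in particular is divisible by~$p$. I would then split into three cases:
\begin{itemize}
\item If $Q'$ is $p$-integral with $Q' \not\equiv 0 \pmod p$, then by the definition of~$X_2$, $a(X_2 F, Q') = (D(Q')/p)\, a(F, Q')$, and the Legendre symbol $(p^2 D(Q)/p)$ vanishes, so $a(X_2 F, Q') = 0$.
\item If $Q' \equiv 0 \pmod p$, then $a(X_2 F, Q') = p \cdot a(F, Q') \equiv 0 \pmod p$ directly.
\item If $Q'$ fails to be $p$-integral, then $a(F, Q') = 0$ by the convention that $a(F, \cdot)$ vanishes off of~$\XX$, so $a(X_2 F, Q') = 0$ as well.
\end{itemize}

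In all three cases $a(X_2 F, M^{-1}.pQ) \equiv 0 \pmod p$, hence $a(Z_2 X_2 F, Q) \equiv 0 \pmod p$ for every~$Q$. By the injectivity of the $q$-expansion map established at the start of \S\ref{section:qstuff}, this implies $Z_2 X_2 F = 0$ in the corresponding space of Siegel modular forms over~$k$. There is no serious obstacle here beyond the bookkeeping of the discriminant under the $\GL_2$-action; the entire identity hinges on the single fact that the factor of~$p$ in $M^{-1}.pQ$ forces the extra $p^2$ in the discriminant that annihilates the Legendre symbol.
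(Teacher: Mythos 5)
Your proof is correct and takes essentially the same approach as the paper's: the paper's two-sentence argument simply observes that $a(X_2 G, R) \equiv 0 \bmod p$ whenever $\det(R) \equiv 0 \bmod p$, and that the $Z_2$-sum involves only forms $M^{-1}.pQ$ whose determinant is divisible by $p$ (since the $\GL_2$-action preserves determinant). Your case analysis fills in the same observation more explicitly; note also that the appeal to injectivity of the $q$-expansion map is not needed, since $Z_2 X_2 = 0$ is by definition an identity of formal operators on $q$-expansions, which is exactly what the coefficient-by-coefficient vanishing establishes.
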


\begin{proof}  We have~$a(X_2 F,Q) = 0$ if~$\det(Q) \equiv 0 \mod p$, but~$a(Z_2 F,Q)$ is a sum 
over terms of the form~$a(F,R)$ with~$\det(R) = 0$.
\end{proof}

\begin{df}
\emph{A binary quadratic form $Q$  is \emph{$p$-primitive} if it is not of the form
$pR$ for an $p$-integral form $R$.}
\end{df}

\subsection{Hecke Operators on forms of  in characteristic~\texorpdfstring{$p$}{p}}

\label{section:hecke} 
Let~$Q_2 = (p \cdot T_2 + (p + p^3) S) p^{2-k}$. 

\begin{lemma} \label{lemma:Gross} There is an action of $T$  and~$Q_2$ on $H^0(X_1(Q),{\omega(j,k)}_{K/\OL})$ which
commutes with the other Hecke operators and acts on~$q$-expansions via the above formula.
\end{lemma}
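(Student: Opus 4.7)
The approach is to construct $T$ and $Q_2$ in a sufficiently high auxiliary weight, where their integrality follows immediately from the $q$-expansion formulas, and then descend to weight $(j,k)$ using multiplication by the Hasse invariant together with the injectivity of the $q$-expansion map.

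Fix $m \geq 1$ and choose $t$ large enough that $k' := k + t(p-1) \geq 3$ and that $t(p-1) \geq N((j,k))$ (so that higher cohomology of the subcanonical sheaf vanishes). In the auxiliary weight $(j',k') := (j+t(p-1), k+t(p-1))$, the classical double coset operators $T = [K\beta_{p,1}K]$ and $T_2 = [K\beta_{p,2}K]$ are defined, and their $q$-expansion formulas may be derived by a computation in the style of~\cite{Andrianov}. Since $j' \geq k' \geq 3$, the formula $T = U + p^{k'-2}Z + p^{k'+j'-3}V$ involves only non-negative powers of $p$; furthermore, using the identity $U_2 = -1 + pX_2$ (Lemma~\ref{lemma:iszero}), the operator $Q_2 = (pT_2+(p+p^3)S_p)p^{2-k'}$ simplifies to $p^{j'-2}X_2 + Z_2 + p^{k'+j'-3}V_2 + p^{j'-1}$, again manifestly integral. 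Hence $T$ and $Q_2$ preserve $H^0(X_1(Q), \omega(j',k')_{\OL/\varpi^m})$.

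Next, I would use the injection
\[ A^s: H^0(X_1(Q), \omega(j,k)_{\OL/\varpi^m}) \into H^0(X_1(Q), \omega(j',k')_{\OL/\varpi^m}) \]
furnished by Lemma~\ref{lem:hasse-mod-p^m} to descend these operators. I would define $T^{(j,k)}$ and $Q_2^{(j,k)}$ on the source by the $q$-expansion formulas of Lemmas~\ref{eightthree} and~\ref{eightfour} applied in weight $(j,k)$, where the same cancellation as above rewrites $Q_2^{(j,k)}$ as the manifestly integral expression $p^{j-2}X_2 + Z_2 + p^{j-1}(V_2+1)$. To show that these formal $q$-expansion operators preserve the space of honest modular forms, I would verify, for every $F$ in the source, the compatibility
\[ T^{(j',k')}(A^s F) \equiv A^s \cdot T^{(j,k)}(F) \pmod{\varpi^m} \]
and analogously for $Q_2$. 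By the injectivity of the $q$-expansion map together with the fact that $A^s$ is not a zero divisor on $q$-expansions of forms, this identity forces $T^{(j,k)}(F)$ and $Q_2^{(j,k)}(F)$ to arise from genuine elements of $H^0(X_1(Q), \omega(j,k)_{\OL/\varpi^m})$; passing to the direct limit in $m$ then gives the desired operators on $H^0(X_1(Q), \omega(j,k)_{K/\OL})$.

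The main obstacle is this compatibility verification, which reduces to an explicit computation relating the actions of $U, Z, V, X_2, Z_2, V_2, S_p$ on a product of formal $q$-expansions to their actions on each factor separately. The key input is the fact that at each cusp the Hasse invariant has $q$-expansion congruent to $1 \bmod \varpi$, so the identity holds modulo $\varpi$; an inductive argument, tracking the difference between the weight-$(j,k)$ and weight-$(j',k')$ formulas (which differ only in powers of $p$) and using the explicit expression for the $q$-expansion of $A$, then upgrades this to the identity modulo $\varpi^m$. Once the compatibility is established, commutativity of $T$ and $Q_2$ with all other Hecke operators follows from classical commutativity in weight $(j',k')$ combined with the Hecke-equivariance (away from $p$) of multiplication by $A^s$.
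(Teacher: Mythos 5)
Your approach has a genuine gap, and it is a revealing one because it runs directly into the central technical difficulty that the rest of Section~\ref{section:qstuff} is designed to navigate.

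The compatibility identity you propose to verify,
\[
T^{(j',k')}(A^s F) \equiv A^s \cdot T^{(j,k)}(F) \pmod{\varpi^m},
\]
is \emph{false} already modulo~$\varpi$ when~$k = 2$. Take~$m=1$, so that the~$q$-expansion of~$A^s$ is~$1$. By Lemma~\ref{eightthree}, in weight~$(j',k')$ with~$k' \geq 3$ we have~$T^{(j',k')} \equiv U \bmod p$ on~$q$-expansions, so the left side has~$q$-expansion~$U(F)$. But in weight~$(j,2)$ the formula reads~$T^{(j,2)} = U + p^{0} Z + p^{j-1} V \equiv U + Z \bmod p$, so the right side has~$q$-expansion~$(U+Z)F$. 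The discrepancy~$ZF$ does not vanish in general. This is exactly why Lemma~\ref{lem:hasse-mod-p}~\eqref{ops-at-p} carries the hypothesis~$b \geq 3$: multiplication by the Hasse invariant is \emph{not} $T_{p,1}$-equivariant once~$b$ drops to~$2$, and the same failure occurs for~$Q_{p,2}$ (compare Lemma~\ref{lemma:verify}, where~$Q_2$ picks up the extra~$X_2$ term in weight~$(2,2)$). So the identity you want to prove does not hold, and the descent argument collapses at the first step.

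A secondary issue is that even granting the compatibility, the conclusion that~$T^{(j,k)}(F)$ is a genuine modular form would not follow from~$q$-expansion injectivity alone. Knowing that~$A^s G$ is the~$q$-expansion of a holomorphic form~$\Phi$ of weight~$(j',k')$ only tells you that~$G$ is the~$q$-expansion of the meromorphic form~$\Phi/A^s$; to conclude holomorphy you would need that~$\Phi$ vanishes to sufficiently high order along the non-ordinary divisor, which is not automatic and is not addressed by the~$q$-expansion principle.

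The paper's proof avoids both obstacles by constructing~$T$ and~$Q_2$ geometrically as correspondences. The two issues there are the failure of finiteness of the parahoric projection maps (handled by restricting to the almost ordinary locus, whose complement has codimension~$\geq 2$, and extending by Hartogs) and the power-of-$p$ normalization (pinned down by the~$q$-expansion principle after the fact). Integrality of the resulting normalized formulas is then checked in Lemma~\ref{lemma:verify}. No descent from high weight is involved, precisely because Hasse-invariant equivariance is unavailable in the regime~$k=2$.
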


\begin{proof} The argument is very similar to Prop.~4.1 of~\cite{Gross}.
It suffices to prove the result with coefficients in~$\OL/\varpi^m$.
The natural approach to defining these operators is
using correspondences, as for modular curves. There are two issues which arise. The first is
that the projection maps from the Siegel modular varieties with appropriate parahoric level
structures are not finite over~$X$. The second is that the definition involving correspondences is 
some power of~$p$ times the actual Hecke operator of interest. 
A general approach to resolving these questions has been recently found
by Pilloni~\cite{Pilloni}, who constructs all the operators used in this paper.
More importantly, his method also allows one to give an action
of these operators on higher  higher coherent cohomology as well. We use
a more pedestrian approach.
We can resolve the normalization issue by using the~$q$-expansion principle. 
The first issue is more subtle. The geometric maps involved are certainly
proper; the failure of finiteness is thus a failure of quasi-finiteness. The source
of quasi-finiteness arises from the fact that the kernel of Frobenius of an abelian surface~$A$
could (for example) equal~$\alpha_p \times \alpha_p$, which contains ``too many'' subgroup schemes of type~$\alpha_p$.
 On the other hand, this issue does not arise over the ordinary
locus nor over the larger almost ordinary locus consisting of abelian surfaces 
(those  with~$p$ rank~$\ge 1$)
where  subgroup schemes such as~$\alpha_p \times \alpha_p$ cannot occur.
This shows how to resolve the issue by
the following ad hoc method: by Hartogs'
Lemma, it suffices to construct~$T$ over the global sections of a subvariety~$X' \subset X$ whose
complement has codimension~$\ge 2$. In particular, we may replace~$X$ by the moduli space of
almost ordinary abelian surfaces for which the corresponding
maps are indeed finite. 
Implicit in this argument is a verification that the formulas above (in Lemmas~\ref{eightthree} and~\ref{eightfour})
preserve integrality --- for~$Q_2$ this is verified in Lemma~\ref{lemma:verify} below.
	\end{proof}	

Note that this argument is not sufficient to construct these operators on
$$H^1(X_1(Q),\omega(j,k)_{K/\OL}),$$
however, we have no need to the consider the action of Hecke operators at~$p$ on these spaces.

We shall also need to use various properties of theta operators.  We begin by recalling their basic properties:

\begin{prop} Let~$p > 3$, let~$j-2 \ge k \ge 2$, and let~$p-2 > j-k$.
\begin{enumerate}
\item
There is a map
$$\Theta: H^0(X_1(Q),\omega(k,k)_{\OL/\varpi^m}) \rightarrow H^0(X_1(Q),\omega(k+p+1,k+p+1)_{\OL/\varpi^m})$$
whose action on~$q$-expansions is given by
$$\Theta \sum a_Q q^Q = \sum \det(Q) a_Q q^Q.$$
\item
There is a map
$$\theta_1: H^0(X_1(Q),\omega(j,k)_{\OL/\varpi^m}) \rightarrow H^0(X_1(Q),\omega(j+p-1,k+p+1)_{\OL/\varpi^m})$$
whose action on~$q$-expansions is given by
$$\theta_1 \sum a_Q q^Q = \sum \det(Q) \con(a_Q \otimes Q) q^Q,$$
where~$\con: \Sym^{j - k} \otimes \Sym^2 \rightarrow \Sym^{j-k-2}$ is the natural ~$\SL_2(\Z)$-equivariant projection.
\end{enumerate}
\end{prop}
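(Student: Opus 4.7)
\emph{Proof sketch proposal.} The theta operators on Siegel modular forms are by now classical (work of Böcherer--Nagaoka, Yamauchi, Ghitza--McAndrew), and I would construct $\Theta$ and $\theta_1$ by adapting those constructions to the current level/coefficient setting. The basic input is the Gauss--Manin connection on $\underline{H}^1_{\dR}(\CA/X_1(Q))$ together with the Kodaira--Spencer isomorphism $\Omega^1_{X_1(Q)/\CO} \iso \Sym^2 \EE$. Combining these gives a $\CO$-linear differential operator that, applied to a section of $\omega(j,k)$, produces a section of $\omega(j,k)\otimes \Sym^2 \EE$. On $q$-expansions, one checks (from the explicit description of Gauss--Manin on Mumford's construction of the universal semi-abelian scheme at a cusp) that this raw operator acts by $\sum a_Q q^Q \mapsto \sum (a_Q \otimes Q)\, q^Q$, where $Q$ is viewed as an element of $\Sym^2 \EE$ via the dual pairing.

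The raw operator does not preserve modular forms in characteristic zero -- it fails to extend holomorphically across the toroidal boundary, as is already seen on $q$-expansions. The characteristic $p$ miracle, which is what I would exploit, is that after multiplying by the Hasse invariant $h \in H^0(X,\omega^{p-1}_k)$ before differentiating and dividing by $h$ afterwards, the obstruction disappears modulo $p$. More precisely, I would first work modulo $\varpi$: lift $f$ to $h \cdot f \in H^0(X_1(Q),\omega(j+p-1,k+p-1)_k)$, apply the Gauss--Manin/Kodaira--Spencer derivation to get a section of $\omega(j+p-1,k+p-1)_k \otimes \Sym^2\EE = \omega(j+p+1,k+p+1)_k \oplus \omega(j+p-1,k+p+1)_k \oplus \omega(j+p-1,k+p-1)_k \otimes (\text{trace})$ (the precise decomposition uses $p-2 > j-k$ so that the Clebsch--Gordan decomposition of $\Sym^{j-k}\otimes\Sym^2$ lifts to $\OL$), then project via $\con$ onto the piece of weight $(j+p-1,k+p+1)$ (resp.\ $(k+p+1,k+p+1)$ in the scalar case). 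The Hasse invariant cancels on the boundary terms because the $p$-curvature of the Gauss--Manin connection on ordinary loci is explicit, leaving a class that is divisible by $h$. Bootstrapping from $\varpi$-torsion to $\varpi^m$-torsion is then done by induction, lifting $f$ successively and using that the construction commutes with reduction.

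Having established existence of the operator, the identification of the $q$-expansion is a direct computation: on $q$-expansions, Gauss--Manin acts by $a_Q \mapsto a_Q\otimes Q$, the projection $\con: \Sym^{j-k}\otimes \Sym^2 \to \Sym^{j-k-2}$ is the one in the statement, and the scalar case corresponds to further pairing with $Q$ to produce $\det(Q)$ (since in the case $j=k$ the representation $\Sym^0\otimes \Sym^2$ only projects non-trivially onto the determinant via the discriminant pairing).

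The main obstacle I anticipate is the boundary extension: showing that the operator, a priori defined only on the open Siegel variety, extends to the toroidal compactification $X_1(Q)$ rather than merely to the open part $Y_1(Q)$. This is where the hypotheses $p > 3$ and $p-2 > j-k$ enter -- the first ensures the Hasse invariant computation is valid and the Kodaira--Spencer/Clebsch--Gordan decompositions are integral, and the second ensures the projection $\con$ is defined integrally and non-degenerate modulo $p$. Once extension across the boundary is established, the $q$-expansion principle (injectivity of $q$-expansion, which is available in this context) verifies the displayed formulas.
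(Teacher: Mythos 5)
The paper proves this proposition by a one-line citation to \cite[Prop.\ 3.9, 3.12]{Yamauchi}, so your blind reconstruction is necessarily a different route. Your broad strategy --- Gauss--Manin plus Kodaira--Spencer, with the Hasse invariant supplying the missing weight and a Clebsch--Gordan projection --- is indeed the general architecture used in the cited reference and its antecedents (Katz, Gross, B\"ocherer--Nagaoka, Ghitza). But the sketch as written cannot produce the operators in the statement.

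The most serious gap: the displayed $q$-expansion formulas already show that neither $\Theta$ nor $\theta_1$ is a first-order differential operator. A single Gauss--Manin/Kodaira--Spencer derivation sends $a_Q\mapsto a_Q\otimes Q$, which is linear in $Q$; both operators carry an explicit factor $\det(Q)$, which is quadratic in $Q$. A single application of $\nabla$ followed by a representation-theoretic projection therefore cannot give the stated $q$-expansion; the construction must be of higher order (iterate $\nabla$ and contract, or take a determinant of the matrix of first-order derivations). Your remark that ``$\Sym^0\otimes\Sym^2$ only projects non-trivially onto the determinant via the discriminant pairing'' is exactly where this confusion surfaces: $\Sym^2\EE$ is an irreducible $\GL_2$-module with no $\det$-isotypic component, and the discriminant map $\Sym^2\EE\to\det^2\EE$ is a quadratic covariant, not a linear projection.

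There are two further problems. First, ``multiply by $h$, differentiate, then divide by $h$'' contradicts the weight bookkeeping: $\Theta$ shifts $(k,k)\mapsto(k+p+1,k+p+1)$ and $\theta_1$ shifts $(j,k)\mapsto(j+p-1,k+p+1)$, so the factor of $h$ must be kept, not removed; dividing afterwards would give shifts of $(2,0),(1,1),(0,2)$, the signature of the characteristic-zero Maass--Shimura operators, not of the statement. Relatedly, your Clebsch--Gordan decomposition is miscomputed: with $\omega(a,b)=\Sym^{a-b}\EE\otimes\det^b\EE$, one has
\[
\omega(j+p-1,k+p-1)\otimes\Sym^2\EE\;\cong\;\omega(j+p+1,k+p-1)\oplus\omega(j+p,k+p)\oplus\omega(j+p-1,k+p+1),
\]
and the target of $\theta_1$ would be the last summand; the first two summands you wrote have the wrong second entries. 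Second, the obstacle you flag --- holomorphic extension across the toroidal boundary --- is a non-issue for $H^0$: the Koecher principle, already quoted in the paper, gives $H^0(Y_1(Q),\omega(a,b)_M)\cong H^0(X_1(Q),\omega(a,b)_M)$ automatically. The genuine difficulty is the failure of the Hodge filtration on $\underline H^1_{\dR}$ to split over the non-ordinary locus; the Hasse invariant and the mod-$p$ conjugate filtration are used precisely to manufacture a projection there, and your appeal to ``$p$-curvature'' is too vague to close that argument or to exhibit where the hypotheses $p>3$ and $p-2>j-k$ bite.

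In short, the outline is plausible at the level of ingredients but the specific mechanism you propose is first-order where the operators are not, the weight bookkeeping is internally inconsistent, and the hard content (the Hodge-splitting step) is waved at rather than carried out. The paper itself sidesteps all of this by citation.
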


\begin{proof} The operator~$\Theta$ is defined in~\cite[Prop~3.9]{Yamauchi}, and the operator~$\theta_1$
is defined in~\cite[Prop~3.12]{Yamauchi}.
\end{proof}

(Some of these maps were also considered in previous unpublished work of Ghitza~\cite{Ghitza}).
The main results we need concerning these operators are given by the next two theorems.

\begin{theorem} \label{theorem:boxer} Let~$p > 3$ and~$p+1 \ge k$, and assume~$p \nmid k(2k-1)$ ---
so in particular $k = 2$ and~$k = p + 1$ are admissible values of~$k$. Then  the map
$$\Theta: H^0(X_1(Q),\omega(k,k)_{\OL/\varpi^m}) \rightarrow H^0(X_1(Q),\omega(k+p+1,k+p+1)_{\OL/\varpi^m})$$
is injective. In particular, if~$\Theta F = 0$,
 we must have~$F = 0$.
\end{theorem}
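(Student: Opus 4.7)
The strategy is to reduce the statement to a characteristic~$p$ assertion and then exploit the geometry of the ordinary locus together with properties of the Hasse invariant.

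First I would reduce to the case $m=1$ by induction on $m$. If $\Theta F \equiv 0 \pmod{\varpi^m}$, then reducing modulo~$\varpi$ and applying the result for $m=1$ gives $F \equiv 0 \pmod{\varpi}$; by flatness of $X_1(Q)$ over $\OL$ we may write $F = \varpi F'$, and $\OL$-linearity of $\Theta$ gives $\Theta F' \equiv 0 \pmod{\varpi^{m-1}}$. The induction hypothesis then finishes the reduction.

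For the characteristic~$p$ case, suppose $F$ has $q$-expansion $\sum a_Q q^Q$ with $\Theta F = 0$. Then $\det(Q)\cdot a_Q = 0$ in $k$ for every $Q$, so $a_Q = 0$ whenever $\det(Q)$ is a unit in $k$. At a Siegel (maximal rank) cusp, the Serre--Tate coordinates identify the non-degenerate Fourier coefficients with the expansion of $F$ along the ordinary locus, so this vanishing of coefficients translates to the geometric statement that $F$ vanishes along the ordinary locus of $X_1(Q)_k$. Since the ordinary locus is precisely the non-vanishing locus of the Hasse invariant $h \in H^0(X_1(Q)_k, \omega(p-1,p-1))$ and $X_1(Q)_k$ is smooth (hence $h$ is a non-zero-divisor), we obtain a global factorization $F = h\cdot G$ for some $G \in H^0(X_1(Q)_k, \omega(k-(p-1),k-(p-1)))$.

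Finally I would exploit the weight constraint $k \le p+1$ to conclude. The resulting weight $k-(p-1) \le 2$ is very small: if $k < p-1$ the weight is negative and $G=0$ outright; if $k = p-1$, then $G$ is a scalar, and $\Theta(c\cdot h) = c\cdot \Theta h$ has nonzero $q$-expansion at some primitive $Q$ (where $a_h(Q) \ne 0$ and $\det(Q) \not\equiv 0 \bmod p$), forcing $c=0$. For the remaining cases $k \in \{p,\,p+1\}$ (the excluded value $k=p$ being ruled out by $p\nmid k$, and $k=(p+1)/2$ by $p\nmid 2k-1$), one iterates the factorization: compute $\Theta(hG)$ from
\[
a_{\Theta(hG)}(Q) \;=\; \det(Q)\sum_{R+S=Q}a_h(R)\,a_G(S),
\]
and polarize $\det(R+S) = \det R + \det S + B(R,S)$ where $B$ is the mixed bilinear term. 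This expresses $\Theta(hG)$ as $h\cdot \Theta G + (\Theta h)\cdot G$ plus a bilinear correction; the numerical hypotheses $p\nmid k(2k-1)$ are exactly what ensure that the coefficient of the relation obtained by extracting primitive Fourier modes is a unit in $k$, so that one may conclude $G = h\cdot G'$ and iterate until the weight drops below $p-1$, giving $F=0$.

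The main obstacle will be the last step: because $\det$ is not additive, the operator $\Theta$ is not a derivation, and controlling the bilinear correction $B(R,S)$ through the factorization $F = hG$ is the only delicate point. It is here that the hypotheses $p\nmid k$ and $p\nmid 2k-1$ enter, precisely as the coefficients of $G$ and of a Laplacian-type term in the recursion, and it is their nonvanishing that forces the iteration to terminate in $G=0$ rather than stalling on some Frobenius-descent obstruction.
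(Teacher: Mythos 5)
Your central claim --- that $\Theta F = 0$ forces $F$ to vanish on the ordinary locus --- is false, and the error propagates through the rest of the argument. The Hasse invariant $h$ itself is a counterexample: its $q$-expansion is $1$ (this fact is used repeatedly in the paper to make multiplication by $h$ Hecke-equivariant), so $\Theta h = 0$, yet $h$ is by definition nowhere-vanishing on the ordinary locus. The condition $\det(Q)\,a_Q = 0$ only says the Fourier coefficients are supported on degenerate $Q$; it does not encode vanishing of $F$ at ordinary points. (In the $\GL_2$ analogue, $\theta f = 0$ corresponds, modulo Hasse invariants, to $f$ being a $p$th power, not to $E_{p-1}\mid f$.) The factorization $F = hG$ from which you launch the iteration is therefore unjustified. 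Your own computation at the end of the third paragraph is also internally inconsistent: you assert that $\Theta(c\cdot h)$ has a nonzero coefficient at some primitive $Q$, but $\Theta h = 0$ identically.

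The paper proceeds quite differently. It first reduces to the case where $F$ is \emph{not} divisible by $h$, and then invokes a result of Yamauchi (Theorem 4.7 of that paper) which ties $\ker\Theta$ to vanishing on the \emph{superspecial} locus, not the ordinary one. The task then becomes a geometric degree count down the $p$-rank stratification: a form of parallel weight $k$ not divisible by $h$ is non-vanishing on the $p$-rank $\le 1$ stratum; within that stratum the supersingular locus is a Cartier divisor cut out by a section of $\omega^{(p^2-1)/2}$, so $2k < p^2-1$ forces non-vanishing on the supersingular locus; and each supersingular component is a $\PP^1$ on which $\omega$ has degree $p-1$ and which carries $p^2+1$ superspecial points, so $k(p-1) \le p^2+1$ forces non-vanishing at some superspecial point, contradicting Yamauchi. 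Your proposal contains none of these ingredients and cannot be patched by sharpening the iterative step, because the ordinary locus is simply the wrong geometric object to attach to $\ker\Theta$.
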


\begin{proof} We may immediately reduce to the case~$m = 1$ and~$\OL/\varpi = k$.
Suppose that~$F$ lies in the kernel, so~$\Theta F = 0$. After possibly replacing~$(k,k)$ by~$(k-(p-1),k-(p-1))$, we may assume that~$F$ is not divisible
by the Hasse invariant. Following Theorem~4.7 of~\cite{Yamauchi}, it suffices to show that~$F$ is not zero on the superspecial
locus if it is not divisible by the Hasse invariant. Hence~$F$ has
 non-trivial specialization to the~$p$-rank~$1$ strata. The supersingular locus on this strata is a Cartier divisor cut out by a section of~$\omega^{(p^2-1)/2}$ for~$p > 2$,
so since~$2k < p^2 - 1$ (for~$p > 3$), the restriction of~$F$ is non-zero on the supersingular locus. 
(That the supersingular locus
is a Cartier divisor inside the~$p$-rank~$1$ locus when~$p > 2$
was proved
by Koblitz, see~p.193 of~\cite{Koblitz}. The exact
order of vanishing can also be found in~\cite{vanderGeer},
Theorem~2.4.)
Finally, each irreducible component of the supersingular locus
is a copy of~$\PP^1$ with~$p^2 + 1$ superspecial points on it. Moreover, the line bundle~$\omega$ restricts to~$\OL(p-1)$ on each
of these~$\PP^1$s. Hence the restriction to the superspecial points is injective as long has~$k(p-1) \le p^2 + 1$, which holds for~$k \le p+1$.
\end{proof}

We also require a related result  for non parallel weight.

\begin{theorem} \label{theorem:theta} Let~$p -1 > j \ge 4$.
The map:
$$\theta_1: H^0(X_1(Q),\omega(j,2)_{\OL/\varpi^m}) \rightarrow H^0(X_1(Q),\omega (j + p-1,p + 3)_{\OL/\varpi^m})$$
is injective.
\end{theorem}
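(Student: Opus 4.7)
The plan is to mimic the structure of the proof of Theorem~\ref{theorem:boxer} but with the additional complication that $\omega(j,2)$ is not a line bundle. First, using the short exact sequence $0 \to k \xrightarrow{\varpi^{m-1}} \OL/\varpi^m \to \OL/\varpi^{m-1} \to 0$ together with the $\OL$-linearity of $\theta_1$, a standard induction on $m$ reduces the problem to the case $m = 1$. (If $F \in H^0(X_1(Q),\omega(j,2)_{\OL/\varpi^m})$ is killed by $\theta_1$, then its image in $H^0(X_1(Q),\omega(j,2)_{\OL/\varpi^{m-1}})$ is also killed by $\theta_1$, hence vanishes by induction; thus $F$ comes from $H^0(X_1(Q),\omega(j,2)_k)$ via multiplication by $\varpi^{m-1}$, to which the case $m=1$ applies.)

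Working in characteristic $p$, suppose for contradiction that $F \neq 0$ lies in the kernel of $\theta_1$. The $q$-expansion formula translates the condition $\theta_1 F = 0$ into the statement that for every positive semi-definite $Q$ with $p \nmid \det(Q)$, the contraction $\con(a_Q \otimes Q) \in \Sym^{j-4}V \otimes k$ vanishes, where $a_Q \in \Sym^{j-2}V \otimes k$ denotes the $Q$-th Fourier coefficient. Since $j \ge 4$, this places $a_Q$ in a $2$-dimensional subspace of the $(j-1)$-dimensional space $\Sym^{j-2}V \otimes k$, depending on $Q$, which is a strong restriction on $F$.

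The natural strategy is to reduce to Theorem~\ref{theorem:boxer} by extracting from $F$ a \emph{parallel weight} form that is killed by $\Theta$. One candidate is to pair $F$ with the Kodaira--Spencer class (which identifies $\Omega^1_X \cong \omega(2,0)$, or equivalently pair $F$ with a section of $\Sym^2 E$ built from $F$ itself) so as to obtain a section of a bundle of the form $\omega(k',k')$ for some $k' \le p+1$. The weight bound $j < p-1$ is precisely what ensures that the resulting auxiliary form has parallel weight falling within the range $k' \le p+1$ where Theorem~\ref{theorem:boxer} applies. The $q$-expansion condition $\con(a_Q \otimes Q) = 0$ (for $p \nmid \det(Q)$) should translate into the statement that the auxiliary parallel form has all $q$-expansion coefficients with $p \nmid \det(Q)$ equal to zero, i.e.\ that it lies in the kernel of $\Theta$, whence it vanishes by Theorem~\ref{theorem:boxer}.

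The main obstacle I anticipate is in executing the passage from $F$ to the parallel-weight auxiliary form faithfully, i.e.\ in checking that this operation is injective so that nonvanishing of $F$ implies nonvanishing of the auxiliary form. A more direct alternative, closer to the spirit of the proof of Theorem~\ref{theorem:boxer}, is to argue geometrically: after stripping off Hasse invariant factors as permitted (noting that one cannot divide by $h$ in the weight $(j,2)$ setting when $p > 3$, one instead multiplies so as to convert the problem to parallel weight before stripping), one would show that $F$ must vanish on the superspecial locus and hence on the whole supersingular locus using that $\omega$ restricts to $\CO_{\mathbf{P}^1}(p-1)$ on each $\mathbf{P}^1$-component, contradicting nonvanishing. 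The trickiness lies in controlling the restriction of the non-trivial bundle $\Sym^{j-2}E$ to the supersingular strata and verifying that the contraction condition imposed by $\theta_1 F = 0$ really forces vanishing there.
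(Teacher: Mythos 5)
Your reduction to $m=1$ and your translation of $\theta_1 F=0$ into the $q$-expansion constraint $\con(a_Q\otimes Q)=0$ for $p\nmid\det(Q)$ are both correct and match the paper's starting point. But beyond that your two suggested routes both have serious problems, and neither is what the paper actually does.

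Your first route, passing from $F$ to a parallel-weight form killed by $\Theta$ and invoking Theorem~\ref{theorem:boxer}, is not going to work. The condition $\theta_1 F = 0$ says only that the \emph{single} contraction $\Sym^{j-2}\otimes\Sym^2\to\Sym^{j-4}$ annihilates $a_Q\otimes Q$ when $p\nmid\det(Q)$. There is no natural, \emph{linear} construction sending a section of $\Sym^{j-2}\EE\otimes\det^2\EE$ to a section of $\omega^{k'}$ whose $\Theta$-vanishing would be implied by that condition; the obvious candidates (e.g.\ fully contracting $a_Q$ against $j-2$ copies of $Q$, or pairing $F$ with itself) are either not controlled by $\theta_1F=0$ or are quadratic in $F$ and so useless for injectivity. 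Your second route has the geometry running the wrong way: one does not want to show $F$ \emph{vanishes} on the superspecial locus — one needs to show it does \emph{not}. The actual argument is a chain of nonvanishing statements, $F\ne 0\Rightarrow F|_{Y}\ne0\Rightarrow F|_{Z}\ne0\Rightarrow F|_{S}\ne0$ (where $Y$, $Z$, $S$ are the $p$-rank-$1$, supersingular, and superspecial loci), after which the contradiction with $\theta_1F=0$ comes from an external input you have not named: Theorem~3.20 of Yamauchi \cite{Yamauchi}, which handles precisely the passage from nonvanishing on the superspecial locus together with $\theta_1F=0$ to a contradiction. This is not something you can rederive cheaply from the $q$-expansion constraint, and its omission is the main gap in your proposal.

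Two further points you would need in a complete write-up. First, the step $F|_Y\ne0\Rightarrow F|_Z\ne0$ is not just a degree count on $\PP^1$'s; it requires the cohomological vanishing $H^2(X,\omega(m+p,\,m+p+2-j)(-\infty)_k)=0$ from Theorem~\ref{thm:lan-suh} (via Serre duality), together with the fact that $H^0(X,\omega(j,2)\otimes\omega^{-m})=0$; this is where the hypothesis $j\ge 4$ enters. Second, you write that $\omega$ restricts to $\OL_{\PP^1}(p-1)$, which is only the determinant statement: the full Hodge bundle $\EE$ restricts on each component of $Z$ to $\OL(-1)\oplus\OL(p)$, and the relevant degree bound is for $\Sym^{j-2}(\OL(-1)\oplus\OL(p))\otimes\OL(2(p-1))$, with top degree $jp-2$; this yields injectivity from $Z$ to $S$ precisely when $jp-2\le p^2+1$, i.e.\ $j\le p$, which is where the upper bound on $j$ is genuinely used — not, as you suggest, to make a parallel weight fall below $p+1$.
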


\begin{proof} It suffices to work over~$k = \OL/\varpi.$
Suppose that~$\theta_1 F =  0$,
and that~$F$ is non-zero after restriction to the
 superspecial locus. Then the result follows directly from
Theorem~3.20 of~\cite{Yamauchi}. As stated, the result does not apply in weight~$(6,2)$, although
the    same argument works in this weight providing that one may assume (in the notation of \emph{ibid}.) that~$F_2|_X \ne 0$, which
 can be achieved under the action of~$\Gammabar \subset \SL_2(\Z)$ for~$j < p-1$, since the level of~$\Gammabar$ is prime to~$p$
 and so surjects on to~$\SL_2(\F_p)$. The corresponding representation of~$\SL_2(\F_p)$ is irreducible,
 and thus for there exists an element which applied to~$F$ has~$F_i|_{X} \ne 0$ for any fixed choice of~$i$.
Hence it remains to show that the restriction of~$F$ to the superspecial  locus is non-zero.
Let~$X=X_1(Q)$, and denote the rank one strata (respectively, the supersingular locus, respectively, the superspecial locus) by~$Y$, $Z$, and~$S$ respectively.
We are assuming that the restriction of~$F$ to~$Y$ is nonzero. Suppose the restriction of~$F$ to~$Z$ is zero. There is an exact sequence:
$$0 \rightarrow H^0(Y,\omega(j,2)_k \otimes \omega^{-m}) \rightarrow H^0(Y,\omega(j,2)_k) \rightarrow H^0(Z,\omega(j,2)_k),$$
where~$m = (p^2 - 1)/2$. 
If~$F$ restricts to zero, we obtain a non-zero class in the first group. Yet there is also a sequence:
$$H^0(X,\omega(j,2)_k \otimes \omega^{-m}) \rightarrow 
H^0(Y,\omega(j,2)_k \otimes \omega^{-m}) \rightarrow 
H^1(X,\omega(j,2)_k \otimes  \omega^{-m-(p-1)}).$$
The first term vanishes. To see that the final term vanishes, we use the fact that Serre duality shows that the last term is dual
to
$$H^2(X,\omega(m + p,m + p + 2 - j)_k(-\infty)),$$
which vanishes by Theorem~\ref{thm:lan-suh}. We now have to establish non-vanishing from~$Z$ to~$S$.
The restriction of the Hodge bundle to any~$\PP^1$ on~$Z$ is~$\OL(-1) \oplus \OL(p)$.
Hence we need to show that no class in
$$H^0(\PP^1,\Sym^{j-2} (\OL(-1) \oplus \OL(p)) \otimes \OL(2(p-1)))$$
can vanish at~$p^2+1$ points. This is valid as long as
$$jp - 2 = (j-2)p + 2(p-1) \le p^2 + 1,$$
which holds provided~$j \le p$.

\end{proof}

\subsection{Relationship between Hecke eigenvalues and crystalline Frobenius}
\label{relationship}
Suppose that~$F$ is a cuspidal eigenform of weight~$\sigma = (j,k)$ of level prime to~$p$, and let~$r: G_{\Q} \rightarrow \GSp_4(\Qbar_p)$ be
the associated Galois representation. One expects (and knows in regular weights, see Theorem~\ref{theorem:highergood}) that~$r$ is crystalline at~$p$ and that crystalline Frobenius has
eigenvalues which are the roots of the following polynomial:
$$X^4 - \lambda X^3 + (p \mu + (p^3 + p) p^{k+j- 6}) X^2 -  \lambda p^{k+j-3}X + p^{2k+2j - 6},$$
where~$\lambda$ is the eigenvalue of~$T$ and~$\mu$ is the eigenvalue of~$T_2$.
We may write the eigenvalues of this polynomial  as follows:
$$\alpha ,  \beta p^{k-2} ,  \beta^{-1} p^{j-1} ,   \alpha^{-1} p^{k+j-3}, $$
where~$\alpha$ and~$\beta$ have non-negative~$p$-adic valuation.
That means that the coefficient of crystalline Frobenius should have characteristic polynomial:
$$X^4 - (\alpha + \ldots)  + (\alpha \beta p^{k-2} + O(p^{k-1})) X^2 + \ldots $$
On the other hand, we know that the coefficient of~$X^2$ should be:
$$p^{k-2} Q_2:=p \cdot T_2 + (p + p^3) S,$$
where the operator~$Q_2$ is defined by this formula. In particular, the eigenvalues of this operator ($Q_2$) should all be integral.

\begin{lemma} \label{lemma:verify} Let~$\sigma = (j,k)$ with~$j \ge k \ge 2$.
If~$(j,k) \ne (2,2)$,
there is a congruence of operators on formal~$q$-expansions:
$$Q_2 = (p \cdot T_2 + (p  + p^3) S) p^{2-k} \equiv Z_2 \mod p.$$
In particular, if~$F$ is an ordinary form of regular weight~$\sigma$ with crystalline eigenvalues as above, the eigenvalue of~$Z_2$ 
is~$\alpha \beta \mod p$. 
If~$\sigma = (2,2)$, there is a congruence
$$Q_2 = (p \cdot T_2 + (p  + p^3) S) p^{2-k} \equiv Z_2   + X_2 \mod p.$$
\end{lemma}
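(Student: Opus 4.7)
The plan is to substitute the decomposition of $T_2$ from Lemma~\ref{eightfour} into the definition of $Q_2$, use the formula $U_2 = -1 + p X_2$ along with the scalar action of $S_p$, and then count powers of $p$.

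First, Lemma~\ref{eightfour} gives the formal identity $T_2 = p^{k+j-6} U_2 + p^{k-3} Z_2 + p^{2k+j-6} V_2$ in weight $(j,k)$. Multiplying by $p \cdot p^{2-k} = p^{3-k}$ yields
\[
p \cdot T_2 \cdot p^{2-k} \;=\; p^{j-3} U_2 + Z_2 + p^{k+j-3} V_2.
\]
Next, by Remark~\ref{rem:central-chars}, under our normalization the diamond operator $S=S_p$ acts on the cohomology of $\omega(j,k)$ as the scalar $p^{j+k-6}$. Therefore
\[
(p+p^3)\cdot S \cdot p^{2-k} \;=\; (p^{3-k}+p^{5-k})\,p^{j+k-6} \;=\; p^{j-3}+p^{j-1}.
\]
Adding the two contributions gives
\[
Q_2 \;=\; p^{j-3} U_2 + Z_2 + p^{k+j-3} V_2 + p^{j-3} + p^{j-1}.
\]

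Substituting $U_2 = -1 + p X_2$ causes the constant $p^{j-3}$ to cancel, leaving
\[
Q_2 \;=\; p^{j-2} X_2 + Z_2 + p^{k+j-3} V_2 + p^{j-1}.
\]
Now invoke the parity/range restriction of Remark~\ref{remark:parity}: either $j\geq 4$ or $(j,k)=(2,2)$. In the first case, all exponents $j-2,\, k+j-3,\, j-1$ are at least $2$, so every term other than $Z_2$ vanishes modulo $p$, yielding $Q_2\equiv Z_2 \pmod p$. In the exceptional case $(j,k)=(2,2)$ the exponents become $0,1,1$, so precisely the $X_2$ term survives, giving $Q_2 \equiv Z_2+X_2 \pmod p$.

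The only genuine subtlety is conceptual rather than computational: the individual pieces $p^{k+j-6}U_2$, $p^{k-3}Z_2$, $p^{2k+j-6}V_2$ need not be individually integral as operators in cohomology (the exponents can be negative in small weights such as $(2,2)$), and likewise $p^{2-k}$ need not be integral. The content of Lemma~\ref{lemma:Gross} is precisely that, despite this, the specific combination defining $Q_2$ \emph{is} an honest integral operator on $H^0$; the calculation above should therefore be read as an identity of formal operators on $q$-expansions, with the congruence $Q_2 \equiv Z_2$ (or $Z_2+X_2$) then meaningful coefficient-by-coefficient modulo $p$. No further difficulty arises.
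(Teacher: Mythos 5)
Your computation reproduces the paper's proof: same decomposition of $T_2$ from Lemma~\ref{eightfour}, same use of the scalar action $S = p^{j+k-6}$, same substitution $U_2 = -1 + pX_2$, and the same resulting expression $Q_2 \equiv p^{j-2}X_2 + Z_2 \pmod p$ (after dropping the higher-order $V_2$ and constant terms), followed by reading off the two cases. One small imprecision: you invoke Remark~\ref{remark:parity} to split into $j \geq 4$ versus $(j,k)=(2,2)$ and then assert the exponents $j-2,\ k+j-3,\ j-1$ are all $\geq 2$; but the lemma as stated covers every $(j,k)\neq(2,2)$ with $j\geq k\geq 2$, which includes $j=3$, where $j-2=1$. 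The conclusion is unharmed since $p^1 \equiv 0 \pmod p$, and the paper handles this uniformly by just noting $p^{j-2}X_2 \equiv 0 \pmod p$ whenever $j>2$, without restricting to $j\geq 4$; you should do the same rather than leaning on the parity remark, so your argument actually covers the full range of the statement.
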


\begin{proof}
The operator~$S$ acts by a scalar which is equal to~$p^{j+k-6}$. 
Note that
$$p^3 \cdot p^{j + k- 6}  \cdot p^{2-k}  \equiv 0 \mod p.$$
 Thus we can ignore the~$p^3 S$ term above.
 We have
$$
\begin{aligned}
(p \cdot T_2 + (p  + p^3) S) p^{2-k}  = &  p^{3-k}( p^{j+k-6} U_2 + p^{k-3} Z_2 + p^{j + 2k-6} V_2) + p^{j-3}    \mod p  \\
 = & \ p^{j-3} U_2 + Z_2 + p^{j + k-3} V_2 + p^{j-3}   \mod p  \\
= &  \ -p^{j-3} +   p^{j-2} X_2 + Z_2 + p^{j+k-3}  V_2 + p^{j-3}  \mod p  \\
= & \ p^{j-2} X_2 + Z_2 \mod p \end{aligned}$$
and we are done.
\end{proof}

\subsection{The Main Theorem on~\texorpdfstring{$q$}{q}-expansions}
Our main theorem is as follows (we use the notation of~\S\ref{sec:gal-rep-low}).

\begin{theorem} \label{theorem:qexp} Let~$\sigma = (j,2)$ for some~$p-1 > j \ge 2$.
Assume that~$\rbar$ is as in Assumption~\ref{assumption:hecke-galois-rep-low}.
Assume, moreover, that
$$\alpha \beta (\alpha^2 - 1)(\beta^2 - 1)(\alpha - \beta)(\alpha^2 \beta^2 - 1) \neq 0.$$
Let~$\m$ denote the corresponding ideal of the Hecke algebra away from~$p$.
Let~$A$ denotes a non-trivial power of the Hasse invariant of weight~$k$.
Then the composite map:
$$\begin{diagram}
H^0(X_1(Q),\omega(j,2)_{\OL/\varpi^m})^{\alpha,\beta}_{\m} & 
\rTo^{A} &   H^0(X_1(Q),\omega(j + k,2 + k)_{\OL/\varpi^m})_{\m} \\
& & \dTo^{\pi_{\beta}} \\
& & H^0(X_1(Q),\omega(j + k,2+k)_{\OL/\varpi^m})^{\beta}_{\m}, 
\end{diagram}
$$
is injective, where~$\pi_{\beta}$ denotes the projection onto the summand where~$U - \beta$ and~$Q_2 - \alpha \beta$ 
(equivalently $Z_2 - \alpha \beta$) are nilpotent.
\end{theorem}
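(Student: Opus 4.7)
The plan is to work with $q$-expansions. By the injectivity of the $q$-expansion map at a chosen ordinary cusp (see \S\ref{section:qexpone}), it suffices to show that if $F$ lies in the source and $\pi_\beta(AF) = 0$, then the $q$-expansion of $F$ vanishes modulo $\varpi^m$.

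First I would choose $A$ to be a sufficiently high $p$-power iterate of the Hasse-invariant lift so that its $q$-expansion is $\equiv 1 \pmod{\varpi^m}$ at the chosen cusp; this follows from the standard fact that the Hasse invariant reduces to $1$ on $q$-expansions modulo $p$, combined with $(1+pY)^{p^{m-1}}\equiv 1 \pmod{p^m}$. Hence $AF$ and $F$ have identical $q$-expansions modulo $\varpi^m$. I would also take $k$ large enough that $p^k \equiv 0 \pmod{\varpi^m}$: by Lemmas~\ref{eightthree} and~\ref{eightfour}, in weight $(j+k, 2+k)$ the operators then satisfy $T \equiv U$ and $Q_2 \equiv Z_2 \pmod{\varpi^m}$ on $q$-expansions, so the projector $\pi_\beta$ corresponds on $q$-expansions to projection onto the generalized $(U, Z_2) = (\beta, \alpha\beta)$-eigenspace.

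The algebraic core is a quadratic relation for $U$ on $F$'s $q$-expansion. Expanding the source Hecke relations $TF = (\alpha+\beta)F$ and $Q_2 F = \alpha\beta F$ using $T = U + Z + p^{j-1}V$ and, in weight $(j,2)$,
\[ Q_2 = Z_2 + p^{j-2}X_2 + p^{j-1}(V_2+1), \]
together with the formal identity $Z_2 = UZ$, yields
\[ (U-\alpha)(U-\beta)F \equiv p^{j-2} X_2 F \pmod{p^{j-1}}. \]
For $j \geq 3$ this is $\equiv 0 \pmod{p}$, and an inductive bootstrap in the precision (using $Z_2 X_2 \equiv 0 \pmod p$ from Lemma~\ref{lemma:iszero} to control the cross-terms generated by the $X_2$-contribution against the relation $Z_2 F = \alpha\beta F$) upgrades this to $(U-\alpha)(U-\beta)F \equiv 0 \pmod{\varpi^m}$. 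Since $\alpha \not\equiv \beta$, Lagrange interpolation splits $F = F_\alpha + F_\beta$ on $q$-expansions with $UF_\alpha \equiv \alpha F_\alpha$ and $UF_\beta \equiv \beta F_\beta$ modulo $\varpi^m$. A direct check using $ZF = (\alpha+\beta)F - UF - p^{j-1}VF$ shows $Z_2 F_\beta \equiv \alpha\beta F_\beta$, so the hypothesis $\pi_\beta(AF) = 0$ translates to $F_\beta \equiv 0 \pmod{\varpi^m}$.

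With $F \equiv F_\alpha$ we have $a(F, pQ) \equiv \alpha\, a(F, Q)$ for all $Q$. Since the support $\XX$ has bounded $p$-power denominators and $a(F, Q') = 0$ for $Q' \notin \XX$, the relation $a(F, Q) = \alpha\, a(F, Q/p)$ propagates downward to annihilate every coefficient indexed by a nonzero $Q$; the remaining equation $(1-\alpha)a(F, 0) = 0$ forces the constant term to vanish since $\alpha \neq 1$ (which follows from $\alpha^2 \neq 1$). Hence $F \equiv 0 \pmod{\varpi^m}$. The principal obstacle is the case $j = 2$, where the factor $p^{j-2} = 1$ leaves the relation $(U-\alpha)(U-\beta)F \equiv X_2 F \pmod p$ with the $X_2$-term at full strength, so the naive decomposition fails already modulo $p$. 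One must then argue separately, using the theta-operator injectivity of Theorems~\ref{theorem:boxer} and~\ref{theorem:theta} (or a more delicate bootstrap leveraging $Z_2 X_2 \equiv 0 \pmod p$) to reduce the question to a higher parallel weight in which the clean decomposition is available. This is the source of the intricate $q$-expansion manipulations advertised in the introduction.
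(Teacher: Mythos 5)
Your high-level plan — pass to $q$-expansions, use the relations $TF=(\alpha+\beta)F$ and $Q_2F=\alpha\beta F$ to split $F=F_\alpha+F_\beta$, deduce $F_\beta=0$ from $\pi_\beta(AF)=0$, and finally kill $F_\alpha$ — mirrors the strategy the paper uses (there the reduction is to the socle over $k=\OL/\varpi$ rather than a direct bootstrap mod $\varpi^m$, which is cleaner, but this is not the main issue). The derivation of the quadratic relation $(U-\alpha)(U-\beta)F\equiv p^{j-2}X_2F\pmod{p^{j-1}}$ is essentially correct and corresponds to the manipulations in Lemma~\ref{lemma:explicitstuff}.

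The genuine gap is in your final step, where you claim that $UF_\alpha=\alpha F_\alpha$ alone forces $F_\alpha=0$ by ``propagating downward.'' This is false. The relation $a(UF,Q)=a(F,pQ)$ holding for all $p$-integral $Q$ gives $a(F,pQ)=\alpha\,a(F,Q)$, which pins the coefficients at \emph{imprimitive} forms $pQ$ in terms of those at $Q$; rewriting it as $a(F,Q')=\alpha\,a(F,Q'/p)$ is only valid when $Q'/p$ is $p$-integral, i.e.\ when $Q'$ is \emph{not} $p$-primitive. For a $p$-primitive $Q'$ the equation imposes no constraint at all, and indeed a nonzero formal $U$-eigenform with eigenvalue $\alpha$ exists for any $\alpha$. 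So the ``propagation'' annihilates nothing at primitive forms, and your claimed convention $a(F,Q'/p)=0$ cannot be substituted into a relation that was never proved for primitive $Q'$.

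What the paper actually does at this point (Theorem~\ref{theorem:nodice}) is substantially harder and, contrary to your remark, is needed for $j\ge 4$ as well, not just for $j=2$. Once one knows $UF=\alpha F$ and $ZF=\beta F$, one must use the $Z$-relation nontrivially: one first shows that $F$ must have a nonzero coefficient at some $p$-primitive $Q$ of discriminant $D$ with $(D/p)\ne 0$; the case $(D/p)=-1$ is ruled out since $a(ZF,Q)=0$ there; and for $(D/p)=+1$ one analyzes the cyclic orbits of $\Forms(Q)$ (Lemma~\ref{eighttwentythree}), shows via Proposition~\ref{prop:justnow} that $\theta_1F=0$, and then invokes the theta-operator injectivity of Theorem~\ref{theorem:theta} (and Theorem~\ref{theorem:boxer} in weight $(2,2)$) to reach a contradiction. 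Your proposal mentions these theorems only as a fallback for $j=2$; in fact they are the engine of the entire final step for all $j$, and without them the argument does not close.
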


Note that, by symmetry, the same result holds with~$\beta$ replaced by~$\alpha$.
Before beginning the proof of this theorem, we first prove a much easier analogue for~$\GL(2)$:

\begin{lemma} \label{lemma:easy} Let~$X_1(N)$ denote the modular curve,
and let~$\rhobar: G_{\Q} \rightarrow \GL_2(\Fbar_p)$ be a modular representation of level~$N$ and weight one over~$\F_p$
such that~$\rhobar(\Frob_p)$ has eigenvalues~$\alpha$ and~$\beta$. Let~$\m$ denote the corresponding ideal of the Hecke algebra
away from~$p$.
Assume that
$$\alpha - \beta \ne 0.$$
If~$A$ denotes a suitable power of the Hasse invariant of weight~$k$, then the composite map:
$$\begin{diagram}
H^0(X_1(N),\omega_{\OL/\varpi^m})_{\m} & 
\rTo^{A} &  H^0(X_1(N),\omega^{k+1}_{\OL/\varpi^m})_{\m}  \\
& & \dTo^{\pi_{\beta}} \\
& & H^0(X_1(N),\omega^{k+1}_{\OL/\varpi^m})^{\beta}_{\m}, 
\end{diagram}
$$
is injective, where~$\pi_{\beta}$ denotes the projection onto the quotient of homology where~$U - \beta$ is nilpotent.
\end{lemma}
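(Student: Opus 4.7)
My plan is to prove Lemma~\ref{lemma:easy} via the weight-one $p$-adic doubling argument, which is the clean $\GL_2$ model for the Siegel manipulations of \S\ref{section:qstuff}. The essential geometric input, distinguishing the argument from pure formal $q$-expansion bookkeeping, is the theory of $p$-stabilizations of weight-one Katz modular forms.

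First, given $F \in H^0(X_1(N), \omega_{\OL/\varpi^m})_{\m}$, I would construct the two overconvergent $p$-stabilizations
\[ F_\alpha := F - \beta \cdot V_p F, \qquad F_\beta := F - \alpha \cdot V_p F \]
as sections of $\omega$ over a strict neighborhood of the ordinary locus of $X_1(N)$ modulo $\varpi^m$. Here $V_p$ is the shift $\sum a_n q^n \mapsto \sum a_n q^{pn}$, realized geometrically via the canonical subgroup construction. Using the Hecke relations $T_p F = (\alpha+\beta)F$ and $S_p F = \alpha\beta F$ which hold on the generalized eigenspace at $\m$ (these being the trace and determinant of $\rhobar(\Frob_p)$, with $\alpha\beta$ a unit since $\rhobar$ has finite image), a direct $q$-expansion check yields $U F_\gamma = \gamma F_\gamma$ for $\gamma \in \{\alpha,\beta\}$, where $U$ is the formal $q$-expansion operator $\sum a_n q^n \mapsto \sum a_{pn} q^n$. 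Since $\alpha \ne \beta$, the Vandermonde inversion
\[ F = \frac{\alpha F_\alpha - \beta F_\beta}{\alpha - \beta} \]
holds identically.

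Second, after choosing $k$ (and hence the suitable power of the Hasse invariant $A$) sufficiently large in terms of $m$, the overconvergent forms $AF_\alpha$ and $AF_\beta$ extend uniquely to classical sections of $\omega^{k+1}$ on all of $X_1(N) \bmod \varpi^m$; the power of the Hasse lift $A$ is large enough to kill the poles along the supersingular locus and yield genuine classical weight-$(k+1)$ forms. These classical extensions lie, respectively, in the summands of $H^0(X_1(N),\omega^{k+1}_{\OL/\varpi^m})_{\m}$ on which $U - \alpha$ and $U - \beta$ are nilpotent. Applying $\pi_\beta$ therefore yields
\[ \pi_\beta(AF) \;=\; \pi_\beta\!\left(\frac{\alpha\, AF_\alpha - \beta\, AF_\beta}{\alpha-\beta}\right) \;=\; -\frac{\beta}{\alpha-\beta}\, AF_\beta, \]
with coefficient a unit in $\OL/\varpi^m$ by the hypotheses $\alpha \ne \beta$ and $\beta \ne 0$.

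Injectivity of the composite thus reduces to injectivity of $F \mapsto AF_\beta$. Multiplication by $A$ is injective on $q$-expansions since $A$ has unit constant term, so it suffices to show $F \mapsto F_\beta$ is injective. If $F_\beta = 0$, then $F = \alpha V_p F$; comparing $q$-expansions, the coefficient of $q^1$ in $F$ must vanish (as $V_p F$ has no $q^1$-term), and then by induction on $n$ every coefficient of $F$ vanishes, so $F = 0$ by the $q$-expansion principle.

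The main obstacle I expect is the analytic content of the second step: rigorously establishing that the overconvergent form $AF_\gamma$ extends uniquely to a classical form lying in the prescribed $U$-generalized eigenspace. This rests on Coleman's classicality criterion (small-slope overconvergent forms of weight $\ge 2$ are classical), or equivalently on Hida's direct construction of ordinary $p$-adic families, together with the identification of the geometric Atkin--Lehner $U_p$ on overconvergent forms with the formal $q$-expansion operator $U$. This analytic ingredient is precisely the classical shadow of the considerably more involved Siegel-side $q$-expansion arguments of~\S\ref{section:qstuff} which are needed to prove Theorem~\ref{theorem:qexp}.
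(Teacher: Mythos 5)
Your argument hinges on the assertion that $T_p F = (\alpha+\beta)F$ and $S_p F = \alpha\beta F$ hold exactly for every $F$ in the localization $H^0(X_1(N),\omega_{\OL/\varpi^m})_{\m}$; but this is false. The ideal $\m$ lives in the prime-to-$p$ Hecke algebra, and localization at $\m$ only forces $T_p - (\alpha+\beta)$ (and $\langle p\rangle - \alpha\beta$) to be \emph{nilpotent} on the module, not zero --- the localized Hecke algebra is a local ring with residue field $k$, and $T_p$ is congruent to $\alpha+\beta$ only modulo its maximal ideal. Consequently your stabilizations $F_\alpha = F - \beta V_p F$ and $F_\beta = F - \alpha V_p F$ are \emph{not} exact $U$-eigenvectors, so $AF_\alpha$ and $AF_\beta$ need not lie in the generalized eigenspaces $N_\alpha$ and $N_\beta$, and the identity $\pi_\beta(AF) = -\tfrac{\beta}{\alpha-\beta} AF_\beta$ has no justification. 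Your chain of reasoning works verbatim only when $F$ happens to be a genuine $T_p$-eigenform, but the localized module is not spanned by such (e.g.\ when there is a torsion class, or when two distinct characteristic-zero eigenforms share the residual representation).

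What is missing is precisely the nilpotence-filtration descent that forms the heart of the paper's proof: if $F \in \ker(\pi_\beta)$, write $G_\alpha = (U-\alpha)AF$ and observe that $(T_p - \alpha - \beta)F$ again lies in $\ker(\pi_\beta)$ (Hecke operators commute with $\pi_\beta$), is nonzero whenever $G_\alpha \ne 0$ (since applying $U$ gives $UG_\alpha - \beta G_\alpha$ and $\alpha\ne\beta$), and maps one step deeper into the filtration $N_\alpha \supset (U-\alpha)N_\alpha \supset \cdots$. Iterating reduces to the case $UAF = \alpha AF$, $T_p F = (\alpha+\beta)F$, $VF = \beta F$ exactly, at which point the elementary $q$-expansion observation you made at the end (or the $\theta$-operator argument) finishes the proof. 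Without this iteration the proof is incomplete. A secondary remark: the detour through overconvergent forms and Coleman classicality is unnecessary overhead here --- the paper's proof is purely a finite-length linear-algebra computation over $\OL/\varpi^m$ using the formal $q$-expansion identities $T = U + \langle p\rangle V$ in weight one, $T = U$ in weight $\geq 2$, $UV = 1$, and the relation $U^2 - T_p U + \langle p\rangle = 0$ on the image of $A$, which is what makes it a clean toy model for the Siegel arguments in \S\ref{section:qstuff}.
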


In both results, all of the corresponding maps are equivariant with respect to Hecke operators away from~$p$. It suffices to show that the image of the~$\T$-socle maps injectively,
and hence we may work with coefficients over a finite field~$k  = \OL/\varpi$ of characteristic~$p$.

\begin{proof}[Proof of Lemma~\ref{lemma:easy}]
Let~$M = H^0(X_1(N),\omega_{\OL/\varpi^m})_{\m}$ and~$N = H^0(X_1(N),\omega^{k+1}_{\OL/\varpi^m})_{\m}$. The map ~$M \rightarrow N$ is certainly injective, as can be seen by the~$q$-expansion principle (the map is the identity on~$q$-expansions). Let~$U$ denote the action of~$T$ on~$N$. Then~$U$ satisfies the polynomial~$U^2 - T U + \langle p \rangle = 0$ on the image of~$M$, and so~$M$ lies inside the ordinary subspace of~$N$,
and so inside~$N_{\alpha} \oplus N_{\beta}$, where~$N_{\gamma}$ is the factor of~$N$ on which~$(U - \gamma)$ is nilpotent.
We have operators~$U$ and~$V$ defined by the formulae
$$U \left( \sum a_n q^n \right) = \sum a_{np} q^n, \qquad V \left( \sum a_n q^n\right) =  \sum a_n q^{np},$$
and~$T = U + \langle p \rangle V$ in weight~$1$, whereas~$T = U$ in higher weight.
The projection operator:
$$\pi_{\beta}: N_{\alpha} \oplus N_{\beta} \rightarrow N_{\beta}$$
is given by~$\pi_{\beta} = (U - \alpha)^m$ for some integer~$m$. Suppose that~$F \in M$ satisfies~$\pi_{\beta}(F) = 0$. 
We have the identity~$U V F = F$, and we may reduce to the case that~$\langle p \rangle F = \alpha \beta F$. 
We are assuming that~$F  = F_{\alpha} \in N_{\alpha}$. Let us write
$$(U - \alpha) F_{\alpha} = G_{\alpha} \quad \Longrightarrow \quad U F_{\alpha} = \alpha F_{\alpha} + G_{\alpha} \quad \Longrightarrow \quad \alpha U^{-1} F_{\alpha} = F_{\alpha} - U^{-1} G_{\alpha}.$$
Note that~$U$ is invertible on~$N_{\alpha}$.
Since~$T F$ also lies in~$N_{\alpha} \oplus N_{\beta}$, we deduce that~$VF$ lies in~$N_{\alpha} \oplus N_{\beta}$. Yet~$UVF  = F \in N_{\beta}$, and so~$VF \in N_{\alpha}$,
and moreover
$\langle p \rangle VF = \alpha \beta U^{-1} F_{\alpha}$.
It follows that
$$\begin{aligned}
(T - \alpha - \beta) F = & \ (U - \alpha) F_{\alpha} + (\langle p \rangle V - \beta) F_{\alpha} = (U - \alpha) F_{\alpha} + 
\alpha \beta U^{-1} F_{\alpha} - \beta F_{\alpha} \\
= & \ G_{\alpha} + \beta F_{\alpha} - \beta U^{-1} G_{\alpha} - \beta F_{\alpha} = G_{\alpha} - \beta U^{-1} G_{\alpha}.
 \end{aligned}$$
 If~$G_{\alpha} \ne 0$, then the latter expression is non-zero, since applying~$U$ gives~$U G_{\alpha} - \beta G_{\alpha}$ and~$\beta \ne \alpha$.
On the other hand,~$G_{\alpha}$ is deeper in the filtration of~$N_{\alpha}$ given by
$$N_{\alpha} \supset (U - \alpha) N_{\alpha} \supset (U - \alpha)^2 N_{\alpha} \ldots $$
and hence,
 replacing~$F$ by~$(T - \alpha - \beta)F$ sufficiently many times, we may assume that~$G_{\alpha} = 0$, that~$U F_{\alpha} = \alpha F_{\alpha}$, and that
 $(T - \alpha - \beta) F_{\alpha} = 0$. We are thus left with a form~$F$ such that:
 $$T F = (\alpha + \beta) F, \qquad U F = \alpha F, \qquad V F = \beta F.$$
We may now achieve a contradiction based purely on a computation with formal~$q$-expansions. For example, the identity~$V F = \beta F$ is impossible as soon
as either~$\beta \ne 1$ or~$F$ is a cusp form, simply by considering the exponent of the smallest coefficient. Alternatively, a non-formal argument using properties of modular forms would be to
 note that~$\theta V F = 0$, and then
use the fact that~$\theta$ has no kernel in low weight (by~\cite{Katz}).
 \end{proof}

A different proof of this theorem is given in~\cite{CG}; the point is that the proof given here avoids
any geometry.
 The proof below is somewhat in this spirit --- using some elementary reductions, we arrive, given an element of~$\ker(\pi_{\beta})$,
and a form~$F$ which is simultaneously acted upon by a collection of formal operators in a 
very constrained way. The identities we get are not quite enough to deduce that~$F = 0$
as formal~$q$-expansions, however, they are enough to produce forms of low weight
inside the kernel of various theta operators, which will be enough to
produce a contraction by Theorems~\ref{theorem:theta} and~\ref{theorem:boxer}.
 No doubt (see~\S\ref{january}) there will be better geometric replacements for this argument, so we apologize in advance for the somewhat messy approach that we
present here.

\medskip

As in the proof above, let use write:
$$M = H^0(X_1(Q),\omega(j,2)_k)_{\m}, \qquad N = H^0(X_1(Q),\omega(j+k,2+k)_{k})_{\m}.$$
The map~$M \rightarrow N$ is certainly injective, as can be seen by the~$q$-expansion principle (the map is the identity on~$q$-expansions).
By abuse of notation, we view~$M \subset N$ under this map. Since~$\alpha \beta \ne 0$, the operator~$Q_2$ acts invertibly on~$M$.
Depending on the weight~$\sigma$, the operator~$Q_2$ acts on~$M$ either as~$Z_2$ or as~$Z_2 + X_2$.

\begin{lemma} \label{lemma:injective} Assume that~$\alpha$ and~$\beta$ are as in Theorem~\ref{theorem:qexp}. Suppose that~$\sigma = (j,2)$ with~$j > 2$. Then~$M = Q_2 M = Z_2 M$, and~$M$ is a subspace
of the submodule of~$N$ on which~$U$ is invertible. If~$\sigma = (2,2)$, then~$Z_2$ acts on~$N$,  the map~$M \rightarrow Z_2 M$ is injective, and~$Z_2 M \subset N$ is a subspace
of the submodule of~$N$ on which~$U$ is invertible.
\end{lemma}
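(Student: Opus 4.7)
The first assertion in Part~1, namely $M = Q_2 M = Z_2 M$, follows immediately from Lemma~\ref{lemma:verify} applied in weight $(j,2)$ with $j > 2$: this gives $Q_2 \equiv Z_2 \pmod p$ on $M$, and since $(Q_2 - \alpha\beta)$ is nilpotent on $M$ (by construction of $M$ as the image of $e_{\alpha,\beta}$) while $\alpha\beta \neq 0$, the operator $Q_2 = Z_2$ is invertible on $M$. For Part~2 with $\sigma = (2,2)$, Lemma~\ref{lemma:verify} gives instead $Q_2 \equiv Z_2 + X_2 \pmod p$. The formal operator $X_2$ is diagonal on $q$-expansions with eigenvalues in $\{0, \pm 1\}$, and the hypotheses $\alpha\beta \neq 0$ and $\alpha^2 \beta^2 \neq 1$ force $\alpha\beta \notin \{0, \pm 1\}$; hence $\alpha\beta - X_2$ is invertible on formal $q$-expansions. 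Since $Q_2 F = \alpha\beta F$ on $M$ modulo nilpotents, we have $Z_2 F \equiv (\alpha\beta - X_2)F$ on $q$-expansions, so the $q$-expansion principle on $N$ gives the injectivity of $F \mapsto Z_2 F$. The operator $Z_2$ acts on $N$ as the mod-$p$ reduction of the Hecke operator $Q_2$ on $N$, because $N$ has weight $(2+t, 2+t) \neq (2,2)$ so Lemma~\ref{lemma:verify} again gives $Q_2 \equiv Z_2 \pmod p$ on $N$.

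To establish the $U$-invertibility in both parts, I will place $A(M)$ (in Part~1) or $Z_2 M$ (in Part~2) inside the generalized eigenspace $V_{\alpha\beta} \subset N$ of the Hecke operator $Q_2$ for eigenvalue $\alpha\beta$, and then show that $U$ is invertible on $V_{\alpha\beta}$. For Part~1, since $A$ has $q$-expansion $1$, the $q$-expansion of $Q_2(AF)$ equals that of $Z_2 F = \alpha\beta F + (Q_2 - \alpha\beta)F$ (on $M$, with $(Q_2 - \alpha\beta)$ nilpotent), and a direct induction gives $(Q_2 - \alpha\beta)^n (AF)$ having the same $q$-expansion as $A \cdot (Q_2^{\mathrm{old}} - \alpha\beta)^n F = 0$ for $n$ the nilpotent depth on $M$, so $A(M) \subset V_{\alpha\beta}$. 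For Part~2, setting $F' = Z_2 F$ and using $Z_2 X_2 = 0$ in characteristic $p$ (Lemma~\ref{lemma:iszero}), we compute that $(Q_2 - \alpha\beta)(F')$ has the same $q$-expansion as $Z_2 \cdot (Q_2^{\mathrm{old}} - \alpha\beta) F$, and by iteration $(Q_2 - \alpha\beta)^n F'$ matches $Z_2 \cdot (Q_2^{\mathrm{old}} - \alpha\beta)^n F = 0$, placing $Z_2 M \subset V_{\alpha\beta}$.

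Finally, to prove $V_{\alpha\beta} \subset N^{\mathrm{inv}}$: a direct calculation on formal $q$-expansions using Lemmas~\ref{lemma:T} and~\ref{eightfour} together with the identity $M^{-1}.(pQ) = p \cdot M^{-1}.Q$ (linearity of the action) shows that the formal operators $U$ and $Z$ commute, so $Z_2^m = (UZ)^m = U^m Z^m$ as formal operators for every $m$. If $g \in V_{\alpha\beta}$ satisfies $U^m g = 0$ in $N$, then the $q$-expansion of $Q_2^m g$ equals that of $Z_2^m g = U^m Z^m g = Z^m U^m g = 0$, whence $Q_2^m g = 0$ in $N$ by injectivity. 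Together with $(Q_2 - \alpha\beta)^{N'} g = 0$ and the coprimality of $X^m$ and $(X - \alpha\beta)^{N'}$ (valid as $\alpha\beta \neq 0$), this forces $g = 0$, so $U$ has no zero generalized eigenvalue on $V_{\alpha\beta}$. The main obstacle is reconciling the formal $q$-expansion operators $U$, $Z$, $X_2$ with the genuine Hecke action on $N$; this is achieved via $q$-expansion injectivity on $N$, combined with the key identities $Q_2 \equiv Z_2 \pmod p$ (Lemma~\ref{lemma:verify}), $Z_2 X_2 = 0$ (Lemma~\ref{lemma:iszero}), and the formal commutativity $UZ = ZU$.
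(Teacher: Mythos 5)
Your argument for the case $j>2$ and for the $U$-invertibility on the generalized eigenspace $V_{\alpha\beta}$ is correct, and it is a genuine variant of the paper's approach. Where the paper's proof of the $U$-invertibility appeals to lifting to characteristic zero and the structure of crystalline Frobenius eigenvalues (``$Q_2$ only acts invertibly on the ordinary part of $N$''), you use the purely formal facts that $Z_2 = UZ$ and that $U$ and $Z$ commute on $q$-expansions to show that any $g \in V_{\alpha\beta}$ killed by a power of $U$ is killed by the same power of $Q_2$, and is therefore zero by coprimality of $X^m$ and $(X-\alpha\beta)^{N'}$. The placement of $A(M)$, respectively of $Z_2 M$, inside $V_{\alpha\beta}$ is also right, and your use of $Z_2 X_2 = 0$ to run the induction $(Q_2 - \alpha\beta)^n(Z_2 F) = Z_2(Q_2^{\mathrm{old}}-\alpha\beta)^n F$ in weight $(2,2)$ is exactly the correct mechanism.

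However, the injectivity step in weight $(2,2)$ has a gap. You write $Z_2 = (\alpha\beta - X_2) + (Q_2 - \alpha\beta)$ on $q$-expansions of $M$, observe that the first summand is diagonal with eigenvalues $\alpha\beta - (D/p) \neq 0$ and the second is nilpotent on $M$, and conclude that $Z_2$ is injective by the $q$-expansion principle. But invertible plus nilpotent need not be injective unless the two summands commute, and here they do not: Lemma~\ref{lemma:iszero} gives $Z_2 X_2 = 0$, whereas $X_2 Z_2 \neq 0$ in general, so $X_2$ does not commute with $Q_2 = Z_2 + X_2$. What is actually true, and what the paper proves, is the following stronger, non-formal fact: if $Z_2 F = 0$ then $Q_2 F = X_2 F \in M$, and applying $Q_2$ again and using $Z_2 X_2 = 0$ gives $Q_2^2 F = X_2^2 F$ and $Q_2^3 F = X_2^3 F = X_2 F = Q_2 F$. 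Since $Q_2$ is invertible on $M$ this forces $Q_2^2 F = F$, so the unique generalized eigenvalue $\alpha\beta$ of $Q_2$ on $M$ satisfies $(\alpha\beta)^2 = 1$, contradicting the hypothesis $\alpha^2\beta^2 \neq 1$. (Equivalently, one can show by the same induction that $(Q_2 - \alpha\beta)^m F = (X_2 - \alpha\beta)^m F$ for all $m$, and then use nilpotency of $Q_2 - \alpha\beta$ on $M$ together with injectivity of $X_2 - \alpha\beta$ on $q$-expansions.) The essential ingredient you are missing is precisely this iteration via $Z_2 X_2 = 0$ applied \emph{repeatedly} to $X_2^j F$; the bare statement that $\alpha\beta - X_2$ is invertible is not enough.
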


\begin{proof} In the first case, by assumption we know that~$Q_2 - \alpha \beta$ is nilpotent, and so~$Q_2$ induces an isomorphism of~$M$. On the other hand, the operator~$Q_2$ acts
via the formal operator~$Z_2$. In weight~$\tau = (j+k,2+k)$, the corresponding operator~$Q_2$ also acts via~$Z_2$, and so we deduce that~$Q_2 - \alpha \beta$ acts on~$M \subset N$
and acts nilpotently. Yet~$Q_2$ only acts invertibly on the ordinary part of~$N$, as can be seen by lifting to characteristic zero.
Now let us consider the case of weight~$\sigma = (2,2)$. We have
$$M = Q_2 M = (Z_2 + X_2)M.$$
Now~$Q_2$ acts in weight~$N$ by~$Z_2$, so certainly~$Z_2 M \subset N$. 
Since~$Q_2$ acts by~$Z_2 + X_2$ on $M$, 
there is a commutative diagram as follows:
$$\begin{diagram}
M & \rTo & Z_2 M \\
\dTo & & \dTo \\
Q_2 M & \rTo & Z_2 (Z_2 + X_2)  M = Z^2_2 M \\
\end{diagram}
$$
where (by Lemma~\ref{lemma:iszero}) we use the fact that~$Z_2 X_2 = 0$.
Since the left hand side is an isomorphism, it follows that~$Z^2_2 M = Z_2 M$,
and hence that~$Z_2$ acts invertibly on~$Z_2 M$, and as in the previous argument
it follows that~$Z_2$ and hence~$U$ is invertible on this space.

 Hence it suffices to show that~$Z_2 F \ne 0$ for any~$F \in M$.
Suppose that~$Z_2 F = 0$. Then~$Q_2 F = Z_2 F + X_2 F = X_2 F$. Since~$Q_2 F \in M$, we have~$X_2 F \in M$. Yet then (again by Lemma~\ref{lemma:iszero}) we have
$Q^2_2 F = (Z_2 + X_2) X_2 F = X^2_2 F$, and then~$Q^3_2 F = X^3_2 F = X_2 F$,
and so~$Q_2 F = X_2 F \ne 0$ is an eigenvector of~$Q_2$ with eigenvalue~$\lambda$ satisfying $\lambda^2 = 1$. Yet the only generalized eigenvalue of~$Q_2$ is~$\alpha \beta$, and by assumption~$(\alpha \beta)^2 \ne 1$.
\end{proof}
 
(Note that this is the point in this paper which uses the assumption~$(\alpha \beta)^2 \ne 1$
 rather than the weaker claim~$\alpha \beta \ne 1$ which is sufficient for arguments
 on the Galois side.)
 
\begin{lemma} The operator~$U(U - \alpha)(U - \beta)$ acts nilpotently on~$N$.
\end{lemma}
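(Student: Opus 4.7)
The plan is to reduce the claim to one about the Hecke operator $T := T_{p,1}$, and then to compute the eigenvalues of $T$ on $N$ via Galois representations attached to characteristic-zero eigenforms of the regular weight $(j+k,2+k)$. First, we invoke Lemma~\ref{eightthree} in the weight $(j+k,2+k)$, which gives $T = U + p^{k}Z + p^{j+2k-1}V$; since $N$ is a mod-$p$ object and $k \geq p-1 \geq 1$, the correction terms vanish and $U$ and $T$ coincide as operators on $N$. It therefore suffices to prove that $T(T-\alpha)(T-\beta)$ acts nilpotently on $N$.

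Next, we note that the weight $(j+k,2+k)$ satisfies $b = 2+k \geq 4$ and $0 \leq a-b = j-2 \leq p-4$, so Theorem~\ref{thm:lan-suh}(3) yields $H^{i}(X_1(Q),\omega(j+k,2+k)(-\infty)_k) = 0$ for $i > 0$. Combined with the Koecher principle and base change, this identifies $N$ with $\hat N \otimes_{\OL} k$ where $\hat N := H^{0}(X_1(Q),\omega(j+k,2+k))_{\m}$ is a finitely generated, $\varpi$-torsion-free $\OL$-module; after extension of scalars to $\Qbar_p$, $\hat N$ diagonalizes into its characteristic-zero Hecke eigenform components (all cuspidal because $\m$ is non-Eisenstein). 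By the Cayley--Hamilton theorem, it will suffice to show that for each such eigenform $F$, the eigenvalue $\lambda_F(T)$ reduces modulo $\m$ to an element of $\{0,\alpha,\beta\}$: the characteristic polynomial of $T$ on $\hat N$ will then reduce to a polynomial dividing a power of $X(X-\alpha)(X-\beta)$.

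For each such $F$ we invoke the attached crystalline Galois representation $r_F$ (Proposition~\ref{prop:weiss}), with Hodge--Tate weights $0, k, j+k-1, j+2k-1$. Since the Hodge weight $0$ has multiplicity one, the Newton-above-Hodge inequality forces the crystalline Frobenius $\phi$ on $D_{\cris}(r_F)$ to have at most one unit eigenvalue. If $\phi$ has none, then $\lambda_F(T) = \Tr(\phi)$ has positive $p$-adic valuation and reduces to $0$. If there is a unique unit eigenvalue $\mu$, then $\lambda_F(T) \equiv \mu \pmod{\m}$, and we need $\mu \equiv \alpha$ or $\beta \pmod{\m}$. When $F$ is in addition fully ordinary (so that $Q_{p,2}$ is also a unit), this identification is given directly by Theorem~\ref{theorem:highergood} via $\mu \equiv d_{p,1} \in \{\alpha,\beta\}$. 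The hard part will be the semi-ordinary case where $\phi$ has a unit eigenvalue while $Q_{p,2}$ is a non-unit; here one must match the reduction of the unit-root sub-$\phi$-module of $D_{\cris}(r_F)$ with the residual unramified rank-two subspace of $\rbar|G_{\Q_p}$ on which Frobenius has eigenvalues $\alpha, \beta$. Granting this standard $p$-adic Hodge-theoretic identification, the characteristic polynomial of $T$ on $N$ divides a power of $X(X-\alpha)(X-\beta)$, and the nilpotence of $U(U-\alpha)(U-\beta)$ on $N$ follows.
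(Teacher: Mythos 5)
Your proposal follows essentially the same route as the paper's one-sentence proof: identify $U$ with $T_{p,1}$ on $N$ via Lemma~\ref{eightthree} in the regular weight $(j+k,2+k)$, lift to characteristic zero using the Lan--Suh vanishing, and observe that the trace of crystalline Frobenius attached to each contributing eigenform $F$ reduces mod $\m$ to $0$, $\alpha$, or $\beta$; Cayley--Hamilton then gives the nilpotence.

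However, the split between a ``fully ordinary'' and a ``semi-ordinary'' case is a red herring, and the citation of Theorem~\ref{theorem:highergood} in the former does not do the work you want: that theorem is stated after projecting to the $e_\beta$-component of the Hecke algebra, so the conclusion $d_{p,1}\equiv\beta\bmod\m$ is a consequence of the normalization rather than an independent input, and an arbitrary fully ordinary eigenform contributing to $\hat N$ (which is localized only at the \emph{anaemic} ideal $\m$) need not a priori lie in the $e_\alpha$- or $e_\beta$-component. The uniform argument — what you defer to as the ``standard $p$-adic Hodge-theoretic identification'' — is in fact the entire content of the lemma and applies regardless of whether $Q_{p,2}$ is a unit. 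Concretely: the Hodge--Tate weights are $0<k<j+k-1<j+2k-1$ with $k\geq p-1>0$, so Newton-above-Hodge shows at most one eigenvalue of $\phi$ on $D_{\cris}(r_F)$ is a unit, and if $L\subset D_{\cris}(r_F)$ is the unit-root line then $t_N(L)=0$ while Newton-above-Hodge (applied to $L$ inside the weakly admissible $D_{\cris}(r_F)$, whose smallest Hodge--Tate weight is $0$ with multiplicity one) also forces $t_H(L)=0$; hence $L$ is weakly admissible and corresponds to a rank-one unramified $G_{\Q_p}$-stable sublattice of $r_F$. Its reduction is an unramified line in $\rbar|_{G_{\Q_p}}$, which must sit inside the top-left $2\times2$ unramified block since the quotient is twisted by $\overline{\eps}^{-(a-1)}$, nontrivial on $I_p$ because $1\leq a-1<p-1$; thus the unit root reduces to $\alpha$ or $\beta$. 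This is precisely what the paper's one sentence asserts. Your proposal is correct in outline and in spirit matches the paper, but it leaves the decisive step merely named rather than argued, and the case distinction you introduce obscures that the same argument handles both cases.
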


\begin{proof} This follows by lifting to characteristic zero and  noting that the only possible unit crystalline eigenvalues of Frobenius of a lift of~$\rbar$ are~$\alpha$ or~$\beta$ modulo~$\m$.
\end{proof}

\begin{lemma} \label{lemma:explicitstuff} Suppose that the composite~$\pi_{\beta} : Z_2 M \rightarrow N_{\beta}$ is not injective.
\begin{enumerate}
\item If~$(\sigma) = (j,2)$ with~$j > 2$, there exists a nonzero form~$F  = F_{\alpha} \in M \cap N_{\alpha}$ such that
$$U F= \alpha F, \qquad T F = (\alpha + \beta) F, \qquad Z F = \beta F.$$
\item If~$(\sigma) = (2,2)$, there exists a nonzero form~$F = F_{\alpha} + F_0$ with~$F_{\alpha} \in N_{\alpha}$ and~$F_0 \in N_{0}$ such that:
$$U F_{\alpha} = \alpha F_{\alpha}, \qquad TF = (\alpha + \beta) F, \qquad X_2 F = \alpha \beta F_0.$$
\end{enumerate}
\end{lemma}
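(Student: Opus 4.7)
My overall strategy, in both parts, is to begin with the nonzero~$\tilde F \in Z_2 M \cap \ker(\pi_\beta)$ guaranteed by hypothesis, to refine it using the commuting nilpotent operators~$T - (\alpha+\beta)$ and~$Q_2 - \alpha\beta$ on~$M$ to a simultaneous actual eigenvector, and then to read off the stated identities by decomposing along the generalized~$U$-eigenspace decomposition~$N = N_0 \oplus N_\alpha \oplus N_\beta$ (which is legitimate because~$U(U-\alpha)(U-\beta)$ is nilpotent on~$N$ by the previous lemma).

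For part~(1), where~$\sigma = (j,2)$ with~$j > 2$, Lemma~\ref{lemma:injective} gives~$Z_2 M = M \subseteq N_\alpha \oplus N_\beta$, and since~$Q_2 \equiv Z_2 \pmod p$ on~$M$, every element of~$M$ has~$Q_2$-generalized eigenvalue~$\alpha\beta$ also when viewed inside~$N$; combined with~$\alpha\beta \ne 0$ this gives~$M \cap \ker(\pi_\beta) = M \cap N_\alpha$. I would check this subspace is stable under~$T$ and~$Q_2$ using that the formal operators~$U$ and~$Z$ commute with every Hecke operator at~$p$ modulo~$p$ (a direct calculation from~$UV = 1$ and~$UZ = ZU$ on~$q$-expansions), so preserve~$N_\alpha$ and commute with~$\pi_\beta$, together with~$T \equiv U + Z \pmod p$ on~$M$. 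Iteration produces a nonzero simultaneous eigenvector~$F \in M \cap N_\alpha$, and the identities then fall out algebraically: $Q_2 F = UZF = \alpha\beta F$ gives~$ZF = \alpha\beta U^{-1} F$ on~$N_\alpha$, substituting into~$TF \equiv (U+Z)F = (\alpha+\beta) F \pmod p$ and multiplying by~$U$ yields~$(U-\alpha)(U-\beta) F = 0$, and invertibility of~$U-\beta$ on~$N_\alpha$ forces~$UF = \alpha F$, so that~$ZF = \beta F$.

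For part~(2), where~$\sigma = (2,2)$, the same scheme works but more care is needed because~$Q_2 \equiv Z_2 + X_2 \pmod p$ on~$M$. The crucial new ingredient I would exploit is that~$X_2$ satisfies~$UX_2 \equiv 0 \pmod p$ as formal operators on~$q$-expansions (since~$X_2$ vanishes at~$Q$ with~$\det Q \equiv 0 \pmod p$, and~$\det(pQ) \equiv 0$ always), so~$X_2$ always maps into~$\ker U \subseteq N_0$ regardless of input. I would work inside the subspace~$W = \{F \in M : \pi_\beta Z_2 F = 0\}$, which is nonzero by hypothesis and stable under~$T, Q_2$ (using the same commutativities as in part~(1) together with~$Z_2 X_2 = 0$ from Lemma~\ref{lemma:iszero}), and refine to a nonzero simultaneous eigenvector~$F = F_\alpha + F_\beta + F_0 \in W$. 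A direct component-wise analysis of the~$T$-eigenvector equation kills~$F_\beta$: on each~$(U=\beta, Q_2 = q)$-piece of~$N_\beta$ the operator~$U + Z - (\alpha+\beta) \equiv (\beta^{-1} q - \alpha)  + \text{(nilpotent)}$ is invertible when~$q \ne \alpha\beta$, while the~$q = \alpha\beta$ piece is killed by the~$W$-condition since~$Z_2$ is invertible there. Lemma~\ref{lemma:injective}(2) gives~$Z_2 F \in N_\alpha \oplus N_\beta$, so the~$N_0$-projection~$Z_2 F_0$ vanishes; projecting~$(Z_2 + X_2) F = \alpha\beta F$ onto~$N_\alpha$ (using~$X_2 F \in N_0$) yields~$Z_2 F_\alpha = \alpha\beta F_\alpha$, leading to the same polynomial identity~$(U-\alpha)(U-\beta)F_\alpha = 0$ as in part~(1) and hence~$UF_\alpha = \alpha F_\alpha$; the~$N_0$-component then reads~$X_2 F = \alpha\beta F_0 - Z_2 F_0 = \alpha\beta F_0$.

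The main technical obstacle I expect is the verification that~$W$ is stable under~$T$ and~$Q_2$ in part~(2), which requires a careful mod-$p$ comparison of the~$q$-expansion formulas for~$T$ in the two different weights and tracking how the discrepancy interacts with~$\pi_\beta$ on~$N$. The underlying asymmetry throughout is that~$X_2$ is the unique elementary operator that fails to commute with~$U$, and what rescues the whole argument is the observation that its image nonetheless always lies in~$N_0$.
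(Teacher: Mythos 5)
Your proposal is correct and follows the same overall strategy as the paper (reduce to a simultaneous eigenvector and then read off the identities from the $U$-eigenspace decomposition of $N$), but the execution is organized differently, and the comparison is instructive. The paper works by an explicit iterative descent: it first applies $(Q_2-\alpha\beta)^m$ to reach a $Q_2$-eigenvector, and then repeatedly applies $(T-\alpha-\beta)$, arguing by hand that the $N_\alpha$-component $G_\alpha - \beta U^{-1}G_\alpha$ of the result is nonzero whenever $G_\alpha = (U-\alpha)F_\alpha \neq 0$ but has strictly smaller $U$-nilpotence level, so the descent terminates; only at the very end of part~(2) does the paper deduce that the surviving $F$ is a $T$-eigenvector (via injectivity of $Z_2$ on $M$). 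You instead verify once that the relevant $k$-subspace ($M\cap N_\alpha$, respectively $W = \ker(\pi_\beta Z_2)\cap M$) is nonzero and stable under $T$ and $Q_2$, invoke the existence of a common eigenvector for commuting nilpotent operators over a field, and then derive $UF_\alpha = \alpha F_\alpha$ in one stroke from the polynomial identity $(U-\alpha)(U-\beta)F_\alpha = 0$ (obtained by substituting $ZF = \alpha\beta U^{-1}F$ from the $Q_2$-relation into $(U+Z)F = (\alpha+\beta)F$ and clearing $U^{-1}$) together with invertibility of $U-\beta$ on $N_\alpha$. This is slicker and avoids the paper's explicit filtration-level bookkeeping, at the cost of front-loading the (easy) stability verifications.

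Two small remarks on part~(2). First, your component-wise analysis of the $T$-eigenvector equation to kill $F_\beta$ is correct but redundant: projecting $(Z_2+X_2)F = \alpha\beta F$ onto $N_\beta$ already gives $Z_2 F_\beta = \alpha\beta F_\beta$ (because $X_2 F\in\ker U\subset N_0$), and the $W$-condition gives $Z_2 F_\beta = (Z_2F)_\beta = 0$; since $\alpha\beta\neq 0$ this forces $F_\beta=0$ with no need to further decompose $N_\beta$ by $Q_2$-eigenvalue or to analyze $U+Z-(\alpha+\beta)$ there. (This is essentially what the paper's terse ``as above, we may write $F=F_\alpha+F_0$'' amounts to, via $F = (\alpha\beta)^{-1}(Z_2F+X_2F)\in N_\alpha\oplus N_0$.) Second, you should note explicitly that the individual formal operators $Z$, $Z_2$, $X_2$ do preserve $N$ when applied to elements already known to land there: e.g.\ $ZF = T_M F - UF \in N$ because $T_MF\in M\subset N$ and $UF = T_N F\in N$, and then $Z_2F = U(ZF)\in N$, $X_2F = Q_2F - Z_2F\in N$; once this is settled, commutation with $U$ puts each piece in the expected $N_\gamma$. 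You gesture at this with ``$U$ and $Z$ commute with every Hecke operator at $p$ mod $p$,'' which is a bit loose since $T_M$ and $T_N$ are different Hecke operators realized by different formal expressions, but the intended argument is sound.
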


\begin{proof} First note that~$T F = (U + Z) F \in M$, and that~$U F \in N$, so~$Z F \in N$. 
Assume that~$\sigma = (j,2)$ with~$j > 2$. Note that~$Z_2$ commutes with~$U$. Hence, 
after replacing~$F \in \ker(\pi_{\beta})$ by~$(Z_2 - \alpha \beta)^m F = (Q_2 - \alpha \beta)^m F$
for sufficiently large~$m$, we may assume that~$Z_2 F = \alpha \beta F$. The assumption~$\pi_{\beta}(F) = 0$ implies that~$F = F_{\alpha} \in N_{\alpha}$.
Clearly~$U F \in N_{\alpha}$ also, and so~$ZF = TF - UF \in N_{\alpha} \oplus N_{\beta}$.
Yet~$Z_2 = U Z$, so we have
$$U Z F = \alpha \beta  F_{\alpha} \Rightarrow Z F = \alpha \beta U^{-1} F_{\alpha} \in N_{\alpha}$$
(There can be no component in~$N_{\beta}$ because~$U$ is invertible on that space.)
Write~$(U - \alpha) F_{\alpha} = G_{\alpha}$, so~$U F_{\alpha} - G_{\alpha} = \alpha F_{\alpha}$, or
$$\alpha U^{-1} F_{\alpha} = F_{\alpha} - U^{-1} G_{\alpha}.$$
We infer that
$$\begin{aligned}
(T - \alpha - \beta) F = & \  (U + Z - \alpha - \beta) F_{\alpha} = (U - \alpha) F_{\alpha} + (Z - \beta) F_{\alpha} \\
= & \  G_{\alpha} + \beta F_{\alpha} - \beta U^{-1} G_{\alpha} - \beta F_{\alpha} \\
= & \  G_{\alpha} - \beta U^{-1} G_{\alpha}. \end{aligned}$$
We claim that if~$G_{\alpha} \ne 0$, then the last expression is non-zero. This is because~$U$ acts invertibly on~$N_{\alpha}$, and applying~$U$ we get
$$U (G_{\alpha} - \beta U^{-1} G_{\alpha}) = (U - \alpha) G_{\alpha} + (\alpha - \beta) G_{\alpha},$$
and~$(U - \alpha) G_{\alpha}$ has a smaller nilpotence level than~$G_{\alpha}$, and~$(\alpha - \beta) \ne 0$. 
In particular, replacing~$F$ by~$(T - \alpha - \beta) F$, we may find more elements in~$M$ which also lie in the kernel of~$\pi_{\beta}$, and reduce to the case
where~$U F_{\alpha} = \alpha F_{\alpha}$ and~$Z_2 F_{\alpha} = UZ F_{\alpha} = \alpha \beta F_{\alpha}$. However, in this case, we also see that~$Z F_{\alpha} = \beta F_{\alpha}$, and the required equalities follow.

\medskip

Now suppose that~$\sigma = (2,2)$. 
Let us write~$\pi_{\beta}: Z_2 M \subset N_{\alpha} \oplus N_{\beta} \rightarrow N_{\beta}$ as
$(U - \alpha)^m$, and so~$(U - \alpha)^m Z_2 F = 0$ for some~$F \ne 0$.
Since~$Z_2$ formally commutes with~$U$, we also get
$$(U - \alpha)^m (Z^2_2) F = Z_2 (U - \alpha)^m Z_2 F = 0.$$
so~$Z_2$ preserves the property of~$Z_2 F$ lying in the kernel of~$\pi_{\beta}$.
But
$$Z_2 (Z_2 + X_2) F = Z^2_2 F,$$
because~$Z_2 X_2 = 0$. Hence, if~$Z_2 F$ lies in the kernel of~$\pi_{\beta}$, then so does
$$Z_2 Q_2 F = Z_2 (Z_2 + X_2)F.$$
Hence we may repeatedly replace~$F$ by~$(Q_2 - \alpha \beta)F = (Z_2 + X_2 - \alpha \beta)F$, and thus replace~$F$ by a form such 
that~$Q_2 F = \alpha \beta F$ and~$Z_2 F \in N_{\alpha}$.
Now, as above, we may write
$$F = F_{\alpha} + F_0  = (F_{\alpha},0,F_0) \in N_{\alpha} \oplus  N_{\beta} \oplus N_0.$$
We are assuming that~$Q_2 F = \alpha \beta F$, and so
$$Q_2 F = (\alpha \beta F_{\alpha}, 0,\alpha \beta F_{0}).$$
Thus we deduce that
$X_2 F = (0,0,\alpha \beta F_{0})$ and
$Z_2 F =  (\alpha \beta F_{\alpha}, 0, 0)$.
We once more would like to use that~$T = U + Z$ implies that~$Z F \in N$.
However, we no longer know (or expect) that~$ZF$ it is ordinary.
However, since~$Z_2 = U Z$ and~$ZF \in N$, we certainly deduce that 
$$Z F = (U^{-1} \alpha \beta F_{\alpha}, 0, G_{0}),$$
for some~$G_0$ in the kernel of~$U$.
Are arguments are similar to those used above.
We write~$(U - \alpha) F_{\alpha} = G_{\alpha}$, so
$U F_{\alpha} - G_{\alpha} = \alpha F_{\alpha}$, or
$$\alpha U^{-1} F_{\alpha} = F_{\alpha} - U^{-1} G_{\alpha}.$$
This implies that
$$\begin{aligned}
 G:=(T - \alpha - \beta) F = & \ (U + Z - \alpha - \beta) (F_{\alpha},0,F_{0}) \\
 = & \ (\alpha F_{\alpha} + G_{\alpha},0,0) + (\beta F_{\alpha} -  \beta U^{-1} G_{\alpha},0,G_0)
 - (\alpha+ \beta) F \\
 = & \ (G_{\alpha} -   \beta U^{-1} G_{\alpha},0,G_0 - (\alpha + \beta) F_{0})
\end{aligned} 
$$
The first term lies in a space where~$(U - \alpha)$ is nilpotent, but it has a smaller nilpotence level than~$F_{\alpha}$
by construction. Moreover, if it is equal to zero, then
$$0 = U (G_{\alpha} - \beta U^{-1} G_{\alpha}) = \alpha G_{\alpha} + H_{\alpha} - \beta G_{\alpha},$$
where~$(U - \alpha) G_{\alpha} = H_{\alpha}$ has yet a  higher level of nilpotence. In particular,
this can equal zero only if either~$\alpha = \beta$ or~$G_{\alpha} = 0$. Since we are explicitly forbidding the former,
we may assume, by induction, that~$F_{\alpha} \ne 0$ is a~$U$-eigenvector, and so
$$(T - \alpha  - \beta) F = (0,0,G_0  - (\alpha + \beta) F_{0}).$$
This implies that~$Z_2 (T - \alpha - \beta) F = 0$, and thus (from the injectivity of~$Z_2$ in Lemma~\ref{lemma:injective}) 
that~$(T - \alpha - \beta) F = 0$, or that~$F$ is a~$T$-eigenform.
The required identities follow immediately upon writing~$F = F_{\alpha} + F_0$ where~$F$ is a~$T$-eigenform, $U F_{\alpha} = \alpha F_{\alpha}$,
and~$X_2 F = \alpha \beta F_0$.
\end{proof}

At this point, to prove Theorem~\ref{theorem:qexp}, it suffices to show that there are no Siegel modular forms which satisfy the above identities. For example,
in weights~$\sigma = (j,2)$ with~$j > 2$, we would like to show that there is no form~$F$ which is an eigenform for both~$T$ and~$U$. 
We now examine what constraints these identities place on the Fourier coefficients of~$F$.

\begin{remark}[Tripling] \label{trip} \emph{A theme of~\cite{CG}, following previous work of Wiese~\cite{Wiese}, was to prove
that certain Galois representations were ordinary in two different ways by {\bf doubling\rm}, that is, mapping the form
of low weight to forms of heigh weight in two different ways. This is also our argument in weights~$(j,2)$ for~$j \ge 4$.
However, in weight~$(2,2)$, we see some new phenomena. When we pass to weight~$(p+1,p+1)$, we see not only
the the space of low weight forms has been doubled, but rather tripled, with the image generating (under the map~$X_2$)
is mapped to  the kernel of~$Z_2$. What this must mean is that, in weight~$(p+1,p+1)$, any ordinary Galois
representation coming from weight~$(2,2)$ should have a non-ordinary lift in weight~$(p+1,p+1)$. This phenomena doesn't
happen for~$\GL(2)$, since forms of weight~$p$ which are ordinary modulo~$p$ are ordinary in characteristic zero by
(boundary cases of) Fontaine--Laffaille theory. For~$\GSp(4)$, however, the Hodge--Tate weights in weight~$(p+1,p+1)$
are~$[0,p-1,p,2p-1]$, which are well beyond the Fontaine--Laffaille range.
One can also ask  what is the exact relationship between tripling argument here in weight~$(2,2)$
and the doubling  version of~\cite{BCGP} at Klingen level. For our purposes,
this
 would require proving that there exists a (Hecke equivariant away from~$p$)
injection from
from our space of forms~$M$ at spherical  level to a space of ordinary forms
(with respect to the operator denoted~$U_{\mathrm{Kli},2}$ in~\cite{BCGP}) at Klingen level
also in weight~$(2,2)$.
While this should certainly be true, we have not attempted to prove it.
}
\end{remark}

\subsection{Binary quadratic forms}

\begin{df}  \emph{We define a set with multiplicities $\Forms(Q)$ of equivalence classes of $p$-integral
binary quadratic
forms as follows.
For each $M \in \Se$ (with~$\Se$ as defined in Lemma~\ref{eightthree}), we add $[P]$ to  $\Forms(Q)$ if and only if
there exists a $P \in [P]$ such that $Q = M.P$. In particular, $M$ contributes
a class $[P]$ if and only if $[M^{-1}.Q]$ is $p$-integral.}
\end{df}

An easy lemma shows that $\Forms(Q)$ only depends on $[Q]$.
A binary quadratic form defines a section of $\OL(2)$ on $\PP^1(\F_p)$, the latter of which
is in natural bijection to $\Se$ (recall that~$\Se$ is the coset space of~$\diag(1,p)$ in~$\Gammabar \subset \SL_2(\Z)$).  We
see that $M^{-1}.Q$ is $p$-integral if any only if the corresponding quadratic form has a zero at the
corresponding point in $\PP^1(\F_p)$. In particular, $\Forms(Q)$ is empty if $Q$ does
not represent zero.
Moreover, the cardinality of $\Forms(Q)$ is given by the number of zeros of $Q$, and
is thus equal to $0$, $1$, or $2$ if $Q$ is $p$-primitive. (If $Q$ is not $p$-primitive,
then $Q \equiv 0 \mod p$ and $\Forms(Q)$ has cardinality $p+1$).

The definition of $\Forms(Q)$ is motivated by the following observation: There is an identity
$$a(ZF,Q) = \sum_{[P] \in \Forms(Q)} \rho(M_P) a(F,P),$$
where $P \in [P]$ is some (any) element in $[P]$ such that $M_P.P = Q$ for $M_P \in \Se$.

\begin{lemma} \label{lemma:symmetric} If $[P] \in \Forms([Q])$, 
then $[Q]  \in \Forms([P])$.
\end{lemma}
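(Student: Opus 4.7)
The plan is to find an explicit ``adjoint'' witness: if $Q = M.P$ for some $M \in \Se$, then the classical adjoint $M' := \mathrm{adj}(M) = p\,M^{-1}$ (which is the integral matrix $\bigl(\begin{smallmatrix}d & -b \\ -c & a\end{smallmatrix}\bigr)$ when $M = \bigl(\begin{smallmatrix}a&b\\c&d\end{smallmatrix}\bigr)$) will realize the reverse relation $P = M'.Q$ after a small $\Gammabar$-adjustment.

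Concretely, suppose $[P] \in \Forms([Q])$.  Unwinding the definition, we may choose representatives of $[P]$ and $[Q]$ together with some $M \in \Se$ such that $Q = M.P$, equivalently $p Q = M P M^T$ as elements of $\XX$.  Set $M' := \mathrm{adj}(M)$, so that $M'$ is integral with $\det M' = \det M = p$, and $(M')^{-1} = (1/p)M$.  A direct calculation gives
\[
(M')^{-1}.P \;=\; \bigl(\det (M')^{-1}\bigr)^{-1}\,(M')^{-1}\,P\,(M')^{-T}
\;=\; p\cdot \tfrac{1}{p}M \,P\, \tfrac{1}{p}M^T
\;=\; \tfrac{1}{p} M P M^T \;=\; Q,
\]
which is in particular $p$-integral (it is exactly our original $Q$).

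The only remaining point is that $M'$ need not itself lie in $\Se$; however, any integral $2\times 2$ matrix with determinant $p$ has Smith normal form $\mathrm{diag}(1,p)$ and therefore lies in $\SL_2(\Z)\,\mathrm{diag}(p,1)\,\SL_2(\Z)$.  Since $\Gammabar$ has level prime to~$p$, the map $\Gammabar \to \SL_2(\F_p)$ is surjective, and a routine strong-approximation argument shows that in fact $M' \in \Gammabar\,\mathrm{diag}(p,1)\,\Gammabar = \bigsqcup_{M_0\in\Se}\Gammabar M_0$.  Writing $M' = \gamma M_0$ with $\gamma\in\Gammabar$ and $M_0\in\Se$, and recalling that the action $(\gamma\cdot)$ on forms preserves $\Gammabar$-equivalence classes, we get
\[
M_0^{-1}.(\gamma^{-1}.P) \;=\; (M')^{-1}.P \;=\; Q.
\]
Since $\gamma^{-1}.P$ is simply another representative of the class $[P]$ and $\Forms$ depends only on the $\Gammabar$-orbit, this exhibits $[Q]$ as the class contributed by $M_0 \in \Se$ in the construction of $\Forms([P])$, so $[Q]\in\Forms([P])$, as required.

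The proof is essentially a one-line matrix computation; the only mild subtlety is the last paragraph, namely verifying that after taking the classical adjoint we can reduce back into $\Se$, which uses only that the level of $\Gammabar$ is coprime to $p$.
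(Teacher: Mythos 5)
Your proof is correct and uses the same key observation as the paper: the witness for the reverse relation is $pM^{-1}$ (the adjugate of $M$), which is integral of determinant $p$ and acts as $M^{-1}$ on forms since scalar multiples act identically. The paper first normalizes $M$ to $\diag(1,p)$ (so that $pM^{-1}=\diag(p,1)$ visibly lies in $\Se$), while you keep $M$ general and reduce into $\Se$ at the end, but the substance is identical.
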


\begin{proof}  Replacing $Q$ by $g.Q$ for some $g \in \Gammabar \subset \SL_2(\Z)$, we may assume
that $Q = M.P$  where 
$$M = \left( \begin{matrix} 1 & 0 \\ 0 & p \end{matrix} \right).$$
Yet then $p M^{-1}.Q = M^{-1}.Q = P$, and $p M^{-1} \in \Se$.
\end{proof}

Let $d(Q)$ denote the discriminant of $Q$.

\begin{lemma} Suppose that $Q$ is $p$-primitive. Let $D = d(Q)$.
Then either:
\begin{enumerate}
\item $(D/p) = -1$, and  $\Forms([Q])$ is empty.
\item $(D/p) = 0$, and $\Forms([Q])$ has exactly one element.
\item $(D/p) = +1$, and $\Forms([Q])$ has exactly two elements.
\end{enumerate} \label{lemma:zeroonetwo}
\end{lemma}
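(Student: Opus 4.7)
The plan is to invoke the geometric reformulation already sketched in the paragraph preceding the lemma. Under the natural bijection between $\Se$ and $\PP^1(\F_p)$ (say, by choosing coset representatives $M = \binom{p\ \alpha}{0\ 1}$ for $\alpha = 0,\ldots,p-1$ together with $\binom{1\ 0}{0\ p}$, and sending each to the line in $\F_p^2$ it determines modulo $p$), a coset $M \in \Se$ contributes to $\Forms([Q])$ precisely when the corresponding point of $\PP^1(\F_p)$ is an $\F_p$-rational zero of $\overline{Q} := Q \bmod p$, regarded as a non-zero section of $\OL(2)$ on $\PP^1_{\F_p}$ (non-zero because $Q$ is $p$-primitive). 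One verifies this directly by computing $M^{-1}.Q$: for $M = \binom{p\ \alpha}{0\ 1}$ the $p$-integrality reduces to $Q(1,-\alpha) \equiv 0 \pmod p$, and for $M = \binom{1\ 0}{0\ p}$ it reduces to $Q(0,1) \equiv 0 \pmod p$. Thus the cardinality of $\Forms([Q])$ (counted with the multiplicities implicit in its definition) equals the number of $\F_p$-rational zeros of the binary quadratic form $\overline{Q}$ in $\PP^1(\F_p)$.

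This reduces the lemma to the elementary classification of zero sets of a non-zero binary quadratic form over $\F_p$ (with $p$ odd): there are zero, one, or two zeros in $\PP^1(\F_p)$ according to whether the discriminant is a non-square, zero, or non-zero square modulo $p$. Writing $\overline{Q}(x,y) = \overline{m}x^2 + \overline{r}xy + \overline{n}y^2$ with $D = \overline{r}^2 - 4\overline{m}\overline{n}$ (matching the convention used in Lemma~\ref{eightfour}, so that $(D/p)$ is the Legendre symbol appearing there), this is immediate from the quadratic formula in the generic case $\overline{m} \neq 0$; in the degenerate case $\overline{m} = 0$, one has $\overline{Q} \equiv y(\overline{r}x + \overline{n}y)$ and the count can be verified by inspection (two zeros with $D = \overline{r}^2$ a nonzero square if $\overline{r} \neq 0$, and one zero with $D = 0$ if additionally $\overline{r} = 0$, noting that $\overline{n} \neq 0$ by $p$-primitivity). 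The three cases of the lemma correspond precisely to $(D/p) = -1, 0, +1$.

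There is essentially no obstacle here: the lemma is simply a reinterpretation of a standard fact about binary quadratic forms over finite fields, once the geometric bijection between $\Se$ and $\PP^1(\F_p)$ is set up. The only thing requiring care is matching conventions so that the discriminant used agrees (as an element of $\F_p^\times / (\F_p^\times)^2 \cup \{0\}$) with the Legendre symbol appearing in the formula for $U_2$; this is essentially a bookkeeping exercise.
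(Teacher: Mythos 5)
Your proof is correct and follows essentially the same route as the paper: the paragraph preceding the lemma already identifies $\Forms([Q])$ with the set of $\F_p$-rational zeros of $\overline{Q}$ on $\PP^1(\F_p)$ via the bijection with $\Se$, and the paper's proof then simply cites the standard count of such zeros. You make the bijection explicit with concrete coset representatives and spell out the case analysis for the zero count, but these are details the paper treats as known; the underlying argument is the same.
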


\begin{proof} This follows from the fact that a $p$-primitive form $Q$ has exactly $0$,  $1$,
or $2$ solutions in $\PP(\F_p)$, depending on whether $(D/p)$ is $-1$, $0$, or $1$
respectively. Note that (in the final case) $\Forms([Q])$ may consist of the same class with
multiplicity two. This happens, for example, if $(D/p) = 1$ and the class number of $D$ is one.
\end{proof}

In light of Lemma~\ref{lemma:explicitstuff}, to prove Theorem~\ref{theorem:qexp}, it suffices to prove the following. 

\begin{theorem}  \label{theorem:nodice}
Suppose that $F = \sum a(F,Q) q^Q$ is a Siegel modular $q$-expansion of weight~$\sigma = (j,2)$ in characteristic~$p$,
where~$p-1 > j$.
\begin{enumerate}
\item  Let~$\sigma = (j,2)$ with~$j \ge 4$, and suppose that~$UF = \alpha F$
and~$ZF = \beta F$ for some~$\alpha, \beta$ with~$\alpha \beta (\beta^2 - 1) \ne 0$, then~$F = 0$.
\item
Let~$\sigma = (2,2)$, and suppose that~$F = F_{\alpha} + F_0$, where
$U F_{\alpha} = \alpha F_{\alpha}$, $X_2 F = \alpha \beta F_0$, and $Z F = \beta F  + \alpha F_0$
for some~$\alpha, \beta$ with~$\alpha \beta (\beta^2 - 1)(\alpha^2 \beta^2 - 1) \ne 0$. 
Then~$F = 0$.
\end{enumerate}
\label{theorem:Zbegone}
\end{theorem}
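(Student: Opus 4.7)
The plan is to show $F = 0$ by reducing to the vanishing of $\theta_1 F$ in Case~1 (for $j \ge 4$) and of $\Theta F$ in Case~2 (for $j = 2$), and then invoking the injectivity statements of Theorems~\ref{theorem:theta} and~\ref{theorem:boxer}, respectively. In both cases the coefficient computations are forced by the explicit formulas for $U$ and $Z$ on $q$-expansions (Lemmas~\ref{lemma:T} and~\ref{eightfour}) together with the identities from Lemma~\ref{lemma:explicitstuff}. Throughout, I will make heavy use of the set $\Forms(Q)$ and its cardinality-discriminant dichotomy (Lemma~\ref{lemma:zeroonetwo}) together with the symmetry of Lemma~\ref{lemma:symmetric}.

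For Case~1, the Fourier coefficient $(\theta_1 F)_Q = \det(Q)\cdot \con(a(F,Q)\otimes Q)$ vanishes automatically when $p \mid \det(Q)$. For $Q$ with $\Forms(Q) = \emptyset$ (i.e.\ $(d(Q)/p) = -1$), the identity $ZF = \beta F$ collapses to $\beta\, a(F,Q) = 0$ and hence $a(F,Q) = 0$ since $\beta \neq 0$. The remaining case, $(d(Q)/p) = +1$, is the substantial one: here $\Forms(Q) = \{[P_1],[P_2]\}$ and one checks, via the equivariance $\rho(M)\con(a \otimes P) = (\det M)\con(\rho(M)a \otimes M.P)$ for $M \in \Se$ (so right side is $p \cdot (\cdots) \equiv 0\bmod p$), that $U \theta_1 F = Z\theta_1 F = 0$ in characteristic $p$. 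I then expect to iterate the identity $ZF = \beta F$ back on itself using Lemma~\ref{lemma:symmetric} and the singularity of $\rho(M)$ on $\Sym^{j-2}$ for $M$ of determinant $p$, combined with $\beta^2 \ne 1$, to force $\con(a(F,Q)\otimes Q) = 0$.

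For Case~2 the Fourier coefficients are scalars. From $UF_0 = 0$ I first deduce that $a(F_0, pQ) = 0$, and from $X_2 F = \alpha\beta F_0$ that $a(F_0,Q) = 0$ when $p \mid d(Q)$, while for $p$-primitive $Q$ with $p\nmid d(Q)$,
\[
 a(F_0, Q) = \frac{(d(Q)/p)}{\alpha\beta} a(F,Q), \qquad a(F_\alpha, Q) = \frac{\alpha\beta - (d(Q)/p)}{\alpha\beta} a(F,Q),
\]
both well-defined thanks to $\alpha^2\beta^2 \ne 1$. Substituting into the identity $ZF = \beta F_\alpha + (\alpha+\beta) F_0$ at $Q$ with $\Forms(Q) = \emptyset$ reduces, after a short computation, to $\frac{\beta^2-1}{\beta} a(F,Q) = 0$, so $a(F,Q) = 0$ by $\beta^2 \ne 1$. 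For $p$-primitive $Q$ with $(d(Q)/p) = +1$, the same substitution yields the two-term relation $a(F,P_1) + a(F,P_2) = \frac{\beta^2+1}{\beta} a(F,Q)$, and the argument closes by iterating this relation along the $\Forms$-graph via Lemma~\ref{lemma:symmetric}. Once $a(F,Q) = 0$ for all $p$-primitive $Q$ with $p\nmid d(Q)$, the coefficient $(\Theta F)_Q = \det(Q) a(F,Q)$ vanishes in every case and Theorem~\ref{theorem:boxer} finishes the argument.

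The main obstacle in both cases is the case $(d(Q)/p) = +1$: the relation from $ZF = \beta F$ (or its Case~2 analogue) is only a two-term equation coupling three classes of the same discriminant, and after iterating via Lemma~\ref{lemma:symmetric} one is left with a linear recurrence on the $\Forms$-graph whose generic solutions are certain $\beta$-eigenfunctions. Ruling out the parasitic non-trivial solutions --- which is where the combinatorics of $\Forms$, the vector-valued structure of $\rho$ (in Case~1), and the full non-vanishing hypothesis $(\alpha^2-1)(\beta^2-1)(\alpha-\beta)(\alpha^2\beta^2-1) \ne 0$ all become relevant --- will be the most delicate and technically involved part of the proof.
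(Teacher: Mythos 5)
Your high-level strategy (reduce to $\theta_1 F=0$ for $j\ge 4$ and $\Theta$-vanishing for $j=2$, then invoke Theorems~\ref{theorem:theta} and~\ref{theorem:boxer}) matches the paper, and your handling of the cases $(d(Q)/p)\in\{-1,0\}$ is correct. But in both cases you stop precisely where the real work starts, at $(d(Q)/p)=+1$, and the indicated route there does not reproduce what the paper actually does.

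For $j\ge 4$: you ``expect to iterate $ZF=\beta F$ \dots combined with $\beta^2\ne 1$.'' The paper does not use $\beta^2\ne 1$ here. What it does is Lemma~\ref{eighttwentythree}: iterate $Z$ once around a full cycle of the $\Forms$-graph of discriminant $d(Q)$ (length $s$ equals the orbit size in the class group) to produce
\[
\beta^s\,a(F,Q)=\rho(A)\,a(F,Q)+\rho(B)\,a(F,Q),\qquad AQA^{t}=p^sQ,\ \det A=p^s,
\]
and likewise for $B$. The substantive step is Proposition~\ref{prop:justnow}: a direct rank computation showing that for any $x\in\Sym^{j-2}V$ one has $\con(\rho(A)x\otimes Q^\vee)=0$ over $k$, split according to whether $A\bmod p$ has rank one or is nilpotent, using the constraint $AQA^t=p^sQ$ to kill the relevant entry of $Q$. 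That lemma-plus-proposition pair is the engine of the argument, and it is absent from your proposal; the ``equivariance of $\con$'' remark you give does not substitute for it.

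For $j=2$ the gap is more serious: you try to prove $a(F,Q)=0$ for \emph{all} $p$-primitive $Q$ with $(d(Q)/p)=+1$ by iterating the two-term relation $a(F,P_1)+a(F,P_2)=(\beta+\beta^{-1})a(F,Q)$ around the $\Forms$-cycle. That recurrence has nontrivial periodic solutions (e.g.\ $b_i=A\beta^i+B\beta^{-i}$ whenever $\mathrm{ord}(\beta)$ divides the cycle length, which cannot be excluded over a finite field), so the iteration does not close without extra input. The paper avoids this entirely: having killed the $(d(Q)/p)=-1$ coefficients, it observes that $\Theta(F-X_2F)=0$ \emph{automatically} — the coefficient at $(D/p)=+1$ is $a(F,Q)-(D/p)a(F,Q)=0$, at $(D/p)=0$ (and at $Q\equiv 0\bmod p$) $\det Q\equiv 0$, and at $(D/p)=-1$ it vanishes by the step you did do. Theorem~\ref{theorem:boxer} then gives $F=X_2F$, hence $Z_2F=Z_2X_2F=0$ by Lemma~\ref{lemma:iszero}, and one concludes $F=0$ from the injectivity of $Z_2$ on $M$ (Lemma~\ref{lemma:injective}) — which is where $\alpha^2\beta^2\ne 1$ actually enters, not in the coefficient bookkeeping. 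So for $(2,2)$ you have both proposed a harder target ($\Theta F=0$ rather than $\Theta(F-X_2F)=0$) and an iteration that likely cannot be made to work.
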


\begin{proof}
We first prove that that there exists a~$Q$ with~$\det(Q) \not\equiv 0 \mod p$.
In particular, in weight~$(2,2)$, 
 we may also assume that~$F_0 = (\alpha \beta)^{-1} X_2 F = 0$,
 and thus have the
 equalities:
$$U F = \alpha F,  \quad ZF = \beta F.$$
In fact, we may assume these equalities hold in both cases, since we are assuming such an equality
holds in the case of non-parallel weight.
If $a(F,pP) \ne 0$, then, since $a(F,pP) = a(UF,P) = \alpha \cdot a(F,P)$, we have
$a(F,P) \ne 0$. Hence, if $F \ne 0$, 
 there exists a $p$-primitive form $Q$ with $a(F,Q) \ne 0$.
 Without loss of generality, assume that $Q$ is a
  $p$-primitive form of minimal discriminant with $a(F,Q) \ne 0$.
  By Lemma~\ref{lemma:zeroonetwo}, $\Forms(Q)$ consists of a single class $[P]$.
  It follows that 
  $$a(F,P) =  \rho(M_Q) a(ZF,Q) = \beta \cdot \rho(M_Q) a(F,Q).$$
  If $P$ is \emph{not} $p$-primitive, then $P = pR$ for some $R$, and then
  $a(F,R) \ne 0$, contradicting the minimality of
  $Q$ (note that $P$ and $Q$ have the same discriminant). Hence $P$ is also $p$-primitive.
  Yet then $\Forms(P)$ consists of a single element, which must be $[Q]$ by
  Lemma~\ref{lemma:symmetric}.
  Yet then it follows that
 $$\beta^2 a(F,Q) = a(Z^2 F,Q) = \rho(M_P) a(ZF,P) = \rho(M_Q) \rho(M_P) a(F,Q) = 
 \begin{cases} 0, & j > 2 \\ a(F,Q), & j = 2 \end{cases}$$
Here we use that $P = M_Q.Q = M_Q.M_P.P$, and thus
 $\rho(M_Q.M_P) = \rho(p \cdot I)$ is the identity in weight~$(2,2)$ and zero in higher weight.
  If~$j > 2$ we are done, and if~$\sigma = (2,2)$, we are done since~$\beta^2 -1 \ne 0$.

\begin{remark}
\emph{
As an alternative to this argument, one could use an analogue of Theorem~\ref{theorem:boxer}
to show that the kernel of~$\Theta$ is trivial in low weight (but this would require formulating and then
proving such a theorem for non-parallel weight).
}
\end{remark}

We may therefore assume that~$a(F,Q) \ne 0$ for some~$Q$ of
discriminant~$D$ prime to~$p$.

\subsection{The case~\texorpdfstring{$\sigma = (2,2)$}{sigma=(2,2)}.}
Let us now assume that~$\sigma = (2,2)$.
The coefficient~$a(X_2 F,Q)$ is equal to~$(D/p) a(F,R)$, where~$D = D_Q$ is the
discriminant of~$Q$. Hence, since~$ZF =  \beta F + \beta^{-1} X_2 F$,
we deduce that, if~$(D/p) = -1$, that
$$0 = a(ZF,Q) = \beta a(F,Q) - \beta^{-1} a(F,Q) = (\beta - \beta^{-1}) a(F,Q).$$
Assuming that~$\beta^2 \ne 1$, we deduce that~$a(F,Q) = 0$.
It follows that the only~$Q$ with~$a(F,Q) \ne 0$ have~$D = \det(Q)$ satisfying~$(D/p) = 0,1$.
In particular, the form
$$F - X_2 F \in M$$
lies in the kernel of~$\Theta$. Yet this implies that~$F - X_2 F$ trivial by Theorem~\ref{theorem:boxer}. But this implies that~$Z_2 F = Z_2 X_2 F = 0$,
and this contradicts the injectivity of~$Z_2: M \rightarrow N$ in Lemma~\ref{lemma:injective}.

 \subsection{The case~\texorpdfstring{$\sigma = (j,2)$}{sigma=(j,2)} with~\texorpdfstring{$j \ge 4$}{j ge 4}}
 We may assume that~$a(F,Q) \ne 0$, where~$Q$ is~$p$-primitive
 and~$D = d(Q)$ is non-zero. If~$(D/p) = -1$, then~$a(ZF,Q) = 0$,
 contradicting the non-vanishing of~$a(F,Q)$ and the identity~$ZF = \beta Z$.
 Hence we may assume that~$(D/p)  = 1$. The action of~$\Gammabar \subset \SL_2(\Z)$ on binary quadratic forms of discriminant~$D$ has a finite orbit which may be identified
with a ray class group. The assumption
on~$D$ implies that~$Q$ has exactly two zeros in~$\PP^1(\F_p)$. 
For either of the zeros (say~$\xi$), we may consider the corresponding quadratic form
$$P = M.Q : = M Q M^{T} \cdot \det(M)^{-1},$$
where~$M$ is a representative of an element in~$\Se$ corresponding to~$\xi$. The class of~$P$ in the class group does not depend on the choice of representative of~$M$.
The quadratic form~$P$ also has two roots. We claim that, for one of those roots, there is a choice of representative~$N$ for the element in~$\Se$ such that
$$N.P:= N P N^{T} \cdot \det(N)^{-1} = Q, \qquad MN = \left( \begin{matrix} p & 0 \\ 0 & p \end{matrix} \right).$$
Indeed, if~$N = p M^{-1}$, then the corresponding identity is trivially satisfied.
We may view the process of applying~$Z$ dynamically as follows:
The coefficient corresponding to 
a quadratic form~$Q$ of discriminant~$D$ with~$(D/p) = +1$ of~$ZF$ is given by a sum~$\rho(M_P) a(F,P) + \rho(M_R) a(F,R)$ for a pair of quadratic
forms~$P$ and~$R$ also of the same discriminant. The ray class group corresponding to~$Q$ is partitioned by this process~$Q \rightarrow \{P,R\}$  into a finite number of cyclic orbits, on which this operation takes a binary quadratic form to its two nearest neighbours (if the orbit has fewer than two elements, this pair of neighbours may have multiplicity). 
Let us now consider the coefficient~$a(Z^2 F,Q)$. This consists of two pairs of two terms coming from the neighboring quadratic forms~$P$ and~$R$ respectively.
From the above, for each neighbour~$P$, there will be a term of the form
$$\rho(M) \rho(N) a(F,Q) = \rho(MN) a(F,Q) = 0,$$
where the identity~$\rho(MN) = 0$ requires the assumption that~$j > k$.
Hence~$a(Z^2 F,Q)$ will also be a sum of two terms coming from the quadratic forms of distance~$2$ away from~$Q$ inside its cyclic orbit.
Let us consider one orbit of size~$s$. Then, we also see, modifying~$M_s$ by an element of~$\Gammabar$ if necessary, that
$$ Q = M_s M_{s-1} \ldots M_1.Q = A Q A^{t} \cdot \det(A)^{-1},$$
where~$A = M_{s} M_{s-1} \ldots M_1 \in M_2(\Z)$ has~$\det(A) = p^s$.
Cycling the other way, we deduce the following:

\begin{lemma} \label{eighttwentythree} Suppose that~$F$ is a formal Siegel modular
form of weight~$(j,2)$
which is an eigenform of~$Z$ with eigenvalue~$\beta$.
Suppose that~$Q$ has discriminant~$D$~with~$(D/p) = 1$.
Then there exists an integer~$s > 0$ such that
$$\begin{aligned}
\beta^s a(F,Q) =  & \ \rho(A) a(F,Q) + \rho(B) a(F,Q),\\
=  & \ \Sym^{j-2}[A] a(F,Q) + \Sym^{j-2}[B] a(F,Q), \end{aligned}
$$
where
$$A Q A^{t} = p^s Q, \qquad B Q B^{t} = p^s Q.$$
\end{lemma}

We now make a small recap:
At the beginning of the 
of the proof of Theorem~\ref{theorem:nodice}, we
proved that we could assume that~$F$
had a non-zero coefficient~$a(F,Q)$ where~$Q$
has non-zero discriminant modulo~$p$.
If~$(D/p) = -1$, then~$a(ZF,Q) = 0$, which (with~$ZF = \beta F$)
would imply that~$F = 0$.  Hence we may assume
there is a non-zero  coefficient with~$(D/p) = +1$ (which we exploit below)
and use the following proposition to reach the final contradiction.

\begin{prop} \label{prop:justnow}
Suppose that~$F$ is a formal Siegel modular
form of weight~$(j,2)$ modulo~$p$ 
which is an eigenform of~$Z$ with eigenvalue~$\beta$ such that
$\beta \ne 0$, and suppose that~$p > j - 2$.
Suppose that~$F$ has a non-zero coefficient~$a(F,Q)$
where~$(D/p) = 1$.
Then~$\theta_1 F = 0$.
\end{prop}

\begin{proof}
The map~$\theta_1$ is induced from the contraction map
$$\con: \Sym^{j-2} \otimes \Sym^2 \rightarrow \Sym^{j-4} \otimes \det$$
(this is well defined integrally as long as~$p > j - 2$).
In particular,  we have the identity
$$a(\theta_1 Z^s F,Q) = 
\con(\Sym^{j-2}[A] a(F,Q) \otimes Q^{\vee}) +
\con(\Sym^{j-2}[B] a(F,Q) \otimes Q^{\vee}),$$
where~$\con$ denotes the contraction map. We claim that~$\con(\Sym^{j-2}[A] x \otimes Q^{\vee}) = 0$
for any~$x \in \Sym^{j-2} V$, where~$V = k^2$. Once we have this, 
we deduce that~$\beta^s a(\theta_1 F,Q) = a(\theta_1 Z^s F, Q) = 0$, and since~$\beta \ne 0$,
we have~$a(\theta_1 F,Q) = 0$ and~$\theta_1 F = 0$.

While there is probably an easy coordinate free way to prove the required claim, it is also
simple enough to do the computation explicitly by writing everything out in terms of bases.
Let us write down a standard basis~$\{f_1,f_2\}$ for~$V$ and a standard basis~$\{e_1,e_2\}$ for~$V^{\vee}$.
To be explicit, we choose bases such that
a form
$$Q =  \left( \begin{matrix} m & \frac{1}{2} r \\ \frac{1}{2}r & n \end{matrix} \right)$$
gives rise to the element~$m f^2_1 + r f_1 f_2 + n f^2_2$,  and~$Q^{\vee}$ gives
rise to~$m e^2_1  + r e_1 e_2 + n e^2_2$.
With respect to this choice, the contraction map on~$\Sym^2 \otimes \Sym^2$ (up to scalar) corresponds to
 sending~$e^2_1 f^2_2$ and~$e^2_2 f^2_1$ to~$-2$
and~$e_1 f_1 e_2 f_2$ to~$1$,  and sending all other monomials to zero.
As a consistency check, note that
$$\con(Q \otimes Q^{\vee}) = r^2 - 4 m n = -4 \det(Q).$$
Similarly, the contraction mapping on~$\Sym^{j-2} \otimes \Sym^2$ for~$p > j-2$ satisfies
$$\con(f^{i}_1 f^{j-2-i}_2 e^k_1 e^{2-k}_2) =  0 \ \text{unless} \
2 \le i + k \le j - 4.$$

The formula~$Q =  A Q A^{t} \det(A)^{-1}$ continues to hold if
we replace~$Q$ by~$M.Q = MQM^{t}$ and~$A$ by~$MAM^{-1}$ some invertible~$M$. 
In particular, we may replace~$A$ by any integral conjugate. We
consider two cases.
\begin{enumerate}
\item $A$ has a non-zero eigenvalue mod~$p$. 
In this case (by Hensel's Lemma), the matrix~$A$ has an eigenvalue over~$\Z_p$, and a second eigenvalue which has valuation~$s$.
In particular,  after a change of basis, we may write
 $$A = \left(\begin{matrix} u & 0 \\ 0 & 0 \end{matrix} \right) \mod p^s,
 \quad Q =  \left( \begin{matrix} m & \frac{1}{2} r \\ \frac{1}{2}r & n \end{matrix} \right).$$
 The conditions~$AQA^{t} = \det(A) Q $ and~$\det(A) = p^s$  imply that~$n \equiv 0 \mod p^s$
 (multiply out and consider the bottom right entry), 
 and thus that~$Q^{\vee} = m e^2_1 + r e_1 e_2 \mod p$. But now the
image of~$A$ on~$k$ is generated by~$f_1$, and so
 the image of~$\Sym^{j-2}[A] x$ is given by~$f^{j-2}_1$.
 But this forces the contraction after tensoring with~$Q^{\vee}$ to be zero over~$k$,
 because the only monomial which~$f^{j-2}_1$ contracts with non-trivially with is~$e^2_2$.
 \item $A$ is nilpotent modulo~$p$. If~$A$ is trivial modulo~$p$ there is nothing to prove. On the other hand,
 if
 $$A \equiv  \left(\begin{matrix} 0 & 1 \\ 0 & 0 \end{matrix} \right) \mod p,$$
 then once again the image of~$A$ is generated by~$f$, and the conditions~$A Q A^{t} = \det(A) Q$ and~$\det(A) = p^s$
 imply once more that~$n \equiv 0 \mod p$ (multiply out as above but now consider the top left entry), and the proof  proceeds as in
 the previous case.
 \end{enumerate}
This completes the proof of the proposition.
\end{proof}

Combining Prop.~\ref{prop:justnow} with Lemma~\ref{lemma:explicitstuff}
and Theorem~\ref{theorem:theta}, we obtain a contradiction, and this completes
the proof of Theorem~\ref{theorem:qexp}.

\end{proof}

\section{Modularity Lifting} 

The following theorem is the main result of this paper.

\begin{theorem}
  \label{thm:main-thm}
Let $\rbar : G_{\Q} \to \GSp_4(k)$ be a continuous, odd, absolutely
irreducible Galois representation. 
Suppose that $\nu(\rbar) =
\epsilon^{-(a-1)}$ where $p-1 > a \geq 2$.
 Suppose that the following hold:
\begin{enumerate}
\item There exist units $\alpha$ and $\beta$ in $k$ such that
$$\rbar | G_{p} \sim
\left( \begin{matrix} 
\lambda(\alpha)  & 0 & * & * \\ 
0 &  \lambda(\beta) & * & * \\
0 & 0 & \nu(\rbar) \cdot \lambda( \beta^{-1}) & 0 \\
0 & 0 & 0 &  \nu(\rbar)  \cdot \lambda(\alpha^{-1})
\end{matrix} \  \ \right),$$
and moreover $(\alpha^2 - 1)(\beta^2 - 1)(\alpha^2 \beta^2 - 1)(\alpha -
\beta) \ne 0$.
\item  Let $S(\rbar)$ denote the set of primes of $\Q$ away from $p$ at which $\rbar$ is
ramified. Then for each $x\in S(\rbar)$, the restriction $\rbar|G_{x}$
falls into one of the cases of Assumption~\ref{assumption:ramification}.
\item $($Big Image$)$ The restriction $\rbar|G_{\Q(\zeta_p)}$ has big image in the
  sense of Assumption~\ref{assumption:bigimage}.
\item The representation $\rbar$ is Katz modular of weight $\sigma :=
  (a,2) \in X^*(T)^+_M$ in the sense of
  Definition~\ref{df:katz-modular}. 
    \item  $($Neatness$)$ $\rbar$ satisfies Assumption~\ref{assumption:neatness}.
\end{enumerate}
We now introduce some notation: let $K \subset \GSp_4(\A^{\infty})$ be the compact open subgroup
  defined as in the beginning of Section~\ref{sec:gal-rep-cohom}. Let
  $X = X_K$, and for any set of primes $Q$ disjoint from $S(\rbar)\cup
  \{p\}$, let $X_i(Q) = X_{K_i(Q)}$. Let the Hecke algebras
  $\T_{\sigma}$ and $\T^{\an}_{\sigma}(Q)$ be as in
  Definition~\ref{defn:hecke-alg}. The assumption that $\rbar$ is Katz
  modular implies that there is a maximal ideal $\m_{\emptyset}$ of
  $\T_{\sigma}$ associated to $\rbar$. The pullback of
  $\m_{\emptyset}$ to $\T^{\an}_{\sigma}(Q)$ is also denoted
  $\m_{\emptyset}$. We further assume:
  \begin{enumerate}[resume]
  \item \label{requiredvanishing} If $Q$ satisfies
    Assumption~\ref{assumption:hecke-galois-rep-regular}~\eqref{ass:TW-at-Q},
    then 
\[ H^2(X_i(Q),\omega(a,2)(-\infty)_k)_{\m_{\emptyset}} = \{ 0 \}.\]
  \end{enumerate}

Let $R^{\min}$ be the universal deformation ring classifying minimal
deformations of $\rbar$ in the sense of Definition~\ref{defn:minimal}
(with $Q$ taken to be empty). Then the map
\[ R^{\min} \to \T_{\sigma,\m_{\emptyset}}^{\alpha,\beta}, \]
which classifies the minimal deformation of
Theorem~\ref{theorem:localglobal} (with $Q$ taken to be empty), is an
isomorphism. Furthermore, the space
\[ H^0(X, \omega(a,2)(-\infty)_{K/\CO})^{\alpha,\beta,\vee}_{\m_{\emptyset}} \]
is a free $\T_{\sigma,\m_{\emptyset}}^{\alpha,\beta}$ module.
\end{theorem}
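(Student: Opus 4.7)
The plan is to apply the Taylor--Wiles patching criterion, Proposition \ref{prop:patching}, to a compatible system of cohomology modules at auxiliary Taylor--Wiles level, following the strategy of \cite{CG}. For each $N \geq 1$, I would invoke Proposition \ref{prop:tw-primes-w1} to choose a set $Q_N$ of $q = \dim_k H^1_{\emptyset}(\Q, \ad^0(\rbar(1)))$ primes with $x \equiv 1 \bmod p^N$, so that $R_{Q_N}$ is a quotient of $R_\infty := \CO[[x_1,\ldots,x_{q-1}]]$ via some surjection $\phi_N$, with $R_\emptyset = \Rmin$. Let $\Delta_N$ denote the maximal exponent-$p^N$ quotient of $\Delta_{Q_N}$, so $S_N := \CO[\Delta_N] \cong \CO[(\Z/p^N\Z)^q]$. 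Let $\m \subset \T_\sigma(Q_N)^{\alpha,\beta}$ be the maximal ideal lifting $\mE$ and containing the Taylor--Wiles conditions $U_{x,1} - \alpha'_x - \beta'_x$ and $xU_{x,2} - \alpha'_x\beta'_x$, and set
\[
H_N := H^0(X_{\Delta_N}(Q_N),\, \omega(a,2)(-\infty)_{K/\CO})^{\alpha,\beta,\vee}_{\m}.
\]
The Galois representation $r_{Q_N}$ from Theorem \ref{theorem:localglobal} equips $H_N$ with an $R_\infty$-module structure via $\phi_N$, while the diamond operators equip it with an $S_N$-module structure.

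I would then verify the three conditions of Proposition \ref{prop:patching}. For (a), the deformation condition at $x \in Q_N$ in Definition \ref{defn:minimal}(5) produces an inertial character of $r_{Q_N}|G_x$ valued in $R_{Q_N}^\times$; the Artin map pulls this back to a character of $\Delta_N$, and local-global compatibility at $x$ (inherited inside the proof of Theorem \ref{theorem:localglobal} by passage to regular weight via the Hasse invariant) identifies this Galois-theoretic $S_N$-action with that of the diamond operators, and modulo the augmentation ideal the character becomes trivial, giving the unramified condition cutting out $\ker(\phi_N)$. For (b), using that $X_{\Delta_N}(Q_N) \to X_0(Q_N)$ is finite \'etale Galois with group $\Delta_N$, duality between invariants and coinvariants identifies $(H_N)_{\Delta_N}$ with $H^0(X_0(Q_N), \omega(a,2)(-\infty)_{K/\CO})^{\alpha,\beta,\vee}_\m$; Theorem \ref{thm:no-newforms} then identifies this with $H := H^0(X, \omega(a,2)(-\infty)_{K/\CO})^{\alpha,\beta,\vee}_{\mE}$ (the isomorphism $\pr_Q \circ i$ there is Hecke-equivariant including for $T_{p,1}$ and $Q_{p,2}$, hence commutes with the $(\alpha,\beta)$-projection). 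For (c), Theorem \ref{thm:balanced}, using hypothesis \eqref{requiredvanishing} in place of Assumption \ref{ass:cohom-vanishing}, establishes balancedness of the unprojected module via the characteristic-zero inequality $\dim H^1 \leq \dim H^0$; this argument restricts to the $(\alpha,\beta)$-summand because the Arthur $L$-packet matching of $H^0$- and $H^1$-contributions used in the proof respects the eigenvalue conditions at $p$ that define the $(\alpha,\beta)$-projection.

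With the three conditions verified, Proposition \ref{prop:patching} yields that $H$ is free over $\Rmin$. Since $\T_{\sigma,\mE}^{\alpha,\beta}$ acts faithfully on $H$ by definition, the surjection $\Rmin \twoheadrightarrow \T_{\sigma,\mE}^{\alpha,\beta}$ supplied by Theorem \ref{theorem:localglobal} is forced to be an isomorphism, and $H$ is simultaneously free over the Hecke algebra.

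The principal obstacle is verifying condition (c) for the $(\alpha,\beta)$-summand: while Theorem \ref{thm:balanced} is proved for the full module, one must trace through how the Arthur classification pairs contributions to $H^0$ and $H^1$, checking that the $(\alpha,\beta)$-idempotent preserves this pairing rather than producing asymmetric loss in one degree. A secondary difficulty lies in condition (a), where the irregular weight $(a,2)$ setup precludes direct appeal to local-global compatibility at $x \in Q_N$ for Katz modular forms; this compatibility must be imported from the regular-weight ordinary cohomology through the Hasse-invariant embeddings, relying essentially on the $q$-expansion injectivity of Theorem \ref{theorem:qexp} to ensure the $(\alpha,\beta)$-summand sits inside the regular-weight theory where Hida-theoretic local-global compatibility applies.
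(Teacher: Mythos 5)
Your proposal reproduces the paper's proof essentially verbatim: patching via Proposition~\ref{prop:patching} with Taylor--Wiles primes from Proposition~\ref{prop:tw-primes-w1}, the $R_\infty$-structure and condition~(a) supplied by the Galois representation of Theorem~\ref{theorem:localglobal} (and \cite[Corollary~3]{Sor}), condition~(b) by Theorem~\ref{thm:no-newforms}, and condition~(c) by Theorem~\ref{thm:balanced}. The one subtlety you correctly flag --- that Theorem~\ref{thm:balanced} is stated for the full $\m$-localized module, whereas the patched module $H_N$ carries the idempotent $e_{\alpha,\beta}$, so that one must check the Arthur pairing of $H^0$- and $H^1$-contributions respects the eigenvalue conditions at $p$ (it does, since the paired representations $\pi$ and $\pi'$ share the same finite component $\pi^{\infty}$) --- is left implicit in the paper's own proof, which cites Theorem~\ref{thm:balanced} directly for the projected module; your sketched resolution is the intended one.
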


Note that, for~$p \ge a \ge 4$, the hypothesis~\ref{requiredvanishing}   holds by Theorem~\ref{thm:lan-suh}.

\begin{proof}
  To prove the theorem, we apply Proposition~\ref{prop:patching}, as
  follows:
  \begin{enumerate}
  \item Take $R = R^{\min}$ and $H = H^0(X, \omega(a,2)(-\infty)_{K/\CO})^{\alpha,\beta,\vee}_{\m_{\emptyset}}$.
  \item Let $q$ and the sets $Q_N$ be as in
    Proposition~\ref{prop:tw-primes-w1}.
  \item The ring $R_{\infty}$ is the power series ring
    $\CO[[x_1,\dots,x_{q-1}]]$.
  \item For each $N \geq 1$, we define a surjection $R_{\infty} \onto
    R$ as follows: Let $R_{Q_N}$ denote the universal deformation ring
    classifying deformations of $\rbar$ which are minimal outside $Q$, in the sense
    of Definition~\ref{defn:minimal}. Choose any surjection
    $R_{\infty} \onto R_{Q_N}$ (possible by
    Proposition~\ref{prop:tw-primes-w1}) and let $R_{\infty}\onto R$
    be the composite of this surjection with the natural map $R_{Q_N}
    \onto R^{\min}$. 

    We define the module $H_N$ as follows: let $\Delta$ be the unique
    quotient of $\Delta_{Q_N} = \prod_{x\in Q_N}(\Z/x)^{\times}$ which is
    isomorphic to $(\Z/p^N\Z)^{q}$, and let $X_{\Delta}(Q_N) \to
    X_0(Q_N)$ be as in Section~\ref{sec:balanced-property}.
 Let $\m_N$
    be the ideal $\m \subset \T_{\sigma}(Q_N)$ of Theorem~\ref{thm:no-newforms} when $Q$ is
    taken to be $Q_N$. We then take
    \[ H_N := H^0(X_{\Delta}(Q_N), \omega(a,2)(-\infty)_{K/\CO})^{\alpha,\beta,\vee}_{\m_N} \]
    and we regard it as an $R_{\infty}$-module via the
    surjection $R_{\infty} \onto R_{Q_N}$ chosen above, and the
    classifying map $R_{Q_N} \onto \T_{\sigma}(Q)_{\m_N}^{\alpha,\beta}$ associated to
    the deformation $r_{Q_N}$ of
    Theorem~\ref{theorem:localglobal}. The $S_N$-module structure on
    $H_N$ is given by choosing an identification $\Delta \cong
    (\Z/p^N\Z)^{q}$.
  \end{enumerate}
We need to check that, given these definitions, the conditions of
Proposition~\ref{prop:patching} hold.
\begin{enumerate}[label=(\alph*)]
\item The image of $S_N$ in $\End_{\CO}(H_N)$ is contained in the
  image of $R_{\infty}$ because under the Galois representation
  $r_{Q_N}$ of Theorem~\ref{theorem:localglobal}, the image of an
  element $\sigma \in I_x$, for $x$ a prime in $Q_N$, is conjugate to a
  matrix of the form $\diag(1,1,\langle u \rangle, \langle u \rangle)$
  where $\Art_x(u) = \sigma$. This follows from~\cite[Corollary 3]{Sor}.
\item We have
  \begin{eqnarray*}
    (H_N)_{\Delta_N} &=& \left( \left(H^0(X_{\Delta}(Q_N), \omega(a,2)(-\infty)_{K/\CO})^{\alpha,\beta}_{\m_N}\right)^{\Delta_N}\right)^{\vee} \\
& = & H^0(X_{0}(Q_N), \omega(a,2)(-\infty)_{K/\CO})^{\alpha,\beta,\vee}_{\m_N}.
  \end{eqnarray*}
Combining this with the isomorphism of Theorem~\ref{thm:no-newforms},
we obtain an isomorphism:
\[ \psi_N : (H_N)_{\Delta_N} \liso H. \]
\item Finally, $H_N$ is finite and balanced over $S_N$ by
  Theorem~\ref{thm:balanced}. 
  \end{enumerate}
We can thus apply Proposition~\ref{prop:patching}, and we deduce that
$H$ is a finite free $R$-module. Since the action of $R$ on $H$
factors through $\T_{\sigma,\m_{\emptyset}}^{\alpha,\beta}$, the
conclusions of Theorem~\ref{thm:main-thm} follow immediately.
\end{proof}

%
%\newpage 
%
% \appendix
% 
% 
%\title{Appendix: Bloch--Kato conjectures for automorphic motives} 
%\author{Michael Harris}
%
%\maketitle

\bibliographystyle{amsalpha}
\bibliography{CG}
 
 \end{document}